\newtheorem{theorem}{Theorem}
\newtheorem{conjecture}{Conjecture}
\newtheorem{claim}{Claim}
\newtheorem{problem}{Problem}
\newtheorem{proposition}{Proposition}
\newtheorem{lemma}{Lemma}
\newtheorem{definition}{Definition}
\newtheorem{example}{Example}
\newtheorem{corollary}{Corollary}
\newtheorem{remark}{Remark}
\numberwithin{equation}{section}
\numberwithin{remark}{section}
\numberwithin{theorem}{section}
\numberwithin{proposition}{section}
\numberwithin{definition}{section}
\numberwithin{lemma}{section}
\numberwithin{claim}{section}
\numberwithin{corollary}{section}
\numberwithin{conjecture}{section}
\DeclareMathOperator{\rnk}{rnk}
\newcommand{\bull}{\ensuremath{{}\bullet{}}}
\newcommand{\cpn}{\ensuremath{\mathbb{P}^{N}}}
\newcommand{\slnc}{\ensuremath{SL(N+1,\mathbb{C})}}
\newcommand{\dlb}{\ensuremath{\overline{\partial}}}
\newcommand{\dl}{\ensuremath{\partial}}
\newcommand{\ra}{\ensuremath{\longrightarrow}}
\newcommand{\om}{\ensuremath{\omega}}
\newcommand{\vp}{\ensuremath{\varphi}}
\newcommand{\vps}{\ensuremath{\varphi_{\sigma}}}
\newcommand{\vplt}{\ensuremath{\varphi_{\lambda(t)}}}
\newcommand{\cn}{\ensuremath{\mathbb{C}^{N+1}}}
\newcommand{\xhyp}{\ensuremath{X\times\mathbb{P}^{n-1}}}
\newcommand{\lambull}{\ensuremath{\lambda_{\bull}} }
\newcommand{\mubull}{\ensuremath{\mu_{\bull}} }
\newcommand{\elam}{\ensuremath{\mathbb{E}_{\lambda_{\bull}}}}
\newcommand{\emu}{\ensuremath{\mathbb{E}_{\mu_{\bull}}}}
\begin{document}
\bibliographystyle{alpha}
\title[CM stability   ]{CM stability of projective varieties}
\author{Sean Timothy Paul}
\email{stpaul@math.wisc.edu}
\address{Mathematics Dept. Univ. of Wisconsin, Madison}
\subjclass[2000]{53C55}
\keywords{Discriminants, Resultants, K-Energy maps, Projective duality, Geometric Invariant Theory, K\"ahler Einstein metric, Stability .}
\date{June 22, 2012}
 \vspace{-5mm}
\begin{abstract}{Let $G$ denote the complex special linear group, or more generally a complex reductive linear algebraic group .  We consider  two a priori unrelated problems: \\
\ \\
 1)  Given a $G$ equivariant map $\pi:G\cdot [v]\ra G\cdot [w]$ between two orbits in  projectivizations of finite dimensional complex rational representations of $G$ provide a necessary and sufficient condition for $\pi$ to extend to a {regular} map between the Zariski closures of those orbits.  Provide a ``polyhedral-combinatorial'' characterization of the existence of an extension.\\
\ \\
 2) Given a smooth complex projective variety $X\ra \cpn$ provide a necessary and sufficient condition in terms of the geometry of the embedding which insures that the Mabuchi energy of $(X ,\om_{FS}|_X )$ is bounded from below on the space of Bergman metrics. Provide a ``polyhedral-combinatorial'' characterization of the existence of a lower bound.    \\
  \ \\
 The second problem is essentially a special and highly non-trivial case of the first problem. The purpose of this article is to study the incredibly rich structures associated with the  first problem and then to deduce several new results related to the second problem: We provide a new definition of the CM-polarization as a pair of invertible sheaves $(\mathscr{L}_R, \mathscr{L}_{\Delta})$  on the appropriate Hilbert scheme. We prove that this pair is globally generated.  We introduce the $(\mathscr{L}_R, \mathscr{L}_{\Delta})$-semistable locus in the Hilbert scheme and prove that this locus is a finite union of locally closed subschemes. We prove that a point $[X]$ in the Hilbert scheme is semistable with respect to $(\mathscr{L}_R, \mathscr{L}_{\Delta})$ if and only if the Mabuchi energy of  $(X , {\om_{FS}}|_X)$ is bounded below on the space $\mathcal{B}$ of Bergman metrics . We also establish an analytic numerical criterion for a lower bound : the Mabuchi energy of  $(X , {\om_{FS}}|_X)$ is bounded below on $\mathcal{B}$ if and only if it is bounded below on all analytic {curves} in $\mathcal{B}$.
We prove that discriminants commute with smooth limit cycle formation.   We provide generalizations of the classical Calabi-Futaki character, the generalized Futaki character, and the Mabuchi energy. We explain how these notions arise naturally in the finite dimensional representation theory of complex linear algebraic groups.}
  
 \end{abstract}
\maketitle
\setcounter{tocdepth}{1}
\tableofcontents  
 \section{Introduction  }

 One of the main problems in complex geometry is to detect the existence of a
canonical  K\"ahler metric in a given K\"ahler class on a compact K\"ahler 
manifold $(X,\om)$ (see \cite{yau78}, \cite{aubin76}) . In particular, an outstanding problem in the field is to find necessary and sufficient conditions for the existence of a K\"ahler Einstein metric on a {Fano manifold}, or more generally a constant scalar curvature metric in the class $c_1(L)$ of any (very)ample divisor $L$ on $X$. This problem seems extremely difficult and has led to a striking series of 
conjectures, which we prefer to call the \emph{standard conjectures}, which relate these special 
metrics  to the algebraic geometry of the associated projective models of the 
manifold $X$.  Yau speculated that the relevant algebraic geometry would be related  
to Mumford's Geometric Invariant Theory. Motivated by Yau's suggestion  Tian introduced the notions of CM/K \footnote{``CM'' stands for Chow-Mumford and ``K'' refers to the K-energy of Mabuchi.}(semi)stability  (see \cite{tian94} , \cite{tian97}).  Tian was led to \footnote{ This strategy is already completely explicit in \cite{tian1990} .} these stability conditions  through the remarkable idea of restricting the Mabuchi energy $\nu_{\om}$  to the group $G:=SL(H^0(X, L)^{\vee})$ or the space $\Delta(G)$ of it's algebraic one parameter subgroups (see \cite{dingtian}). The problem was to establish the following : 
\begin{align}\label{coreproblem}
\begin{split}
& i) \quad {\nu_{\om}}|_G>-\infty \iff \ \mbox{$X\ra \mathbb{P}^{N}$ is CM-semistable.} \\
\ \\
& ii) \quad {\nu_{\om}}|_{\Delta(G)}>-\infty \iff \mbox{$X\ra \mathbb{P}^{N}$ is K-semistable.}
\end{split}
\end{align}
Part of the problem is to \emph{provide the definitions of (semi)stability}. 
The difficulty here is to understand the behavior of the Mabuchi energy restricted to the spaces on the left and then convert this understanding into meaningful algebro-geometric conditions  on the embedding  $X\ra \mathbb{P}^{N}$. 
Due to this difficulty Tian's original definition of K-stability has been modified and extended in various directions by many authors. Currently, the most widely known reformulation is due to Donaldson (see \cite{skdtoric}). Unfortunately neither this reformulation nor its variants  resolve the problem of Mabuchi energy control along algebraic degenerations in the space of Bergman metrics, let alone the behavior of the energy on the entire space. With the exception of Lu (see \cite{lu2004}), this issue seems to have been neglected by most researchers in the field. Recently the author has given complete solutions to problems $i)$ and $ii)$ for a fixed subvariety $X$ in $\cpn$ (see \cite{paul2011}).  The purpose of this article to analyze the relative case and develop the entire theory in the broader context of equivariant embeddings of algebraic homogeneous spaces and the representation theory of reductive algebraic groups. The main contributions in this paper are Theorems \ref{globgen}, \ref{uniform1} , \ref{weakII}, and the table at the end of Section \ref{generaltheory} .  

\subsection{CM (semi)stability}
In order to better appreciate the results of this paper, we first discuss Tian's approach towards CM-stability and the standard conjectures (see \cite{tian97} section 8 for further details).     
To begin we consider a family $\mathbb{X}\overset{\pi}{\ra} Y$ of polarized varieties satisfying the following conditions :  
 \begin{enumerate}
  \item $\mathbb{X}$ and $Y$ are smooth varieties such that  $\mathbb{X}\subset Y\times\cpn $ .\\ 
  
  \item $\pi:={p_1}|_{\mathbb{X}}: \mathbb{X}\rightarrow Y$ is flat of relative dimension $n$, degree $d$ with Hilbert polynomial $P$.\\
  \item Let $L:=p_2^*\mathcal{O}(1)|_{\mathbb{X}}$. Then $L|_{ {X}_y}$ is very ample and the embedding ${ {X}_y}:=\pi^{-1}(y)\overset{L}{ \rightarrow} \cpn$ is given by a complete linear system for $y\in Y$.\\
\item There is an action of $G:=\slnc$ on the data compatible with the projection and the standard action on $\cpn$.
  \end{enumerate}
 We let $K_{\pi}$ denote the relative canonical bundle :
 \begin{align}
 K_{\pi}:=  K_{\mathbb{X}}\otimes \pi^*K^{-1}_{Y}\ , \ K_{\pi}|_{X_y}\cong K_{X_y}\ \mbox{ for all}\ y\in Y\setminus \mathscr{D}\  .
 \end{align}
 $\mathscr{D}$ denotes the discriminant locus of the map $\pi$. 
 
 Let $\mu$ be the subdominant coefficient of $P$ (the Hilbert polynomial of the family) .  Tian introduces the following virtual bundle over $\mathbb{X}$ :
  \begin{align}
  \mathcal{E}_{\pi}:=(n+1)(K_{\pi}-K_{\pi}^{-1})(L-L^{-1})^n-d\mu (L-L^{-1})^{n+1} \ .
  \end{align}
  Tian's definition of CM (semi)stability is formulated in terms of the following linearization on $Y$. 
 \begin{definition} (Tian \cite{tian97}) \emph{The \textbf{\emph{CM polarization}} of the family $\mathbb{X}\rightarrow Y$ is the line bundle
  \begin{align*} 
\mathscr{L}_{cm}:= \mbox{det}R\pi_*(\mathcal{E}_{\pi})^{-1} \in \mbox{Pic}^G(Y)\  .
 \end{align*}}
 \end{definition}
\begin{remark}
\emph{The CM polarization is not necessarily positive. We will discuss this issue in detail below.}
\end{remark}

 Given $y\in Y$ we define  $e_y$ to be any (non-zero) lift of $y$ to ${\mathscr{L}_{cm}^{-1}}|_y$ . 
 \begin{definition}(Tian \cite{tian97}) \emph{ $y \in Y$ is CM semistable if and only if
 \begin{align}\label{cmstab97}
 \overline{G\cdot e_y}\cap \{\mbox{ zero section  of $\mathscr{L}^{-1}_{cm}$}\}=\emptyset \ .
 \end{align}
}
\end{definition}
  
  In the statement of Theorem \ref{tian97} below $c$ is a constant which depends only on the choice of background K\"ahler metrics on $\mathbb{X}$ and $Y$ , $\omega_y:= \omega_{FS} |_{{X}_y}$ denotes the restriction of the Fubini Study form of $\cpn$ to the fiber $ {X}_y$. We let  $\nu_{\omega_{y}}$ denote the Mabuchi energy of $ {X}_y$ , and $\vps$ denotes the Bergman potential corresponding to $\sigma \in G$. For the convenience of the reader these notions are recalled in detail in section \ref{discussionofresults} below. The following result is due to Tian.
\begin{theorem}\label{tian97} (Tian \cite{tian97} section 8, pg. 34 (8.15)) 
\emph{Assume $Y$ is complete.  Let $h$ be a smooth Hermitian metric on $\mathscr{L}_{cm}^{-1}$ .  
 Then there is a continuous function $\Psi :Y \setminus \mathscr{D} \rightarrow (-\infty, \ c)$
 such that for all $y\in Y\setminus \mathscr{D}$ and all $\sigma \in G$ the following identity holds
 \begin{align}\label{singnrm}
\begin{split}
d(n+1)\nu_{\omega|_{{X}_y}}(\vp_\sigma)=   {\Psi (\sigma y)}+ \log\frac{||\sigma\cdot e_y ||_h^{2} }{||e_y ||_h^{2}} \ .
\end{split}
\end{align}
In particular for all $\sigma\in G$ there is an inequality  
\begin{align}\label{cminequality}
d(n+1)\nu_{\omega|_{{X}_y}}(\vp_\sigma)\leq \log\frac{||\sigma\cdot e_y ||_h^{2} }{||e_y ||_h^{2}}+c\ .
\end{align}}   
 \end{theorem}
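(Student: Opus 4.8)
The identity (\ref{singnrm}) is an "arithmetic/analytic" decomposition of the Mabuchi energy along Bergman metrics into a piece that lives on the base $Y$ and a piece that is the log of the norm-change of the CM line bundle. The strategy is to reduce the statement to the known energy formula for a single fibre (as in \cite{paul2011}, \cite{tian97}) and then to track how the various virtual-bundle metrics vary as $y$ moves in $Y$.

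\textbf{Step 1: the fibrewise energy formula.} First I would recall that for a fixed smooth $X\subset\cpn$ the Mabuchi energy evaluated at the Bergman potential $\vps$ admits an expression, due to Tian and Zhang, of the form
\begin{align*}
d(n+1)\,\nu_{\om|_X}(\vps)=\log\frac{\|\sigma\cdot \hat v\|^2}{\|\hat v\|^2}-\log\frac{\|\sigma\cdot v\|^2}{\|v\|^2}+(\text{bounded error}),
\end{align*}
where $v$ is the $X$-resultant (Chow form) and $\hat v$ the $X$-hyperdiscriminant, each regarded as a point in a suitable $\slnc$-representation equipped with its natural Hermitian norm; the bounded error comes from the Aubin--Yau functional and the comparison of the Fubini--Study volume form with $\om_{FS}^{n}$. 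This is the energy-asymptotics machinery already cited in the introduction; I would invoke it as a black box. The content of Theorem \ref{tian97} is that, \emph{as $y$ varies}, these two norm terms can be assembled into a single Hermitian metric $h$ on $\mathscr{L}_{cm}^{-1}$ together with a continuous correction $\Psi$.

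\textbf{Step 2: Knudsen--Mumford expansion and the CM line bundle.} The key algebro-geometric input is the Knudsen--Mumford expansion of $\det R\pi_*(L^{\otimes k})$ in powers of $k$, whose top two coefficients produce exactly the resultant and hyperdiscriminant line bundles $\mathscr{L}_R,\mathscr{L}_\Delta$ over $Y$; combining them with the multiplicities $(n+1)$, $d$, $\mu$ built into $\mathcal{E}_\pi$ gives $\mathscr{L}_{cm}=\det R\pi_*(\mathcal{E}_\pi)^{-1}$ up to a $G$-linearized line bundle that is topologically trivial on $Y$ (or trivialized after the Deligne-pairing bookkeeping). Over $Y\setminus\mathscr{D}$ one has canonical identifications of the fibres of $\mathscr{L}_R$ and $\mathscr{L}_\Delta$ at $y$ with the resultant line and the hyperdiscriminant line of $X_y$; these identifications carry the fibrewise Hermitian norms of Step 1, and I would check (this is the technical heart) that the resulting metric extends \emph{continuously} across $\mathscr{D}$. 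The function $\Psi(\cdot)$ is then defined by $\Psi(y):=d(n+1)\,\nu_{\om_y}(1)$ minus the constants produced in the comparison; its continuity on $Y\setminus\mathscr{D}$ is immediate from smoothness of $\om_{FS}|_{X_y}$ in $y$, and its upper bound by $c$ follows because $\nu_{\om_y}$ normalized to vanish at the base point is bounded above by the $J$-functional contribution, which is uniformly controlled by the background metrics on $\mathbb{X}$ and $Y$.

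\textbf{Step 3: equivariance and assembling (\ref{singnrm}).} With $h$ and $\Psi$ in hand, the identity (\ref{singnrm}) is obtained by applying Step 1 on the fibre $X_y$ and rewriting the two norm quotients: $G$-equivariance of $\mathscr{L}_{cm}$ means $\|\sigma\cdot e_y\|_h^2$ computed at $y$ equals the resultant/hyperdiscriminant norm quotient computed at $\sigma y$ on the fibre $X_{\sigma y}$, up to the $\Psi(\sigma y)$ shift; since $G$ acts on $\cpn$ fixing $\om_{FS}$ up to the Bergman potential $\vps$, the fibrewise energy at $\sigma y$ with form $\om_{FS}|_{X_{\sigma y}}$ matches the energy at $y$ with potential $\vp_\sigma$ via the cocycle property of $\nu$. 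Finally (\ref{cminequality}) is the trivial consequence of (\ref{singnrm}) together with $\Psi<c$ and $\log(\|e_y\|_h^2)\ge$ a finite constant (for fixed $y$). \textbf{The main obstacle} I anticipate is Step 2: verifying that the fibrewise resultant and hyperdiscriminant norms glue to a genuinely \emph{continuous} (not merely smooth-away-from-$\mathscr{D}$) Hermitian metric on $\mathscr{L}_{cm}^{-1}$, i.e. controlling the degeneration of the Chow form and the hyperdiscriminant as $X_y$ acquires singularities; this is where the "discriminants commute with smooth limit cycle formation" result advertised in the abstract, together with the properness of the relevant Hilbert/Chow morphisms, does the real work.
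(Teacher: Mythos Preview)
This theorem is not proved in the present paper: it is quoted from Tian's 1997 article (the attribution is explicit in the statement, ``Tian \cite{tian97} section 8, pg.\ 34 (8.15)''), and serves here only as background and motivation for the author's new approach via pairs $(\mathscr{L}_R,\mathscr{L}_\Delta)$. There is therefore no proof in this paper to compare your proposal against.

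That said, your sketch is anachronistic in a way worth flagging. You build the argument on the resultant/hyperdiscriminant energy formula (what this paper calls Theorem~A, from \cite{paul2011}), but that formula postdates Tian's theorem by over a decade; Tian's original argument in \cite{tian97} proceeds directly from the definition of $\mathscr{L}_{cm}$ as a determinant of cohomology and uses the Bott--Chern/Quillen-type metrics on $\det R\pi_*(\mathcal{E}_\pi)$, together with an explicit curvature computation that produces the Mabuchi integrand fibrewise. The function $\Psi$ in Tian's paper is defined concretely (see his Lemma 8.5 and Corollary 8.6, as the present paper notes) rather than abstractly as a leftover correction. Your Step~2 also conflates the CM line bundle with the pair $(\mathscr{L}_R,\mathscr{L}_\Delta)$; the present paper shows these have the same first Chern class (Proposition after Theorem~A), but they are genuinely different objects, and the point of this paper is precisely that replacing Tian's single $\mathscr{L}_{cm}$ by the pair removes the singular term $\Psi$.
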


\begin{corollary} (Tian \cite{tian97}) \emph{Assume that the Mabuchi energy of the fiber $X_y$ is bounded below. Then $y$ is CM semistable. In particular $y\in Y\setminus \mathscr{D}$ is CM-semistable whenever it admits a canonical metric in the class $\omega|_{{X}_y}$ .}
\end{corollary}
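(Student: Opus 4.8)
The plan is to read the Corollary off directly from inequality (\ref{cminequality}) of Theorem \ref{tian97}. Fix $y\in Y$ and a non-zero lift $e_y$ of $y$ to ${\mathscr{L}_{cm}^{-1}}|_y$, and suppose the Mabuchi energy of $X_y$ is bounded below on Bergman metrics, say $\nu_{\om|_{X_y}}(\vps)\geq -M$ for all $\sigma\in G$ and some $M\geq 0$. Substituting this lower bound into (\ref{cminequality}) would give, for every $\sigma\in G$,
\begin{align*}
\log\frac{\|\sigma\cdot e_y\|_h^{2}}{\|e_y\|_h^{2}}\ \geq\ -d(n+1)M-c\ ,
\end{align*}
and hence $\|\sigma\cdot e_y\|_h\geq \|e_y\|_h\exp\!\big(-\tfrac12(d(n+1)M+c)\big)=:\varepsilon>0$. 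Thus the whole orbit $G\cdot e_y$ would have $h$-norm bounded below by $\varepsilon$.

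Next I would promote this pointwise bound to the orbit closure. Since $Y$ is complete, $h$ is a globally defined smooth Hermitian metric on $\mathscr{L}_{cm}^{-1}$ and $z\mapsto\|z\|_h$ is continuous on its total space; moreover $\mathscr{D}$ is $G$-invariant, so $G\cdot y\subset Y\setminus\mathscr{D}$, which is where (\ref{cminequality}) is available. Any point of $\overline{G\cdot e_y}$ is a limit $\lim_{i}\sigma_i\cdot e_y$, so by continuity its $h$-norm is at least $\varepsilon>0$ and it cannot lie on the zero section. Therefore $\overline{G\cdot e_y}\cap\{\text{zero section of }\mathscr{L}_{cm}^{-1}\}=\emptyset$, which is exactly condition (\ref{cmstab97}); so $y$ is CM semistable, proving the first assertion.

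For the final sentence I would take $y\in Y\setminus\mathscr{D}$, so $X_y$ is smooth with very ample polarization $L|_{X_y}$, and assume $X_y$ carries a canonical (constant scalar curvature K\"ahler) metric in the class $\om|_{X_y}$. It is well known that the presence of such a metric forces $\nu_{\om|_{X_y}}$ to be bounded below on the entire space of K\"ahler potentials of that class, \emph{a fortiori} on the family $\{\vps:\sigma\in G\}$ of Bergman potentials; the first part then applies. The whole argument is formal once Theorem \ref{tian97} is granted --- there is no real obstacle --- and the only points requiring a word of care are the appeal to completeness of $Y$ in the limit argument (so that $\|\cdot\|_h$ is defined and continuous along $\overline{G\cdot e_y}$) and, for the last sentence, the cited lower boundedness of the Mabuchi energy at a canonical metric. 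I note that the continuous function $\Psi$ of Theorem \ref{tian97} may blow down to $-\infty$ along $\mathscr{D}$; this is harmless here, since only the one-sided estimate $\Psi<c$ enters (\ref{cminequality}).
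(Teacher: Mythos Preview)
Your argument is correct and is exactly the intended derivation: the paper states the corollary without proof, but it follows immediately from inequality (\ref{cminequality}) in the way you describe. The only steps are rearranging (\ref{cminequality}) to get a uniform positive lower bound on $\|\sigma\cdot e_y\|_h$ over $G$, then using continuity of $\|\cdot\|_h$ on the total space of $\mathscr{L}_{cm}^{-1}$ to pass to the orbit closure; your remarks about $\mathscr{D}$ being $G$-invariant and about where $\Psi$ is needed are accurate and address the only subtleties.
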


 In contrast to K-(semi)stability CM (semi)stability and the linearization $\mathscr{L}_{cm}$ have remained in the background.  There are, in the opinion of the author, three reasons for this.

The first reason is that $\mathscr{L}_{cm}$ seems to have no convincing positivity properties. Positivity is necessary in order to make contact with classical invariant theory. More precisely given a flat $G$ equivariant family $\mathbb{X}\ra Y$ of polarized manifolds and the corresponding linearization $\mathscr{L}_{cm}\in \mbox{Pic}^G(Y)$ we would like to say that $y\in Y^{ss}(\mathscr{L}_{cm})$ if and only if $0\notin\overline {G\cdot w(y)}$, where $w(y)$ is a point in the dual space of a suitable finite dimensional $G$ invariant base point free subspace\footnote{When $Y$ is complete, we can take $\mathbb{W}=H^0(Y\ ,\ \mathscr{L}_{cm})$ .} $\mathbb{W}$ of $H^0(Y\ ,\ \mathscr{L}_{cm})$ .  Unfortunately it seems that in the majority of examples $\mathscr{L}_{cm}$ has no sections at all and this is why semistability with respect to $\mathscr{L}_{cm}$ is defined by (\ref{cmstab97}) . In the same vein one would like a \emph{geometric interpretation} for the lift $e_y$. For example in \cite{tian94} Tian shows that for the universal family of hypersurfaces of fixed degree in a projective space $e_y$ is just a power of the polynomial corresponding to $y$.  Therefore in this special case the lift $e_y$  is essentially the Cayley-Chow form of $X_y$. We remark that in this case $\mathscr{L}_{cm}$ is ample.
   
The second reason is that even in those rare situations where $\mathscr{L}_{cm}$ is actually positive, semistability with respect to $\mathscr{L}_{cm}$ does {not coincide} with Mabuchi energy lower bounds along the Bergman metrics. This is due to the appearance of the so-called singular term $\Psi$ in (\ref{singnrm}) (see \cite{tian97} section 8 Lemma 8.5 and corollary 8.6 for the definition of $\Psi$).

The third reason is that CM stability seems to require the base of the family $\mathbb{X}\ra Y$ to be complete. In the majority of interesting examples completeness fails. 

Our explanation for these defects is simple but rather jarring:  CM stability should not have been formulated in terms of a linearization. In this paper we formulate an improved version of CM semistability in terms of a \emph{pair} of linearizations 
\begin{align*}
(\mathscr{L}_{R}\ , \ \mathscr{L}_{\Delta}) \in \mbox{Pic}^G(Y) \times \mbox{Pic}^G(Y)  \ .
\end{align*}

We will refer to this pair, by abuse of terminology, as the {CM-polarization} in recognition of Tian's early work.  The theory of embeddings of algebraic homogeneous spaces provides us with the correct definition of {semistability with respect to} $(\mathscr{L}_{R}\ , \ \mathscr{L}_{\Delta}) $. We say that our definition of  \emph{semistable pairs}\footnote{See Definitions \ref{dominate} and \ref{pair}. Our theory is not related to Hitchins' theory.} is the correct one because this definition {coincides} with the existence of a lower bound for the  Mabuchi energy restricted to the Bergman metrics. We prove that the pair $(\mathscr{L}_{R}\ , \ \mathscr{L}_{\Delta}) $ is {globally generated}. In this way the positivity sought after in the old theory is available, and perhaps better understood, in our new theory of pairs. Moreover, this positivity provides the indispensable connection with representation theory. Our new definition of CM semistability cannot be developed or even stated without this connection. The ideas required for the proof of global generation first arose in a breathtaking 1848 note of Arthur Cayley (see \cite{cay}) . More modern formulations can be found in ( \cite{detdiv}, \cite{fogarty}, \cite{gkz}, \cite{weyman}) .

 In our previous papers (see \cite{paul2011} , \cite{paul2009} ) we studied the Mabuchi energy, the $X$-\emph{resultant} $R(X)\in \mathbb{P}(\elam)$ , and the $X$-\emph{hyperdiscriminant} $\Delta(X)\in\mathbb{P}(\emu )$ \footnote{Definitions appear in section \ref{discussionofresults} below.}of a {fixed} projective manifold $X\ra\cpn$ . The vector spaces $\elam$ and $\emu$ are certain finite dimensional (irreducible) representations of $G$ with corresponding highest weights $\lambull$ and $\mubull$. These modules are described in detail below.
 
   In this paper we analyze, among other things, the relative situation. To begin consider a flat family $\mathbb{X} \xrightarrow{\pi} Y$ of polarized manifolds. Define maps $\Delta$ and $R$ as follows: 
 \begin{align}\label{rdmaps}
\begin{split}
& \Delta:Y\rightarrow{ }\mathbb{P}(\emu) \ , \ \Delta(y):= \Delta(X_y)  \ , \\
\ \\
& R:Y\rightarrow{ }\mathbb{P}(\elam) \ , \ R(y):= R(X_y)  \ .
\end{split}
\end{align}
  \begin{remark}
  \emph{When the family is $G$-equivariant the maps $R$ and $\Delta$ are also $G$-equivariant.}
  \end{remark} 
 The first main result of this paper is the following.
 \begin{theorem}\label{globgen} (Positivity of the CM polarization)  \emph{Let $\mathbb{X} \xrightarrow{\pi} Y$ be a $G$ equivariant family of polarized manifolds.  
There exist invertible sheaves $\mathscr{L}_{\Delta} , \mathscr{L}_{R} \in \mbox{Pic}^G(Y)$ such that:
\begin{align*}
& i)\ \mathscr{L}_{\Delta}\ \mbox{is globally generated} . \\
 \ \\
 & ii)\ \mbox{There exists a base point free $G$ invariant finite dimensional subspace}\\
 &\ \mathbb{E}\subset \Gamma(Y\ , \ \mathscr{L}_{\Delta}) \ \mbox{satisfying}\ \mathbb{E}\cong \emu^{\vee}\ . \\
 \ \\
& iii)\ \mbox{The associated morphism}\  \phi_{\mathscr{L}_{\Delta} ,\emu}:Y\ra \mathbb{P}(\emu)
\ \mbox{coincides with $\Delta$ . } \\
 \ \\
 & iv)\ \mathscr{L}_{R}\ \mbox{is globally generated} . \\
 \ \\
 & v)\ \mbox{There exists a base point free $G$ invariant finite dimensional subspace}\\
 &\ \mathbb{F}\subset \Gamma(Y\ , \ \mathscr{L}_{R}) \ \mbox{satisfying}\ \mathbb{F}\cong \elam^{\vee}\ . \\
 \ \\
& vi)\ \mbox{The associated morphism}\  \phi_{\mathscr{L}_{R} ,\elam}:Y\ra \mathbb{P}(\elam)
\ \mbox{coincides with $R$ . }\ \Box
 \end{align*}}
\end{theorem}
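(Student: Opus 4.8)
The plan is to realize both sheaves as pullbacks of the hyperplane bundle along the maps $\Delta$ and $R$ of \eqref{rdmaps}, so that (i)--(vi) become formal consequences of the universal property of the morphism-to-projective-space construction together with the irreducibility of $\emu$ and $\elam$. Explicitly, I would set
\begin{align*}
\mathscr{L}_{\Delta}:=\Delta^{*}\mathcal{O}_{\mathbb{P}(\emu)}(1)\ ,\qquad \mathscr{L}_{R}:=R^{*}\mathcal{O}_{\mathbb{P}(\elam)}(1)\ .
\end{align*}
Because $\mathcal{O}_{\mathbb{P}(\emu)}(1)$ carries its canonical $G$-linearization, for which $H^{0}(\mathbb{P}(\emu),\mathcal{O}(1))\cong\emu^{\vee}$ as $G$-modules, and $\Delta$ is $G$-equivariant, the pullback $\mathscr{L}_{\Delta}$ is naturally $G$-linearized and hence lies in $\mbox{Pic}^{G}(Y)$; likewise for $\mathscr{L}_{R}$. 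This construction makes no use of completeness of $Y$, which already addresses the third of the defects discussed above.

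The hard part --- indeed the only substantive point --- is to know that $\Delta$ and $R$ are genuine morphisms, regular on all of $Y$ rather than merely rational maps defined away from $\mathscr{D}$. For $R$ this is the classical fact that the Cayley--Chow form of the fibres of a flat family of $n$-cycles of degree $d$ varies algebraically: $\mathbb{X}\to Y$ induces a morphism from $Y$ to the Chow variety, and composition with the natural embedding of the Chow variety into $\mathbb{P}(\elam)$ produces the morphism $R$ (cf.\ \cite{gkz}, \cite{fogarty}). For $\Delta$ one needs the corresponding statement for the hyperdiscriminant, i.e.\ for the Chow form of the dual variety of $X_{y}\times\mathbb{P}^{n-1}$ in its Segre embedding; this is precisely the assertion that discriminants commute with smooth limit cycle formation. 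I would prove it by exploiting that, for the product $X_{y}\times\mathbb{P}^{n-1}$, the dual variety is always a hypersurface of a degree depending only on $n,d,N$ (a fact going back to Cayley \cite{cay}; see also \cite{gkz}, \cite{weyman}), and then running a specialization argument for the associated conormal/incidence cycle to see that the defining hyperdiscriminant of a smooth limit $X_{y_{0}}$ is the flat limit of the hyperdiscriminants of nearby fibres. Granting this, $\Delta\colon Y\to\mathbb{P}(\emu)$ and $R\colon Y\to\mathbb{P}(\elam)$ are $G$-equivariant morphisms; the rest of the argument is routine.

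Once $\Delta$ is a morphism, put $\mathbb{E}:=\Delta^{*}\bigl(H^{0}(\mathbb{P}(\emu),\mathcal{O}(1))\bigr)\subseteq\Gamma(Y,\mathscr{L}_{\Delta})$, a finite-dimensional $G$-submodule. Since $\mathcal{O}_{\mathbb{P}(\emu)}(1)$ is globally generated by $\emu^{\vee}$, its pullback $\mathscr{L}_{\Delta}$ is globally generated by $\mathbb{E}$, which proves (i) and shows that $\mathbb{E}$ is base point free. For (ii) it remains to check that $\Delta^{*}\colon\emu^{\vee}\to\Gamma(Y,\mathscr{L}_{\Delta})$ is injective, equivalently that $\Delta(Y)$ is linearly nondegenerate in $\mathbb{P}(\emu)$: the span of the affine cone over $\Delta(Y)$ is a nonzero $G$-submodule of $\emu$, hence --- $\emu$ being irreducible --- equals $\emu$, so $\ker\Delta^{*}=0$ and $\mathbb{E}\cong\emu^{\vee}$ as $G$-modules. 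For (iii): $\phi_{\mathscr{L}_{\Delta},\emu}$ is by definition the morphism $Y\to\mathbb{P}(\emu)$ determined by the base point free pair $(\mathscr{L}_{\Delta},\mathbb{E})$ via the identification $\mathbb{E}\cong\emu^{\vee}=\Gamma(\mathbb{P}(\emu),\mathcal{O}(1))$; since $\mathscr{L}_{\Delta}$ and this identification were obtained precisely by pulling $\mathcal{O}(1)$ and its global sections back along $\Delta$, the universal property of $\mathbb{P}(\emu)$ forces $\phi_{\mathscr{L}_{\Delta},\emu}=\Delta$. Items (iv), (v), (vi) follow word for word with $(\Delta,\emu,\mathbb{E})$ replaced by $(R,\elam,\mathbb{F})$, using the irreducibility of $\elam$. $\Box$
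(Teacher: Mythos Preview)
Your reduction is correct: once $\Delta$ and $R$ are known to be regular $G$-morphisms, defining $\mathscr{L}_{\Delta}:=\Delta^{*}\mathcal{O}(1)$, $\mathscr{L}_{R}:=R^{*}\mathcal{O}(1)$ and running the pullback/irreducibility argument gives (i)--(vi) immediately. For $R$ your appeal to the Hilbert--Chow morphism is legitimate; the paper itself remarks that (iv)--(vi) are already in \cite{detdiv}, \cite{fogarty}. The linear nondegeneracy step via irreducibility of $\emu,\elam$ is also fine.

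The gap is in the regularity of $\Delta$. You write that you would ``run a specialization argument for the associated conormal/incidence cycle'' to see that hyperdiscriminants of nearby fibres specialize to the hyperdiscriminant of a smooth limit. This is exactly the content of the theorem, and your sketch does not supply it: the relative conormal variety $\mathcal{C}\subset\mathbb{X}_{(n-1)}\times_{Y}(Y\times\mathbb{P}^{\vee})$ is indeed flat over $Y$, but its \emph{image} $\mathcal{Z}\subset Y\times\mathbb{P}^{\vee}$ is not obviously a relative Cartier divisor --- constancy of the dual degree is necessary but not sufficient, since images of flat families need not be flat. Establishing that $\mathcal{Z}$ is Cartier over $Y$ (equivalently, that $y\mapsto\Delta(X_y)$ extends to a morphism) is precisely what requires work, and in the paper it is stated as a \emph{corollary} of Theorem~\ref{globgen}, not an input.

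The paper's route is quite different and more constructive. It defines $\mathscr{L}_{R}(m)$ and $\mathscr{L}_{\Delta}(m)$ intrinsically as alternating tensor products of determinants of direct images $\det\pi_{*}(\bigwedge^{j}\mathcal{E}^{\vee}\otimes\mathcal{O}(m))$, where $\mathcal{E}$ is $\bigoplus^{n+1}\mathcal{O}_{\mathbb{X}}(1)$ (for $R$) or the relative jet bundle $J_{1}(\mathbb{X}_{(n-1)}/Y)$ (for $\Delta$). One then forms the Cayley--Koszul complex $K^{\bullet}(E)(m)$ on $\mathbb{X}\times\mathbb{G}$ (resp.\ $\mathbb{X}\times\mathbb{P}^{\vee}$), pushes it down to $Y\times\mathbb{G}$, and shows via a torsion/determinant construction (Lemma~\ref{torsion}) that $\det p_{*}K^{\bullet}(E)(m)$ is trivialized off the incidence locus $\mathcal{Z}$ by a canonical section $S$ whose fibrewise restriction is the resultant (resp.\ discriminant) of $X_y$, with $Z$-adic order one by Proposition~\ref{ord1}. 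This simultaneously produces the sheaf, the section, and the Cartier structure on $\mathcal{Z}$, from which the morphism and global generation follow via the mechanism (\ref{morphism})--(\ref{picture}). What this buys, beyond the bare statement, is an explicit determinantal formula for $\mathscr{L}_{R},\mathscr{L}_{\Delta}$ in terms of the family, which is what makes the Chern class comparison with Tian's $\mathscr{L}_{cm}$ possible.
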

\begin{remark}
\emph{In most applications the base $Y$ is quasi projective. In particular, $Y$ is usually not complete.} 
\end{remark}

\begin{corollary}\emph{$R$ and $\Delta$ are $G$-equivariant \emph{\textbf{regular}} maps and there are isomorphisms of invertible sheaves on $Y$}
\begin{align*}
\mathscr{L}_{R}\cong R^*\mathcal{O}_{\mathbb{P}(\elam)}(1) \ , \ \mathscr{L}_{\Delta}\cong \Delta^*\mathcal{O}_{\mathbb{P}(\emu)}(1) \ .\ \Box
\end{align*}
\end{corollary}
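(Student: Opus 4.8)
The plan is to deduce the corollary formally from Theorem \ref{globgen}, treating the pairs $(\mathscr{L}_{\Delta},\Delta)$ and $(\mathscr{L}_{R},R)$ in parallel. First I would recall the standard dictionary between base point free linear systems and morphisms to projective space (Hartshorne, \emph{Algebraic Geometry}, II.7.1): if $\mathscr{L}$ is an invertible sheaf on $Y$ and $V\subset\Gamma(Y,\mathscr{L})$ is a finite dimensional subspace whose sections generate $\mathscr{L}$ at every point, then the surjection $V\otimes\mathcal{O}_Y\twoheadrightarrow\mathscr{L}$ is precisely the datum of a morphism of schemes $\phi_{\mathscr{L},V}\colon Y\ra\mathbb{P}(V^{\vee})$, and this morphism satisfies $\phi_{\mathscr{L},V}^{*}\mathcal{O}_{\mathbb{P}(V^{\vee})}(1)\cong\mathscr{L}$ with the induced map on global sections $\Gamma(\mathbb{P}(V^{\vee}),\mathcal{O}(1))\ra\Gamma(Y,\mathscr{L})$ equal to the inclusion $V\hookrightarrow\Gamma(Y,\mathscr{L})$. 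Applying this with $\mathscr{L}=\mathscr{L}_{\Delta}$ and $V=\mathbb{E}$ — the hypotheses hold by Theorem \ref{globgen} $i)$ and $ii)$, and the isomorphism $\mathbb{E}\cong\emu^{\vee}$ identifies $\mathbb{P}(\mathbb{E}^{\vee})$ with $\mathbb{P}(\emu)$ — produces a genuine morphism $\phi_{\mathscr{L}_{\Delta},\emu}\colon Y\ra\mathbb{P}(\emu)$ together with a canonical isomorphism $\phi_{\mathscr{L}_{\Delta},\emu}^{*}\mathcal{O}_{\mathbb{P}(\emu)}(1)\cong\mathscr{L}_{\Delta}$; the same applied with $(\mathscr{L}_{R},\mathbb{F})$ and parts $iv)$, $v)$ gives $\phi_{\mathscr{L}_{R},\elam}\colon Y\ra\mathbb{P}(\elam)$ with $\phi_{\mathscr{L}_{R},\elam}^{*}\mathcal{O}_{\mathbb{P}(\elam)}(1)\cong\mathscr{L}_{R}$.

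Next I would invoke Theorem \ref{globgen} $iii)$ and $vi)$, which identify the morphisms just constructed with the maps $\Delta$ and $R$ of \eqref{rdmaps}. Because $\phi_{\mathscr{L}_{\Delta},\emu}$ and $\phi_{\mathscr{L}_{R},\elam}$ are by construction morphisms of schemes, this upgrades the a priori merely set-theoretic assignments $y\mapsto\Delta(X_y)$ and $y\mapsto R(X_y)$ to regular maps, and substituting into the isomorphisms of the previous paragraph yields
\begin{align*}
\mathscr{L}_{\Delta}\cong\phi_{\mathscr{L}_{\Delta},\emu}^{*}\mathcal{O}_{\mathbb{P}(\emu)}(1)=\Delta^{*}\mathcal{O}_{\mathbb{P}(\emu)}(1)\,,\qquad\mathscr{L}_{R}\cong R^{*}\mathcal{O}_{\mathbb{P}(\elam)}(1)\,.
\end{align*}
For the $G$-equivariance: the family $\mathbb{X}\ra Y$ is $G$-equivariant by hypothesis, so each $\sigma\in G$ carries the embedded fibre $X_{y}\ra\cpn$ isomorphically onto $X_{\sigma\cdot y}\ra\cpn$ intertwining the $G$-actions; since the formation of the $X$-resultant and the $X$-hyperdiscriminant is functorial for such ambient isomorphisms, one obtains $R(\sigma\cdot y)=\sigma\cdot R(y)$ and $\Delta(\sigma\cdot y)=\sigma\cdot\Delta(y)$. (This is already recorded in the Remark following \eqref{rdmaps}.)

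I expect no serious obstacle: all of the content has been packaged into Theorem \ref{globgen}, and the corollary merely unwinds the standard correspondence between a globally generated invertible sheaf equipped with a distinguished space of sections and the associated morphism to projective space, combined with the identification of that morphism with $\Delta$ (resp. $R$) supplied by parts $iii)$ and $vi)$. If anything requires care it is bookkeeping of the projectivization convention: one must use the normalization under which a base point free $V\subset\Gamma(Y,\mathscr{L})$ induces a map $Y\ra\mathbb{P}(V^{\vee})$, so that with $\mathbb{E}\cong\emu^{\vee}$ and $\mathbb{F}\cong\elam^{\vee}$ the targets come out as $\mathbb{P}(\emu)$ and $\mathbb{P}(\elam)$ and the tautological $\mathcal{O}(1)$ pulls back to $\mathscr{L}_{\Delta}$ and $\mathscr{L}_{R}$ rather than to their duals.
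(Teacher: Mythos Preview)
Your proposal is correct and takes essentially the same approach as the paper: the corollary is stated with a $\Box$ and no separate argument, as it is intended to follow immediately from Theorem \ref{globgen} via the standard correspondence between a base-point-free linear system $(\mathscr{L},V)$ and its associated morphism $\phi_{\mathscr{L},V}$ satisfying $\phi_{\mathscr{L},V}^*\mathcal{O}(1)\cong\mathscr{L}$, together with the identifications in parts $iii)$ and $vi)$. Your unpacking of this, including the bookkeeping with $\mathbb{E}\cong\emu^{\vee}$, $\mathbb{F}\cong\elam^{\vee}$ and the equivariance remark, is exactly what the paper leaves implicit.
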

The corollary implies that the lifts of $y\in Y$ to $\mathscr{L}^{-1}_{R}$ and $\mathscr{L}^{-1}_{\Delta}$ respectively admit {geometric} interpretations
\begin{align*}
{\mathscr{L}^{-1}_{R}}|_y=\mathbb{C}R(X_y) \ ,\ {\mathscr{L}^{-1}_{\Delta}}|_y=\mathbb{C}\Delta(X_y)\ .
\end{align*}

\begin{remark}
\emph{ The map $R:Y\rightarrow{ }\mathbb{P}(\elam)$ is essentially the Hilbert-Chow morphism.  A study of this map has been carried out in \cite{detdiv} and \cite{fogarty}. In particular the reader should be aware that parts $iv)-vi)$ of Theorem \ref{globgen} are known. The proof provided here is new and, we believe, much simpler.}
\end{remark}

 Thanks to Theorem \ref{globgen} given a $G$-equivariant family $\mathbb{X}\xrightarrow{\pi}Y$  we may define the following subset of $Y$
 \begin{align} 
 Y^{(n)ss}(\mathscr{L}_{R}\ , \ \mathscr{L}_{\Delta}):=  \mbox{(numerically) semistable locus in $Y$ .}
 \end{align} 
  See Definition \ref{defnsemistable} below for the definitions of the (numerical) semistable locus.
 
 The connection between these loci and K-energy bounds is brought out in the following theorem, in order to state it we first fix some notation.
 
 For any vector space $\mathbb{V}$ and any $v\in \mathbb{V}\setminus\{0\}$ we  let $[v]\in\mathbb{P}(\mathbb{V})$ denote the line through $v$. If $\mathbb{V}$ and $\mathbb{W}$ are $G$ modules we define the projective orbits :     
\begin{align}
\mathcal{O}_{vw}:=G\cdot [(v,w)]  \subset \mathbb{P}(\mathbb{V}\oplus\mathbb{W}) \ , \ \mathcal{O}_{v}:=G\cdot [v]  \subset \mathbb{P}(\mathbb{V}\oplus\{0\})\ .
\end{align}
We let $\overline{\mathcal{O}}_{vw}\ ,\  \overline{\mathcal{O}}_{v}$ denote the Zariski closures of these orbits.

 Our second new result is as follows.
 \begin{theorem}\label{uniform1} 
 \emph{  Let $\mathbb{X}\xrightarrow{\pi}Y$ be a flat $G$-equivariant family of smooth subvarieties of $\cpn$ with Hilbert polynomial $P$ } 
 \begin{align*}
 P(T)=c_n\binom{T}{n}+c_{n-1}\binom{T}{n-1}+O(T^{n-2})\ .
 \end{align*}
\begin{enumerate}
\item \emph{ The Mabuchi energy of $(X_y,\ {\om_{FS}}|_{X_y})$ is bounded below on $\mathcal{B}$ if and only if \newline $y\in Y^{ss}(\mathscr{L}_{R}\ , \ \mathscr{L}_{\Delta})$ .} \\
 \ \\
 \item \emph{There is a constant $M$ which depends only on $c_n , c_{n-1}$ and $h$  such that for every \newline $y\in Y^{ss}(\mathscr{L}_{R}\ , \ \mathscr{L}_{\Delta})$ the following inequality holds}
  \begin{align}\label{M}
&|(n+1) \inf_{\sigma\in \mathcal{B}}\nu_{{\om_{FS}}|_{X_y}}(\sigma)- \frac{1}{d^2}
\log\tan^2 d _{g} (\overline{\mathcal{O}}_{R(y)  \Delta(y)},\overline{\mathcal{O}}_{R(y)} )| \leq M \ .  
 \end{align}
 \ \\
\item \emph{ The Mabuchi energy of $(X_y,\ {\om_{FS}}|_{X_y})$ is bounded below on all degenerations in $G$  if and only if } $y\in Y^{nss}(\mathscr{L}_{R}\ , \ \mathscr{L}_{\Delta})$ . \\  
\ \\
\item \emph{ The locus $Y^{nss}(\mathscr{L}_{R}\ , \ \mathscr{L}_{\Delta})$ is {constructible}}. 
\end{enumerate}
\end{theorem}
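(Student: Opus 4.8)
\medskip
\noindent\textbf{Proof plan for Theorem \ref{uniform1}.} The whole statement is the ``family version'' of the fixed--fibre results of \cite{paul2011} (see also \cite{paul2009}), and the device that makes the passage possible is the regularity of $R$ and $\Delta$ supplied by Theorem \ref{globgen} and its Corollary: since $\mathscr{L}_{R}\cong R^{*}\mathcal{O}_{\mathbb{P}(\elam)}(1)$ and $\mathscr{L}_{\Delta}\cong\Delta^{*}\mathcal{O}_{\mathbb{P}(\emu)}(1)$ we have $\mathscr{L}_{R}^{-1}|_{y}=\mathbb{C}\,R(X_y)$ and $\mathscr{L}_{\Delta}^{-1}|_{y}=\mathbb{C}\,\Delta(X_y)$, so the fibre at $y$ of the pair $(\mathscr{L}_{R},\mathscr{L}_{\Delta})$ \emph{is} the pair $(R(X_y),\Delta(X_y))\in\mathbb{P}(\elam)\times\mathbb{P}(\emu)$. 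The analytic input for a single fibre is the master formula of \cite{paul2011}: there exist $G$--invariant Hermitian norms on $\elam$ and $\emu$ and a constant $M$, depending only on $c_n,c_{n-1}$ and on the background metric $h$, such that for all $y$ with $X_y$ smooth and all $\sigma\in G$
\begin{align*}
\Big|\,(n+1)\,\nu_{{\om_{FS}}|_{X_y}}(\vps)-\frac{1}{d^{2}}\log\frac{\|\sigma\cdot\Delta(X_y)\|^{2}}{\|\sigma\cdot R(X_y)\|^{2}}\,\Big|\ \leq\ M ,\qquad d=c_n .
\end{align*}
Geometrically the logarithm on the right is exactly $\log\tan^{2}$ of the Fubini--Study distance from the moving point $[(\sigma\cdot R(X_y),\sigma\cdot\Delta(X_y))]$ to the linear subspace $\mathbb{P}(\elam\oplus\{0\})$, which contains $\overline{\mathcal{O}}_{R(X_y)}$.

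\smallskip
\noindent\emph{Parts (1) and (2).} Taking the infimum over $\sigma\in\mathcal{B}$, and passing to the closure $\overline{\mathcal{O}}_{R(y)\Delta(y)}$ (along a minimizing sequence the nearest point of $\mathbb{P}(\elam\oplus\{0\})$ already lies in $\mathcal{O}_{R(y)}$), the right side of the master formula becomes $\frac{1}{d^{2}}\log\tan^{2}d_{g}(\overline{\mathcal{O}}_{R(y)\Delta(y)},\overline{\mathcal{O}}_{R(y)})$; this is precisely (\ref{M}), proving (2). For (1): the left side is bounded below on $\mathcal{B}$ iff the displayed logarithm is bounded below along $G$, iff $\|\sigma\cdot R(X_y)\|$ is \emph{dominated} by $\|\sigma\cdot\Delta(X_y)\|$ in the sense of Definition \ref{dominate}, iff the pair $(R(X_y),\Delta(X_y))$ is semistable in the sense of Definition \ref{pair}, iff $y\in Y^{ss}(\mathscr{L}_{R},\mathscr{L}_{\Delta})$ (Definition \ref{defnsemistable}); the middle equivalences are Problem 1) of the introduction applied to the equivariant maps $R$ and $\Delta$. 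The constant $M$ is uniform over $y$ because the representations $\elam,\emu$, the integer $d$, and every combinatorial or metric quantity entering the error term depend only on the two leading Hilbert coefficients $c_n,c_{n-1}$ (fixed along the family) and on $h$.

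\smallskip
\noindent\emph{Parts (3) and (4).} Restricting the master formula to an algebraic one--parameter subgroup $\lambda(t)\subset G$, the energy is bounded below along $\lambda$ iff a numerical inequality between the $\lambda$--weight of $R(X_y)$ and the $\lambda$--weight of $\Delta(X_y)$ holds; imposing this for every $\lambda$ and rewriting it in the weight--polytope language of Section \ref{generaltheory} is exactly the condition defining $Y^{nss}(\mathscr{L}_{R},\mathscr{L}_{\Delta})$, which gives (3). (That (1) and (3) in fact agree --- boundedness on all of $\mathcal{B}$ being detected by algebraic degenerations --- is the pair analogue of the equivalences (\ref{coreproblem}) and follows from the Hilbert--Mumford criterion for pairs together with the reduction of Mabuchi energy lower bounds to analytic, hence algebraic, curves.) Finally $Y^{nss}(\mathscr{L}_{R},\mathscr{L}_{\Delta})=(R,\Delta)^{-1}(Z)$, where $Z\subset\mathbb{P}(\elam)\times\mathbb{P}(\emu)$ is the numerically semistable locus of the pair in the product; by the general theory $Z$ is a finite union of locally closed subschemes, and since $(R,\Delta)$ is a morphism its preimage $Y^{nss}$ is again such a finite union, i.e.\ constructible, proving (4).

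\smallskip
\noindent\emph{Where the difficulty lies.} The technical heart is establishing the master formula with its error term controlled \emph{uniformly in $y$} by $c_n,c_{n-1},h$: this rests on the full asymptotic expansion of the Mabuchi energy along Bergman metrics in terms of the $X$--resultant and $X$--hyperdiscriminant from \cite{paul2011}, plus a bookkeeping check that no implied constant sees more of the fibre than its top two Hilbert coefficients. The other genuinely non--formal point --- already present in Tian's original problem (\ref{coreproblem}) --- is the coincidence of the ``global'' semistability of part (1) with the ``numerical'' semistability of part (3); this is exactly where the Hilbert--Mumford machinery for pairs developed in Section \ref{generaltheory}, and the passage from lower bounds on $\mathcal{B}$ to lower bounds on curves, become indispensable.
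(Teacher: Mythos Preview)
Your overall route is the paper's own: Theorem~A from \cite{paul2011} combined with Corollary~\ref{infi} gives parts~(1) and~(2); the restriction to one-parameter subgroups via Proposition~\ref{fdkenergy} gives part~(3); and constructibility is Proposition~\ref{constructible}. So far this is correct and matches the paper.

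However, your parenthetical in the discussion of part~(3), and again your final paragraph, assert something the paper explicitly does \emph{not} prove: that the conditions in~(1) and~(3) agree, i.e.\ that semistability and numerical semistability of the pair coincide. This is precisely Conjecture~\ref{numericalcriterion}, left open in the paper. Only one direction is available (Proposition~\ref{conj2}: semistable $\Rightarrow$ numerically semistable). Your appeal to a ``Hilbert--Mumford criterion for pairs'' is an appeal to the conjecture itself; and the ``reduction to analytic, hence algebraic, curves'' conflates Theorem~\ref{weakII} with something stronger --- the remark following Theorem~\ref{weakII} says explicitly that one \emph{cannot} at present deduce the existence of a destabilizing one-parameter subgroup from the existence of a destabilizing analytic arc $\gamma(\alpha)=\lambda(\alpha)\sigma(\alpha)$. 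None of this is needed for the theorem as stated, since the four parts are separate claims with $Y^{ss}$ and $Y^{nss}$ kept distinct, but you should strike those remarks.

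Two minor points. First, Theorem~A is an \emph{equality} for certain continuous norms on $\elam,\emu$; the constant $M$ in~(\ref{M}) enters when one passes from those norms to the Hermitian norm induced by $h$ (this is the ``discussion after remark~5'' in \cite{paul2011} that the paper invokes), so your displayed ``master formula with built-in $M$'' is the composite of two steps rather than a single citation. Second, for part~(4) you reduce to constructibility of the numerically semistable locus $Z$ inside $\mathbb{P}(\elam)\times\mathbb{P}(\emu)$ and then pull back along $(R,\Delta)$; that is fine, but the constructibility of $Z$ is not ``general theory'' --- it requires exactly the argument of Proposition~\ref{constructible} (pull the weight-space conditions back along the action map $\tau:G\times Y\to Y$ and use Chevalley), applied with $Y=\mathbb{P}(\elam)\times\mathbb{P}(\emu)$. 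The paper simply applies that proposition directly to the given $Y$.
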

 In the statement of Theorem \ref{uniform1} we let $h$ denote a background positive definite Hermitian form on $\cn$, $\mathcal{B}$ is the set of Bergman metrics associated to $G$ , and  $d_g $ denotes the distance in the Riemannian metric induced by $h$  on the projective space $\mathbb{P}(\elam\oplus\emu)$ . The polynomials $R(X_y)$ and $\Delta(X_y)$ have been normalized to unit length. The reader is referred to section 2 for an explanation of undefined terms.
 \begin{remark}\emph{
 Part $(4)$ of  Theorem \ref{uniform1} says that the locus of points $y\in Y$ whose corresponding fibers have Mabuchi energy lower bounds along all degenerations in $G$ is cut out by a finite number of  polynomial equalities and non-equalities. Moreover the author expects that the semistable and numerically semistable loci coincide (see Conjecture \ref{numericalcriterion}) .}
 \end{remark}

  In his discussion of the definition of (semi)stability Mumford ( see \cite{git} pg.194 ) remarks   that  `` (these notions) \emph{are interesting topological properties to study in any linear representation} (of $G$) ''.  In this article semistability is developed entirely in the context of complex reductive linear algebraic groups and pairs
  \begin{align*}
  (v\in\mathbb{V}\setminus\{0\} \ ; \ w\in \mathbb{W}\setminus\{0\} )
  \end{align*}
  of finite dimensional rational representations of such groups (see Definition \ref{pair}) . A semi-stable pair is a special case of the relation of {dominance} in the theory of equivariant embeddings of an algebraic homogeneous space $\mathcal{O}$ .  We provide a ``polyhedral combinatorial'' necessary condition for semistability which we call \emph{numerical} semistability (see Definitions \ref{numss} and \ref{numerical}) , the author believes this to be a sufficient condition as well (see Property (P) and Conjecture \ref{numericalcriterion} in section 3 ).  
  
  Numerical semistability is closely related to the next result of this paper. This theorem grew out of the author's many conversations with Joel Robbin.

Let $D_{\delta} $ denote the $\delta$ disk in $\mathbb{C}$.
 
    \begin{theorem}\label{weakII} (Analytic Numerical Criterion ) \emph{Let $X^n \hookrightarrow \cpn$ be a smooth, linearly normal, complex projective variety of degree $d \geq 2$. Assume that
\begin{align}
\inf_{\sigma\in \mathcal{B}}\nu_{\om}(\vps)=-\infty \ .
\end{align}
Then there exists a positive number $\delta$, a holomorphic mapping $\sigma :D_{\delta}\ra G $, and an algebraic one parameter subgroup $\lambda$ of $G$ such that
\begin{align}
\lim_{\alpha\ra 0}\nu_{\om}(\varphi_{\gamma(\alpha)})= -\infty \ \qquad \gamma(\alpha) := \lambda(\alpha)\cdot \sigma(\alpha) \qquad \Box \ .
\end{align}}
\end{theorem}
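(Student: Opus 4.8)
\emph{Proposed proof.}
The plan is to convert the analytic hypothesis into the statement that the Zariski closure of a \emph{single} $G$-orbit in $\mathbb{P}(\elam\oplus\emu)$ meets $\overline{\mathcal{O}}_{R(X)}$, to select an \emph{algebraic} curve through such an intersection point, to lift that curve to $G$, and finally to put the lift into Cartan normal form by invoking Smith normal form over the discrete valuation ring of convergent power series. For the first step, recall the pointwise Mabuchi energy asymptotics of \cite{paul2011} (the fixed--variety case of Theorem \ref{uniform1}): there is a constant $M'$ such that
\[
\big|(n+1)\nu_{\om}(\vps)-\tfrac{1}{d^{2}}\log\tan^{2}d_{g}\big(\sigma\cdot[(R(X),\Delta(X))],\ \overline{\mathcal{O}}_{R(X)}\big)\big|\le M'\qquad(\sigma\in G),
\]
the hypotheses $d\ge 2$ and linear normality being exactly what make this estimate available and ensure $\Delta(X)\ne 0$. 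Taking the infimum over $\sigma$ recovers the bound of Theorem \ref{uniform1}(2); in particular $\inf_{\mathcal{B}}\nu_{\om}=-\infty$ forces $d_{g}(\overline{\mathcal{O}}_{R(X)\Delta(X)},\overline{\mathcal{O}}_{R(X)})=0$, and since these are compact subvarieties of $\mathbb{P}(\elam\oplus\emu)$ we may fix a point $[p]\in\overline{\mathcal{O}}_{R(X)\Delta(X)}\cap\overline{\mathcal{O}}_{R(X)}$. (Equivalently $(R(X);\Delta(X))$ fails to be a semistable pair in the sense of Definition \ref{pair}; the rational projection $\mathbb{P}(\elam\oplus\emu)\dashrightarrow\mathbb{P}(\elam)$ is $G$-equivariant and regular off $\mathbb{P}(\emu)$, carries $\mathcal{O}_{R(X)\Delta(X)}$ onto $\mathcal{O}_{R(X)}$, and a curve-selection argument identifies $\overline{\mathcal{O}}_{R(X)\Delta(X)}\cap\mathbb{P}(\elam)$ with $\overline{\mathcal{O}}_{R(X)\Delta(X)}\cap\overline{\mathcal{O}}_{R(X)}$, which is therefore nonempty.)

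For the second step, since $\mathcal{O}_{R(X)\Delta(X)}$ is Zariski-open and dense in $\overline{\mathcal{O}}_{R(X)\Delta(X)}$ and $[p]$ lies in its closure, slicing by generic hyperplanes through $[p]$ and normalizing produces a smooth projective curve $\bar C$, a point $q_{0}\in\bar C$, and a non-constant morphism $\bar c:\bar C\ra\overline{\mathcal{O}}_{R(X)\Delta(X)}$ with $\bar c(q_{0})=[p]$ and $\bar c$ mapping a dense open set into $\mathcal{O}_{R(X)\Delta(X)}\cong G/H$, where $H$ is the stabilizer of $[(R(X),\Delta(X))]$. The morphism $G\ra G/H$ is faithfully flat with smooth fibres, so the generic point of $\bar C$, viewed as a $\mathbb{C}(\bar C)$-point of $G/H$, lifts to a $G$-point over a finite extension $\mathbb{C}(\bar C')$; replacing $\bar C$ by the corresponding finite cover $\pi_{C}:\bar C'\ra\bar C$ we obtain a rational lift $\bar c':\bar C'\dashrightarrow G$. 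Choosing a preimage $q_{0}'$ of $q_{0}$ and a local coordinate $\alpha$ at $q_{0}'$, the germ $\bar c'$ is regular on a punctured disk $D_{\delta}^{*}$ and, \emph{because it is algebraic, its matrix entries in the standard representation of $G=\slnc$ are meromorphic at $\alpha=0$} — this is the essential gain over a naive analytic lift, which could acquire an essential singularity there. Denote this germ by $\tilde g:D_{\delta}^{*}\ra G$; by construction $\tilde g(\alpha)\cdot[(R(X),\Delta(X))]=\bar c(\pi_{C}(\alpha))\ra[p]$ in $\overline{\mathcal{O}}_{R(X)\Delta(X)}$ as $\alpha\ra 0$.

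For the third step, view $\tilde g(\alpha)$ as an element of $SL(N+1,\mathrm{Frac}(R))$ with $R=\mathbb{C}\{\alpha\}$ the discrete valuation ring of convergent power series. Smith (i.e. Cartan) normal form over $R$ gives $\tilde g(\alpha)=A(\alpha)D(\alpha)B(\alpha)$ with $A,B\in GL(N+1,R)$ and $D(\alpha)=\mathrm{diag}(\alpha^{a_{0}},\dots,\alpha^{a_{N}})$; since $\det A,\det B$ are units of $R$ and $\det\tilde g=1$ we get $\sum_{i}a_{i}=0$, so $\lambda:=D$ is a genuine algebraic one-parameter subgroup of $G$. Absorbing a diagonal unit (which commutes with $D$) into $A$ and $B$ we may take $A,B\in SL(N+1,R)$, so $A,B:D_{\delta}\ra G$ are holomorphic. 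Put $\sigma:=B$ and $\gamma(\alpha):=\lambda(\alpha)\sigma(\alpha)=A(\alpha)^{-1}\tilde g(\alpha)$; then $\sigma:D_{\delta}\ra G$ is holomorphic, $\lambda$ is an algebraic one-parameter subgroup of $G$, and
\[
\lim_{\alpha\ra 0}\gamma(\alpha)\cdot[(R(X),\Delta(X))]=\lim_{\alpha\ra 0}A(\alpha)^{-1}\cdot\bar c(\pi_{C}(\alpha))=A(0)^{-1}\cdot[p].
\]
Because $A(0)\in G$ and $\overline{\mathcal{O}}_{R(X)}$ is $G$-invariant, this limit again lies in $\overline{\mathcal{O}}_{R(X)}$, hence $d_{g}(\gamma(\alpha)\cdot[(R(X),\Delta(X))],\overline{\mathcal{O}}_{R(X)})\ra 0$, the tangent of this distance tends to $0$, and the estimate of the first step forces $\nu_{\om}(\varphi_{\gamma(\alpha)})\ra-\infty$, which is the assertion.

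The hard part is the second step: one must carry out both the curve selection and the lifting to $G$ \emph{algebraically}, so that the curve-germ $\tilde g$ into $G$ is meromorphic, not merely holomorphic, at $\alpha=0$; only then is the DVR (Smith/Cartan) normal form of the third step available and the splitting $\gamma=\lambda\sigma$ into an algebraic degeneration and a convergent correction forced. The finite base change $\bar C'\ra\bar C$ — needed to lift through $G\ra G/H$ and to absorb the monodromy of $H$ around $q_{0}$ — is precisely the origin of the implicit reparametrization $\alpha\mapsto\alpha^{k}$ buried in the final choice of $\delta$ and local coordinate.
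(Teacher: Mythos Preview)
Your proposal is correct and follows essentially the same route as the paper. The paper's proof is a single line: ``Theorem \ref{weakII} follows from Theorem A and Proposition \ref{curveselection},'' and Proposition \ref{curveselection} in turn is deduced from Corollary \ref{infi} (the identity $\inf_{\sigma}\nu_{vw}(\sigma)=\log\tan^{2}d_{g}(\overline{\mathcal{O}}_{vw},\overline{\mathcal{O}}_{v})$) together with Proposition \ref{valuative} (Mumford's valuative criterion plus the Iwahori decomposition $\gamma(t)=l(t)\lambda(t)\sigma(t)$ over the local ring). Your three steps are exactly this chain, unpacked: Theorem A/Lemma \ref{distance} for step one, the boundary-point/curve-selection part of Proposition \ref{valuative} for step two, and the Iwahori/Smith normal form for step three; your absorption of the left factor $A(\alpha)^{-1}$ is the paper's implicit use of the $l(t)$ factor, which is harmless since $l(0)\in G$ and $\overline{\mathcal{O}}_{R}$ is $G$-invariant.

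One genuine improvement in your write-up is worth flagging: the paper states Proposition \ref{valuative} over $\mathbb{C}[[t]]$ and $\mathbb{C}((t))$ but then asserts in Proposition \ref{curveselection} (and hence in Theorem \ref{weakII}) a \emph{holomorphic} map $\sigma:D_{\delta}\ra G$. You close this gap by insisting that the curve selection and lift be algebraic, so that the germ $\tilde g$ is meromorphic (not merely formal) at $\alpha=0$ and Smith normal form over the DVR $\mathbb{C}\{\alpha\}$ of \emph{convergent} series applies. That is the right thing to say; the paper relies on the reader supplying it. Your explicit finite base change $\bar C'\ra\bar C$ to lift through $G\ra G/H$ is likewise a detail the paper suppresses inside its citation of \cite{git}.
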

 
\begin{remark}\emph{The limitation of Theorem \ref{weakII} is that we cannot, at the moment, deduce the existence of a \emph{one parameter subgroup} along which the Mabuchi energy degenerates to $-\infty$. A stronger version of this result can be obtained provided we restrict to algebraic tori in $G$. In this situation one can always approach the boundary with a one parameter subgroup. See Lemma \ref{p4tori} below ( see also Theorem E from \cite{paul2011} ) .}
\end{remark}
  
  Our new versions of  CM/K-semistability bring our work into contact with many subjects in mainstream algebraic geometry: compactifications of (locally) symmetric spaces, toric varieties, complete symmetric varieties, wonderful compactifications, spherical varieties, and classical Geometric Invariant Theory. In fact the definition of semistability developed in this paper already appears implicitly in these subjects.

 Despite the terminology ``semistable pair'', Mumford's Geometric Invariant Theory is not, for our purpose, the only model to emulate. The author has also been inspired by the works of DeConcini-Procesi (see \cite{deconcini1983}), Brion-Luna-Vust (see \cite{blv1986}),  and Popov-Vinberg (see \cite{popov-vinberg}). The abstract representation / equivariant embedding theory is then applied to the non-trivial {example} of projective varieties by considering the map (\ref{rdmaps}).

One of the many advantages of our new point of view is the presence of a large number of interesting {examples} of semistable pairs, equivalently, examples of morphisms between equivariant completions of algebraic homogeneous spaces. A particularly accessible illustration of the definition is provided by the class of {two orbit} varieties. We consider several examples in Section \ref{generaltheory}.

We exhibit the present scope of our theory with the following diagram. In particular, the stability required for the K\"ahler Einstein problem is not a special case of Geometric Invariant Theory, rather, both theories are special cases of the theory of pairs. \\
\ \\
 \tikzstyle{block} = [rectangle, draw, fill=blue!20, 
    text width=5em, text centered, rounded corners, minimum height=4em]
 \tikzstyle{line} = [draw, -latex']
\begin{center} 
 \begin{tikzpicture}[node distance = 3.5cm, auto]
\node[block](morphisms){Morphisms between embeddings of $\mathcal{O}$};
\node[block,below of=morphisms](pairs){Semistable Pairs};
\node[block, below of=pairs](two){Quasi-closed orbits in projective representations};
\node[block, right of=two](mumford){Hilbert-Mumford Geometric Invariant Theory};
\node[block, left of=two](bounds){ K-energy lower bounds on Bergman metrics};
 \path[line](morphisms)--(pairs);
 \path [line](pairs)--(bounds);
\path [line](pairs)--(mumford);
\path [line](pairs)--(two);
 \end{tikzpicture}
\end{center}
\begin{center}{\small{Figure 1. Special cases of the theory of semistable pairs.}}\end{center}

\subsection{ Organization }
 In order to maximize the readers' understanding we give a synopsis of its contents. In section \ref{discussionofresults}   
we define resultants, discriminants, and hyperdiscriminants of complex projective varieties and define the semistability of projective varieties in terms of these objects (see Definitions \ref{cmss} and \ref{numss}) . We also define the Mabuchi energy,  the space of Bergman metrics, and state several auxiliary results which follow easily from the discussion in Section \ref{generaltheory}. In Section \ref{equivexts}  we define the concept of a semistable pair of points in the context of finite dimensional rational representations of algebraic groups (Definitions \ref{dominate} and \ref{pair} ) . In Section \ref{toricmorphs} we discuss numerical semistability (definition \ref{numerical}). Section \ref{kempfness} is concerned with a Kempf-Ness type ``energy functional'' attached to a pair of representations . At this point the reader will have noticed a strong family resemblance between our theory of semistable pairs and Mumfords' Geometric Invariant Theory. A head to head comparison of the two theories is provided in Table 1 at the end of Section \ref{generaltheory}.   
Section 4 is concerned with determinants of direct images of Cayley-Koszul complexes. This section contains the proof of Theorem \ref{globgen}.

 \section{The Basic Construction and Further Results}\label{discussionofresults}
  
  
  For the convenience of the reader we recall the definitions of  $\Delta(X)$, the hyperdiscriminant, and $R(X)$, the resultant, of a projective variety. We always assume that $X$ is embedded into $\cpn$ as a linearly normal variety. This insures that the resultant and discriminant of $X$ behave as well as possible ( see \cite{tevelev} section 1.4.3).  For further details and background we refer the reader to \cite{gkz} and \cite{paul2011} .
 \subsection{(Hyper)discriminants and Resultants of projective varieties} 
Let $X^n\ra \cpn$ be an irreducible, $n$-dimensional, linearly normal, complex projective variety of degree $d\geq 2$.
Let $\mathbb{G}(k,N)$ denote the Grassmannian of $k$-dimensional projective linear subspaces of $\cpn$. This is the same as $G(k+1, \mathbb{C}^{N+1})$ , the Grassmannian of $k+1$ dimensional subspaces of $\mathbb{C}^{N+1}$.   

\begin{definition} (Cayley  \cite{cayley1860}) \emph{The \textbf{\emph{associated form}} of $X^n\ra \cpn$ is given by}
\begin{align*}
Z_X:=\{L\in \mathbb{G}(N-n-1,N)\ | L\cap X\neq \emptyset \} \ .
\end{align*}
\end{definition}
It is well known that $Z_X$ enjoys the following properties: \\
\ \\
$i)$   $Z_X$ is a \textbf{\emph{divisor}} in $\mathbb{G}(N-n-1,N)$ . \\
 \ \\
$ii)$   $Z_X$ is irreducible . \\
  \ \\
$iii)$  $\deg(Z_X)=d$ (in the Pl\"ucker coordinates) . \\
  \ \\
  
  Therefore there exists $R_X\in H^0(\mathbb{G}(N-n-1,N), \mathcal{O}(d))$ such that
 \begin{align*}
 \{R_X=0\}=Z_X
 \end{align*}
 $R_X$ is the Cayley-{Chow} form of $X$. Following  Gelfand, Kapranov, and Zelevinsky  we call $R_X$ the \textbf{\emph{X-resultant}} .  Observe that   
\begin{align*}
R_X\in \mathbb{C}_{d(n+1)}[M_{(n+1)\times (N+1)}]^{SL(n+1,\mathbb{C})} \ . 
\end{align*}

\subsection{Discriminants} We assume that $X\ra\cpn$ has degree $d\geq 2$. Let $X^{sm}$ denote the smooth points of $X$. For $p\in X^{sm}$ let 
$\mathbb{T}_p(X)$ denote the {embedded} tangent space to $X$ at $p$. Recall that $\mathbb{T}_p(X)$ is an $n$-dimensional {projective} linear subspace of $\cpn$.
\begin{definition}
 \emph{ The \emph{\textbf{dual variety}} of $X$, denoted by $X^{\vee}$, is the Zariski closure of the set of  {tangent hyperplanes} to $X$ at its smooth points }
\begin{align*}
X^{\vee}:=\overline{ \{ f\in {\cpn}^{\vee} \ |  \  \mathbb{T}_p(X)\subset \ker(f) \ , \ p\in X^{sm}\} } \ .
\end{align*}
\end{definition}
 Usually $X^{\vee}$ has codimension one in $ {\cpn}^{\vee}$.  
\begin{definition} \emph{The \emph{\textbf{dual defect}} of $X\ra \cpn$ is the integer}
\begin{align*}
\delta(X):=\mbox{\emph{codim}}(X^{\vee})-1 \ .
\end{align*}
\end{definition}
 When $\delta=0$ there exists an irreducible homogeneous polynomial $\Delta_X\in \mathbb{C}[{\cpn}^{\vee}] $ (  the  \textbf{\emph{X-discriminant}})  such that
\begin{align*}
X^{\vee}=\{\Delta_X=0\} \ .
\end{align*}
 \begin{example}
\emph{There are many varieties with positive dual defect. For an example one may take $X:=G(2,\mathbb{C}^{2n+1})\hookrightarrow \mathbb{P}(\wedge^2\mathbb{C}^{2n+1})$ . The defect for this variety is positive because the space $\wedge^2\mathbb{C}^{2n+1}$ is \emph{prehomogeneous} for the action of $G:=SL(2n+1,\mathbb{C})$, that is, it contains an open dense orbit. Therefore all $G$ invariants are constant. If $X$ had codimension one dual then $\Delta_X$ would be a non-trivial $G$ invariant. In the even case, one may use the determinant as a nontrivial invariant. Recall that the Grassmannian of lines in $\mathbb{P}^3$ is a smooth hypersurface in $\mathbb{P}^5$ so its dual is automatically codimension one by general theory (see \cite{tevelev}).  }
\end{example}   

\subsection{Hyperdiscriminants }  Given $X\ra \cpn$ we consider the \emph{Segre embedding} 
\begin{align*}
\xhyp\ra \mathbb{P}({M_{n\times (N+1)}}^{\vee}) \ .
\end{align*}
Of basic importance for this paper is the next proposition which follows from work of Weyman and Zelevinsky (see \cite{weymanzelevinsky}) and Zak ( see \cite{zak} ) .
\begin{proposition}\label{cayleytrick} \emph{ Let $X\ra \cpn$ be a nonlinear subvariety embedded by a very ample complete linear system. Then $\delta(\xhyp)=0 $ .  }
 \end{proposition}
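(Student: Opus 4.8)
\textbf{Proof proposal for Proposition \ref{cayleytrick}.}

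The plan is to reduce the vanishing of the dual defect of $\xhyp$ to a statement about the dimension of the conormal variety, via the Cayley trick, and then invoke the known results of Weyman--Zelevinsky and Zak. First I would recall the general principle (the ``Cayley trick'') that relates the dual variety of a Segre product $X \times \mathbb{P}^{n-1}$ to the resultant/discriminant complex of $X$: a hyperplane $H$ in $\mathbb{P}(M_{n\times(N+1)}^{\vee})$ is, by the standard identification, an $n \times (N+1)$ matrix of linear forms, equivalently an $n$-tuple $(f_1, \dots, f_n)$ of hyperplanes in ${\cpn}^{\vee}$, and $H$ is tangent to $\xhyp$ at a smooth point $(p, [t])$ precisely when the single hyperplane $t_1 f_1 + \cdots + t_n f_n$ is tangent to $X$ at $p$ (the $\mathbb{P}^{n-1}$-factor contributing the condition $f_i(p)=0$ for all $i$, with the tangency packaged in the $t$-direction). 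This is the computation that converts a ``universal hyperplane section'' statement on the product into a tangency statement on $X$ alone.

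Next I would set up the conormal (Lagrangian) variety picture. The dual defect $\delta(Y) = 0$ for $Y := \xhyp$ is equivalent to the conormal variety $\mathrm{Con}(Y) \subset Y \times \mathbb{P}(M_{n\times(N+1)})$ projecting \emph{dominantly} onto the second factor; since $\mathrm{Con}(Y)$ always has dimension equal to $\dim \mathbb{P}(M_{n\times(N+1)})$ (by the biduality/Lagrangian property, which holds in characteristic zero), dominance is equivalent to this projection being generically finite, equivalently to $Y^{\vee}$ being a hypersurface. The content of Proposition \ref{cayleytrick} is thus: for $X$ nonlinear and embedded by a complete very ample linear system, $(\xhyp)^{\vee}$ is a hypersurface. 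Using the tangency reformulation from the first step, $\dim (\xhyp)^\vee$ is computed from the incidence variety $\{(p, [t], [f_1,\dots,f_n]) : f_i(p) = 0, \ \mathbb{T}_p X \subseteq \ker(\sum t_i f_i)\}$; I would do a fiber-dimension count over $X^{sm}$: the conditions $f_i(p)=0$ cut down by $n$ (each $f_i$ lies in an $(N-1)$-dimensional subspace), and the tangency condition $\mathbb{T}_p X \subseteq \ker(\sum t_i f_i)$ cuts the remaining data down by $n$ more (the embedded tangent space is $n$-dimensional projectively). A bookkeeping of these counts against $\dim \mathbb{P}(M_{n\times (N+1)}^\vee) = n(N+1) - 1$ should show that projection to the dual projective space is surjective onto a hypersurface exactly when $X$ is not contained in a hyperplane in a degenerate way — this is where nonlinearity of $X$ and linear normality (completeness of the linear system) enter, ruling out the exceptional situation in which the Segre product would be dual-defective (e.g. the linear case where $X = \cpn$ itself).

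The cleanest route, and the one I would actually write, is to cite the precise theorem: Weyman--Zelevinsky (\cite{weymanzelevinsky}) classified all smooth products $\mathbb{P}^{a_1}\times \cdots \times \mathbb{P}^{a_k}$, and more generally Segre products of varieties, that are dual-defective, and Zak's theorem on tangencies (\cite{zak}) gives the dimension bounds that force $\delta = 0$ outside the classified exceptional list; one then checks that $\xhyp$ with $X$ nonlinear, linearly normal, and $n \geq 1$ (the product has a genuine $\mathbb{P}^{n-1}$ factor only matters when $n \geq 2$; for $n=1$ the statement $\delta(X \times \mathbb{P}^0) = \delta(X)$ reduces to the fact that a nonlinear curve in its linearly normal embedding has a hypersurface dual, which is classical) never lands on that list. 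The main obstacle — and the reason I would lean on the cited literature rather than reprove everything — is the exceptional-case analysis: one must be certain that no choice of nonlinear $X$ makes the Segre product $\xhyp$ fall into the short list of dual-defective Segre varieties, and verifying this requires either the full Weyman--Zelevinsky classification or a direct but somewhat delicate Zak-type tangency estimate on $X \times \mathbb{P}^{n-1}$; the hypothesis that the embedding is by a \emph{complete} linear system is exactly what is needed to exclude the degenerate linear re-embeddings that would otherwise create spurious defect.
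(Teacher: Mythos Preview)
The paper does not give a proof of this proposition---it simply states that the result follows from work of Weyman--Zelevinsky and Zak, which is exactly what your proposal ultimately does as well (with some added heuristic context via the Cayley-trick tangency interpretation and a conormal dimension sketch). Your summary of the cited results is slightly imprecise (Weyman--Zelevinsky relate $\delta(X\times\mathbb{P}^{n-1})$ to higher tangency/defect conditions on $X$ rather than giving a general classification of dual-defective Segre products), but the approach matches the paper's.
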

\begin{remark} \emph{The reader should observe that $X$ is not required to be smooth .}
\end{remark}

It follows from Proposition \ref{cayleytrick} that there exists a nonconstant homogeneous polynomial 
 \begin{align*}
 { \Delta_{\xhyp}\in \mathbb{C}[M_{n\times (N+1)}]}^{SL(n,\mathbb{C})}\ ,
 \end{align*}

which we shall call the \textbf{\emph{X-hyperdiscriminant}}, such that
\begin{align*}
\{\Delta_{\xhyp}=0\}=(\xhyp)^{\vee} \ .
\end{align*}
\begin{remark}
\emph{For further information on the hyperdiscriminant the reader is referred to \cite{paul2011} section 2.2 pg. 270. The two crucial properties are that $\xhyp$ is always dually non-degenerate and that $\Delta_{\xhyp}$ encodes only the Ricci curvature of $X\ra \cpn$.}
\end{remark}

The obvious task is to compute the degree of this polynomial .

\begin{proposition}\label{degreeofhyp} (see \cite{paul2011} Proposition 5.7) \emph{Assume $X$ is {smooth}. Then the degree of the hyperdiscriminant is given as follows}
\begin{align*}
\deg(\Delta_{\xhyp}) =n(n+1)d-d\mu \in \mathbb{Z}_+\ .\ \Box
\end{align*}
\end{proposition}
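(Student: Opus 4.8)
The plan is to reduce the statement to the classical formula for the degree of the dual variety of a smooth projective variety and then to carry out a short two--variable Chern--class computation adapted to the product structure of $\xhyp$. First, by Proposition~\ref{cayleytrick} the dual $(\xhyp)^{\vee}$ is a hypersurface, so $\Delta_{\xhyp}$ is a genuine nonconstant defining polynomial and $\deg(\Delta_{\xhyp})$ equals the degree of $(\xhyp)^{\vee}$ as a reduced hypersurface in $\mathbb{P}(M_{n\times(N+1)})$. Since $X$ is smooth, $\xhyp$ is smooth, and I would invoke the class formula (see \cite{gkz}): for a smooth $m$-dimensional $Y\hookrightarrow\mathbb{P}^{M}$ with $Y^{\vee}$ a hypersurface,
\[
\deg(Y^{\vee})=\int_{Y}c(\Omega^{1}_{Y})\,(1-h)^{-2}\qquad(\text{degree-}m\text{ part}),\qquad h:=c_{1}(\mathcal{O}_{Y}(1)).
\]

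Next I would specialize to $Y=\xhyp$ with its Segre polarization. Writing $h_{1}=c_{1}(\mathcal{O}_{X}(1))$ and $h_{2}=c_{1}(\mathcal{O}_{\mathbb{P}^{n-1}}(1))$ for the pullbacks to $\xhyp$, one has $\dim(\xhyp)=2n-1$, the Segre hyperplane class is $h_{1}+h_{2}$, and $T_{\xhyp}=\mathrm{pr}_{1}^{*}T_{X}\oplus\mathrm{pr}_{2}^{*}T_{\mathbb{P}^{n-1}}$ gives $c(\Omega^{1}_{\xhyp})=c(\Omega^{1}_{X})(1-h_{2})^{n}$. Thus
\[
\deg\big((\xhyp)^{\vee}\big)=\int_{\xhyp}\Big[\,c(\Omega^{1}_{X})\,\frac{(1-h_{2})^{n}}{(1-h_{1}-h_{2})^{2}}\,\Big]_{2n-1},
\]
and integration over the $\mathbb{P}^{n-1}$ fibre amounts to extracting the coefficient of $h_{2}^{\,n-1}$. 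The one trick I would use is the substitution $u=1-h_{2}$, under which $\dfrac{(1-h_{2})^{n}}{(1-h_{1}-h_{2})^{2}}=\dfrac{u^{n}}{(u-h_{1})^{2}}=\sum_{k\ge 0}(k+1)\,h_{1}^{\,k}\,(1-h_{2})^{\,n-2-k}$, whose coefficient of $h_{2}^{\,n-1}$ is $\sum_{k\ge n-1}(k+1)\binom{k}{n-1}h_{1}^{\,k}$ (only $k\ge n-1$ contributes, via $[h_{2}^{n-1}](1-h_{2})^{-m}=\binom{m+n-2}{n-1}$).

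Multiplying by $c(\Omega^{1}_{X})$ and keeping the $X$-degree-$n$ part, only the coefficients of $h_{1}^{\,n-1}$ and $h_{1}^{\,n}$ in $\sum_{k\ge n-1}(k+1)\binom{k}{n-1}h_{1}^{\,k}$ survive (paired respectively with $c_{1}(\Omega^{1}_{X})$ and with $c_{0}=1$); the contribution of $c_{e}(\Omega^{1}_{X})$ for each $e\ge 2$ vanishes because the matching $h_{1}$-degree is $n-e\le n-2$, where the series has coefficient $0$. The surviving coefficients are $(n-1+1)\binom{n-1}{n-1}=n$ and $(n+1)\binom{n}{n-1}=n(n+1)$, so
\[
\deg\big((\xhyp)^{\vee}\big)=n(n+1)\int_{X}h_{1}^{\,n}+n\int_{X}c_{1}(\Omega^{1}_{X})\,h_{1}^{\,n-1}=n(n+1)d-n\int_{X}c_{1}(T_{X})\,h_{1}^{\,n-1}.
\]
By Hirzebruch--Riemann--Roch the (normalized) subdominant coefficient $\mu$ of the Hilbert polynomial is related to the Chern numbers of $X$ by $d\mu=n\int_{X}c_{1}(T_{X})\,h_{1}^{\,n-1}$, which turns the right-hand side into $n(n+1)d-d\mu$; and this lies in $\mathbb{Z}_{+}$ since it is the degree of the nonempty hypersurface $(\xhyp)^{\vee}$. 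The only steps needing genuine care are pinning down the normalization of $\mu$ via Riemann--Roch (the one conceptual input beyond the class formula) and the $h_{2}$-coefficient extraction, which the substitution $u=1-h_{2}$ makes painless; I do not expect any serious obstacle, only the bookkeeping of organizing the two-variable power series so that the vanishing of the terms with $e\ge 2$ is manifest rather than an accident of a large alternating sum.
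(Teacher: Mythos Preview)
Your argument is correct. The paper does not supply its own proof of this proposition; it simply cites \cite{paul2011}, Proposition~5.7, and closes with a $\Box$. So there is nothing in the present paper to compare your argument against line by line.

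That said, the machinery the paper develops in Section~4 (Cayley--Koszul complexes and Proposition~\ref{ord1}) gives an alternative route to such degree computations, and the reference \cite{paul2009} cited there indicates that the author's original proof proceeds by identifying $\deg(\Delta_{\xhyp})$ with the weighted Euler characteristic $\sum_j(-1)^{j+1}j\,h^0\big(\xhyp,\bigwedge^j J_1^{\vee}\otimes\mathcal{O}(m)\big)$ and then evaluating that via Riemann--Roch. Your approach instead invokes the closed-form class formula for $\deg(Y^{\vee})$ directly and reduces everything to a short two-variable generating-function computation. The substitution $u=1-h_2$ is the clean step that makes the vanishing of the $c_e(\Omega^1_X)$ contributions for $e\ge 2$ transparent, and your identification $d\mu=n\int_X c_1(T_X)h_1^{\,n-1}$ is exactly the normalization the paper uses (``$\mu$ denotes the average of the scalar curvature of $\omega_{FS}|_X$''). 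Both approaches ultimately rest on Riemann--Roch; yours is more self-contained and avoids the determinant-of-cohomology apparatus, at the cost of importing the class formula from \cite{gkz} as a black box.
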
 
In the preceding proposition $\mu$ denotes, as usual, the average of the scalar curvature of ${\om_{FS}}|_X$.  For the algebraic geometer this number is essentially the subdominant coefficient of the Hilbert polynomial of $X$. 

So far, to a nonlinear projective variety $X\ra\cpn$ we have associated two polynomials: $R_X$ and $\Delta_X$.
 Translation invariance of the Mabuchi energy forces us to {normalize the degrees} of  these polynomials. From this point on we are interested in the pair 
\begin{align} 
 R=R(X):= R^{\deg(\Delta_{\xhyp})}_{X} \ ,\ \Delta=\Delta(X):=\Delta ^{\deg(R_X)}_{\xhyp} \ .
  \end{align}
 Below we shall let $r$ denote their \emph{common} degree 
 \begin{align}
 r=\deg(\Delta_{\xhyp})\deg(R_X)=d(n+1)(n(n+1)d-d\mu) \ .
 \end{align}
 We summarize this discussion as follows .\newline
\ \\
 \noindent\textbf{The Basic Construction .}
   Given a partition $\beta_{\bull}$ with $N$ parts let $\mathbb{E}_{\beta_{\bull}}$ denote the corresponding irreducible  $G:=\slnc$ module. 
   Let $X\ra\cpn$ be a linearly normal complex projective variety. We make the associations
\begin{align*}
& X  \rightarrow R=R(X) \in \elam\setminus\{0\} \ , \ (n+1)\lambda_{\bull}= \big(\overbrace{ {r} , {r} ,\dots, {r}}^{n+1},\overbrace{0,\dots,0}^{N-n}\big) \ .\\
\ \\
& X \rightarrow \Delta=\Delta(X) \in \emu\setminus\{0\} \ , \  n\mu_{\bull}= \big(\overbrace{ {r} , {r} ,\dots, {r}}^{n },\overbrace{0,\dots,0}^{N+1-n}\big) \ . 
\end{align*}
 Moreover, the associations are $G$ equivariant:
\begin{align*}
 R(\sigma\cdot X)=\sigma\cdot R(X) \ , \ \Delta(\sigma\cdot X)=\sigma\cdot \Delta(X) \ .
 \end{align*}
The irreducible modules $\elam$ and $\emu$ admit the following descriptions
\begin{align*}
&\elam\cong  H^0(\mathbb{G}(N-n-1,N), \mathcal{O}\Big(\frac{r}{n+1}\Big))\cong \mathbb{C}_{r}[M_{(n+1)\times (N+1)}]^{SL(n+1,\mathbb{C})} \\
\ \\
& \emu \cong H^0(\mathbb{G}(N-n ,N), \mathcal{O}\Big(\frac{r}{n}\Big))\cong \mathbb{C}_{r}[M_{n\times (N+1)}]^{SL(n,\mathbb{C})}\ . \ \Box
\end{align*}
\begin{remark}
\emph{Observe that $r$ is divisible by both $n$ and $n+1$. Therefore $\lambull$ and $\mubull$ are actual partitions.}
\end{remark}

\begin{remark}\emph{ $X\ra \cpn$ need not be smooth in order to carry out the basic construction. In this event $r$ is still the product of the degrees, but there seems to be no straightforward expression for this degree for arbitrarily bad singularities.  }
\end{remark}

For any vector space $\mathbb{V}$ and any $v\in \mathbb{V}\setminus\{0\}$ we  let $[v]\in\mathbb{P}(\mathbb{V})$ denote the line through $v$. If $\mathbb{V}$ is a $G$ module define $\mathcal{O}_{v}:=G\cdot [v]\subset \mathbb{P}(\mathbb{V})$ the projective orbit of $[v]$ .  We let $\overline{\mathcal{O}}_{v}$ denote the Zariski closure of this orbit. Given  $(v\in\mathbb{V}\setminus\{0\} \ ; \ w\in \mathbb{W}\setminus\{0\} )$ we consider the orbits inside the projective space of the direct sum  
\begin{align}
\mathcal{O}_{vw}:=G\cdot [(v,w)]  \subset \mathbb{P}(\mathbb{V}\oplus\mathbb{W}) \ , \ \mathcal{O}_{v} \subset \mathbb{P}(\mathbb{V}\oplus\{0\})\ .
\end{align} 
Now we are prepared to introduce the following replacement for Tian's definition of CM semistability (see \cite{tian97} section 8) .  
 \begin{definition}\label{cmss} 
\emph{Let $X\ra\cpn$ be an irreducible, $n$-dimensional, linearly normal complex projective variety. Then 
$X$ is \emph{\textbf {semistable}}  if and only if }
\begin{align}
\overline{ \mathcal{O}}_{R\Delta}\cap \overline{ \mathcal{O}}_{R}=\emptyset \ .
\end{align}
\end{definition}
 
Recall that for any $G$ module $\mathbb{V}$ and any $v\in \mathbb{V}\setminus\{0\}$ the {weight polytope} $\mathcal{N}(v)$ is the convex hull of the $H$ weights of $v$ where $H$ is a maximal algebraic torus in $G$.  Observe that $\mathcal{N}(v)$ is a compact convex lattice polytope in $M_{\mathbb{R}}(H)$. 

Now we are prepared  to introduce the following replacement for Tian's (respectively  Donaldson's) definition of K semistability (see  \cite{tian97} definition 1.1 and \cite{skdtoric} section 2.1). 

 \begin{definition} \label{numss}  
\emph{Let $X\ra\cpn$ be an irreducible, $n$-dimensional, linearly normal complex projective variety. Then $X$ is \emph{\textbf {numerically semistable}}  if and only if }
\begin{align}
 \mathcal{N}(R) \subset  \mathcal{N}(\Delta) \ \ \mbox{\emph{for every maximal algebraic torus} $H\leq G$} \ .
\end{align}
\end{definition}

\begin{remark}\emph{Clearly Definitions \ref{cmss} and \ref{numss} can be formulated for any nonzero pair of vectors $(v,w)$ in any pair $\mathbb{V},\mathbb{W}$ of finite dimensional $G$-modules.This is the correct level of generality in which to operate and this point of view will be fully developed below (see Definitions \ref{pair} and \ref{numerical} ). The author expects that these definitions are equivalent (see Conjecture \ref{numericalcriterion} below) . }
\end{remark}

 Given any compact be a K\"ahler manifold $(X,\ \om)$ we always let $\mu$ denote the average of the scalar curvature of $\om$ and $V$ denote the volume
\begin{align*}
\mu:=\frac{1}{V}\int_X\mbox{Scal}(\om)\om^n \ , \
V:= \int_X\om^n \ .
\end{align*}

The space of K\"ahler potentials will be denoted by $\mathcal{H}_\om$
\begin{align*}
\mathcal{H}_\om:=\{\vp\in C^{\infty}(X)\ |\ \om_\vp:=\om+\frac{\sqrt{-1}}{2\pi}\dl\dlb\vp>0 \} \ .
\end{align*}

  The \textbf{ \emph{Mabuchi energy}}  is  the functional $\nu_\om:\mathcal{H}_\om\ra \mathbb{R}$  defined by
  
\begin{align}\label{mabenergy}
 \qquad \nu_{\omega}(\varphi):= - \frac{1}{V}\int_{0}^{1}\int_{X}\dot{\varphi_{t}}(\mbox{Scal}(\varphi_{t})-\mu)\omega_{t}^{n}dt.
\end{align}
 $\varphi_{t}$ is a $C^1$ path in $\mathcal{H}_\om$ joining $0$ and $\varphi$. It is well known that  $\nu_{\omega}$ does not depend on the path chosen and that $\vp$ is a critical point of $ \nu_{\omega}$  if and only if $\mbox{Scal}_\om(\vp)\equiv \mu$ (see \cite{mabuchi}) . One of the most important analytic results concerning the Mabuchi energy is the following.
 
 \begin{theorem} (Bando-Mabuchi \cite{bando-mabuchi87})\emph{ Let $(X,\om)$ be a compact K\"ahler manifold satisfying $[\om]=C_1(X)$. If there exists a K\"ahler Einstein metric in the class $[\om]$ then $\nu_{\om}$ is bounded from below on $\mathcal{H}_\om$.   }
 \end{theorem}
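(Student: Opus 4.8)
The plan is to exhibit the Mabuchi energy as (essentially) the difference of two Monge–Ampère-type energy functionals whose positivity along geodesics is controlled, and then to exploit the fact that a K\"ahler–Einstein metric $\om_{KE}$ is an absolute minimum of the relevant energy. First I would recall the Chen–Tian decomposition valid when $[\om]=C_1(X)$: writing $\mathrm{Ric}(\om)=\om+\frac{\sqrt{-1}}{2\pi}\dl\dlb h_\om$ for the Ricci potential $h_\om$, one has
\begin{align}\label{ct}
\nu_\om(\vp)=\frac{1}{V}\int_X\log\!\left(\frac{\om_\vp^n}{\om^n}\right)\om_\vp^n+J_\om(\vp)-\frac{1}{V}\int_X h_\om\,(\om^n-\om_\vp^n),
\end{align}
where $J_\om$ is the Aubin functional and the first term is a relative entropy. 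The entropy term is $\geq 0$ by Jensen's inequality, so the only danger is the last two terms; the heart of the argument is that the functional $F^0_\om(\vp):=J_\om(\vp)-\frac{1}{V}\int_X h_\om(\om^n-\om_\vp^n)$ minus a correction is convex along suitable paths and is minimized at $\om_{KE}$.

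Next I would pass to Mabuchi geodesics. Given $\vp\in\mathcal{H}_\om$, join $0$ to $\vp$ by the (weak) geodesic $\vp_t$ in $\mathcal{H}_\om$, i.e. the solution of the homogeneous complex Monge–Ampère equation on $X\times(\text{annulus})$. Along such a geodesic, the functional $t\mapsto \nu_\om(\vp_t)$ is convex — this is Mabuchi's fundamental convexity (for smooth geodesics; in general one uses the $\epsilon$-geodesic approximation of Chen, or works directly with the bounded-geodesic regularity). The K\"ahler–Einstein metric $\om_{KE}=\om_{\vp_{KE}}$ is a critical point of $\nu_\om$, hence by convexity it is a global minimum on the geodesically convex set it spans; concretely, for any $\vp$ connect $\vp_{KE}$ to $\vp$ by a geodesic, observe $\frac{d}{dt}\big|_{0^+}\nu_\om(\vp_t)\geq 0$ because $\vp_{KE}$ is critical, and conclude $\nu_\om(\vp)\geq \nu_\om(\vp_{KE})$ by convexity. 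This gives the lower bound $\nu_\om\geq \nu_\om(\vp_{KE})>-\infty$ on all of $\mathcal{H}_\om$.

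The main obstacle is the regularity of geodesics: Mabuchi's convexity is elementary only for \emph{smooth} geodesics, but two arbitrary potentials in $\mathcal{H}_\om$ need not be joined by one. I would handle this exactly as Bando–Mabuchi do — via continuity method along the family $\om_s$ with $\mathrm{Ric}(\om_s)=s\om_s+(1-s)\om$ (Aubin–Yau), combined with the uniqueness of the K\"ahler–Einstein metric up to $\mathrm{Aut}$, rather than through geodesic convexity at all; alternatively one invokes Chen's $C^{1,1}$ geodesics together with the approximation argument that the convexity inequality survives the limit. Either route reduces the theorem to: (i) the decomposition \eqref{ct}; (ii) nonnegativity of entropy; (iii) a properness/minimization statement for $F^0_\om$ at the Einstein metric obtained from the openness/closedness in the continuity method. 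The second and cleanest option — which I would actually write up — is to note that along the Aubin–Yau path $\vp_s$ one can compute $\frac{d}{ds}\nu_\om(\vp_s)$ explicitly and show it has a sign, so that $\nu_\om(\vp_s)$ is monotone and bounded, and then transfer the bound to arbitrary $\vp$ using that $\nu_\om$ differs from a manifestly bounded-below expression by the entropy term. The only genuinely nontrivial input beyond bookkeeping is the a priori $C^0$-estimate underlying the continuity method, which I would quote rather than reprove.
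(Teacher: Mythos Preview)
The paper does not prove this theorem; it merely quotes the result with a citation to Bando--Mabuchi \cite{bando-mabuchi87} and remarks that Chen--Tian extended it to arbitrary K\"ahler classes. There is therefore no proof in the paper to compare your proposal against.

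That said, a few comments on your sketch as a proof of the cited result. You oscillate between two genuinely different strategies --- geodesic convexity of $\nu_\om$ and the Aubin--Yau continuity path --- without committing to either, and the hybrid you describe is not how Bando--Mabuchi actually argue. Their original 1987 proof predates the geodesic machinery entirely (Chen's $C^{1,1}$ regularity is from 2000); they work directly with the continuity family $\mathrm{Ric}(\om_s)=s\om_s+(1-s)\om$ and a careful monotonicity/uniqueness argument. If you want to write up the geodesic proof instead, that is legitimate but is a later, different argument (essentially Chen's), and you should present it as such rather than attributing it to Bando--Mabuchi. Also check your decomposition: the Aubin-type term in the Fano case is $-(I_\om-J_\om)(\vp)$, not $+J_\om(\vp)$, and this sign is exactly what makes the argument delicate --- the non-entropy part is \emph{not} manifestly bounded below, so your sentence ``the only danger is the last two terms'' followed by a convexity appeal glosses over the actual content of the theorem.
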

 \begin{remark}\emph{This result has been generalized been generalized to an \emph{arbitrary} K\"ahler class by Chen and Tian.}
 \end{remark}
 Now we suppose that $X^n\ra \cpn$ is a smooth complex projective variety . Let $\om_{FS}$ denote the Fubini-Study K\"ahler form on $\cpn$ . We set ${\om:=\om_{FS}}|_X$ .  The space $\mathcal{B}$ of {Bergman  metrics} is defined by
 \begin{align*}
\mathcal{B}:=\{\ \sigma^*\om_{FS}  \ |\ \sigma\in G\ \}\ .
\end{align*}    
  Since $\sigma\in G$ acts by automorphisms of $\cpn$ there is a potential $\vps \in C^{\infty}(X)$ such that
 \begin{align*}
  \sigma^*\om_{FS}= \om_{FS}+\frac{\sqrt{-1}}{2\pi}\dl\dlb\vps >0\ .
  \end{align*}
  Therefore ${\sigma^*\om_{FS}}|_X>0$ and we have an ``inclusion'' map $\mathcal{B}\ra \mathcal{H}_{\om}$. We will often confound $G$ with $\mathcal{B}$. Now we may think of the Mabuchi energy as being a function on $\mathcal{B}$.  For any $\sigma\in \mathcal{B}$ we define 
 \begin{align*}
 \nu_{\om}(\sigma):=\nu_{\om}(\vps) \ .
 \end{align*}
 Given $\lambda:\mathbb{C}^*\ra G$,  an algebraic one parameter subgroup of $G$, the associated potentials $\varphi_{\lambda(t)} \in \mathcal{B}$ , are called \emph{degenerations}.  
 
 A cornerstone of the present article is the following Theorem, recently obtained by the author. The reader should compare this with Tian's result (\ref{singnrm}) . \\
 \ \\
\textbf{Theorem A .} (Paul \cite{paul2011})
 { Let $X^n \hookrightarrow \cpn$ be a smooth, linearly normal, complex projective variety of degree $d \geq 2$ . 
  Then there are continuous norms on $\emu$ and $\elam$ such that the Mabuchi energy restricted to $\mathcal{B}$ is given as follows}
\begin{align*} 
\begin{split}
&d^2(n+1)\nu_{\om}(\varphi_{\sigma})=  \log  \frac{{||\sigma\cdot\Delta ||}^{2}}{{||\Delta ||}^{2}} -   \log\frac{{||\sigma\cdot R||}^{2}}{||R||^2}      \ . \qquad \Box
 \end{split}
\end{align*}
 
 The definitions of stability proposed by the author are completely justified by the next result.
\begin{theorem}(Paul \cite{paul2011})
\begin{align*}
\begin{split}
& i) \quad {\nu_{\om}}|_G>-\infty \iff \  \overline{ \mathcal{O}}_{R\Delta}\cap \overline{ \mathcal{O}}_{R}=\emptyset \ . \\  
\ \\
& ii) \quad {\nu_{\om}}|_{\Delta(G)}>-\infty \iff \mbox{ $\mathcal{N}(R) \subset  \mathcal{N}(\Delta)$  for all maximal algebraic tori $H\leq G$ .}
\end{split}
\end{align*}
\end{theorem}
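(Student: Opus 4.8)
The plan is to derive both equivalences directly from Theorem A, which converts the analytic question of a lower bound for $\nu_{\om}$ into the purely algebraic problem of controlling the norms $\|\sigma\cdot R\|$ and $\|\sigma\cdot\Delta\|$ under the $G$-action. A preliminary reduction makes this precise: any two continuous norms on a finite dimensional space are comparable up to a bounded multiplicative factor, so the function $\sigma\mapsto\log\|\sigma\cdot\Delta\|^2-\log\|\sigma\cdot R\|^2$ attached to the norms of Theorem A differs by a bounded amount from the one attached to a fixed Hermitian norm on $\elam\oplus\emu$ invariant under a maximal compact subgroup. Consequently every assertion about boundedness below may be checked against an honest Hermitian norm, which I will use throughout. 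With this normalization Theorem A reads, up to an additive constant,
\[
d^2(n+1)\,\nu_{\om}(\vps)=\log\frac{\|\sigma\cdot\Delta\|^2}{\|\sigma\cdot R\|^2}+\mathrm{const},
\]
so that ${\nu_{\om}}|_{G}>-\infty$ is equivalent to the statement that $\|\sigma\cdot R\|/\|\sigma\cdot\Delta\|$ is bounded above as $\sigma$ ranges over $G$.

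For part $i)$ I would isolate the following lemma from the theory of pairs (cf. Definitions \ref{dominate} and \ref{pair}): for nonzero $v\in\mathbb{V}$ and $w\in\mathbb{W}$, the ratio $\|\sigma\cdot v\|/\|\sigma\cdot w\|$ is bounded above on $G$ if and only if $\overline{\mathcal{O}}_{vw}\cap\overline{\mathcal{O}}_{v}=\emptyset$ inside $\mathbb{P}(\mathbb{V}\oplus\mathbb{W})$. The argument is a compactness/normalization argument in projective space. If the ratio is unbounded, choose $\sigma_j$ with $\|\sigma_j\cdot v\|/\|\sigma_j\cdot w\|\to\infty$ and normalize the pair $(\sigma_j\cdot v,\sigma_j\cdot w)$ to unit length; a subsequence converges in $\mathbb{P}(\mathbb{V}\oplus\mathbb{W})$ to a point $[(\xi,0)]$ with $\xi\neq0$, and since $[\sigma_j\cdot v]$ converges to $[\xi]$ in $\mathbb{P}(\mathbb{V})$ this limit lies in $\overline{\mathcal{O}}_{vw}\cap\overline{\mathcal{O}}_{v}$. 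Conversely a point of the intersection is automatically of the form $[(\xi,0)]$ with $\xi\neq0$ and is a limit of $[(\sigma_j\cdot v,\sigma_j\cdot w)]$; normalizing shows the $\mathbb{W}$-component becomes negligible relative to the $\mathbb{V}$-component, forcing the ratio to blow up. Applying this with $(v,w)=(R,\Delta)$ and combining with the reduction above yields ${\nu_{\om}}|_{G}>-\infty\iff\overline{\mathcal{O}}_{R\Delta}\cap\overline{\mathcal{O}}_{R}=\emptyset$, which is Definition \ref{cmss}.

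For part $ii)$ I would restrict to a fixed algebraic one parameter subgroup $\lambda$ of $G$, which after conjugation lies in a maximal torus $H$; since the final condition is quantified over all maximal tori this loses no generality. Decomposing $R=\sum_a R_a$ and $\Delta=\sum_b\Delta_b$ into $H$-weight vectors and using the orthogonality of distinct weight spaces for the Hermitian norm gives $\|\lambda(t)\cdot R\|^2=\sum_a|t|^{2\langle a,\lambda\rangle}\|R_a\|^2$. The function $s\mapsto\log\|\lambda(e^{-s})\cdot R\|^2$ is convex (a log-sum-exponential) and asymptotically linear as $s\to+\infty$ with slope $-2\min\{\langle a,\lambda\rangle:R_a\neq0\}$, its deviation from the asymptote being bounded; the minimum over occurring weights equals $\min_{x\in\mathcal{N}(R)}\langle x,\lambda\rangle$. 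Subtracting the analogous expression for $\Delta$ and feeding it into Theorem A gives, as $t\to0$,
\[
d^2(n+1)\,\nu_{\om}(\vplt)=2\log|t|\,\Big(\min_{x\in\mathcal{N}(\Delta)}\langle x,\lambda\rangle-\min_{x\in\mathcal{N}(R)}\langle x,\lambda\rangle\Big)+O(1).
\]
Because $\log|t|\to-\infty$, the energy stays bounded below along $\lambda$ precisely when $\min_{\mathcal{N}(\Delta)}\langle\cdot,\lambda\rangle\le\min_{\mathcal{N}(R)}\langle\cdot,\lambda\rangle$. Quantifying over all integral cocharacters $\lambda$, extending to all real $\lambda$ by homogeneity and density, and ranging over all maximal tori $H$, this says that the (min-)support function of $\mathcal{N}(R)$ dominates that of $\mathcal{N}(\Delta)$ for every $H$, which by the support-function characterization of inclusion of convex bodies is exactly $\mathcal{N}(R)\subset\mathcal{N}(\Delta)$ for all $H$, i.e. Definition \ref{numss}.

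The genuinely substantive step is the pair lemma underlying part $i)$: the two compactness arguments must be executed with care so that the projective limit produced by an unbounded ratio is verified to lie in $\overline{\mathcal{O}}_{R}$ and not merely in $\mathbb{P}(\elam\oplus\{0\})$, and conversely. The norm-equivalence reduction in the first paragraph is what licenses replacing the implicit (possibly non-Hermitian) norms of Theorem A by a Hermitian one in both the orbit-closure criterion and the leading-order asymptotic coefficient; I expect this bookkeeping, rather than any deep idea, to be the main place where rigor must be maintained. By contrast part $ii)$ is a routine weight-polytope computation once the convexity and asymptotic-linearity of the logarithmic norms are recorded.
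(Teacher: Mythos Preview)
Your proposal is correct and follows essentially the same route as the paper. The paper derives part $i)$ from Theorem~A together with Proposition~\ref{vwlowerbound} (whose proof goes through the Fubini--Study distance identity of Lemma~\ref{distance}, $\nu_{v,w}(\sigma)=\log\tan^{2}d_{g}(\sigma\cdot[(v,w)],\sigma\cdot[(v,0)])$, and the triangle-type inequality~(\ref{sigma-tau})), and part $ii)$ from Theorem~A and the asymptotic expansion of Proposition~\ref{fdkenergy}.

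The one stylistic difference is in your treatment of the pair lemma underlying part $i)$: you argue by extracting convergent subsequences in $\mathbb{P}(\mathbb{V}\oplus\mathbb{W})$ and checking that the limit lands simultaneously in $\overline{\mathcal{O}}_{vw}$ and $\overline{\mathcal{O}}_{v}$, whereas the paper packages the same content into the closed-form identity $\inf_{\sigma}\nu_{vw}(\sigma)=\log\tan^{2}d_{g}(\overline{\mathcal{O}}_{vw},\overline{\mathcal{O}}_{v})$ (Corollary~\ref{infi}). The paper's formulation is cleaner and gives quantitative control of the infimum, not just its finiteness; your sequential argument is equally valid and perhaps more transparent as to why the limit point lies in $\overline{\mathcal{O}}_{v}$ rather than merely in $\mathbb{P}(\mathbb{V}\oplus\{0\})$. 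Your norm-equivalence reduction at the outset is a point the paper does not make explicit but which is indeed needed, and your weight-polytope computation for part $ii)$ is exactly Proposition~\ref{fdkenergy}.
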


Next we explain how our new theory of semistability makes contact with Tian's earlier notion of CM-semistability as formulated in \cite{tian97}. Given  $\mathbb{X}\xrightarrow{\pi}Y$  a flat family of polarized manifolds we define a line bundle on $Y$ as follows
\begin{align*}
\mathscr{L}_{\pi}:=\mathscr{L}_{R}^{\vee}\otimes\mathscr{L}_{\Delta}=  R^*\mathcal{O}_{\mathbb{P}(\elam)}(-1)\otimes  \Delta^*\mathcal{O}_{\mathbb{P}(\emu)}(+1) \ .
  \end{align*}
 In the next proposition we compare this sheaf to Tian's CM-polarization. Precisely, we compute the first Chern class of $\mathscr{L}_{\pi}$ in terms of the family $\mathbb{X}\xrightarrow{\pi}Y$. The reader should compare this to 8.3 on pg. 28 of \cite{tian97} . In what follows $\deg(\mathbb{X}/Y)$ denotes the degree of any fiber of the map $\mathbb{X}\xrightarrow{\pi}Y$.
 \begin{proposition}
 \emph{Assume that the Grothendieck Riemann Roch theorem is valid for the map $\mathbb{X}\xrightarrow{\pi}Y$. Then the first Chern class of $\mathscr{L}_{\pi}$ coincides (up to a positive power) with $c_1(\mathscr{L}_{cm})$. Precisely}
\begin{align*}
c_1(\mathscr{L}_{\pi})=\deg(\mathbb{X}/Y)\pi_*\left( (n+1)c_1(K_{\pi})\pi_2^* c_1(L)^n+\mu \pi_2^* c_1(L)^{n+1}\right) \ . \quad \Box
\end{align*}
 \end{proposition}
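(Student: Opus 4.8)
The plan is to compute $c_1(\mathscr{L}_{\pi})$ directly from the definition $\mathscr{L}_{\pi} = R^*\mathcal{O}(-1)\otimes\Delta^*\mathcal{O}(+1)$, using the determinant-of-direct-image descriptions of $\mathscr{L}_R$ and $\mathscr{L}_{\Delta}$ that underlie the proof of Theorem \ref{globgen}. From that theorem and its corollary we already know $\mathscr{L}_R \cong R^*\mathcal{O}_{\mathbb{P}(\elam)}(1)$ and $\mathscr{L}_{\Delta}\cong \Delta^*\mathcal{O}_{\mathbb{P}(\emu)}(1)$, and that each of these is realized as a determinant of a direct image of a (twisted) Cayley--Koszul complex built on $\mathbb{X}\subset Y\times\cpn$ (Section 4). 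So first I would write each sheaf as $\mathscr{L}_R = \det R\pi_*(\mathcal{C}_R^{\bull})^{\pm1}$ and $\mathscr{L}_{\Delta} = \det R\pi_*(\mathcal{C}_{\Delta}^{\bull})^{\pm1}$ for the appropriate Cayley--Koszul complexes, whose terms are explicit combinations of $p_2^*\mathcal{O}(1) = L$ and (for the hyperdiscriminant side) a twist encoding the relative canonical bundle $K_{\pi}$. This is exactly what the discriminant/resultant degree formulas (Propositions \ref{degreeofhyp} and the $\deg(Z_X)=d$ fact) predict fiberwise, and it is the reason why $\mathscr{L}_{\Delta}$ carries Ricci-curvature information while $\mathscr{L}_R$ does not.

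Second, I would invoke Grothendieck--Riemann--Roch for $\pi:\mathbb{X}\to Y$ (this is precisely the hypothesis we are allowed to assume) to turn $\mathrm{ch}(\det R\pi_*(\mathcal{C}^{\bull}))$ into a pushforward on $\mathbb{X}$ of $\mathrm{ch}(\mathcal{C}^{\bull})\cdot\mathrm{Td}(K_{\pi}^{-1})$, and extract the degree-one (codimension-one on $Y$) part. Here one only needs the degree-$2$ component of the GRR identity, so the Todd class enters only through $1 - \tfrac12 c_1(K_{\pi})$ and $\mathrm{ch}$ only through its first two graded pieces. The virtual bundle appearing in the Euler characteristic of the Cayley--Koszul complex is designed so that, after taking the alternating sum over the complex, the total rank (degree-$0$) contributions telescope and what survives in $c_1$ is a universal polynomial in $c_1(L)$ and $c_1(K_{\pi})$ pushed forward along $\pi_2$ and $\pi$. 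I expect the combination that emerges is exactly $(n+1)c_1(K_{\pi})\pi_2^*c_1(L)^n + \mu\,\pi_2^*c_1(L)^{n+1}$ up to the overall factor $\deg(\mathbb{X}/Y)$ coming from the normalization of $R$ and $\Delta$ to their common degree $r$ (recall $R = R_X^{\deg(\Delta_{\xhyp})}$, $\Delta = \Delta_{\xhyp}^{\deg(R_X)}$, and $r = d(n+1)(n(n+1)d - d\mu)$, so a factor of $d = \deg(\mathbb{X}/Y)$ factors out cleanly).

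Third, I would compare with Tian's $\mathscr{L}_{cm} = (\det R\pi_*\mathcal{E}_{\pi})^{-1}$, $\mathcal{E}_{\pi} = (n+1)(K_{\pi}-K_{\pi}^{-1})(L-L^{-1})^n - d\mu(L-L^{-1})^{n+1}$, by running the same GRR computation on $\mathcal{E}_{\pi}$ and reading off its $c_1$. The factors $(K_{\pi}-K_{\pi}^{-1})$ and $(L-L^{-1})$ are precisely the "antisymmetrized" combinations whose Chern characters start in degree one, so $\mathrm{ch}(\mathcal{E}_{\pi})$ begins in degree $n+1$; after pushing forward by $\pi$ (fiber dimension $n$) one lands in degree one on $Y$, and the leading term is visibly $(n+1)\cdot 2c_1(K_{\pi})\cdot(2c_1(L))^n/(\text{normalization}) + d\mu\cdot(2c_1(L))^{n+1}/(\text{normalization})$, matching the claimed expression up to the explicit positive constant. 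Thus $c_1(\mathscr{L}_{\pi})$ and $c_1(\mathscr{L}_{cm})$ agree up to a positive rational multiple, which is what "coincides up to a positive power" means at the level of $\mathrm{Pic}\otimes\mathbb{Q}$.

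The main obstacle is bookkeeping rather than conceptual: one must keep track of the twist by $K_{\pi}$ in the hyperdiscriminant Cayley--Koszul complex and verify that, upon pushing forward, the terms not of the stated form genuinely cancel — in particular the purely $L$-dependent pieces coming from the resultant side must recombine with the $L$-only pieces of the hyperdiscriminant side so that only the two listed monomials remain, and the subdominant Hilbert coefficient $\mu$ must appear with the correct sign and normalization (this is where Proposition \ref{degreeofhyp}, $\deg\Delta_{\xhyp} = n(n+1)d - d\mu$, is the crucial input). A secondary point requiring care is that GRR for a possibly non-smooth or non-projective $\pi$ is only assumed, not proven — but since that is an explicit hypothesis of the proposition, I would simply cite it and restrict all equalities to $\mathrm{Pic}^G(Y)\otimes\mathbb{Q}$ (or to rational Chow/cohomology classes), which is all the statement asserts.
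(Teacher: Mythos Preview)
The paper does not actually supply a proof of this proposition: the statement ends with $\Box$ and the text moves on, treating the formula as a computational consequence to be checked by the reader (the reference to Tian's formula 8.3 in \cite{tian97} is the implicit pointer). Your outlined approach --- write $\mathscr{L}_R$ and $\mathscr{L}_{\Delta}$ via their determinant-of-direct-image definitions from Section~4.1, apply GRR for $\pi$ to extract the codimension-one piece, and compare with the GRR computation for Tian's virtual bundle $\mathcal{E}_{\pi}$ --- is exactly the intended route and is correct in spirit.

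Two small points of precision. First, on the hyperdiscriminant side the object entering the Cayley--Koszul complex is the relative jet bundle $J_1(\mathbb{X}_{(n-1)}/Y)$, not $K_{\pi}$ directly; the relative canonical bundle appears only after you use the jet exact sequence $0\to \Omega^1_{\pi}\otimes L\to J_1(L)\to L\to 0$ (and its Segre analogue on $\mathbb{X}_{(n-1)}$) to rewrite Chern classes of $J_1$ in terms of $c_1(K_{\pi})$ and $c_1(L)$. You allude to this but should make it explicit, since that is where $K_{\pi}$ actually enters the computation. Second, what you have written is a plan rather than a proof: none of the alternating-sum cancellations or Todd-class expansions are carried out. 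Since the paper itself omits the computation, that is acceptable as a proof sketch, but if you want a self-contained argument you must actually perform the GRR expansion term by term (Tian does the $\mathscr{L}_{cm}$ side in \cite{tian97}, \S 8).
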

 \begin{remark}\emph{ All earlier approaches to the positivity of $\mathscr{L}_{cm}$ are based on the study of its first Chern class.  In the present work we produce a large number of algebraic sections. This is a much more substantial measure of positivity.}
 \end{remark} 
 Let $(X,\om)$ be any compact K\"ahler manifold. Let $\eta(X)$ denote the Lie algebra of holomorphic vector fields. Recall that the Calabi-Futaki invariant (see \cite{Futaki}) is the Lie algebra character
\begin{align}\label{futakiinv}
F_{[\om]}:\eta(X)\ra\mathbb{C} \ , \ F_{[\om]}(v):=\int_Xv(h_{\om})\om^n \ .
\end{align}
The potential $h_{\om}$ is defined by
\begin{align*}
Scal(\om)-\mu=\triangle h_{\om} \ .
\end{align*}
Let $\mathbb{X}\xrightarrow{\pi} Y$ be a flat family of polarized manifolds.  For any $y\in Y$ we define  the space of \textbf{\emph{special degenerations}} of $y$, denoted by $\Lambda_y$ ,  as follows 
 \begin{align*}
& \Lambda_y:= \{\lambda:\mathbb{C}^*\ra G\ |\ \lambda(0)\cdot y\in Y \} \ .
\end{align*}
In other words, $\lambda$ is a special degeneration of $y\in Y$  if and only if $\lambda(\alpha)\cdot y$ specializes to a point $y_0:=\lambda(0)\cdot y$ in $Y$ as $\alpha\ra 0$. Next we observe that  $\lambda \in \Lambda_y$  induces a holomorphic vector field $v_{\lambda}$  along the central fiber $X_0:=X_{y_0}$ . Therefore we may consider the Calabi-Futaki invariant of  $(X_0, {\om_{FS}}|_{X_0}, v_{\lambda})$ which we denote by $F_0(v_{\lambda})$. 
 The following definition is a special case of a slightly broader notion introduced by Ding and Tian (see \cite{dingtian})  .
 \begin{definition}\emph{
 Let $\mathbb{X}\xrightarrow{\pi} Y$ be a flat family of polarized manifolds. Let $y\in Y$. The \textbf{\emph{generalized Futaki invariant}} of $X_y$ is the map}
 \begin{align*}
 F:\Lambda_y\ra\mathbb{C} \ , \ F(v_{\lambda}):= F_0(v_{\lambda}) \ .
 \end{align*}
 \end{definition}
  The next corollary (of Theorem \ref{globgen}) gives conditions which insure that the hyperdiscriminant is compatible with limit cycle formation.

  \begin{corollary}\label{limitcycle}
  \emph{Let $y\in Y$ and let $\lambda$ be a special degeneration of $X_y$ . Then}
  \begin{align*}
  \lambda(0)\cdot \Delta(y)=\Delta(\lambda(0)\cdot y) \ .\ \Box
  \end{align*}
  \end{corollary}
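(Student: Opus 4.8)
The plan is to deduce this as a formal consequence of the regularity statement in Theorem \ref{globgen}, part $iii)$. First I would recall that a special degeneration $\lambda$ of $X_y$ is, by definition, an algebraic one parameter subgroup $\lambda:\mathbb{C}^*\ra G$ such that the limit point $y_0:=\lambda(0)\cdot y$ lies in $Y$. Since the family $\mathbb{X}\xrightarrow{\pi}Y$ is $G$-equivariant, the associated map $\Delta:Y\ra\mathbb{P}(\emu)$, $\Delta(y)=\Delta(X_y)$, is $G$-equivariant (Remark following (\ref{rdmaps})), and by Theorem \ref{globgen}(iii) it coincides with the morphism $\phi_{\mathscr{L}_{\Delta},\emu}$ associated to the globally generated sheaf $\mathscr{L}_{\Delta}$. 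In particular $\Delta$ is a \emph{regular} morphism of varieties (see the Corollary to Theorem \ref{globgen}).

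The key observation is that regularity plus equivariance forces continuity along the degeneration. Concretely, consider the orbit map $\alpha\mapsto \lambda(\alpha)\cdot y$, a regular map $\mathbb{C}^*\ra Y$ which extends (by the hypothesis that $\lambda$ is a special degeneration of $y$) to a regular map $\mathbb{C}\ra Y$ sending $0$ to $y_0$. Composing with the regular map $\Delta$ gives a regular map $\mathbb{C}\ra\mathbb{P}(\emu)$, $\alpha\mapsto \Delta(\lambda(\alpha)\cdot y)$. For $\alpha\neq 0$, equivariance of $\Delta$ gives $\Delta(\lambda(\alpha)\cdot y)=\lambda(\alpha)\cdot\Delta(y)$. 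Both sides of this identity are regular maps $\mathbb{C}\ra\mathbb{P}(\emu)$ — the left by the composition just described, the right because $\lambda(\alpha)\cdot\Delta(y)$ extends to $\alpha=0$ by setting its value to be $\lim_{\alpha\ra 0}\lambda(\alpha)\cdot\Delta(y)=\lambda(0)\cdot\Delta(y)$, which exists in $\mathbb{P}(\emu)$ since the $\mathbb{C}^*$-action on projective space always has a limit (the weight decomposition of $\Delta(y)$ with respect to $\lambda$ picks out the lowest-weight component). Two regular maps from $\mathbb{C}$ agreeing on the dense open set $\mathbb{C}^*$ agree everywhere, so evaluating at $\alpha=0$ yields
\begin{align*}
\lambda(0)\cdot\Delta(y)=\Delta(\lambda(0)\cdot y)=\Delta(y_0)=\Delta(X_{y_0}) \ .
\end{align*}

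The main obstacle is purely conceptual rather than computational: one must be careful that ``$\lambda(0)\cdot\Delta(y)$'' and ``$\Delta(\lambda(0)\cdot y)$'' are both genuinely well-defined points of $\mathbb{P}(\emu)$ before the identity can even be stated — the first because $\mathbb{P}(\emu)$ is complete so the one-parameter limit exists, the second because $y_0\in Y$ by the special degeneration hypothesis and $\Delta$ is everywhere regular on $Y$. Once both limits are known to exist, the separatedness of $\mathbb{P}(\emu)$ (equivalently, the valuative criterion applied to $\mathrm{Spec}\,\mathbb{C}[[\alpha]]$) forces them to coincide, since they agree on the punctured disk. No estimate is needed; the content is entirely in having upgraded $\Delta$ from a rational to a regular map via Theorem \ref{globgen}. $\Box$
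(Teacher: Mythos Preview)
Your proof is correct and is precisely the argument the paper intends: the corollary is stated with a $\Box$ immediately after it and no explicit proof, signalling that it follows directly from the regularity and $G$-equivariance of $\Delta$ established in Theorem \ref{globgen} (and its corollary). You have spelled out exactly that deduction---extend the orbit map to $\alpha=0$ in $Y$, compose with the regular equivariant map $\Delta$, and use separatedness of $\mathbb{P}(\emu)$ to identify the two limits.
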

 
Theorem A from \cite{paul2011} and corollary \ref{limitcycle} imply the following result, originally due to Ding and Tian. The reader should compare this result to the key claim 3.15 on pg. 328  of \cite{dingtian} (see also Proposition 6.2 on pg. 24 of \cite{tian97} as well as Theorem B pg. 3 from \cite{paul2011} ) .
  \begin{corollary}\label{genfutaki} 
\emph{ Let $\mathbb{X}\xrightarrow{\pi}Y$ be a flat family of polarized manifolds. Let $\nu_{{\om}_y}$ denote the Mabuchi energy of the fiber $(X_y,{\om_{FS}}|_{X_y})$. Let $\lambda$ be a special degeneration of $X_y$. Then there is an asymptotic expansion}
 \begin{align*}
\nu_{{\om}_y}(\vplt)=F_0(v_{\lambda})\log|t|^2+O(1) \ , \ |t|\ra 0 \ 
\end{align*}
\emph{where $F_0(v_{\lambda})$ is Ding and Tian's generalized Futaki invariant.} \emph{In particular}
\begin{align*}
\nu_{{\om}_y}(\vplt)= O(1) \ \mbox{\emph{as}} \ |t|\ra 0 \ \mbox{\emph{ whenever} $F_0(v_{\lambda})=0 $}\quad \Box \ .
\end{align*}
\end{corollary}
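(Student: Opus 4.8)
\textbf{Proof proposal for Corollary \ref{genfutaki}.}

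The plan is to combine Theorem A (the explicit norm formula for the Mabuchi energy along Bergman metrics) with Corollary \ref{limitcycle} and a Kempf-Ness / weight analysis of how the two norms $\|\lambda(t)\cdot\Delta(y)\|^2$ and $\|\lambda(t)\cdot R(y)\|^2$ behave as $|t|\ra 0$. First I would apply Theorem A to the fiber $(X_y, \om_{FS}|_{X_y})$ and the one-parameter family $\sigma = \lambda(t)$, which gives
\begin{align*}
d^2(n+1)\,\nu_{{\om}_y}(\vplt)=\log\frac{\|\lambda(t)\cdot\Delta(y)\|^2}{\|\Delta(y)\|^2}-\log\frac{\|\lambda(t)\cdot R(y)\|^2}{\|R(y)\|^2}\ .
\end{align*}
So the corollary reduces to computing the asymptotics of each logarithm separately. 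For a rational representation and a fixed algebraic one-parameter subgroup $\lambda$, diagonalize the $\mathbb{C}^*$-action on $\emu$ (resp. $\elam$): writing $\Delta(y)=\sum_i v_i$ in weight spaces with weights $w_i$, one has $\|\lambda(t)\cdot\Delta(y)\|^2 = \sum_i |t|^{2w_i}\|v_i\|^2 \cdot(1+o(1))$, hence $\log\|\lambda(t)\cdot\Delta(y)\|^2 = -2\,w_{\min}(\Delta(y),\lambda)\log\frac{1}{|t|} + O(1)$, where $w_{\min}$ is the minimal $\lambda$-weight occurring in $\Delta(y)$ — equivalently the Hilbert-Mumford weight attached to the point $\lambda(0)\cdot\Delta(y)$. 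The same applies to $R(y)$. Therefore
\begin{align*}
d^2(n+1)\,\nu_{{\om}_y}(\vplt) = \big(w_{\min}(R(y),\lambda)-w_{\min}(\Delta(y),\lambda)\big)\log|t|^2 + O(1)\ ,
\end{align*}
which already establishes an expansion of the asserted shape; the remaining task is to identify the coefficient with $F_0(v_\lambda)$.

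The identification of the weight difference with the generalized Futaki invariant is where Corollary \ref{limitcycle} enters, and it is the heart of the argument. Since $\lambda$ is a special degeneration of $y$, the limit $y_0 = \lambda(0)\cdot y$ lies in $Y$, so $X_0 = X_{y_0}$ is again a smooth fiber, and by Corollary \ref{limitcycle} the point $\lambda(0)\cdot\Delta(y)$ is exactly $\Delta(X_0)$ — the hyperdiscriminant of the central fiber — and likewise $\lambda(0)\cdot R(y) = R(X_0)$ is its resultant (this latter via parts $iv)$–$vi)$ of Theorem \ref{globgen} applied to the same family, i.e. regularity of $R$). Consequently $w_{\min}(\Delta(y),\lambda)$ and $w_{\min}(R(y),\lambda)$ are the weights of the $\mathbb{C}^*$-action (induced by $\lambda$ acting on $X_0$ through its induced holomorphic vector field $v_\lambda$) on the lines $\mathbb{C}\Delta(X_0)\subset\emu$ and $\mathbb{C}R(X_0)\subset\elam$. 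I would then invoke the asymptotic-Riemann-Roch / equivariant-Euler-characteristic computation (as in Tian \cite{tian97} and Ding-Tian \cite{dingtian}) which expresses $F_0(v_\lambda)$ precisely as a normalized combination of these two weights — concretely, Theorem A combined with the differential-geometric expansion $\nu_{{\om}_y}(\vplt)=F_0(v_\lambda)\log|t|^2 + O(1)$ valid along smooth degenerations forces the coefficient to be $F_0(v_\lambda)$. An alternative, purely algebraic route is to match the weight difference directly with the Ding-Tian weight via the known relation between the Chow form weight, the discriminant weight, and the generalized Futaki number; Proposition \ref{degreeofhyp} (the degree $\deg\Delta_{\xhyp}=n(n+1)d-d\mu$) supplies the normalization constants $c_n=d$, $c_{n-1}$ that convert $\mu$ into subdominant Hilbert-polynomial data.

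The main obstacle is precisely this last identification: showing that the combinatorial weight difference $w_{\min}(R(y),\lambda)-w_{\min}(\Delta(y),\lambda)$, after dividing by $d^2(n+1)$, equals Ding and Tian's $F_0(v_\lambda)$ as they defined it intrinsically on the central fiber. This is not a formal manipulation — it requires knowing that the $\mathbb{C}^*$-weight on $\mathbb{C}\Delta(X_0)$ computes the Ricci-curvature part of the Futaki invariant while the weight on $\mathbb{C}R(X_0)$ computes the scalar-curvature/volume part, which is exactly the content of the refined statement (Theorem B of \cite{paul2011}) that $\Delta_{\xhyp}$ "encodes only the Ricci curvature." Granting that result, the expansion and the vanishing statement ($\nu_{{\om}_y}(\vplt)=O(1)$ when $F_0(v_\lambda)=0$) follow immediately by dropping the now-zero logarithmic term. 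One subtlety to check along the way: the continuous norms in Theorem A are not the flat Hermitian norms, so I must verify that the discrepancy between them contributes only to the $O(1)$ term — this follows because the weight polytope controls the leading logarithmic behavior independently of the choice of continuous norm, a point already used in \cite{paul2011}.
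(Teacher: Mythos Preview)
Your proposal is essentially the paper's own route: Theorem A plus Corollary \ref{limitcycle} plus the weight expansion (which is Proposition \ref{fdkenergy} in the paper). Two remarks.

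First, a sign slip: from Theorem A the coefficient of $\log|t|^2$ in $d^2(n+1)\nu_{\om_y}(\vplt)$ is $w_\lambda(\Delta(y))-w_\lambda(R(y))$, not $w_\lambda(R(y))-w_\lambda(\Delta(y))$ as you wrote. This is harmless for the structure of the argument but should be corrected.

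Second, you overcomplicate the identification of the coefficient with $F_0(v_\lambda)$. You do not need to invoke Theorem B of \cite{paul2011} or an asymptotic Riemann--Roch argument. Because $\lambda$ is a \emph{special} degeneration, the central fiber $X_0=X_{y_0}$ is itself a smooth member of the family, and $\lambda$ acts on it through the holomorphic vector field $v_\lambda$. By Corollary \ref{limitcycle} (and its resultant analogue) the lines $\mathbb{C}\Delta(X_0)$ and $\mathbb{C}R(X_0)$ are $\lambda$-fixed, so $w_\lambda(\Delta(y))=d\chi_{\Delta(X_0)}(v_\lambda)$ and $w_\lambda(R(y))=d\chi_{R(X_0)}(v_\lambda)$; their difference is exactly the Futaki character $F_*(v_\lambda)$ of the pair $(R(X_0),\Delta(X_0))$. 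Now simply apply Theorem A a second time, to $X_0$ itself along the one-parameter group of automorphisms generated by $v_\lambda$: differentiating gives $d^2(n+1)$ times the classical Calabi--Futaki integral $\int_{X_0}v_\lambda(h_{\om_0})\om_0^n$, which is by definition $F_0(v_\lambda)$. This is precisely the content of (\ref{calfut}) in the paper, and it closes the loop without any external input. Your worry about the continuous versus Hermitian norms is legitimate and your resolution is correct: the leading $\log|t|^2$ coefficient depends only on the weight support, not on the choice of norm.
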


  \section{The general theory of semistable pairs }\label{generaltheory}  
 
  \begin{center}
 {\tiny{  
\begin{tabular}{ll}
   $G - $ complex reductive linear algebraic group &   $\lambda_{\bull}, \mu_{\bull} -$  \emph{dominant} integral weights for $G=\slnc$ \\
  $P$, $B$ parabolic (Borel) subgroups of $G$ &   $\lambda_{\bull}=(\lambda_1\geq \lambda_2\geq \dots\geq \lambda_N\geq 0)$ $\lambda_i\in \mathbb{Z}$ \\
$\Lambda_{\mathbb{Z}}(B)$ dominant weights relative to $B$& \\
& \\
   $\mathbb{V} ,\mathbb{W}-$ finite dimensional rational $G$-modules &  $\mathbb{E}_{\lambda_{\bull}}-${irreducible module} corresponding to $\lambda_{\bull}$ \\
 & \\ 
   $v,w-$ \emph{nonzero} vectors in  $\mathbb{V} ,\mathbb{W}$ respectively   &    $m_{\lambull}(\beta):=\dim\elam({\beta})-$ multiplicity of $\beta$ in $\elam$ \\
  $[v] \in  \mathbb{P}(\mathbb{V})$ corresponding line in $\mathbb{V}$ & \\
 &\\
   $H ,T -$ maximal algebraic tori in $G$ &   $\mbox{supp}(\lambull):=\mbox{supp}(\elam)$      \\
  & \\ 
   $W-$\emph{Weyl group} (relative to $B$) &   $\mathcal{N}(\lambull)-$ convexhull$\{s\cdot\lambull \ | \ s\in W\} \subset M_{\mathbb{R}}$   \\
  & \\
   $M_{\mathbb{Z}}=M_{\mathbb{Z}}(H):=$ character lattice of $H$ &    $\mathcal{N}(v)-\mbox{convexhull}\{\beta \in  \mbox{supp}(\mathbb{V})\ |\  v_{\beta}\neq 0\} \subset M_{\mathbb{R}} $ \\
$N_{\mathbb{Z}}:=\mbox{Hom}(M_{\mathbb{Z}},\mathbb{Z})$ (dual lattice) \\
 $M_{\mathbb{R}}:= M_{\mathbb{Z}}\otimes_{\mathbb{Z}}\mathbb{R}$ ,  $N_{\mathbb{R}}:= N_{\mathbb{Z}}\otimes_{\mathbb{Z}}\mathbb{R}$& \\
 &\\
   $\mathbb{V}({\beta})=\{v\in \mathbb{V}\ |\ \tau\cdot v=\beta(\tau)v \ \mbox{for all} \ \tau \in H\} $    &    $\Delta(G)-$algebraic one parameter subgroups of $G$\\
\quad (weight space) $v_{\beta}-$ projection of $v$ into $\mathbb{V}({\beta})$ &\\
 \quad$\beta\in M_{\mathbb{Z}}$&    \\ 
  &\\
  $\mbox{supp}(\mathbb{V}):=\{\beta \in M_{\mathbb{Z}}\ |\  \mathbb{V}({\beta})\neq 0\} $     & $\mathbb{V}^{\vee}=$ dual space to $\mathbb{V}$ \\
   
  \end{tabular}}}
 \end{center}
 \ \\
 \ \\
 \subsection{Equivariant extensions of rational maps}\label{equivexts}
 In this section we loosely follow the first few paragraphs of \cite{deconcini87}, our primary goal is to formulate, and put into context, our  notions of semistability  (Definitions \ref{dominate}, \ref{pair}, and \ref{numerical}) .
 
 To begin, let $G$ be an algebraic group. $H\leq G$ a Zariski closed (possibly finite) subgroup. Let $\mathcal{O}$ denote the algebraic homogeneous space $G/H$. The definition of semistable pair arises immediately upon studying equivariant {completions} of the space $\mathcal{O}$.  
\begin{definition}
\emph{An \textbf{\emph{embedding}} of $\mathcal{O}$ is a $G$ variety $X$ together with a $G$-equivariant embedding $i:\mathcal{O}\ra X$ such that $i(\mathcal{O})$ is an open dense subset of $X$.}
\end{definition}
  $[\sigma]=\sigma H$ denotes the associated $H$ coset for any $\sigma\in G$. Then an embedding of $\mathcal{O}$ has a natural basepoint given by $o:=i([e])$ and we have that
\begin{align*}
i(\mathcal{O})=G\cdot o \ ,\ \overline{G\cdot o}=X\ .
\end{align*}
Let $(X_1,i_1)$ and $(X_2,i_2)$ be two embeddings of $\mathcal{O}$.  We recall the following well established notion.
\begin{definition}
\emph {A \textbf{\emph{morphism}} $\varphi$ from $(X_1,i_1)$ to $(X_2,i_2)$ is a $G$ equivariant regular map \newline $\varphi:X_1\ra X_2$ such that the diagram
\[
 \xymatrix{
  &X_1 \ar[dd]^{\varphi}  \\
 \mathcal{O}\ar[ru]^{i_1}\ar[rd]_{i_2}&\\
 & X_2  }
 \]
commutes. If a morphism  $\varphi$ exists we write $(X_1,i_1)\succsim (X_2,i_2)$ and we say that $(X_1,i_1)$ \textbf{\emph{dominates}} $(X_2,i_2)$. }
\end{definition}
\begin{remark}\emph{Observe that if a morphism  exists it is unique.}\end{remark}

Let $(X_1,i_1)$ and $(X_2,i_2)$ be two embeddings of $\mathcal{O}$ such that $(X_1,i_1)\succsim (X_2,i_2)$. Assume that these embeddings are both projective (hence complete) with very ample linearizations
\begin{align*}
\mathbb{L}_1\in \mbox{Pic}(X_1)^G\ , \ \mathbb{L}_2\in \mbox{Pic}(X_2)^G 
\end{align*}
 satisfying
\begin{align*}
\varphi^*(\mathbb{L}_2)\cong \mathbb{L}_1 \ .
\end{align*}
This is essentially definition 1.2.1 of \cite{alexeev-brion2006}. Observe that the induced map of $G$ modules
\begin{align*}
\varphi^*:H^0(X_2,\mathbb{L}_2) \ra H^0(X_1,\mathbb{L}_1) 
\end{align*}
is injective, hence its dual map
\begin{align*}
(\varphi^*)^t:H^0(X_1,\mathbb{L}_1)^{\vee}\ra H^0(X_2,\mathbb{L}_2)^{\vee}
\end{align*}
is surjective and gives a rational map on the projectivizations of these spaces. The whole set up may be pictured as follows
\[
 \xymatrix{
 & X_1 \ar[dd]^{\varphi}\ar@{^{(}->}[r]^>>>{f_1 }&\  \mathbb{P}(H^0(X_1,\mathbb{L}_1)^{\vee})\ar@{-->}[dd]^{(\varphi^*)^t} \\
 \mathcal{O}\ar[ru]^{i_1}\ar[rd]_{i_2}& &\\
 & X_2 \ar@{^{(}->}[r]^>>>{f_2}& \mathbb{P}(H^0(X_2,\mathbb{L}_2)^{\vee})}
 \]
We isolate some features of this situation.
\begin{enumerate}
\item There are $u_i \in H^0(X_i,\mathbb{L}_i)^{\vee}\setminus\{0\}$ such that $X_i\cong f_i(X_i)=\overline{G\cdot [u_i]}$ $(i=1,2)$ . \\
\ \\
\item $(\varphi^*)^t(u_1)=u_2$ \ . \\
\ \\
\item $\mbox{Span}(G\cdot u_i )=H^0(X_i,\mathbb{L}_i)^{\vee}$ \ . \\
\ \\
\item The map $(\varphi^*)^t:G\cdot [u_1]\ra G\cdot [u_2]$ extends to a regular map on the Zariski closures of these orbits.
\end{enumerate}

We abstract (1)-(4) as follows.
Let $G$ be a complex reductive linear algebraic group.  We consider pairs $(\mathbb{V}; v)$ such that the linear span of $G\cdot v$ coincides with $\mathbb{V}$. Recall from section 2 that for any vector space $\mathbb{V}$ and any $v\in \mathbb{V}\setminus\{0\}$ we  let $[v]\in\mathbb{P}(\mathbb{V})$ denote the line through $v$. If $\mathbb{V}$ is a $G$ module define $\mathcal{O}_{v}:=G\cdot [v]\subset \mathbb{P}(\mathbb{V})$ the projective orbit of $[v]$ . Recall that $\overline{\mathcal{O}}_{v}$ denotes the Zariski closure of this orbit.

All of the author's work on the standard conjectures revolves around the following definition. The only reference to the definition in the context of representation theory known to the author is \cite{smirnov2004} .  

\begin{definition}\label{dominate} \emph{ 
$(\mathbb{V}; v)$  \textbf{\emph{dominates}} $(\mathbb{W}; w)$, in which case we write $(\mathbb{V}; v)\succsim (\mathbb{W}; w)$ ,  
 if and only if there exists $\pi\in Hom(\mathbb{V},\mathbb{W})^G$ such that
$ \pi(v)=w$ and the induced  rational map  
$\pi:\mathbb{P}(\mathbb{V}) \dashrightarrow  \mathbb{P}(\mathbb{W})$
restricts to a regular (finite) map}  
$\pi:\overline{\mathcal{O}}_{v}\ra \overline{\mathcal{O}}_{w} \ $ \emph{between the Zariski closures of the orbits.}
  \end{definition}
The situation may be pictured as follows.
\[
 \xymatrix{
 & \overline{\mathcal{O}}_v\ar[dd]^{\pi}\ar@{^{(}->}[r] & \mathbb{P}(\mathbb{V})\ar@{-->}[dd]^{\pi} \\
 \mathcal{O}\ar[ru]^{i_v}\ar[rd]_{i_w}& &\\
 &  \overline{\mathcal{O}}_w\ar@{^{(}->}[r] & \mathbb{P}(\mathbb{W})}
 \]
Observe that the restriction of the map $\pi$  to $\overline{\mathcal{O}}_v$ is regular if and only if the following holds
  \begin{align}\label{disjoint}
 \qquad  \overline{\mathcal{O}}_v\cap \mathbb{P}(\ker \pi)=\emptyset \ .
\end{align}
The reader should check that whenever $(\mathbb{V}; v)\succsim (\mathbb{W}; w)$ it follows that
\begin{align*} 
 & \pi(\mathbb{V})=\mathbb{W} \ \mbox{and} \ \mathbb{V}=\ker(\pi)\oplus \mathbb{W} \ \mbox{ ($G$-module splitting) } \ .
\end{align*}
Therefore we may identify $\pi$ with projection onto $\mathbb{W}$ and $v$ decomposes as follows
\begin{align*}
v=(v_{\pi},w) \ , \ \ker(\pi)\ni v_{\pi}\neq 0 \ .
\end{align*}

Again the reader can easily check that $(\ref{disjoint})$ is equivalent to
\begin{align}\label{project}
 \qquad \overline{G\cdot[(v_{\pi},w)]}\cap\overline{G\cdot[(v_{\pi},0)]}=\emptyset \quad \mbox{( Zariski closure in  $\mathbb{P}(\ker(\pi)\oplus\mathbb{W}$ ) )} \ .
\end{align}
We reformulate (\ref{project}) as follows. 
Given $(v\in \mathbb{V}\setminus \{0\}\ ;\ w\in \mathbb{W}\setminus \{0\})$ we consider the projective orbits
\begin{align*}
\mathcal{O}_{vw}:=G\cdot[(v,w)]\subset \mathbb{P}(\mathbb{V}\oplus\mathbb{W}) \ , \ \mathcal{O}_{v}:=G\cdot[(v,0)]\subset \mathbb{P}(\mathbb{V}\oplus\{0\})
\end{align*}

 \begin{definition}\label{pair}
\emph{The pair $(v,w)$ is \textbf{\emph{semistable}} if and only if}  $ \overline{\mathcal{O}}_{vw}\cap\overline{\mathcal{O}}_{v}=\emptyset $ .
  \end{definition} 
We stress that semistability and dominance are equivalent
\begin{align*}
  (\mathbb{V}\oplus\mathbb{W}; (v,w))\succsim (\mathbb{W}; w) \Leftrightarrow  \overline{\mathcal{O}}_{vw}\cap\overline{\mathcal{O}}_{v}=\emptyset \ .
  \end{align*}
\begin{example}\label{hmss} \emph{Let $\mathbb{V}=\mathbb{C}, v=1$ (the trivial one dimensional representation). Let $\mathbb{W}$ be any $G$ module.
Then 
\begin{align*}
(\mathbb{C}\oplus\mathbb{W}; (1,w))\succsim (\mathbb{W}; w) \ \mbox{if and only if} \  
0\notin \overline{G\cdot w}\subset \mathbb{W} \quad (\mbox{\emph{affine} closure} ) \ .
\end{align*}
 In other words , the pair $(1,w)$ is semistable if and only if $w$ is semistable in the usual (Hilbert-Mumford) sense.}
\end{example} 
\subsection{Toric morphisms and numerical semistability}\label{toricmorphs}
In this subsection we study the dominance relation $(X_1,i_1)\succsim (X_2,i_2)$ in the special case $G\cong (\mathbb{C}^*)^n$, an algebraic torus.  In this situation our new semistability condition admits a description in terms of certain polytopes generalizing the numerical criterion of Geometric Invariant Theory (see \cite{dolgachev} pg.137 Theorem 9.2). To begin the discussion let's denote our torus by $H$. Let $\chi\in M_{\mathbb{Z}}$ be an $H$ character and $u\in N_{\mathbb{Z}}$ an algebraic one parameter subgroup satisfying $<\chi,u>=1$ . 
\[ 
\xymatrix{
1\ar[r]&T:=\mbox{Ker}(\chi)\ar@{^{(}->}[r]&H\ar[r]^u_{\chi}   & \mathbb{C}^*\ar@/_1pc/[l]\ar[r]&1}
\]
Let $\mathscr{A}, \mathscr{B} \subset M_{\mathbb{Z}}$ be (nonempty) finite subsets satisfying $a(u(\alpha))\equiv 1 , b(u(\alpha))\equiv 1$ for all $a\in \mathscr{A}$ and $b\in \mathscr{B}$ and all $\alpha \in \mathbb{C}^*$ . Define
\begin{align*}
\mathscr{A}_+:=\{\chi\}\cup \{a+\chi\ |\ a\in \mathscr{A}\ \}\ , \ \mathscr{B}_+:=\{\chi\}\cup \{b+\chi\ |\ b\in \mathscr{B}\ \}\ .
\end{align*}
There are two naturally associated $H$ representations $\mathbb{C}^{\mathscr{A}_+}$ and $\mathbb{C}^{\mathscr{B}_+}$ given by
\begin{align*}
&\mathbb{C}^{\mathscr{A}_+}=\mbox{span}\{\mathbf{e}_{\chi}\ ,\ \mathbf{e}_{\chi+a}\ | \ a\in \mathscr{A}\}\ , \  \mathbb{C}^{\mathscr{B}_+}=\mbox{span}\{\mathbf{f}_{\chi}\ ,\ \mathbf{f}_{\chi+b}\ | \ b\in \mathscr{B}\} \\
\ \\
&h\cdot (c_{\chi}\mathbf{e}_{\chi}+\sum_{a\in\mathscr{A}}c_a\mathbf{e}_{\chi+a})=\chi(h)(c_{\chi}\mathbf{e}_{\chi}+\sum_{a\in\mathscr{A}}a(h)c_a\mathbf{e}_{\chi+a})\\
\ \\
&h\cdot (c_{\chi}\mathbf{f}_{\chi}+\sum_{b\in\mathscr{B}}c_b\mathbf{f}_{\chi+b})=\chi(h)(c_{\chi}\mathbf{f}_{\chi}+\sum_{b\in\mathscr{B}}b(h)c_b\mathbf{f}_{\chi+b})\\
\ \\
& \mathbb{C}^{\mathscr{A}_+}\ni v:= \mathbf{e}_{\chi} +\sum_{a\in\mathscr{A}} 
\mathbf{e}_{\chi+a} \ ,\ \mathbb{C}^{\mathscr{B}_+}\ni w:= \mathbf{f}_{\chi} +\sum_{b\in\mathscr{B}} \mathbf{f}_{\chi+b}\ .
\end{align*}
We define $T$ equivariant maps into projective spaces  in the usual manner:
\begin{align*}
&\varphi_{\mathscr{A}}:T\ra \mathbb{P}^{|\mathscr{A}|}:=\mathbb{P}(\mathbb{C}^{\mathscr{A}_+}) \ , \ \varphi_{\mathscr{A}}(t):=\big[ \mathbf{e}_{\chi} +\sum_{a\in\mathscr{A}}a(t)\mathbf{e}_{\chi+a}\big] \\
\ \\
&\varphi_{\mathscr{B}}:T\ra \mathbb{P}^{|\mathscr{B}|}:= \mathbb{P}(\mathbb{C}^{\mathscr{B}_+})\ , \ \varphi_{\mathscr{B}}(t):=\big[ \mathbf{f}_{\chi} +\sum_{b\in\mathscr{B}}b(t)\mathbf{f}_{\chi+b}\big] \\
\end{align*}
Let $\pi\in Hom(\mathbb{C}^{\mathscr{A}_+},\mathbb{C}^{\mathscr{B}_+})^H$ be such that $\pi(v)=w$. Observe that these requirements force that the following conditions are met
\begin{align*}
\mathscr{B}\subseteq\mathscr{A} \ \mbox{and}\ \ker(\pi)=\bigg\{\sum_{a\in \mathscr{A}\setminus\mathscr{B}}c_a\mathbf{e}_{\chi+a}\ |\ c_a\in \mathbb{C}\ \bigg\} \ .
\end{align*}
Then we have exactly the same set up as before
\[
 \xymatrix{
 & X_{\mathscr{A}}:=\overline{\varphi_{\mathscr{A}}(T)}\ar[dd]^{\pi}\ar@{^{(}->}[r] & \mathbb{P}^{|\mathscr{A}|}\ar@{-->}[dd]^{\pi} \\
  {T}\ar[ru]^{\varphi_{\mathscr{A}}}\ar[rd]_{\varphi_{\mathscr{B}}}& &\\
 &  X_{\mathscr{B}}:=\overline{\varphi_{\mathscr{B}}(T)}\ar@{^{(}->}[r] & \mathbb{P}^{|\mathscr{B}|}}
 \]
 
 Recall from our previous discussion that the map $\pi$ extends to $X_{\mathscr{A}}\setminus \varphi_{\mathscr{A}}(T)$ if and only if $(X_{\mathscr{A}}\setminus \varphi_{\mathscr{A}}(T))\cap \mathbb{P}(\mbox{ker}(\pi))=\emptyset $. To test for this latter condition we study 
\begin{align*}
\lambda^u(0)\cdot [v]=\varphi_{\mathscr{A}}(\lambda^u(0))\quad \mbox{for all $u\in N_{\mathbb{Z}}$} \ ,
\end{align*}
where $\lambda^u$ is the one parameter subgroup corresponding to $u$.
\begin{align*}
\lambda^u(t)\cdot v= \sum_{a\in\mathscr{A}\setminus\mathscr{B}}t^{(u,a)}\mathbf{e}_{\chi+a}+\big(\mathbf{e}_{\chi}+\sum_{b\in \mathscr{B}}t^{(u,b)}\mathbf{e}_{\chi+b}\big) \ .
\end{align*}
 It follows at once that $\varphi_{\mathscr{A}}(\lambda^u(0))\in \mathbb{P}(\mbox{ker}(\pi))$ if and only if
 \begin{align*}
 \min_{a\in\mathscr{A}\setminus\mathscr{B}}(u,a) <\ \min\{0,\min_{b\in\mathscr{B}}(u,b) \} \ .
 \end{align*}
Therefore a necessary condition that $\pi$ extend to the closure of the torus orbit is the following
\begin{align*}
\min\{0,\min_{b\in\mathscr{B}}(u,b) \}\leq \min_{a\in\mathscr{A}\setminus\mathscr{B}}(u,a) \ \mbox{for all $u\in N_{\mathbb{Z}}$} \ . \quad (*)
\end{align*}
Observe that $(*)$ is equivalent to the following polyhedral containment 
\begin{align*}
\mathbf{conv}(\mathscr{A}\setminus\mathscr{B})\subseteq \mathbf{conv}(\{0\}\cup \mathscr{B})\ .
\end{align*}
In fact, this necessary condition is also sufficient.
\begin{proposition}
\emph{The map $\pi$ extends to $X_{\mathscr{A}}$ if and only if $\mathbf{conv}(\mathscr{A}\setminus\mathscr{B})\subseteq \mathbf{conv}(\{0\}\cup \mathscr{B})$.}
\end{proposition}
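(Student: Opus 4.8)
The necessity direction has already been extracted in the discussion preceding the proposition: the one-parameter subgroup computation shows that $\varphi_{\mathscr{A}}(\lambda^u(0))\in\mathbb{P}(\ker\pi)$ precisely when $\min_{a\in\mathscr{A}\setminus\mathscr{B}}(u,a)<\min\{0,\min_{b\in\mathscr{B}}(u,b)\}$, so that failure of $(*)$ for some $u\in N_{\mathbb{Z}}$ exhibits a point of $X_{\mathscr{A}}\setminus\varphi_{\mathscr{A}}(T)$ lying in $\mathbb{P}(\ker\pi)$, hence an obstruction to extending $\pi$. The substantive content is the converse: assuming the polyhedral containment $\mathbf{conv}(\mathscr{A}\setminus\mathscr{B})\subseteq\mathbf{conv}(\{0\}\cup\mathscr{B})$, show that $X_{\mathscr{A}}\cap\mathbb{P}(\ker\pi)=\emptyset$, equivalently (as recorded in (\ref{disjoint}) and (\ref{project})) that $\overline{\mathcal{O}}_{vw}\cap\overline{\mathcal{O}}_v=\emptyset$ in $\mathbb{P}(\mathbb{C}^{\mathscr{A}_+})$ after identifying $\mathbb{C}^{\mathscr{A}_+}\cong\ker\pi\oplus\mathbb{C}^{\mathscr{B}_+}$.

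\textbf{Strategy for the converse.} The plan is to use the Hilbert--Mumford criterion for the torus $T$ (equivalently for $H$): a point $[z]$ lies in $\overline{X_{\mathscr{A}}\cap\mathbb{P}(\ker\pi)}$ only if it is a limit $\lambda^u(0)\cdot[v]$ for some $u\in N_{\mathbb{Z}}$, since $X_{\mathscr{A}}$ is the closure of a single torus orbit and the boundary strata of a toric variety are themselves swept out by limits of one-parameter subgroups acting on the generic point. Concretely, every point of $X_{\mathscr{A}}=\overline{\varphi_{\mathscr{A}}(T)}$ is in the closure of $\varphi_{\mathscr{A}}(T)$, and for a closed torus orbit the Hilbert--Mumford / valuative criterion reduces membership in $\mathbb{P}(\ker\pi)$ to the existence of $u$ with $\varphi_{\mathscr{A}}(\lambda^u(0))\in\mathbb{P}(\ker\pi)$; but that was shown to require the strict inequality negating $(*)$. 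So I would argue: suppose for contradiction $X_{\mathscr{A}}\cap\mathbb{P}(\ker\pi)\neq\emptyset$; pick a point $p$ there, necessarily on the boundary; realize $p$ (or a point in its orbit closure, which suffices since $\mathbb{P}(\ker\pi)$ is $H$-stable) as $\lambda^u(0)\cdot[v]$ for a suitable primitive $u\in N_{\mathbb{Z}}$; read off from the explicit expansion $\lambda^u(t)\cdot v=\sum_{a\in\mathscr{A}\setminus\mathscr{B}}t^{(u,a)}\mathbf{e}_{\chi+a}+(\mathbf{e}_\chi+\sum_{b\in\mathscr{B}}t^{(u,b)}\mathbf{e}_{\chi+b})$ that the $\mathbb{C}^{\mathscr{B}_+}$-component of the limit vanishes, i.e. $\min_{a\in\mathscr{A}\setminus\mathscr{B}}(u,a)<\min\{0,\min_{b\in\mathscr{B}}(u,b)\}$; and finally observe that this violates the containment $\mathbf{conv}(\mathscr{A}\setminus\mathscr{B})\subseteq\mathbf{conv}(\{0\}\cup\mathscr{B})$ by testing the support functions of the two polytopes against $u$.

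\textbf{The main obstacle.} The delicate point is the reduction of ``$p$ is an arbitrary point of the boundary $X_{\mathscr{A}}\setminus\varphi_{\mathscr{A}}(T)$ lying in $\mathbb{P}(\ker\pi)$'' to ``$p$ is of the form $\lambda^u(0)\cdot[v]$ for some $u\in N_{\mathbb{Z}}$'': a priori $p$ could be a limit along a curve in $T$ that is not a one-parameter subgroup. The clean way around this is the valuative criterion of properness together with the fact that $\mathbb{P}(\ker\pi)$ is Zariski closed and $H$-invariant: any point of $X_{\mathscr{A}}$ is $\lim_{t\to0}\varphi_{\mathscr{A}}(\gamma(t))$ for some formal curve $\gamma:\operatorname{Spec}\mathbb{C}((t))\to T$, and writing $\gamma$ in terms of the cocharacter lattice one extracts a dominant cocharacter $u=\operatorname{ord}_t(\gamma)\in N_{\mathbb{Z}}$ whose associated limit $\lambda^u(0)\cdot[v]$ lies in the same $H$-orbit closure; since $\mathbb{P}(\ker\pi)$ is closed and $H$-stable, if $p\in\mathbb{P}(\ker\pi)$ then also $\lambda^u(0)\cdot[v]\in\mathbb{P}(\ker\pi)$. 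Granting this — which is exactly the toric incarnation of the Hilbert--Mumford criterion and the only nontrivial input — the rest is the support-function bookkeeping already carried out, run in reverse. I would state the valuative reduction as a short lemma, cite the standard toric/GIT reference (\cite{dolgachev}), and then close the argument in one line via the polytope containment.
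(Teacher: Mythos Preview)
Your proposal is correct and follows essentially the same route as the paper. The paper packages the key reduction as Lemma~\ref{p4tori}, asserting that every boundary point $[y]\in X_{\mathscr{A}}\setminus\varphi_{\mathscr{A}}(T)$ is \emph{exactly} of the form $\varphi_{\mathscr{A}}(\lambda^u(0))$ for some $u\in N_{\mathbb{Z}}$; your valuative-criterion argument (factor a formal arc $\gamma(t)$ in the abelian group $T$ as $\sigma(t)\lambda^u(t)$ with $\sigma(0)\in T$, so that $p=\sigma(0)\cdot\lambda^u(0)\cdot[v]$) is precisely how one proves that lemma, and your observation that $\mathbb{P}(\ker\pi)$ is $H$-invariant is what lets the slightly weaker ``same $H$-orbit'' conclusion suffice.
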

The proposition is a consequence of the following lemma .
\begin{lemma}\label{p4tori}
\emph{Let $[y]\in X_{\mathscr{A}}\setminus \varphi_{\mathscr{A}}(T)$. Then there exists a $u\in N_{\mathbb{Z}}$ such that }
\begin{align*}
\varphi_{\mathscr{A}}(\lambda^u(0))=[y]\ .  
\end{align*}
\end{lemma}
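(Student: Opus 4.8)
The plan is to show that every point in the boundary $X_{\mathscr{A}}\setminus\varphi_{\mathscr{A}}(T)$ is reached by a one-parameter subgroup degeneration $\lambda^u(0)$, i.e.\ that the toric variety $X_{\mathscr{A}}$ is covered by the torus-fixed points of limits $\lim_{t\to0}\lambda^u(t)\cdot[v]$ together with the open orbit. First I would write, for $u\in N_{\mathbb{Z}}$,
\begin{align*}
\lambda^u(t)\cdot v=\mathbf{e}_{\chi}+\sum_{a\in\mathscr{A}}t^{(u,a)}\mathbf{e}_{\chi+a},
\end{align*}
so that after rescaling by $t^{-m(u)}$ with $m(u):=\min\{0,\min_{a\in\mathscr{A}}(u,a)\}$, the limit $\varphi_{\mathscr{A}}(\lambda^u(0))$ is the coordinate point supported on $\mathscr{A}_+^{\min}(u):=\{\chi\}\cup\{\chi+a : (u,a)=m(u)\}$ when $m(u)=0$, and on $\{\chi+a:(u,a)=m(u)\}$ when $m(u)<0$; in either case the support is exactly the face of the polytope $P:=\mathbf{conv}(\{0\}\cup\mathscr{A})$ on which the linear functional $u$ (extended by $(u,0)=0$) is minimized. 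Thus the set of one-parameter-subgroup limit points is in bijection with the faces of $P$.

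The second step is to identify $X_{\mathscr{A}}$ with the projective toric variety $X_P$ associated to the lattice polytope $P$ (or rather to the point configuration $\{0\}\cup\mathscr{A}$ sitting at height $1$ via $\chi$). Since $v$ has all coordinates equal to $1$, the closure $\overline{\varphi_{\mathscr{A}}(T)}$ is precisely the projective toric variety defined by this configuration, and its torus-invariant points are classically in bijection with the vertices of $P$, while its torus-invariant subvarieties correspond to faces. The key point to extract is that the boundary $X_{\mathscr{A}}\setminus\varphi_{\mathscr{A}}(T)$ is the union of the torus orbits corresponding to proper faces of $P$, and — crucially — that each such orbit contains the distinguished fixed point supported on that face in its closure, but more to the point, \emph{every} point $[y]$ in the boundary lies in the torus orbit $T\cdot[y_F]$ where $[y_F]$ is the fixed point of some proper face $F$. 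Since $T\cdot[y_F]=T\cdot\varphi_{\mathscr{A}}(\lambda^{u}(0))$ for any $u$ in the relative interior of the normal cone of $F$, and since $T$ acts on $X_{\mathscr{A}}$, I can write $[y]=s\cdot\varphi_{\mathscr{A}}(\lambda^u(0))$ for some $s\in T$; then $\varphi_{\mathscr{A}}((s^{-1})\cdot\lambda^u)(0)$, using that $\lambda^u$ commutes with $T$ and that conjugating/translating the one-parameter subgroup by a torus element does not change the limit's orbit — actually I want $\lambda'(t):=s\cdot\lambda^u(t)$, a coset of the one-parameter subgroup, and observe $\lambda'(0)\cdot[v]=s\cdot\varphi_{\mathscr{A}}(\lambda^u(0))=[y]$. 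One checks $s\cdot\lambda^u$ extends holomorphically over $t=0$ because $\lambda^u$ does and $s$ is a fixed constant, so this is a legitimate degeneration, and its "value at $0$" in the relevant sense is $[y]$; if one insists on a genuine one-parameter subgroup rather than a translate, absorb $s$ into the choice of basepoint or note that the statement as written only requires $\varphi_{\mathscr{A}}(\lambda^u(0))=[y]$, which forces $[y]$ to be a torus-fixed point — so in fact the sharper claim is that boundary points that are limits of the \emph{specific} vector $v$ along one-parameter subgroups are exactly the fixed points, and the lemma should be read with the understanding that $\varphi_{\mathscr{A}}$ here denotes the orbit map so that $\varphi_{\mathscr{A}}(\lambda^u(0))$ ranges over the $T$-orbit closure's boundary orbits.

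The main obstacle is precisely this last subtlety: whether the lemma asserts that boundary points are torus-fixed limits (true only for the fixed points) or that the whole boundary is swept out by $T$-translates of such limits. For a normal projective toric variety, the standard structure theory (orbit–cone correspondence, e.g.\ via Cox–Little–Schenck) gives that $X_{\mathscr{A}}=\bigsqcup_{F\preceq P}O_F$ with $O_F\cong$ a quotient torus, and each $O_F$ is a single $T$-orbit; hence $[y]\in O_F$ implies $[y]=s\cdot(\text{fixed point of }F)$ for some $s$, and one then checks that $s$ can be realized by flowing along a one-parameter subgroup in the big torus combined with the limiting one — i.e.\ one uses that $T$ acts transitively on $O_F$ and that $\overline{O_F}$ is itself the toric variety of the face $F$, to induct on dimension. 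I would therefore carry out: (i) the explicit limit computation above; (ii) identification of $X_{\mathscr{A}}$ as a projective toric variety and invocation of the orbit–cone correspondence; (iii) a short induction on $\dim P$ reducing a general boundary point to a fixed point via the transitive $T$-action on each orbit, thereby writing it as $\varphi_{\mathscr{A}}(\lambda^u(0))$ after suitably modifying the one-parameter subgroup by a torus element. Step (iii), reconciling the "$T$-translate" with a bona fide expression of the form $\varphi_{\mathscr{A}}(\lambda^u(0))$, is where the real content lies.
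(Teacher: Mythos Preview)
The paper does not prove this lemma; it is asserted and then invoked for the preceding Proposition and, later, for Property~(P) in the case of tori. So there is no argument in the paper to compare against, but your proposal merits comment.

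Your central worry is well founded. For any genuine one-parameter subgroup $u\in N_{\mathbb{Z}}$, the limit $\varphi_{\mathscr{A}}(\lambda^u(0))$ always has every nonzero homogeneous coordinate equal to $1$: it is the distinguished point supported on the face of $P:=\mathbf{conv}(\{0\}\cup\mathscr{A})$ where the linear functional $u$ is minimized. There is exactly one such point per face, so the set of these limits is finite, whereas the boundary $X_{\mathscr{A}}\setminus\varphi_{\mathscr{A}}(T)$ typically contains positive-dimensional $T$-orbits. Concretely, with $T=(\mathbb{C}^*)^2$ and $\mathscr{A}=\{(1,0),(0,1)\}$ one has $X_{\mathscr{A}}=\mathbb{P}^2$, and the boundary point $[1:0:2]$ is not $\varphi_{\mathscr{A}}(\lambda^u(0))$ for any $u$. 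So the lemma, read literally, is too strong; your step~(iii), reconciling an arbitrary $[y]$ with a bona fide one-parameter-subgroup limit, cannot be carried out and you should not try.

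One small correction: the one-parameter-subgroup limits are not the $T$-fixed points. There is one such limit per face of $P$, while the $T$-fixed points correspond only to the vertices. In the example above $[1:0:1]=\varphi_{\mathscr{A}}(\lambda^{(1,0)}(0))$ is a genuine limit but is not $T$-fixed.

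What both applications in the paper actually need is only the weaker assertion you sketch: every nonempty closed $T$-invariant subset $\mathscr{Z}$ of the boundary contains a one-parameter-subgroup limit. Your orbit--cone argument handles this cleanly. Since $\mathscr{Z}$ is $T$-invariant it is a union of $T$-orbits $O_F$ indexed by proper faces $F\prec P$; each $O_F$ contains its distinguished point, and that point equals $\varphi_{\mathscr{A}}(\lambda^u(0))$ for any $u$ in the relative interior of the inner normal cone of $F$. This is exactly what is required for the Proposition (take $\mathscr{Z}=X_{\mathscr{A}}\cap\mathbb{P}(\ker\pi)$) and is precisely Property~(P) for tori. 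No induction on $\dim P$ is needed.
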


The purpose of this discussion is to motivate the following definition.

\begin{definition}\label{numerical}   \emph{Let $\mathbb{V},\mathbb{W}$ be $G$-modules. The pair $(v\in\mathbb{V}\setminus\{0\} \ , \ w\in \mathbb{W}\setminus\{0\} )$ is \textbf{\emph{numerically semistable}} if and only if  $\mathcal{N}(v)\subset \mathcal{N}(w)$    for all maximal algebraic tori $H\leq G$.
  }
 \end{definition}

 \begin{proposition}\label{conj2} \emph{Semistability implies numerical semistability} :
\begin{align*}
\overline{\mathcal{O}}_{vw}\cap\overline{\mathcal{O}}_{v}=\emptyset \Rightarrow  \mathcal{N}(v)\subset \mathcal{N}(w) \ \mbox{ \emph{for all maximal algebraic tori} $H\leq G$ \ .   } 
\end{align*}
\end{proposition}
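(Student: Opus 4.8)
The plan is to prove the contrapositive by testing against one-parameter subgroups inside a \emph{single} maximal torus, in the spirit of the ``easy'' implication in the Hilbert--Mumford numerical criterion. Fix a maximal algebraic torus $H\leq G$ and suppose $\mathcal{N}(v)\not\subseteq\mathcal{N}(w)$ in $M_{\mathbb{R}}$. Since $\mathcal{N}(v)$ is the convex hull of the finite set of $H$-weights $\beta$ with $v_\beta\neq 0$, it has a vertex $\beta_0$ lying outside the convex set $\mathcal{N}(w)$ (otherwise every vertex of $\mathcal{N}(v)$ would lie in $\mathcal{N}(w)$ and convexity would force $\mathcal{N}(v)\subseteq\mathcal{N}(w)$), and this $\beta_0$ is itself such a weight, hence a lattice point. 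Because $\mathcal{N}(w)$ is a lattice polytope, the separating hyperplane theorem can be sharpened to produce a \emph{rational}, hence after clearing denominators an integral, $u\in N_{\mathbb{Z}}$ with $(u,\beta_0)<(u,\gamma)$ for every $H$-weight $\gamma$ of $w$. Writing $m_v(u):=\min\{(u,\beta):v_\beta\neq 0\}$ and $m_w(u):=\min\{(u,\gamma):w_\gamma\neq 0\}$, this gives $m_v(u)\leq(u,\beta_0)<m_w(u)$.

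Next I would compute the limits of the one-parameter subgroup $\lambda^u:\mathbb{C}^*\ra H$ acting on the two points in question. Decomposing into $H$-weight spaces, $\lambda^u(t)\cdot(v,w)=\big(\sum_\beta t^{(u,\beta)}v_\beta,\ \sum_\gamma t^{(u,\gamma)}w_\gamma\big)$; the overall minimal exponent is $m_v(u)$, so dividing by $t^{m_v(u)}$ and letting $t\ra 0$ the $\mathbb{W}$-component dies (every exponent appearing there is $\geq m_w(u)-m_v(u)>0$) and we obtain
\[
\lim_{t\ra 0}\lambda^u(t)\cdot[(v,w)]=\Big[\Big(\textstyle\sum_{(u,\beta)=m_v(u)}v_\beta\,,\,0\Big)\Big]\ ,
\]
whose $\mathbb{V}$-part is a nonzero sum of vectors lying in distinct weight spaces, so this is a genuine point of $\mathbb{P}(\mathbb{V}\oplus\mathbb{W})$ and it lies in $\overline{\mathcal{O}}_{vw}$. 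Running the identical computation on $[(v,0)]$ gives $\lim_{t\ra 0}\lambda^u(t)\cdot[(v,0)]=\big[\big(\sum_{(u,\beta)=m_v(u)}v_\beta,0\big)\big]\in\overline{\mathcal{O}}_v$ — literally the same point. Hence $\overline{\mathcal{O}}_{vw}\cap\overline{\mathcal{O}}_v\neq\emptyset$, contradicting the hypothesis $\overline{\mathcal{O}}_{vw}\cap\overline{\mathcal{O}}_v=\emptyset$, and the proposition follows.

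I do not expect a serious obstacle here; the argument is essentially formal once the reduction to a torus is made. The two points that deserve care are (i) the upgrade from a real separating functional to an integral one-parameter subgroup, which is where one uses that $\mathcal{N}(w)$ is a \emph{lattice} polytope and $\beta_0$ a lattice point, and (ii) the projective normalization: the crucial observation to record is that the \emph{strict} inequality $m_v(u)<m_w(u)$ is exactly what forces the $\mathbb{W}$-coordinate of the limit of $[(v,w)]$ to vanish, so that the two limits coincide. With the reverse inequality the two limits need not agree, which is consistent with the standard fact that the containment $\mathcal{N}(v)\subseteq\mathcal{N}(w)$ is precisely the reformulation of ``$m_v(u)\geq m_w(u)$ for all $u\in N_{\mathbb{Z}}$'' in terms of support functions. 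The computation also suggests that semistability might be equivalent to the absence of such torus limits for every maximal torus, which is the content of Conjecture \ref{numericalcriterion}.
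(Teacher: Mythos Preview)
Your argument is correct. The paper actually states this proposition without proof, presumably regarding it as standard; the one-parameter-subgroup computation you give is precisely the ``easy direction'' of a Hilbert--Mumford type criterion and is in complete agreement with the toric calculation the paper carries out just before (Section~\ref{toricmorphs}), where the same inequality $\min_{a}(u,a)<\min_b(u,b)$ is shown to force the limit $\lambda^u(0)\cdot[v]$ into $\mathbb{P}(\ker\pi)$. One could also assemble a proof internally from the Kempf--Ness material that follows in the paper: Proposition~\ref{vwlowerbound} gives $\nu_{vw}\geq -C$, Proposition~\ref{fdkenergy} then forces $F_{gen}(\lambda)=w_\lambda(w)-w_\lambda(v)\leq 0$ for every one-parameter subgroup, and the proposition immediately preceding~\ref{fdkenergy} converts that into $\mathcal{N}(v)\subset\mathcal{N}(w)$ for all maximal tori --- but your direct argument is shorter and logically prior.

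A minor simplification you might record: rather than first locating a vertex $\beta_0$ and then separating it, you can go straight to the support-function characterization $\mathcal{N}(v)\subseteq\mathcal{N}(w)\Leftrightarrow m_v(u)\geq m_w(u)$ for all $u\in N_{\mathbb{Z}}$; its failure hands you the integral $u$ with $m_v(u)<m_w(u)$ in one step, and the remainder of your limit computation goes through unchanged.
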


We have formulated stability in terms of arbitrary finite dimensional $G=\slnc$-modules $\mathbb{V}$ and $\mathbb{W}$. In this paragraph we assume that the modules are not only both \emph{irreducible} but satisfy a further condition which we will now consider.  The reader should check that this condition is satisfied by the modules that appear in the applications of stability to the Mabuchi enegy. To begin let $\lambda_{\bull}$ be a partition consisting of $N$ parts 
\begin{align}
\lambda_{\bull}=(\lambda_1\geq \lambda_2\geq \dots\geq \lambda_N\geq \lambda_{N+1}=0)\ .
\end{align}
Let $\elam$ denote the corresponding irreducible representation of $G$  with highest weight $\lambda_{\bull}$ (with respect to a choice of Borel $B$) . Let $W$ denote the Weyl group of $G$ with respect to $B$, it is well known the weight polytope of $\elam$ is given by
\begin{align}
\mathcal{N}(\lambda_{\bull})=\mbox{convexhull} \ \big\{W\cdot {\lambda_{\bull}}\big\} \ ,
\end{align}  
 where $W\cdot {\lambda_{\bull}}$ denotes the orbit of the highest weight under the action of the Weyl group.
  Given two partitions $\lambda_{\bull}$  and $\mu_{\bull}$ recall that $\mu_{\bull}$ \emph{dominates} $\lambda_{\bull}$ (written 
 $ \lambda_{\bull} \trianglelefteq \mu_{\bull}$ ) if and only if for all $1\leq i\leq N$ we have
  \begin{align}
\begin{split}
  \sum_{k=1}^i\lambda_k\leq \sum_{k=1}^i\mu_k \ .
\end{split}
\end{align}
 
 Consider two irreducible $G$ modules $\mathbb{V}=\elam$ and $\mathbb{W}=\emu$. 
 Let $(v,w)$ be a pair of non-zero vectors in these modules. We have the following proposition. 
\begin{proposition}  
\emph{If $(v,w)$ is semistable then $\lambda_{\bull} \trianglelefteq \mu_{\bull}$} .
 \end{proposition}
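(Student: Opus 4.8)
The plan is to reduce the claim to the torus case and then invoke Proposition \ref{conj2} together with the combinatorial description of the weight polytope of an irreducible $\slnc$-module. First I would observe that semistability of $(v,w)$ is, by definition, a condition in $\mathbb{P}(\elam\oplus\emu)$, and that the disjointness $\overline{\mathcal{O}}_{vw}\cap\overline{\mathcal{O}}_{v}=\emptyset$ is inherited under restriction to any maximal algebraic torus $H\leq G$ (the $H$-orbit closures are contained in the $G$-orbit closures). Hence, by Proposition \ref{conj2}, semistability of $(v,w)$ forces the polytope containment
\begin{align*}
\mathcal{N}(v)\subseteq \mathcal{N}(w)
\end{align*}
for every maximal torus $H$, where these are the weight polytopes of $v$ and $w$ with respect to $H$.

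The key step is then to extract the dominance order $\lambda_{\bull}\trianglelefteq\mu_{\bull}$ from a single well-chosen instance of this containment. Fix the maximal torus $H$ of diagonal matrices used to define the partitions, so that $v\in\elam$ has highest $H$-weight $\lambda_{\bull}$ and $w\in\emu$ has highest $H$-weight $\mu_{\bull}$. Since $v$ is a nonzero vector in the irreducible module $\elam$, its weight polytope $\mathcal{N}(v)$ is the full weight polytope $\mathcal{N}(\lambda_{\bull})=\mathbf{conv}\{W\cdot\lambda_{\bull}\}$ — here I would use the standard fact that every weight of an irreducible representation occurs in the support of any nonzero vector's orbit hull once one takes the convex hull, or more safely, I would first replace $v$ by a point of the open orbit (equivalently note that $G\cdot[v]$ spans $\elam$, so the $H$-weights appearing over the whole orbit exhaust $\mathrm{supp}(\elam)$, whence $\mathcal{N}(v)=\mathcal{N}(\lambda_{\bull})$ after replacing $v$ by a generic point of its $G$-orbit, which does not affect semistability). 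Likewise $\mathcal{N}(w)=\mathcal{N}(\mu_{\bull})$. Therefore the containment becomes
\begin{align*}
\mathcal{N}(\lambda_{\bull})=\mathbf{conv}\{W\cdot\lambda_{\bull}\}\ \subseteq\ \mathbf{conv}\{W\cdot\mu_{\bull}\}=\mathcal{N}(\mu_{\bull}).
\end{align*}

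Finally I would invoke the classical equivalence between this Weyl-polytope containment and the dominance partial order: for dominant integral weights of $\slnc$ (equivalently partitions with $N$ parts), one has $\mathbf{conv}\{W\cdot\lambda_{\bull}\}\subseteq\mathbf{conv}\{W\cdot\mu_{\bull}\}$ if and only if $\lambda_{\bull}\trianglelefteq\mu_{\bull}$, i.e.\ $\sum_{k=1}^i\lambda_k\leq\sum_{k=1}^i\mu_k$ for all $i$. This is a standard fact (a permutohedron/majorization statement: the orbit polytope of a vector is contained in that of another precisely when the first is majorized by the second, in the sense of the dominance order on the sorted coordinate vectors), and since both highest weights are already dominant the "$W\cdot$" can be read off directly. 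The main obstacle is the bookkeeping in the second paragraph — making rigorous that for a \emph{fixed arbitrary} nonzero $v$ the weight polytope $\mathcal{N}(v)$ equals the full polytope $\mathcal{N}(\lambda_{\bull})$; the cleanest route, as indicated, is to note that semistability is a $G$-invariant notion so we may test it on a point with full support, for which $\mathcal{N}(v)=\mathcal{N}(\lambda_{\bull})$ is immediate, and similarly for $w$. Everything else is either Proposition \ref{conj2} or the classical majorization dictionary.
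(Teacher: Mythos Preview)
Your proof is correct and follows essentially the same route as the paper: invoke Proposition \ref{conj2} to get numerical semistability, then use the classical equivalence $\mathcal{N}(\lambda_{\bull})\subseteq\mathcal{N}(\mu_{\bull})\iff\lambda_{\bull}\trianglelefteq\mu_{\bull}$. The paper's proof is a single line citing only this equivalence, so you have in fact supplied the bookkeeping the paper leaves implicit---namely, how to pass from $\mathcal{N}(v)\subset\mathcal{N}(w)$ (for all maximal tori) to $\mathcal{N}(\lambda_{\bull})\subset\mathcal{N}(\mu_{\bull})$. One small simplification: you do not need $\mathcal{N}(\sigma\cdot w)=\mathcal{N}(\mu_{\bull})$; the automatic inclusion $\mathcal{N}(\sigma\cdot w)\subseteq\mathcal{N}(\mu_{\bull})$ already gives $\mathcal{N}(\lambda_{\bull})=\mathcal{N}(\sigma\cdot v)\subset\mathcal{N}(\sigma\cdot w)\subseteq\mathcal{N}(\mu_{\bull})$ once $\sigma$ is chosen generically for $v$ alone.
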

 \begin{proof}{The proposition follows at once from the fact that}
 \begin{align}
\lambda_{\bull} \trianglelefteq \mu_{\bull} \ \mbox{if and only if}\ \mathcal{N}(\lambda_{\bull})\subseteq \mathcal{N}(\mu_{\bull}) \ .
\end{align} 
 \end{proof}
 Dominance is not sufficient to guarantee  that the locus of semistable pairs is non-empty.  
 \begin{example}
\emph{Let $\mathbb{V}_e$ and  $\mathbb{V}_d$ be irreducible $SL(2,\mathbb{C})$ modules with highest weights $e , d \in\mathbb{N}$. These are well known to be spaces of homogeneous polynomials in two variables. Let $f$ and $g$ be two such polynomials in $\mathbb{V}_e\setminus\{0\}$ and $\mathbb{W}_d\setminus\{0\}$
respectively. Then the pair $(f,g)$ is numerically semistable if and only if
\begin{align}\label{d-e/2}
e\leq d \ \mbox{and for all $p\in \mathbb{P}^1$}\ \mbox{ord}_p(g)-\mbox{ord}_p(f)\leq \frac{d-e}{2} \ .
\end{align}
This can be proved by factoring the polynomials.
In particular when $e=0$ and $f=1$ we see that $(1,g)$ is numerically semistable if and only if 
\begin{align}
 \mbox{ord}_p(g) \leq \frac{d}{2} \ \mbox{for all $p\in \mathbb{P}^1$}\ .
\end{align}
 Assume that $e=d-1$ and $(f,g)$ is numerically semistable. Then by (\ref{d-e/2}) we get
\begin{align}\label{d=e}
\mbox{ord}_p(g)\leq \mbox{ord}_p(f) \quad \mbox{for all $p\in \mathbb{P}^1$} \ .
\end{align}
Let $\{p_1,p_2,\dots,p_d\}$ be the zeros of $g$ on $\mathbb{P}^1$ counted with multiplicity. Then by (\ref{d=e}) we have
\begin{align}
d=\sum_{1\leq i\leq d}\mbox{ord}_{p_i}(g)\leq \sum_{1\leq i\leq d}\mbox{ord}_{p_i}(f)\leq d-1 \ .
\end{align}
Therefore there is no numerically semistable pair in $\mathbb{V}_{d-1}\oplus\mathbb{V}_d$.
When $d=e$ the pair $(f,g)$ is numerically semistable if and only if
\begin{align}
\mathbb{C}f=\mathbb{C}g \ . 
\end{align}}
\end{example}
 Inspired by the magisterial work of Sato and Kimura (see \cite{kimura} and references therein) we pose the following problem.
 \begin{problem}
 \emph{Let $G$ be a reductive algebraic group. Classify all $( G ,\mathbb{V}, \mathbb{W} )$ with empty semistable locus .}
 \end{problem}

 
A reasonable conjecture to make is that semistability and numerical semistability are \emph{equivalent}.
 \begin{conjecture}\label{numericalcriterion} \emph{ The pair $(v,w)$  is numerically semistable if and only if $\overline{\mathcal{O}}_{vw}\cap\overline{\mathcal{O}}_{v}=\emptyset$ .}
 \end{conjecture}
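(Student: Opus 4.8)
The plan is to establish the one implication not already contained in Proposition~\ref{conj2}, namely that numerical semistability of $(v,w)$ forces $\overline{\mathcal{O}}_{vw}\cap\overline{\mathcal{O}}_{v}=\emptyset$; I would argue by contraposition. Assume $p\in\overline{\mathcal{O}}_{vw}\cap\overline{\mathcal{O}}_{v}$; the goal is to produce a maximal torus $H\leq G$ with $\mathcal{N}_{H}(v)\not\subseteq\mathcal{N}_{H}(w)$. Since $w\neq 0$, every point of $\mathcal{O}_{vw}$ has non-zero $\mathbb{W}$-component, whereas $p\in\mathbb{P}(\mathbb{V})$, so $p$ lies in the boundary $\overline{\mathcal{O}}_{vw}\setminus\mathcal{O}_{vw}$. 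Because $\mathcal{O}_{vw}$ is dense open in the irreducible variety $\overline{\mathcal{O}}_{vw}$, there is a normal curve $C$, a point $0\in C$, and a morphism $c:C\ra\overline{\mathcal{O}}_{vw}$ with $c(0)=p$ and $c(C\setminus\{0\})\subseteq\mathcal{O}_{vw}$. Since $\mathcal{O}_{vw}\cong G/G_{[(v,w)]}$ and $G\ra G/G_{[(v,w)]}$ is locally trivial in the \'etale topology, after a finite base change $t\mapsto t^{k}$ one can lift $c|_{C\setminus\{0\}}$ to a morphism $\gamma$ into $G$; localising at $0$ and choosing a uniformiser $t$ yields $\gamma\in G(\mathbb{C}((t)))$ with $\gamma(t)\cdot[(v,w)]$ specialising to $p$ at $t=0$.

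Next I would feed $\gamma$ into the Cartan decomposition of $G$ over the Laurent field: $\gamma(t)=g_{1}(t)\lambda(t)g_{2}(t)$ with $g_{1},g_{2}\in G(\mathbb{C}[[t]])$ and $\lambda$ an algebraic one parameter subgroup of $G$. Using properness of $\mathbb{P}(\mathbb{V}\oplus\mathbb{W})$ to extend the relevant $\mathbb{C}((t))$-points uniquely over $t=0$, one obtains $p=g_{1}(0)\cdot q$ where $q:=\lim_{t\to 0}\lambda(t)\cdot\big(g_{2}(t)\cdot(v,w)\big)$; as $\overline{\mathcal{O}}_{v}$ is $G$-stable this forces $q\in\overline{\mathcal{O}}_{v}\subset\mathbb{P}(\mathbb{V})$, i.e. $q$ has vanishing $\mathbb{W}$-component. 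Write $g_{2}(t)\cdot(v,w)=\sum_{j\geq 0}t^{j}c_{j}$ with $c_{0}=g_{2}(0)\cdot(v,w)\neq 0$, choose a maximal torus $H$ with $\operatorname{Im}(\lambda)\subset H$, let $u\in N_{\mathbb{Z}}$ be the corresponding cocharacter, and split each $c_{j}=c_{j}^{\mathbb{V}}\oplus c_{j}^{\mathbb{W}}$ into $H$-weight spaces. Then the vanishing of the $\mathbb{W}$-part of $q$ is exactly the strict inequality
\[
\min\{\, j+\langle u,\beta\rangle\ :\ (c_{j}^{\mathbb{V}})_{\beta}\neq 0\,\}\ <\ \min\{\, j+\langle u,\beta\rangle\ :\ (c_{j}^{\mathbb{W}})_{\beta}\neq 0\,\}\, .
\]

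The argument would be complete if one could take $g_{2}(t)\equiv g_{2}(0)$: only the $j=0$ terms survive, the inequality becomes $\min_{\mathcal{N}_{H}(g_{2}(0)v)}\langle u,\cdot\rangle<\min_{\mathcal{N}_{H}(g_{2}(0)w)}\langle u,\cdot\rangle$, and conjugating $H$ by $g_{2}(0)$ this says precisely $\mathcal{N}(v)\not\subseteq\mathcal{N}(w)$ for the maximal torus $g_{2}(0)^{-1}Hg_{2}(0)$, contradicting numerical semistability. Hence the real obstacle, and the step I expect to be genuinely hard, is controlling or eliminating the higher order terms $c_{j}$ $(j\geq 1)$: these are off-orbit corrections lying in $\mathbb{V}\oplus\mathbb{W}$ whose $H$-weights need not belong to $\operatorname{supp}(v)\cup\operatorname{supp}(w)$, so a priori they can drive the left-hand minimum below $\min_{\mathcal{N}_{H}(g_{2}(0)v)}\langle u,\cdot\rangle$ and create a specialisation into $\overline{\mathcal{O}}_{v}$ that the weight polytopes cannot detect. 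This is the very $g_{2}$ — equivalently the factor $\sigma(\alpha)$ in Theorem~\ref{weakII} — that one cannot presently remove, and it is absent in the torus case, where Lemma~\ref{p4tori} reaches every boundary point by a one parameter subgroup with trivial $g_{2}$; this is what makes the statement a theorem for tori and only a conjecture in general.

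To close the gap I would attempt, in increasing order of optimism: (i) a Newton--Puiseux analysis of the degenerating stabiliser of $\gamma(t)$ showing that the minimal surviving weights of $g_{2}(t)\cdot(v,w)$ in $\mathbb{V}$ and in $\mathbb{W}$ are already attained at $j=0$ (this is essentially the content of Property~(P)); (ii) an induction on $\dim G$ using the Levi factor of the parabolic attached to $\lambda$ to replace $G$ by a smaller reductive group in which the conjecture is known, reducing the correction terms to weights of a genuine orbit; and (iii) for the geometric pair $(R(X),\Delta(X))$ specifically, exploiting the multiplicativity of resultants and hyperdiscriminants together with Corollary~\ref{limitcycle} to rule out helpful off-orbit corrections. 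Finally, I would record that for smooth, linearly normal $X^{n}\hookrightarrow\cpn$ of degree $d\geq 2$ the conjecture, applied to $(R(X),\Delta(X))$, is equivalent via the characterisations recalled in Section~\ref{discussionofresults} to the analytic statement $\nu_{\om}|_{G}>-\infty\iff\nu_{\om}|_{\Delta(G)}>-\infty$; thus any improvement of Theorem~\ref{weakII} producing a bona fide one parameter subgroup in place of $\lambda(\alpha)\sigma(\alpha)$ would settle the geometric case.
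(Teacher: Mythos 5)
You have not proved this statement, and neither does the paper: it is stated there only as a conjecture, with the easy implication supplied by Proposition \ref{conj2} and the converse reduced, but not settled, via Proposition \ref{valuative} and Property (P). Your proposal reproduces exactly that reduction — your contraposition plus the decomposition $\gamma(t)=g_{1}(t)\lambda(t)g_{2}(t)$ over $\mathbb{C}((t))$ is Proposition \ref{valuative}, your observation that the argument closes when $g_{2}(t)$ can be taken constant is the torus case (Lemma \ref{p4tori}), and your strategy (i) is literally Property (P) — and then stops at precisely the point where the paper stops. So as a proof there is a genuine gap, and you have named it correctly yourself: the higher order terms $c_{j}$, $j\geq 1$, of $g_{2}(t)\cdot(v,w)$ can contribute weights outside $\mathrm{supp}(v)\cup\mathrm{supp}(w)$ and can realize the specialisation into $\overline{\mathcal{O}}_{v}$ without any torus detecting it; equivalently, one cannot at present upgrade the curve $\lambda(\alpha)\sigma(\alpha)$ of Theorem \ref{weakII} to a one parameter subgroup. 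Be aware also that the pointwise form of your step (i) is actually false: the paper's $SL(2,\mathbb{C})$ example with the pair $(1,e_{1}^{d}\cdot e_{2})$ exhibits a boundary point $[(1,e_{1}^{d+1})]$ of the orbit closure that is inaccessible by any one parameter subgroup, so no Newton--Puiseux analysis of your particular limit $p$ can work point by point; any successful argument must only reach \emph{some} point of the closed invariant subvariety $\overline{\mathcal{O}}_{vw}\cap\overline{\mathcal{O}}_{v}$, which is exactly what Property (P) asks and what remains open for reductive $G$.

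Your remaining remarks are accurate but do not advance the proof: strategies (ii) and (iii) are not carried out, and the closing observation that for the pair $(R(X),\Delta(X))$ the conjecture is equivalent to the statement that $\nu_{\om}|_{G}>-\infty$ if and only if $\nu_{\om}|_{\Delta(G)}>-\infty$ is a correct translation via Theorem A and the cited results of \cite{paul2011}, but it restates the problem rather than resolving it. In short, the proposal is a faithful and essentially correct account of why the statement is a conjecture, not a proof of it; to be spliced in as a proof it would need the missing ingredient (Property (P) for reductive groups, or some substitute controlling the $g_{2}$-corrections), which neither you nor the paper supplies.
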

The point of the conjecture, as the reader has surely seen, is that we would like to actually \emph{check} whether or not a given $(v,w)$ is semistable, that is, we would like to know if the Zariski closures of the orbits $\mathcal{O}_{vw}$ and $\mathcal{O}_v$ are disjoint. The conjecture, if true, converts this problem into a ``polyhedral-combinatorial'' problem which may be easier to solve.

\begin{remark}
\emph{When $\mathbb{V}$ is the trivial one dimensional representation and $v=1$ the conjecture reduces to the well known Hilbert-Mumford criterion in Geometric Invariant Theory. In particular the conjecture is true in this case.}
\end{remark}
Conjecture \ref{numericalcriterion} is closely related to the following condition, \emph{Property (P)}, 
which may or may not hold for a complex linear algebraic group $G$. Observe that this property is the projective version of  Property (A) from \cite{birkes71}. The reader should also consult Theorem 1.4 of   
 \cite{kempf78}.

Once more we fix notation. Let $\mathbb{W}$ be a finite dimensional rational complex representation of $G$. Let $w\in\mathbb{W}\setminus \{0\}$. Then $\mathcal{O}_w$ denotes the \emph{projective} orbit:
\begin{align}
\mathcal{O}_w:= G\cdot [w] \subset \mathbb{P}(\mathbb{W})\ .
\end{align}
\ \\
\noindent \textbf{Property (P) .}  \emph{ If $\rho:G\ra GL(\mathbb{W})$ is a finite dimensional complex rational representation of $G$, and if $\mathscr{Z}$ is a non-empty $G$ invariant closed subvariety of $\overline{\mathcal{O}}_w\setminus {\mathcal{O}}_w$ , then there exists an element $z_0$ of  $\mathscr{Z}$ and an algebraic one-parameter subgroup $\lambda:\mathbb{C}^*\ra G$ such that}
\begin{align}
\lim_{\alpha\ra 0}\lambda(\alpha)\cdot [w]=z_0 \ .
\end{align}
If Property (P) holds for reductive groups then Conjecture \ref{numericalcriterion} follows at once. 
\begin{remark}
\emph{Observe that when $G=T$ an algebraic torus, Property (P) holds by Lemma \ref{p4tori}.}
\end{remark}
\begin{remark}
\emph{Property (P) does not require that \emph{every} boundary point $z$ is ``accessible'' but that \emph{some} point on $\mathscr{Z}$ can be reached by a one-parameter subgroup.}
\end{remark}
 
 In Lieu of Property (P) we have the following useful result.
 \begin{proposition}\label{valuative}(See Mumford \cite{git}) \emph{Let $G$ be a complex algebraic group. Let $\mathbb{V}$ be a finite dimensional complex rational representation of $G$. Let $v\in \mathbb{V}\setminus \{0\}$. Then for any $z$ satisfying}
 \begin{align}
 z\in \overline{\mathcal{O}}_v\setminus \mathcal{O}_v \subset \mathbb{P}(\mathbb{V}) 
 \end{align}
 \emph{there exists a $\mathbb{C}((t))$ valued point $\gamma(t)$ of $G$ such that}
 \begin{align}
 \lim_{t\ra 0}\gamma(t)\cdot [v]=z \ .
 \end{align}
 \emph{If $G$ is reductive then there exists a one parameter subgroup $\lambda(t)$ and two $\mathbb{C}[[t]]$ valued points $l(t),\sigma(t)$ of $G$ such that}
\begin{align}\label{iwahori}
\gamma(t)= l(t)\lambda(t)\sigma(t)   \ .
\end{align}
 \end{proposition}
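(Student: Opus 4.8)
The statement to prove is Proposition \ref{valuative}, which has two parts: existence of a $\mathbb{C}((t))$-valued point $\gamma(t)$ of $G$ degenerating $[v]$ to a prescribed boundary point $z$, and (when $G$ is reductive) the Iwahori factorization $\gamma(t) = l(t)\lambda(t)\sigma(t)$ with $l(t),\sigma(t)$ integral over $\mathbb{C}[[t]]$ and $\lambda$ a genuine one-parameter subgroup.

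\medskip

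\noindent\textbf{Proof proposal.} The plan is to deduce the first assertion from the valuative criterion of properness applied to the orbit closure, and the second from the Iwahori--Bruhat decomposition of $G(\mathbb{C}((t)))$. For the first part, I would consider the orbit map $\mu_v: G \ra \overline{\mathcal{O}}_v$, $g \mapsto g\cdot[v]$, whose image is the locally closed subset $\mathcal{O}_v$. The point $z$ lies in the closure $\overline{\mathcal{O}}_v$, so by general nonsense about constructible sets and the closure of an orbit, $z$ is in the closure of $\mathcal{O}_v$ in the Zariski topology. Now I would invoke the standard fact (essentially the curve-selection / valuative criterion lemma in \cite{git}, Chapter 2, or the appendix there): if $z \in \overline{\mathcal{O}_v}$, then there is a discrete valuation ring $\mathcal{O}$ with fraction field $K$, a morphism $\mathrm{Spec}(K) \ra \mathcal{O}_v$ (hence lifting through $G$ after possibly passing to a finite extension, since $G \ra \mathcal{O}_v$ is a surjection of varieties and one can lift generic points), and an extension $\mathrm{Spec}(\mathcal{O}) \ra \overline{\mathcal{O}}_v$ sending the closed point to $z$. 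After normalizing and completing, $\mathcal{O}$ may be taken to be $\mathbb{C}[[t]]$ (replacing $t$ by a root if a ramified extension was needed — here one uses that $\mathbb{C}$ is algebraically closed so residue fields are $\mathbb{C}$ and the completion of any DVR over $\mathbb{C}$ with residue field $\mathbb{C}$ is $\mathbb{C}[[t]]$), giving the $\mathbb{C}((t))$-valued point $\gamma(t)$ of $G$ with $\lim_{t\ra 0}\gamma(t)\cdot[v] = z$. The only subtlety is the lift of the $K$-point from $\mathcal{O}_v$ to $G$: this is possible after a finite field extension because $G \ra \mathcal{O}_v$ is faithfully flat (it is a torsor under the stabilizer, or at worst a surjective map of smooth varieties), and any finite extension of $\mathbb{C}((t))$ is again of the form $\mathbb{C}((t^{1/e}))$, which we re-baptize as $\mathbb{C}((t))$.

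\medskip

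\noindent For the second part, assume $G$ is reductive (equivalently, a reductive linear algebraic group over $\mathbb{C}$). Then the group of $\mathbb{C}((t))$-points $G(\mathbb{C}((t)))$ admits the Iwahori--Bruhat (Cartan) decomposition: fixing a maximal torus $H \leq G$, one has
\begin{align*}
G(\mathbb{C}((t))) = \bigsqcup_{\lambda \in X_*(H)/W} G(\mathbb{C}[[t]])\,\lambda(t)\,G(\mathbb{C}[[t]]),
\end{align*}
where $X_*(H)$ is the cocharacter lattice and the union is over a set of dominant representatives; this is exactly the affine analogue of the Cartan decomposition $KAK$ and is standard (e.g.\ it is the geometry underlying the affine Grassmannian). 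Applying this to our $\gamma(t) \in G(\mathbb{C}((t)))$ yields $\gamma(t) = l(t)\lambda(t)\sigma(t)$ with $l(t), \sigma(t) \in G(\mathbb{C}[[t]])$ — i.e.\ $\mathbb{C}[[t]]$-valued points — and $\lambda$ a genuine algebraic one-parameter subgroup (the cocharacter $\lambda \in X_*(H) \subset X_*(G)$). This is precisely \eqref{iwahori}. I would remark that $l(t)$ and $\sigma(t)$, being $\mathbb{C}[[t]]$-points, have well-defined specializations $l(0), \sigma(0) \in G$ at $t=0$, which is what makes the factorization useful for analyzing the limit: $\lim_{t\ra 0}\gamma(t)\cdot[v] = l(0)\cdot\big(\lim_{t\ra 0}\lambda(t)\cdot(\sigma(0)\cdot[v])\big)$, reducing a general degeneration to a one-parameter-subgroup degeneration conjugated and then moved by a fixed element of $G$.

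\medskip

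\noindent\textbf{Main obstacle.} The routine-but-delicate point is the lift of the generic point of $\mathcal{O}_v$ through the orbit map $G \ra \mathcal{O}_v$ in the first part: one must check that a $\mathbb{C}((t))$-point of the orbit lifts to a $\mathbb{C}((t))$-point of $G$ after a tame (cyclic) extension of the field, which then gets re-absorbed by rescaling $t$. After that, everything is an appeal to standard structure theory — the valuative criterion of properness (or the curve lemma of \cite{git}) for part one, and the Iwahori decomposition for part two — so there is no serious analytic or geometric difficulty, only the bookkeeping of field extensions and completions. I would present the argument in that order: orbit map and its image, valuative criterion to produce $\gamma(t)$, then Iwahori decomposition to factor it.
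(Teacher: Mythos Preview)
Your proposal is correct and in fact more thorough than what the paper itself does. The paper offers no proof of this proposition; it simply attributes the statement to Mumford and follows it with the remark that for the only case actually needed, namely $G=SL(N+1,\mathbb{C})$, the factorization \eqref{iwahori} follows from elementary row and column operations over $\mathbb{C}[[t]]$ --- that is, from the Smith normal form over the principal ideal domain $\mathbb{C}[[t]]$.

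So the comparison is this: you invoke the general Cartan/Iwahori decomposition $G(\mathbb{C}((t)))=G(\mathbb{C}[[t]])\cdot X_*(H)\cdot G(\mathbb{C}[[t]])$ for an arbitrary reductive $G$, which is the right structural statement and works uniformly; the paper instead sidesteps the general theory by noting that for $SL_{N+1}$ one can obtain $l(t)\lambda(t)\sigma(t)$ directly by Gaussian elimination over $\mathbb{C}[[t]]$, producing a diagonal matrix with entries $t^{a_i}$ (the one-parameter subgroup) flanked by matrices in $SL_{N+1}(\mathbb{C}[[t]])$. Your approach buys generality and conceptual clarity; the paper's remark buys elementarity and self-containedness for the specific group at hand. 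Both are valid, and your identification of the lifting step (from a $\mathbb{C}((t))$-point of the orbit to one of $G$, possibly after a cyclic base change) as the only delicate point in part one is exactly right.
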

\begin{remark}
 \emph{For our purpose we only need $G=SL(N+1,\mathbb{C})$. In which case the decomposition (\ref{iwahori}) follows from elementary row and column operations over $\mathbb{C}[[t]]$ .}
 \end{remark}

Next we provide a simple example of an inaccessible boundary point of an orbit $\mathcal{O}$. 
\begin{example}\emph{Let $G=SL(2,\mathbb{C})$. Let $d\geq 2 \in \mathbb{Z}$. Let $\mathbb{W}:=\mathbb{C}\oplus \mbox{Sym}^{d+1}(\mathbb{C}^2)$. $G$ acts trivially on $\mathbb{C}$ and by the standard action on the second summand. Consider the orbit:
\begin{align}
\mathcal{O}:= G\cdot [(1 , e_1^d\cdot e_2)] \subset \mathbb{P}(\mathbb{W} ) \ .
\end{align}
Define
\begin{align*}
\sigma(t):= \begin{pmatrix}
t& t^{-d}\\
0&t^{-1}
\end{pmatrix}\ .
\end{align*}
Then 
\begin{align}
\lim_{t\ra 0}\sigma(t)\cdot [(1,e_1^d\cdot e_2)]=[(1,e_1^{d+1})]\in \overline{\mathcal{O}}\  .
\end{align}
  However, the stabilizer of this point is 
\begin{align}
\{ \begin{pmatrix} \zeta & z\\
0&\zeta^{-1} 
\end{pmatrix} \ |\ \zeta \in \mathbb{Z}_{d+1} \ , \ z\in \mathbb{C}\ \} \ .
\end{align}
This group contains no one parameter subgroup. Therefore this point cannot be reached from $\mathcal{O}$ by a one parameter subgroup of $G$.  }
\end{example}
\subsection{A Kempf-Ness type functional}\label{kempfness} In this section we study semistability in terms of a Kempf-Ness type functional. In fact it is from this point of view that the author arrived at the definition of semistability. As always $(\mathbb{V}, v)$ and $(\mathbb{W}, w)$ are finite dimensional complex rational representations of $G$ together with a pair of nonzero vectors. We equip $\mathbb{V}$ and $\mathbb{W}$ with Hermitian norms .  We are interested in the function on $G$ which we call the \emph{energy of the pair} $(v,w)$ :
 \begin{align}\label{energy}
 G\ni \sigma\ra \nu_{v,w}(\sigma):=\log||\sigma\cdot w||^2-\log||\sigma\cdot v||^2 \ .
 \end{align}
 Then we have the following fact.
 \begin{proposition}\label{vwlowerbound}
\emph{  $\nu_{vw}$ is bounded from below on $G$ if and only if $(v,w)$ is semistable.}
 \end{proposition}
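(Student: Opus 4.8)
The plan is to reduce the statement to a comparison between the growth of $\log\|\sigma\cdot w\|^2 - \log\|\sigma\cdot v\|^2$ and the intersection-theoretic condition $\overline{\mathcal{O}}_{vw}\cap\overline{\mathcal{O}}_v=\emptyset$, using the fact that $\nu_{v,w}$ is (up to an additive constant coming from the choice of norms) the logarithm of a ratio of squared norms of a single vector $(v,w)\in\mathbb{V}\oplus\mathbb{W}$ against a projection. First I would observe that, after equipping $\mathbb{V}\oplus\mathbb{W}$ with the orthogonal direct-sum norm, for $\sigma\in G$ one has
\begin{align*}
\nu_{v,w}(\sigma) = \log\frac{\|\sigma\cdot w\|^2}{\|\sigma\cdot v\|^2}
= \log\frac{\|\sigma\cdot w\|^2}{\|\sigma\cdot v\|^2 + \|\sigma\cdot w\|^2} - \log\frac{\|\sigma\cdot v\|^2}{\|\sigma\cdot v\|^2 + \|\sigma\cdot w\|^2},
\end{align*}
so that $\nu_{v,w}(\sigma)$ differs by a bounded amount from $-\log\sin^2\theta(\sigma) + \log(\text{something bounded})$, where $\theta(\sigma)$ is the Fubini--Study angle between the line $[\sigma\cdot(v,w)]\in\mathbb{P}(\mathbb{V}\oplus\mathbb{W})$ and the subspace $\mathbb{P}(\mathbb{V}\oplus\{0\})$. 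More precisely, $\nu_{v,w}(\sigma)$ is bounded below on $G$ if and only if the distance from the orbit point $[\sigma\cdot(v,w)]$ to $\mathbb{P}(\mathbb{V}\oplus\{0\})$ stays bounded away from zero, i.e. if and only if
\begin{align*}
\inf_{\sigma\in G} d_{FS}\big([\sigma\cdot(v,w)]\,,\, \mathbb{P}(\mathbb{V}\oplus\{0\})\big) > 0.
\end{align*}

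Next I would convert this infimum condition into the closed-orbit-disjointness condition. The orbit $\mathcal{O}_{vw} = G\cdot[(v,w)]$ has its distance-to-$\mathbb{P}(\mathbb{V}\oplus\{0\})$ function; since $d_{FS}(\cdot,\mathbb{P}(\mathbb{V}\oplus\{0\}))$ is a continuous function on the compact space $\mathbb{P}(\mathbb{V}\oplus\mathbb{W})$ and $\mathbb{P}(\mathbb{V}\oplus\{0\}) = \overline{\mathcal{O}}_v \cup \cdots$ is contained in the zero locus of this function, the infimum of the distance over $\mathcal{O}_{vw}$ equals the infimum over its closure $\overline{\mathcal{O}}_{vw}$. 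Hence $\inf_{\sigma}d_{FS} > 0$ precisely when no point of $\overline{\mathcal{O}}_{vw}$ lies in $\mathbb{P}(\mathbb{V}\oplus\{0\})$. Now $\overline{\mathcal{O}}_{vw}$ meets $\mathbb{P}(\mathbb{V}\oplus\{0\})$ in a closed $G$-invariant subset; using the valuative criterion (Proposition \ref{valuative}), any point of $\overline{\mathcal{O}}_{vw}$ that lies over $\mathbb{P}(\mathbb{V}\oplus\{0\})$ can be reached by a $\mathbb{C}((t))$-point of $G$, and by homogeneity of the limiting behavior one checks that the intersection $\overline{\mathcal{O}}_{vw}\cap\mathbb{P}(\mathbb{V}\oplus\{0\})$ is nonempty if and only if it meets $\overline{\mathcal{O}}_v$ itself (the point being that the limit of $[\sigma(t)\cdot(v,w)]$, when it falls into $\mathbb{P}(\mathbb{V}\oplus\{0\})$, is forced by equivariance to lie in the closure of the $G$-orbit of a vector of the form $(v',0)$, hence in $\overline{\mathcal{O}}_v$, modulo the observation that $\mathcal{O}_v$ is already the only relevant orbit through such points because $v$ generates $\mathbb{V}$ under $G$). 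Thus $\overline{\mathcal{O}}_{vw}\cap\mathbb{P}(\mathbb{V}\oplus\{0\}) = \emptyset \iff \overline{\mathcal{O}}_{vw}\cap\overline{\mathcal{O}}_v = \emptyset$, which is Definition \ref{pair}.

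Assembling these equivalences: $\nu_{v,w}$ bounded below on $G$ $\iff$ $\inf_\sigma d_{FS}([\sigma\cdot(v,w)], \mathbb{P}(\mathbb{V}\oplus\{0\})) > 0$ $\iff$ $\overline{\mathcal{O}}_{vw}\cap\mathbb{P}(\mathbb{V}\oplus\{0\}) = \emptyset$ $\iff$ $\overline{\mathcal{O}}_{vw}\cap\overline{\mathcal{O}}_v = \emptyset$ $\iff$ $(v,w)$ semistable, which is the claim. I expect the main obstacle to be the middle step — carefully justifying that the closed $G$-invariant intersection $\overline{\mathcal{O}}_{vw}\cap\mathbb{P}(\mathbb{V}\oplus\{0\})$, if nonempty, actually meets $\overline{\mathcal{O}}_v$ and not merely $\mathbb{P}(\mathbb{V}\oplus\{0\})$; this requires the observation (using $\mathrm{Span}(G\cdot v) = \mathbb{V}$ together with the valuative criterion decomposition $\gamma(t) = l(t)\lambda(t)\sigma(t)$) that the accessible boundary points of $\mathcal{O}_{vw}$ lying in $\mathbb{P}(\mathbb{V})$ are limits along one-parameter subgroups of $[(v,w)]$, whose $\mathbb{V}$-components are themselves degenerations of $v$, hence sit in $\overline{\mathcal{O}}_v$. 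The rest is the standard Kempf--Ness-type translation between norm growth and distance to a linear subspace, which is routine once the angle identity above is written down.
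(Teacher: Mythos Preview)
Your proposal is correct, but it takes a slightly more circuitous route than the paper and you overestimate the difficulty of what you call the ``main obstacle.''

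The paper's argument (Lemma \ref{distance} together with inequality (\ref{sigma-tau})) is based on a single geometric observation you almost make but do not quite exploit: for the orthogonal direct-sum Hermitian form, the nearest point of the linear subspace $\mathbb{P}(\mathbb{V}\oplus\{0\})$ to $[\sigma\cdot(v,w)]$ is exactly $[\sigma\cdot(v,0)]=\sigma\cdot[(v,0)]$, and this point already lies on the orbit $\mathcal{O}_v$. Hence
\[
\nu_{v,w}(\sigma)=\log\tan^2 d_g\big(\sigma\cdot[(v,w)],\,\sigma\cdot[(v,0)]\big)
= \log\tan^2 d_g\big(\sigma\cdot[(v,w)],\,\mathcal{O}_v\big),
\]
and taking the infimum over $\sigma$ yields directly $\inf_G\nu_{v,w}=\log\tan^2 d_g(\overline{\mathcal{O}}_{vw},\overline{\mathcal{O}}_v)$ (Corollary \ref{infi}). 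No separate reduction from $\mathbb{P}(\mathbb{V}\oplus\{0\})$ to $\overline{\mathcal{O}}_v$ is needed, because the foot of the perpendicular always lands on $\mathcal{O}_v$.

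Your detour through $\overline{\mathcal{O}}_{vw}\cap\mathbb{P}(\mathbb{V}\oplus\{0\})$ is fine, but the step you flag as the main obstacle is in fact elementary and requires neither the valuative criterion, the Iwahori decomposition, nor the hypothesis $\mathrm{Span}(G\cdot v)=\mathbb{V}$. If $[(v',0)]\in\overline{\mathcal{O}}_{vw}$, choose $\sigma_i\in G$ with $[\sigma_i\cdot(v,w)]\to[(v',0)]$; since $v'\neq 0$, the projection $\mathbb{P}(\mathbb{V}\oplus\mathbb{W})\dashrightarrow\mathbb{P}(\mathbb{V})$ is defined near $[(v',0)]$ and continuous there, giving $[\sigma_i\cdot v]\to[v']$ in $\mathbb{P}(\mathbb{V})$, so $[v']\in\overline{\mathcal{O}}_v$. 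That is all.
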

 The proposition is a consequence of the following observation.
 \begin{lemma}\label{distance}
 \begin{align*}
 \nu_{v,w}(\sigma)=\log\tan^2d_g(\sigma\cdot [(v,w)] , \sigma\cdot [(v,0)]) \ ,
 \end{align*}
 \emph{where $d_g$ denotes the distance in the Fubini-Study metric on} $\mathbb{P}(\mathbb{V}\oplus\mathbb{W})$ .
 \end{lemma}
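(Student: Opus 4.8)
The plan is to compute the Fubini--Study distance between the two points $\sigma\cdot[(v,w)]$ and $\sigma\cdot[(v,0)]$ in $\mathbb{P}(\mathbb{V}\oplus\mathbb{W})$ directly from the definition, and to recognize the resulting expression as $\nu_{v,w}(\sigma)$. Recall that for two lines $[\xi],[\eta]$ in a Hermitian vector space the Fubini--Study distance is given by
\begin{align*}
\cos^2 d_g([\xi],[\eta])=\frac{|\langle \xi,\eta\rangle|^2}{\|\xi\|^2\,\|\eta\|^2}\ .
\end{align*}
Taking $\xi:=\sigma\cdot(v,w)=(\sigma v,\sigma w)$ and $\eta:=\sigma\cdot(v,0)=(\sigma v,0)$, the Hermitian form on $\mathbb{V}\oplus\mathbb{W}$ being the orthogonal direct sum of the given forms, I compute $\langle \xi,\eta\rangle=\langle \sigma v,\sigma v\rangle=\|\sigma v\|^2$, $\|\eta\|^2=\|\sigma v\|^2$, and $\|\xi\|^2=\|\sigma v\|^2+\|\sigma w\|^2$. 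Hence
\begin{align*}
\cos^2 d_g(\sigma\cdot[(v,w)],\sigma\cdot[(v,0)])=\frac{\|\sigma v\|^4}{\|\sigma v\|^2(\|\sigma v\|^2+\|\sigma w\|^2)}=\frac{\|\sigma v\|^2}{\|\sigma v\|^2+\|\sigma w\|^2}\ .
\end{align*}

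From this the identity follows by elementary trigonometry. Indeed $\sin^2 d_g=1-\cos^2 d_g=\dfrac{\|\sigma w\|^2}{\|\sigma v\|^2+\|\sigma w\|^2}$, so
\begin{align*}
\tan^2 d_g(\sigma\cdot[(v,w)],\sigma\cdot[(v,0)])=\frac{\|\sigma w\|^2}{\|\sigma v\|^2}\ ,
\end{align*}
and taking logarithms gives exactly $\log\|\sigma\cdot w\|^2-\log\|\sigma\cdot v\|^2=\nu_{v,w}(\sigma)$, which is the assertion. One should note that the formula makes sense only when $\sigma v\neq 0$ and $(\sigma v,\sigma w)$ is not proportional to $(\sigma v,0)$ with the second never vanishing; since $\sigma$ is invertible and $v,w\neq 0$, both $\sigma v$ and $\sigma w$ are nonzero, so the two points are genuinely distinct and $d_g\in(0,\pi/2)$, where $\tan$ is positive and finite — so no degenerate cases arise.

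There is essentially no obstacle here: the only subtlety worth flagging is the normalization convention for the Fubini--Study metric (a factor in the definition of $d_g$ could alter $\tan^2$ to $\tan$ of a rescaled argument), but with the standard convention in which $d_g\in[0,\pi/2]$ and $\cos^2 d_g=|\langle\xi,\eta\rangle|^2/(\|\xi\|^2\|\eta\|^2)$ the computation above is exact. The lemma then feeds directly into Proposition \ref{vwlowerbound}: $\nu_{v,w}$ is bounded below on $G$ iff $d_g(\sigma\cdot[(v,w)],\sigma\cdot[(v,0)])$ stays bounded away from $0$ as $\sigma$ ranges over $G$, i.e. iff the orbit closures $\overline{\mathcal{O}}_{vw}$ and $\overline{\mathcal{O}}_v$ (which contain all limits of $\sigma\cdot[(v,w)]$ and $\sigma\cdot[(v,0)]$ respectively) do not meet — that is, iff $(v,w)$ is semistable.
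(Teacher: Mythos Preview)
Your proof is correct and follows essentially the same approach as the paper: both use the Fubini--Study distance formula $\cos d_g([\xi],[\eta])=|\langle\xi,\eta\rangle|/(\|\xi\|\,\|\eta\|)$, compute the inner product using the orthogonal direct-sum Hermitian form, and then extract $\tan^2 d_g$ by elementary trigonometry (the paper uses $\sec^2=1+\tan^2$ where you use $\sin^2=1-\cos^2$, which is of course equivalent). Your additional remarks on well-definedness and the link to Proposition~\ref{vwlowerbound} are accurate but go slightly beyond what the paper records in the proof of the lemma itself.
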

\begin{proof}
 Let $u,v\in \mathbb{V}$ and let $(\cdot,\cdot)$ be any Hermitian inner product on $\mathbb{V}$ with associated Fubini-Study metric $g$ on $\mathbb{P}(\mathbb{V})$. Recall the distance formula
\begin{align*}
\cos d_g([u],[v])=\frac{|(u,v)|}{||u||||v||}\ .
\end{align*}
With the orthogonal direct sum Hermitian form on $\mathbb{V}\oplus\mathbb{W}$ we have for any $\sigma\in G$
\begin{align*}
\cos d_g(\sigma\cdot [(v,w)] , \sigma\cdot [(v,0)])&= \frac{|(\sigma\cdot [(v,w)] , \sigma\cdot [(v,0)])|}{\sqrt{||\sigma\cdot v||^2+||\sigma\cdot w||^2}||\sigma\cdot v||}\\
 \ \\
 &=\frac{||\sigma\cdot v||}{\sqrt{||\sigma\cdot v||^2+||\sigma\cdot w||^2}} \ .
\end{align*}
Since
\begin{align*}
\sec^2d_g(\sigma\cdot [(v,w)] , \sigma\cdot [(v,0)])&=1+\tan^2d_g(\sigma\cdot [(v,w)] , \sigma\cdot [(v,0)]) \ ,
\end{align*}
we may conclude that
\begin{align*} 
\tan^2d_g(\sigma\cdot [(v,w)] , \sigma\cdot [(v,0)])  = \frac{||\sigma\cdot w||^2}{||\sigma\cdot v||^2}\ .
\end{align*}
Now take the log of both sides.
\end{proof}
Observe that for any $\sigma,\tau \in G$ we have the inequality
\begin{align}\label{sigma-tau}
d_g(\sigma\cdot [(v,w)] , \sigma\cdot [(v,0)])\leq d_g(\sigma\cdot [(v,w)] , \tau\cdot [(v,0)])\ .
\end{align}
 As a corollary of (\ref{sigma-tau}) and lemma \ref{distance} we have the much more refined version of Proposition \ref{vwlowerbound}.
\begin{corollary}\label{infi}\emph{ The infimum of the energy of the pair $(v,w)$ is as follows}
 \begin{align}\label{inf}
 \inf_{\sigma\in G}\nu_{vw}(\sigma)=\log\tan^2d_g(\overline{\mathcal{O}}_{vw},\overline{\mathcal{O}}_{v}) \ .  
 \end{align}  
 \end{corollary}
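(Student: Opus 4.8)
The plan is to reduce the statement to Lemma \ref{distance} together with the inequality (\ref{sigma-tau}), after which only a routine passage from an orbit to its closure remains. First I would set $f(\sigma):=d_g(\sigma\cdot[(v,w)],\sigma\cdot[(v,0)])$, so that $\nu_{vw}(\sigma)=\log\tan^2 f(\sigma)$ by Lemma \ref{distance}. Since the Hermitian form on $\mathbb{V}\oplus\mathbb{W}$ is an orthogonal direct sum, one has $\langle(\sigma v,\sigma w),(\sigma v,0)\rangle=\|\sigma v\|^2\neq 0$, so every value $f(\sigma)$ lies in $[0,\pi/2)$, and the same estimate at $\sigma=e$ shows $d_g(\overline{\mathcal{O}}_{vw},\overline{\mathcal{O}}_{v})<\pi/2$. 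On that interval $t\mapsto\log\tan^2 t$ is continuous and nondecreasing, hence commutes with infima, so it suffices to establish
\begin{align*}
\inf_{\sigma\in G}f(\sigma)=d_g(\overline{\mathcal{O}}_{vw},\overline{\mathcal{O}}_{v})\ .
\end{align*}

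The inequality ``$\ge$'' is immediate, since $\sigma\cdot[(v,w)]\in\overline{\mathcal{O}}_{vw}$ and $\sigma\cdot[(v,0)]\in\overline{\mathcal{O}}_{v}$. For ``$\le$'', the key observation is that $\mathbb{P}(\mathbb{V}\oplus\{0\})$ is a totally geodesic linear subspace of $\mathbb{P}(\mathbb{V}\oplus\mathbb{W})$ containing $\mathcal{O}_v$, and the orthogonal decomposition $(\sigma v,\sigma w)=(\sigma v,0)+(0,\sigma w)$ identifies $\sigma\cdot[(v,0)]$ as the nearest point of that subspace --- a fortiori of $\mathcal{O}_v$ --- to $\sigma\cdot[(v,w)]$; this is exactly inequality (\ref{sigma-tau}). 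Hence $f(\sigma)=d_g(\sigma\cdot[(v,w)],\mathcal{O}_v)=d_g(\sigma\cdot[(v,w)],\overline{\mathcal{O}}_{v})$, the last equality because the distance from a point to a set equals the distance to its closure.

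Taking the infimum over $\sigma\in G$ and using $\mathcal{O}_{vw}=\{\sigma\cdot[(v,w)]:\sigma\in G\}$ then gives $\inf_{\sigma}f(\sigma)=\inf_{p\in\mathcal{O}_{vw}}d_g(p,\overline{\mathcal{O}}_{v})$. Since $p\mapsto d_g(p,\overline{\mathcal{O}}_{v})$ is continuous and $\mathcal{O}_{vw}$ is dense in $\overline{\mathcal{O}}_{vw}$, this infimum coincides with $\inf_{p\in\overline{\mathcal{O}}_{vw}}d_g(p,\overline{\mathcal{O}}_{v})=d_g(\overline{\mathcal{O}}_{vw},\overline{\mathcal{O}}_{v})$, which closes the argument; I would also remark that both closures are compact in $\mathbb{P}(\mathbb{V}\oplus\mathbb{W})$, so the set-distance is actually attained.

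The only genuinely substantive step is the use of (\ref{sigma-tau}) to see that $\sigma\cdot[(v,0)]$ is already the closest point of $\mathcal{O}_v$ to $\sigma\cdot[(v,w)]$: this is precisely what lets one pass from arbitrary pairs $(\sigma\cdot[(v,w)],\tau\cdot[(v,0)])$ to the ``diagonal'' pairs computed by $\nu_{vw}$ without losing any distance. Everything else --- the monotonicity and continuity of $\log\tan^2$, the point-to-set and orbit-to-closure passages, and the positivity of $\|v\|$ keeping all relevant distances strictly below $\pi/2$ --- is soft, so I do not expect a real obstacle here.
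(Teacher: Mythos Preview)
Your proof is correct and follows exactly the route the paper intends: the paper simply states that the corollary follows from inequality (\ref{sigma-tau}) and Lemma \ref{distance}, and you have faithfully unpacked those two ingredients, adding only the routine orbit-to-closure passage via density and continuity. There is nothing to add or correct.
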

 Our whole approach to the standard conjectures in K\"ahler Geometry is based on this identity.

 Corollary \ref{infi} and Proposition \ref{valuative} imply the following result.
 \begin{proposition}\label{curveselection}
\emph{Assume that
\begin{align}
\inf_{\sigma\in G}\nu_{vw}(\sigma)=-\infty \ .
\end{align}
Then there exists a positive number $\delta$, a holomorphic mapping $\sigma :D_{\delta}\ra G $, and an algebraic one parameter subgroup $\lambda$ of $G$ such that}
\begin{align}
\lim_{\alpha\ra 0}\nu_{vw}( {\gamma(\alpha)})= -\infty \ \qquad \gamma(\alpha) := \lambda(\alpha)\cdot \sigma(\alpha) \qquad   \ .
\end{align}
\end{proposition}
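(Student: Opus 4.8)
The plan is to feed the hypothesis through the two results quoted immediately above: Corollary~\ref{infi} turns the divergence $\inf_G\nu_{vw}=-\infty$ into the statement that the orbit closures $\overline{\mathcal{O}}_{vw}$ and $\overline{\mathcal{O}}_v$ actually meet, and Proposition~\ref{valuative} then produces a curve realizing a boundary point; the only cleanup needed afterwards is to discard a harmless factor using $G$-invariance and to read the energy off from Lemma~\ref{distance}. First I would invoke Corollary~\ref{infi}: by $(\ref{inf})$ the assumption $\inf_{\sigma\in G}\nu_{vw}(\sigma)=-\infty$ is equivalent to $d_g(\overline{\mathcal{O}}_{vw},\overline{\mathcal{O}}_v)=0$, and since $\overline{\mathcal{O}}_{vw}$ and $\overline{\mathcal{O}}_v$ are closed subsets of the compact manifold $\mathbb{P}(\mathbb{V}\oplus\mathbb{W})$ this forces $\overline{\mathcal{O}}_{vw}\cap\overline{\mathcal{O}}_v\neq\emptyset$; fix $z$ in this intersection. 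I would note that $z$ is necessarily a boundary point, $z\in\overline{\mathcal{O}}_{vw}\setminus\mathcal{O}_{vw}$: every point of $\mathcal{O}_{vw}$ is of the form $[(\tau\cdot v,\tau\cdot w)]$ with $\tau\cdot w\neq 0$, hence lies off $\mathbb{P}(\mathbb{V}\oplus\{0\})\supseteq\overline{\mathcal{O}}_v$.

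Next I would apply Proposition~\ref{valuative} to the $G$-module $\mathbb{V}\oplus\mathbb{W}$, the vector $(v,w)$, and the boundary point $z$: there is a $\mathbb{C}((t))$-valued point $\gamma(t)$ of $G$ with $\gamma(t)\cdot[(v,w)]\to z$ as $t\to 0$ and, $G$ being reductive, a factorization $\gamma(t)=l(t)\lambda(t)\sigma(t)$ with $\lambda$ an algebraic one-parameter subgroup and $l,\sigma$ points of $G$ (a priori over $\mathbb{C}[[t]]$); I will take $l,\sigma$ holomorphic on a disk $D_\delta$, deferring this point to the end. Put $z':=l(0)^{-1}\cdot z$. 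Since $\overline{\mathcal{O}}_{vw}$ and $\overline{\mathcal{O}}_v$ are $G$-invariant, $z'$ still lies in their intersection; and from $\gamma(t)\cdot[(v,w)]\to z$ together with $l(t)^{-1}\to l(0)^{-1}$ in $G$ we get, by continuity of the action, $\lambda(t)\sigma(t)\cdot[(v,w)]=l(t)^{-1}\cdot\bigl(\gamma(t)\cdot[(v,w)]\bigr)\to z'$.

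With $\gamma(\alpha):=\lambda(\alpha)\cdot\sigma(\alpha)$ --- the curve claimed in the statement --- the energy estimate is then immediate from Lemma~\ref{distance} and $(\ref{sigma-tau})$. For $\alpha\neq 0$ small, set $g_\alpha:=\lambda(\alpha)\sigma(\alpha)\in G$; minimizing the right-hand side of $(\ref{sigma-tau})$ over $\tau\in G$ and using that the distance to $\mathcal{O}_v$ equals the distance to $\overline{\mathcal{O}}_v$ gives
\begin{align*}
d_g\bigl(g_\alpha\cdot[(v,w)],\,g_\alpha\cdot[(v,0)]\bigr)\ \le\ d_g\bigl(g_\alpha\cdot[(v,w)],\,\overline{\mathcal{O}}_v\bigr)\ \xrightarrow[\alpha\to 0]{}\ d_g\bigl(z',\overline{\mathcal{O}}_v\bigr)=0 ,
\end{align*}
the limit by continuity of the distance to the fixed compact set $\overline{\mathcal{O}}_v$ and by $z'\in\overline{\mathcal{O}}_v$. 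Lemma~\ref{distance} now yields $\nu_{vw}(\gamma(\alpha))=\log\tan^2 d_g\bigl(g_\alpha\cdot[(v,w)],g_\alpha\cdot[(v,0)]\bigr)\to-\infty$, which is the assertion.

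The one step I expect to require genuine care is the holomorphy of $\sigma$ (and $l$): Proposition~\ref{valuative} as stated only supplies the formal factorization over $\mathbb{C}[[t]]$, so one must upgrade $l,\sigma$ to honestly convergent maps on a disk. For $G=SL(N+1,\mathbb{C})$ this is routine --- the factorization comes from row and column operations that may be performed over the ring of convergent power series --- and for general reductive $G$ it follows from Artin approximation (one could equally well first produce $\gamma(t)$ itself as a convergent, even algebraic, arc by the curve selection lemma applied to $\mathcal{O}_{vw}\subset\overline{\mathcal{O}}_{vw}$ and then lift it to $G$). Apart from this, the proof is formal; the two ideas carrying it --- throwing away the nuisance factor $l(t)$ via $G$-invariance of the orbit closures, and controlling the relevant distance by $d_g(\,\cdot\,,\overline{\mathcal{O}}_v)$ through $(\ref{sigma-tau})$, so that the motion of $g_\alpha\cdot[(v,0)]$ never has to be analysed --- are immediate once Corollary~\ref{infi}, Lemma~\ref{distance} and $(\ref{sigma-tau})$ are in hand.
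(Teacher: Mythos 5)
Your argument is correct and follows the paper's own route: the paper proves Proposition~\ref{curveselection} exactly by combining Corollary~\ref{infi} (divergence of the infimum forces $\overline{\mathcal{O}}_{vw}\cap\overline{\mathcal{O}}_{v}\neq\emptyset$) with Proposition~\ref{valuative} and its factorization $\gamma(t)=l(t)\lambda(t)\sigma(t)$, discarding $l(t)$ and reading the energy off via Lemma~\ref{distance} and $(\ref{sigma-tau})$. You merely spell out details the paper leaves implicit (including the convergence of the $\mathbb{C}[[t]]$-factors, which the paper handles for $G=\slnc$ by row and column operations), so no further changes are needed.
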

  $D_{\delta} $ denotes the $\delta$ disk in $\mathbb{C}$. 
\subsection{The Classical Futaki Character} Given the relationship between the Mabuchi energy and the semistability of pairs, we expect to be able to capture the classical Futaki invariant (see \cite{Futaki}) in terms of  representation theory and polyhedral geometry. The purpose of this section is to show that this is indeed the case.
As in the preceding sections  $G$ denotes a reductive complex linear algebraic group. $\mathbb{V}$, $\mathbb{W}$ are finite dimensional complex rational representations of $G$. Let $v\in \mathbb{V}\setminus \{0\}$ and $w\in \mathbb{W}\setminus \{0\}$ . As usual $[v]$ denotes the corresponding point in the projective space $\mathbb{P}(\mathbb{V})$ and $G_{[v]}$ denotes the stabilizer of the line through $v$. Therefore there is a \emph{character}
\begin{align*}
\chi_{v}:G_{[v]}\ra \mathbb{C}^* \ , \ \sigma\cdot v= \chi_{v}(\sigma)\cdot v \ .
\end{align*}
\begin{definition} \emph{Let $v\in \mathbb{V}\setminus \{0\}$ and $w\in \mathbb{W}\setminus \{0\}$ . Then the \textbf{\emph{automorphism group}} of the pair $(v,w)$ is
the algebraic subgroup of $G$ given by }
\begin{align*}
Aut(v,w):=G_{[v]}\cap G_{[w]} \ .
\end{align*}
\end{definition}
Let $\mathfrak{aut}(v,w)$ denote the Lie algebra of  $Aut(v,w)$ .
\begin{definition} \emph {Let $\mathbb{V} , \mathbb{W}$ be finite dimensional complex rational representations of $G$. Let $v,w$ be two nonzero vectors in $\mathbb{V} , \mathbb{W}$ respectively. Then the \textbf{\emph{Futaki character}} of the pair $(v,w)$ is  the algebraic homomorphism
\begin{align}
F:=\chi_{w}\chi_{v}^{-1}: Aut(v,w) \ra \mathbb{C}^* \ 
\end{align}
induced by the one dimensional representation $\mathbb{C}w\otimes (\mathbb{C}v )^{\vee}$ . We set $ {F}_*$ to be the corresponding Lie algebra character
\begin{align*}
{F}_{*}:={d\chi_{w}}-{d\chi_{v}}:\mathfrak{aut}(v,w)\ra \mathbb{C} \ 
\end{align*}
where ${d\chi_{v}}$ denotes the differential of $\chi_{v}$ at the identity.}
\end{definition}
\begin{remark}
\emph{At this point the order is not important. That is, we could equally well consider $\chi_{w}^{-1}\chi_{v}$ .}
\end{remark}
Let $\tau\in G$ and $\sigma\in G_{[v]}\cap G_{[w]}$, then the diagram below is commutative.
\begin{align*}
\xymatrix{
 \mathbb{C}w\otimes (\mathbb{C}v)^{\vee} \ar[d] ^{\chi_{w}\chi_{v}^{-1}(\sigma) } \ar[r]^{\alpha_{\tau}}&\mathbb{C}\tau\cdot w\otimes (\mathbb{C}\tau\cdot v)^{\vee} \ar[d]^{\chi_{\tau\cdot w}\chi_{\tau\cdot v}^{-1}(Ad_{\tau^{-1}}(\sigma))} \\
\mathbb{C}w\otimes (\mathbb{C}v)^{\vee} \ar[r] ^{\alpha_{\tau}}&\mathbb{C}\tau\cdot w\otimes (\mathbb{C}\tau\cdot v)^{\vee} 
 }  
 \end{align*}
 This shows that the Futaki character only depends on the \emph{orbit} of the pair $(v,w)$ .

We can decompose the identity component of $Aut(v,w)$
\begin{align*}
Aut^o(v,w)=S\rtimes U \ ,
\end{align*} 
 $S$ is reductive and $U$ is unipotent . Then we have that
 \ \\
 \begin{center}\emph{ $F$ is completely determined  on $S$}.  \end{center}
 \ \\
 Let $T\leq S$ be any maximal algebraic torus in $S$. Then we have that 
 \ \\
\begin{center}\emph{ $F $ is completely determined  on $T$}.  \end{center}
\ \\
  Let $M_{\mathbb{Z}}(T)$ denote the character group of $T$. Then $F\in M_{\mathbb{Z}}(T)$. Let \newline $N_{\mathbb{Z}}(T):=Hom(M_{\mathbb{Z}}(T), \mathbb{Z})$ denote the dual lattice. It is well known that
 \begin{align*}
 N_{\mathbb{Z}}(T)\cong\mbox{algebraic one parameter subgroups $\lambda$ of $T$ } \ .
 \end{align*}
Finally we have that
\begin{center}\emph{ $F $ is completely determined  on $N_{\mathbb{Z}}(T)$.}\end{center}
\begin{align*}
F : N_{\mathbb{Z}}(T)\ra \mathbb{Z} \ , \ F (\lambda)=<\chi_{w},\lambda>- <\chi_{v},\lambda> \ .
\end{align*}
In many cases one has that $Aut(v,w)$ is {trivial}. In such a situation we  introduce a generalization of $F$. In what follows $H$ denotes a maximal algebraic torus of $G$.
\begin{definition}
\emph{Let $\mathbb{V}$ be a rational representation of $G$. Let $\lambda$ be any degeneration in $H$  . The \textbf{\emph{weight}}  $w_{\lambda}(v)$  of $\lambda$ on $v\in \mathbb{V}\setminus\{0\}$ is the integer}
\begin{align*}
w_{\lambda}(v):= \mbox{\emph{min}}_{ \{ x\in \mathcal{N}(v)\}}\ l_{\lambda}(x)= \mbox{\emph{min}} \{ <\chi,\lambda>| \chi \in \mbox{\emph{supp}}(v)\}\ .
\end{align*}
 \noindent\emph{ Alternatively, $w_{\lambda}(v)$ is the unique integer such that}
\begin{align*}
\lim_{|t|\rightarrow 0}t^{-w_{\lambda}(v)}\lambda(t)v \  \mbox{ {exists in $\mathbb{V}$ and is \textbf{not} zero}}.
\end{align*}
\end{definition}
Let $\Delta(G)$ denote the space of algebraic one parameter subgroups of $G$.   
\begin{definition} \emph{The \textbf{\emph{generalized Futaki character}} of the pair $(v,w)$ is the map }
\begin{align*}
F_{gen} :\Delta (G)\ra \mathbb{Z}\ , \ F_{gen} (\lambda):=w_{\lambda}(w)-w_{\lambda}(v) \ .
\end{align*}
\end{definition}
\begin{proposition}\emph{$F_{gen}(\lambda)\leq 0$ for all $\lambda \in \Delta(G)$ if and only if $(v,w)$ is numerically semistable.  }
\end{proposition}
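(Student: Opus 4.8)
The plan is to unwind both sides of the claimed equivalence to a statement about weight polytopes and then apply the definition of $w_\lambda$ as a support function. First I would recall that for any maximal algebraic torus $H \leq G$ the weight polytope $\mathcal{N}(v)$ is by definition the convex hull of $\mathrm{supp}(v) \subset M_{\mathbb{R}}(H)$, and that every algebraic one parameter subgroup $\lambda \in \Delta(G)$ is conjugate into some maximal torus $H$; conversely every one parameter subgroup of a maximal torus is an element of $\Delta(G)$. Since both $w_\lambda(v)$ and the containment $\mathcal{N}(v) \subset \mathcal{N}(w)$ are invariant under replacing $\lambda$ by a conjugate (and simultaneously conjugating $H$), it suffices to fix one maximal torus $H$ and show: $w_\lambda(w) - w_\lambda(v) \leq 0$ for all one parameter subgroups $\lambda$ of $H$ if and only if $\mathcal{N}(v) \subset \mathcal{N}(w)$.

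Next I would use the identity $w_\lambda(v) = \min_{x \in \mathcal{N}(v)} l_\lambda(x)$ recorded in the definition, where $l_\lambda(x) = \langle x, \lambda \rangle$ is the linear functional on $M_{\mathbb{R}}(H)$ determined by $\lambda \in N_{\mathbb{Z}}(H)$. Thus $F_{gen}(\lambda) = \min_{x \in \mathcal{N}(w)} l_\lambda(x) - \min_{y \in \mathcal{N}(v)} l_\lambda(y)$, and the condition $F_{gen}(\lambda) \leq 0$ for all $\lambda$ reads
\begin{align*}
\min_{x \in \mathcal{N}(w)} \langle x, \lambda \rangle \leq \min_{y \in \mathcal{N}(v)} \langle y, \lambda \rangle \quad \text{for all } \lambda \in N_{\mathbb{Z}}(H).
\end{align*}
Since $N_{\mathbb{Z}}(H)$ spans $N_{\mathbb{R}}(H)$ and both sides are positively homogeneous and continuous in $\lambda$, this holds for all $\lambda \in N_{\mathbb{Z}}(H)$ if and only if it holds for all $\lambda \in N_{\mathbb{R}}(H)$; that is, the support function of $\mathcal{N}(w)$ is everywhere $\leq$ the support function of $\mathcal{N}(v)$. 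The core of the argument is then the standard fact from convex geometry that for compact convex sets $P, Q$ in a real vector space, $P \subseteq Q$ if and only if $h_P \geq h_Q$ pointwise, where here I am using the ``min'' form of the support function $h_P(\lambda) = \min_{x \in P}\langle x, \lambda\rangle$ (the inequality flips because $\min$ rather than $\max$ is used). Applying this with $P = \mathcal{N}(w)$ and $Q = \mathcal{N}(v)$ gives that the displayed inequality is equivalent to $\mathcal{N}(w) \supseteq \mathcal{N}(v)$, i.e. $\mathcal{N}(v) \subset \mathcal{N}(w)$, which is the definition of numerical semistability once we quantify over all maximal tori $H$.

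The only genuinely substantive point — and the step I would flag as the main obstacle, though it is more a bookkeeping hazard than a deep difficulty — is correctly matching the conjugation conventions so that ranging over all $\lambda \in \Delta(G)$ really does test $\mathcal{N}(v) \subset \mathcal{N}(w)$ for \emph{every} maximal torus $H$, not just one. Concretely: given a maximal torus $H$ and $\lambda$ a one parameter subgroup of $H$, one has $w_{g\lambda g^{-1}}(v) = w_\lambda(g^{-1}v)$, and $\mathcal{N}(g^{-1}v)$ computed with respect to $H$ equals $g^{-1}\mathcal{N}(v)$ where $\mathcal{N}(v)$ is computed with respect to $gHg^{-1}$; since $g$ acts by a lattice isomorphism intertwining the two character lattices, the containment $\mathcal{N}(v) \subset \mathcal{N}(w)$ for $gHg^{-1}$ is equivalent to $\mathcal{N}(g^{-1}v) \subset \mathcal{N}(g^{-1}w)$ for $H$. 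So each maximal torus is ``seen'' by $F_{gen}$ after conjugating into the fixed reference torus, and the biconditional closes. With that dictionary in hand the proof is a one-line invocation of the convex-geometry lemma on each torus.
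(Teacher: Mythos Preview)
Your argument is correct, and it is essentially the intended one: the paper states this proposition without proof, treating it as an immediate unwinding of the definitions via the support-function characterization of convex containment, which is exactly what you carry out. One cosmetic slip: when you invoke the lemma ``$P\subseteq Q$ iff $h_P\geq h_Q$'' you write ``applying this with $P=\mathcal{N}(w)$ and $Q=\mathcal{N}(v)$'' but then (correctly) conclude $\mathcal{N}(v)\subseteq\mathcal{N}(w)$; the roles of $P$ and $Q$ should be swapped in that sentence to match your own statement of the lemma, though the mathematics is unaffected.
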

 
The energy of the pair $(v,w)$ and the generalized character $F_{gen}$ are related as follows (see also \cite{paul2011} (2.30) pg. 269 ) .
 \begin{proposition}\label{fdkenergy}
\emph{ Let $\lambda\in\Delta(G)$ . Then there is an asymptotic expansion as $|t|\ra 0$
 \begin{align*}
 \nu_{vw}(\lambda(t))=F_{gen}(\lambda)\log|t|^2+O(1) \ .  
 \end{align*}}
\end{proposition}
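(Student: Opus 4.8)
The plan is to reduce the statement to a one–parameter subgroup computation by writing everything in terms of the weight space decompositions of $\mathbb{V}$ and $\mathbb{W}$ with respect to a maximal torus $H$ containing the image of $\lambda$. Fix Hermitian norms on $\mathbb{V}$ and $\mathbb{W}$; after replacing them by $H$-invariant (equivalently, $\lambda$-invariant) norms — which changes $\nu_{vw}$ only by a bounded amount and hence does not affect the claimed asymptotic expansion — we may assume the weight space decompositions $\mathbb{V}=\bigoplus_{\beta}\mathbb{V}(\beta)$, $\mathbb{W}=\bigoplus_{\gamma}\mathbb{W}(\gamma)$ are orthogonal. Writing $v=\sum_{\beta}v_{\beta}$ and $w=\sum_{\gamma}w_{\gamma}$, for $\lambda$ corresponding to $u\in N_{\mathbb{Z}}(H)$ we get $\lambda(t)\cdot v=\sum_{\beta}t^{\langle\beta,u\rangle}v_{\beta}$, so
\begin{align*}
\|\lambda(t)\cdot v\|^2=\sum_{\beta\in\mathrm{supp}(v)}|t|^{2\langle\beta,u\rangle}\|v_{\beta}\|^2\ ,
\end{align*}
and likewise for $w$. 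By definition $w_{\lambda}(v)=\min_{\beta\in\mathrm{supp}(v)}\langle\beta,u\rangle$, the minimum being attained on a nonempty subset of weights, so factoring out the dominant power $|t|^{2w_{\lambda}(v)}$ gives $\|\lambda(t)\cdot v\|^2=|t|^{2w_{\lambda}(v)}\big(c_v+o(1)\big)$ as $|t|\to 0$, where $c_v=\sum_{\langle\beta,u\rangle=w_{\lambda}(v)}\|v_{\beta}\|^2>0$ since $v\neq 0$. The same argument yields $\|\lambda(t)\cdot w\|^2=|t|^{2w_{\lambda}(w)}\big(c_w+o(1)\big)$ with $c_w>0$.

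Now I would simply take logarithms and subtract, using the definition \eqref{energy} of the energy of the pair:
\begin{align*}
\nu_{vw}(\lambda(t))=\log\|\lambda(t)\cdot w\|^2-\log\|\lambda(t)\cdot v\|^2=\big(w_{\lambda}(w)-w_{\lambda}(v)\big)\log|t|^2+\log\frac{c_w+o(1)}{c_v+o(1)}\ .
\end{align*}
Since $c_v,c_w>0$, the last term converges to $\log(c_w/c_v)$ as $|t|\to 0$ and is in particular bounded, i.e.\ $O(1)$. Recognizing $w_{\lambda}(w)-w_{\lambda}(v)=F_{gen}(\lambda)$ by the definition of the generalized Futaki character gives exactly the asserted expansion $\nu_{vw}(\lambda(t))=F_{gen}(\lambda)\log|t|^2+O(1)$.

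I do not expect a serious obstacle here; this is essentially a bookkeeping exercise once one is willing to normalize the norms. The only point requiring a little care is the reduction to $\lambda$-invariant norms: one must note that any two continuous norms on a finite dimensional space are comparable, so $\log\|\sigma\cdot v\|^2$ computed in two different norms differ by a globally bounded function of $\sigma\in G$, and in particular the difference along the curve $t\mapsto\lambda(t)$ is $O(1)$ — which is absorbed into the error term and leaves the leading coefficient $F_{gen}(\lambda)$ untouched. Alternatively, one can avoid invariant norms altogether and argue directly: the matrix of $\lambda(t)$ in a weight basis is diagonal with entries $t^{\langle\beta,u\rangle}$, so $\|\lambda(t)\cdot v\|^2$ is a finite sum $\sum_{\beta,\beta'}\overline{t}^{\langle\beta',u\rangle}t^{\langle\beta,u\rangle}\langle v_{\beta},v_{\beta'}\rangle_{H}$ for the possibly non-invariant form, and the same dominant-term extraction applies after checking the leading coefficient (the block of the Gram matrix on the minimal-weight space) is positive definite, hence nonzero on $v$. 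Either route completes the proof.
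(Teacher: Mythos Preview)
Your argument is correct and is the standard weight--space computation for this kind of statement. The paper itself does not supply a proof of Proposition~\ref{fdkenergy}; it merely states the result with a pointer to \cite{paul2011}, so there is nothing to compare against beyond noting that your approach is exactly the expected one. One small terminological quibble: ``$H$-invariant'' and ``$\lambda$-invariant'' are not literally equivalent (invariance under the compact torus in $H$ is strictly stronger), but either suffices for the orthogonality you need, so this does not affect the argument.
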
 
 In particular for $\sigma\in Aut(v,w)$ we have
 \begin{align*}
 \nu_{vw}(\sigma) =\log|\chi_{w}(\sigma)|^2-\log|\chi_{v}(\sigma)|^2 \ . 
 \end{align*}

We study the relationship between the generalized and classical Futaki invariants.  We have, as in the previous section, the Levi decomposition of $Aut(v,w)^o$
\begin{align}
Aut(v,w)^o=S\ltimes U \ ,
\end{align}
$S$ is reductive and $U$ is the unipotent radical of ${Aut(v,w)^o}$. Let $T \leq S$ be any maximal algebraic torus (possibly trivial). Since
$S\leq G$ there is a maximal algebraic torus $H$ in $G$ containing $T$. Fix any such $H$.   Then we have the short exact sequence of lattices
 \begin{align*}
 0\ra L_{\mathbb{Z}}\xrightarrow{\iota}M_{\mathbb{Z}}(H)\xrightarrow{\pi_T}M_{\mathbb{Z}}(T)\ra 0 \ . \\
   \end{align*}
 
Recall that $\sigma\in Aut(v,w)$ acts on $w$ (resp. $v$)  via a character $\chi_w$ (resp. $\chi_v$ ) . We have the following.
\begin{proposition}\emph{ All the characters in the $H$ support of $w$ (or $v$) coincide upon restriction to $T$
\begin{align*}
\begin{split}
&\chi\in\mbox{supp}(w)\Rightarrow \pi_T(\chi)=\chi_w \\
\ \\
&\eta\in\mbox{supp}(v)\Rightarrow \pi_T(\eta)=\chi_v \ .
\end{split}
\end{align*}
Consequently, the difference of any two characters in supp($w$) (or supp($v$))  lies in $ {L_{\mathbb{Z}}}$ .  }
\end{proposition}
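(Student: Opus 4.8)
The plan is to run a direct weight-space comparison. First I would decompose $v$ and $w$ into their $H$-weight components,
\begin{align*}
v=\sum_{\chi\in\mbox{supp}(v)}v_{\chi}\ ,\quad w=\sum_{\eta\in\mbox{supp}(w)}w_{\eta}\ ,\qquad 0\neq v_{\chi}\in\mathbb{V}(\chi)\ ,\ 0\neq w_{\eta}\in\mathbb{W}(\eta)\ .
\end{align*}
Since $T\leq H$, every $H$-weight space $\mathbb{V}(\chi)$ is contained in a single $T$-weight space, namely the one attached to the restricted character $\pi_T(\chi)\in M_{\mathbb{Z}}(T)$ (this is exactly how the map $\pi_T$ in the displayed exact sequence of character lattices is defined); so $T$ acts on $v_{\chi}$ through $\pi_T(\chi)$, and on $w_{\eta}$ through $\pi_T(\eta)$.

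On the other hand $[v]$ is fixed by $Aut(v,w)\supseteq T$, so by the very definition of the character $\chi_v:G_{[v]}\ra\mathbb{C}^*$ recalled above, $T$ acts on $v$ through the single character $\chi_v$. Hence for every $t\in T$,
\begin{align*}
\chi_v(t)\sum_{\chi}v_{\chi}=t\cdot v=\sum_{\chi}\pi_T(\chi)(t)\,v_{\chi}\ .
\end{align*}
The $v_{\chi}$ lie in distinct $H$-weight spaces, hence are linearly independent, so comparing components gives $\chi_v(t)=\pi_T(\chi)(t)$ for all $t\in T$ and all $\chi\in\mbox{supp}(v)$, i.e. $\pi_T(\chi)=\chi_v$. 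The identical argument with $[w]$ and $\chi_w$ in place of $[v]$ and $\chi_v$ yields $\pi_T(\eta)=\chi_w$ for all $\eta\in\mbox{supp}(w)$. This is the first assertion.

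For the last sentence, I would take $\chi,\chi'\in\mbox{supp}(w)$; by what was just proved $\pi_T(\chi-\chi')=\chi_w-\chi_w=0$, so $\chi-\chi'\in\ker\pi_T$, and by exactness of $0\ra L_{\mathbb{Z}}\xrightarrow{\iota}M_{\mathbb{Z}}(H)\xrightarrow{\pi_T}M_{\mathbb{Z}}(T)\ra 0$ we have $\ker\pi_T=\iota(L_{\mathbb{Z}})$, whence $\chi-\chi'\in L_{\mathbb{Z}}$; the same applies to differences of characters in $\mbox{supp}(v)$. There is essentially no obstacle: the only points requiring care are that $[v]$ and $[w]$ are genuinely $T$-stable with well-defined characters $\chi_v,\chi_w$ (which is built into the setup, since $T\leq Aut(v,w)\leq G_{[v]}\cap G_{[w]}$) and that restriction of an $H$-character to the subtorus $T$ coincides with $\pi_T$, which is the definition of the exact sequence of lattices.
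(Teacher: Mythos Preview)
Your argument is correct and is exactly the natural direct verification from the definitions. The paper states this proposition without proof (presumably regarding it as immediate from the weight-space decomposition and the definition of $\chi_v,\chi_w$), so there is nothing to compare against; your write-up simply supplies the routine details the paper omits.
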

 
Extending scalars to $\mathbb{R}$ gives the sequence
\begin{align*}
0\ra {L_{\mathbb{R}}}\xrightarrow{ {\iota}} M_{\mathbb{R}}(H)&\xrightarrow{ {\pi_T}} M_{\mathbb{R}}(T)\ra 0\ .
\end{align*}

Then $\mathcal{N}(w)$ and $\mathcal{N}(v)$ both lay in affine subspaces of ${\mathbb{R}}^N$ modeled on $ {L_{\mathbb{R}}}$ . 
Now we suppose that 
\begin{align}
\mathcal{N}(v)\subseteq \mathcal{N}(w) \ .
\end{align}

Since ${ {\pi_T}}$ is {linear} we have that
\begin{align}
\{\chi_v\}={ {\pi_T}}\big(\mathcal{N}(v)\big)\subseteq {{\pi_T}}\big(\mathcal{N}(w)\big)=\{\chi_w\} \ .
\end{align}
We conclude 
\begin{align}
\chi_w=\chi_v \ . 
\end{align} 

We summarize the relationship between the character of the pair and numerical semistability.
\begin{proposition}
\emph{ \begin{align*}
 \begin{split}
&a)\ \mbox { $\mathcal{N}(v)$ and $\mathcal{N}(w)$ both lie in parallel affine subspaces of $M_{\mathbb{R}}(H)$ modeled on $L_{\mathbb{R}}$ . }\\
&\mbox{Precisely, for any choice of $\chi\in \mbox{supp}(v)$ and $\eta\in \mbox{supp}(w)$ we have }\\
& \mathcal{N}(v)\subset \mathbb{A}_v:=\chi + L_{\mathbb{R}} \ \mbox{and}\ \mathcal{N}(w)\subset \mathbb{A}_w:=\eta + L_{\mathbb{R}} \ . \\
\ \\
&b) \ F_*\equiv 0 \ \mbox{if and only if} \ \mathbb{A}_v=\mathbb{A}_w \ . \\
\ \\
&c)\ F_*\not\equiv 0 \ \mbox{if and only if}\ \mathbb{A}_v\cap\mathbb{A}_w=\emptyset \ .\\
\ \\
& d)\ \mbox{ $F_*\equiv 0$  whenever the pair $(v,w)$ is numerically semistable .}
 \end{split}
 \end{align*}} 
 \end{proposition}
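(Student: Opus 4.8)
The plan is to assemble the observations recorded in the paragraphs immediately preceding the statement; beyond those, essentially only careful bookkeeping with the exact sequence $0\to L_{\mathbb{Z}}\to M_{\mathbb{Z}}(H)\xrightarrow{\pi_T}M_{\mathbb{Z}}(T)\to 0$ is required. For part a) I would invoke the preceding Proposition: for any two weights $\chi,\chi'\in\mbox{supp}(v)$ one has $\pi_T(\chi)=\chi_v=\pi_T(\chi')$, hence $\chi-\chi'\in\ker(\pi_T)=L_{\mathbb{Z}}$. Fixing one $\chi\in\mbox{supp}(v)$ this gives $\mbox{supp}(v)\subset\chi+L_{\mathbb{Z}}$, and passing to the convex hull yields $\mathcal{N}(v)\subset\chi+L_{\mathbb{R}}=\mathbb{A}_v$; the identical argument applied to $w$ gives $\mathcal{N}(w)\subset\mathbb{A}_w$. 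Since $\mathbb{A}_v$ and $\mathbb{A}_w$ are translates of the single linear subspace $L_{\mathbb{R}}\subset M_{\mathbb{R}}(H)$, they are parallel, hence either equal or disjoint.

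For b) and c) the crux is that $F_*$ is determined by its restriction to $\mathfrak{t}:=\mbox{Lie}(T)$. I would argue this from the Levi decomposition $Aut(v,w)^o=S\ltimes U$: the algebraic character $F=\chi_{w}\chi_{v}^{-1}$ is trivial on the unipotent radical $U$ (its image is a unipotent subgroup of $\mathbb{C}^*$, hence trivial) and on the semisimple derived group $[S,S]$ (a connected semisimple group has no nontrivial characters), while the central torus $Z(S)^o$ lies inside the maximal torus $T$ of $S$. Writing $\mbox{Lie}(Aut(v,w)^o)$ as the sum of $\mbox{Lie}(U)$, $[\mathfrak{s},\mathfrak{s}]$, and $\mathfrak{z}(\mathfrak{s})\subset\mathfrak{t}$, it follows that $F_*\equiv 0$ if and only if $F_*|_{\mathfrak{t}}=0$. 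Because the differential map from $M_{\mathbb{Z}}(T)$ to the dual of $\mathfrak{t}$ is injective over $\mathbb{C}$, the latter is equivalent to $\chi_w=\chi_v$ as elements of $M_{\mathbb{Z}}(T)$. By the preceding Proposition $\chi_w=\pi_T(\eta)$ and $\chi_v=\pi_T(\chi)$ for any $\eta\in\mbox{supp}(w)$, $\chi\in\mbox{supp}(v)$, so the condition becomes $\eta-\chi\in\ker(\pi_T)=L_{\mathbb{Z}}\subset L_{\mathbb{R}}$, i.e. $\mathbb{A}_v=\mathbb{A}_w$. This is b). Part c) is then the contrapositive of b) combined with the dichotomy from a): $F_*\not\equiv 0\iff\mathbb{A}_v\neq\mathbb{A}_w\iff\mathbb{A}_v\cap\mathbb{A}_w=\emptyset$.

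For d) I would argue as follows: if $(v,w)$ is numerically semistable then $\mathcal{N}(v)\subseteq\mathcal{N}(w)$ for the chosen maximal torus $H$; by a) the nonempty set $\mathcal{N}(v)$ then lies in $\mathbb{A}_v\cap\mathbb{A}_w$, forcing $\mathbb{A}_v=\mathbb{A}_w$ since parallel affine subspaces that meet coincide, and b) gives $F_*\equiv 0$. Equivalently, applying the linear surjection $\pi_T$ to the containment $\mathcal{N}(v)\subseteq\mathcal{N}(w)$ collapses both polytopes to single points and yields $\{\chi_v\}\subseteq\{\chi_w\}$, i.e. $\chi_v=\chi_w$, directly.

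The only step here that is more than bookkeeping is the reduction $F_*\equiv 0\iff F_*|_{\mathfrak{t}}=0$, which rests on the structure of $Aut(v,w)^o$ (the unipotent radical and the semisimple part carry no characters, and the central torus sits inside $T$) together with the injectivity of the weight-differential for a complex torus. Once this reduction is in place, the preceding Proposition and the exactness of $0\to L_{\mathbb{Z}}\to M_{\mathbb{Z}}(H)\xrightarrow{\pi_T}M_{\mathbb{Z}}(T)\to 0$ make parts a)–d) follow formally, so I expect no further obstacle.
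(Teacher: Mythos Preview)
Your proof is correct and follows essentially the same approach as the paper: the proposition is stated there as a summary of the preceding observations, and your argument spells out precisely those observations (the preceding Proposition for part a), the reductions ``$F$ is completely determined on $S$'' and ``$F$ is completely determined on $T$'' for parts b) and c), and the $\pi_T$-collapse argument the paper gives just before the statement for part d)). Your justification of the reduction $F_*\equiv 0\iff F_*|_{\mathfrak t}=0$ via the Levi decomposition is a bit more explicit than the paper's, but the content is the same.
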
 
Let $X\ra \cpn$ be a smooth subvariety of $\cpn$ satisfying our usual hypotheses. Observe that 
\begin{align}
 {Aut}(X , \mathcal{O}(1)|_X)\cong  {Aut}(R(X),\Delta(X)) \ .
\end{align}
The present discussion is justified by the following result concerning the usual Calabi-Futaki invariant of $X$ (see (\ref{futakiinv}) ).
\begin{theorem} \emph{Let $v\in \mathfrak{aut}(X , \mathcal{O}(1)|_X)$ be a holomorphic vector field. Let $F_*$ denote the Lie algebra character of $(R,\Delta)$. Then the following identity holds
\begin{align}\label{calfut}
\int_{X}v (h_{\om})\om^n=F_*(v ) \ .
\end{align}}
\end{theorem}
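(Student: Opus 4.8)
The plan is to transport \eqref{calfut} to the pair $(R,\Delta)$ through the isomorphism $\mathrm{Aut}(X,\mathcal{O}(1)|_X)\cong\mathrm{Aut}(R(X),\Delta(X))$ recorded above, and then to recognise both sides as the slope of the Mabuchi energy of $X$ along a one parameter degeneration. Both sides of \eqref{calfut} are $\mathbb{C}$-linear in $v$: the left side is the classical Calabi--Futaki character of $(X,\om_{FS}|_X)$, the right side is the Lie algebra character $F_*=d\chi_{\Delta}-d\chi_{R}$ of the pair. Now $\mathfrak{aut}(X,\mathcal{O}(1)|_X)$, being an algebraic Lie algebra, is spanned by semisimple elements (each of which lies in a maximal torus and may be approximated by generators of algebraic $\mathbb{C}^*$'s) together with nilpotent elements; both characters are trivial on the latter, since each is the differential of a character of an algebraic group and such characters kill the unipotent radical (this is immediate for $F_*$, and classical for the Calabi--Futaki character by Futaki--Morita). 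Hence it suffices to establish \eqref{calfut} when $v$ generates some $\lambda:\mathbb{C}^*\ra\mathrm{Aut}^{\circ}(X,\mathcal{O}(1)|_X)$.

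For such a $\lambda$ the left side of \eqref{calfut} is computed analytically. Since $\lambda(t)\in\mathrm{Aut}(X)$ one has $\lambda(0)\cdot[X]=[X]$, so $\lambda$ is a (trivial, product) special degeneration of $[X]$ and Corollary \ref{genfutaki} applies: $\nu_{\om}(\varphi_{\lambda(t)})=F_{0}(v_{\lambda})\log|t|^{2}+O(1)$ as $|t|\ra 0$, where the leading coefficient $F_{0}(v_{\lambda})$ is Ding--Tian's generalised Futaki invariant of the central fibre, which for a product degeneration is, by construction, the Calabi--Futaki invariant $\int_{X}v(h_{\om})\om^{n}$ of \eqref{futakiinv}. (Alternatively, and valid for arbitrary $v$, one differentiates \eqref{mabenergy} along $\varphi_{\exp(sv)}$, uses $\mathrm{Scal}(\om)-\mu=\triangle h_{\om}$ and an integration by parts, and lets the complex parameter $s$ run over $\mathbb{R}$ and $i\mathbb{R}$ to recover the full complex value of the invariant from the real valued energy; this avoids the reduction of the previous paragraph altogether.)

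For the same $\lambda$ the right side is computed representation-theoretically. Because $\lambda(t)$ fixes the lines $\mathbb{C}R$ and $\mathbb{C}\Delta$ it scales the vectors $R=R(X)$ and $\Delta=\Delta(X)$ by the characters $\chi_{R}(\lambda(t))$ and $\chi_{\Delta}(\lambda(t))$, and since any norm is homogeneous the norm ratios in Theorem A depend only on these characters; hence Theorem A (equivalently Proposition \ref{fdkenergy}, which records $\nu_{R,\Delta}(\sigma)=\log|\chi_{\Delta}(\sigma)|^{2}-\log|\chi_{R}(\sigma)|^{2}$ on $\mathrm{Aut}(R,\Delta)$) gives the \emph{exact} identity
\begin{align*}
d^{2}(n+1)\,\nu_{\om}(\varphi_{\lambda(t)})=\log|\chi_{\Delta}(\lambda(t))|^{2}-\log|\chi_{R}(\lambda(t))|^{2}=\big(d\chi_{\Delta}(v)-d\chi_{R}(v)\big)\log|t|^{2}=F_*(v)\log|t|^{2}.
\end{align*}
Comparing the coefficient of $\log|t|^{2}$ here with the one produced analytically, and carrying out the bookkeeping of the normalisation constants --- the factor $d^{2}(n+1)$ in Theorem A, the degree powers $\deg(R_X)$ and $\deg(\Delta_{\xhyp})$ built into the normalised pair $(R,\Delta)$, the volume $V$ in \eqref{mabenergy}, and the normalisation of Ding--Tian's invariant --- yields \eqref{calfut}. (The same matching can also be done purely algebraically, expressing $d\chi_{R}$ and $d\chi_{\Delta}$ via Grothendieck--Riemann--Roch applied to the Cayley--Koszul complexes of Section 4 and recognising Tian's virtual bundle $\mathcal{E}_{\pi}$, i.e. the CM weight.)

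The main obstacle is precisely that last matching of constants: one must check that the several normalisations conspire so that the proportionality factor between $\int_{X}v(h_{\om})\om^{n}$ and $F_*(v)$ is \emph{exactly} $1$, not merely some nonzero constant. This is where the particular choice of partitions $\lambda_{\bull},\mu_{\bull}$ --- equivalently, the decision to raise $R_X$ and $\Delta_{\xhyp}$ to one another's degrees --- is used essentially: it is the very normalisation forced on us by translation invariance of the Mabuchi energy in Theorem A, so its two appearances (once in Theorem A, once in the definition of $F_*$) cancel. A secondary technical point is the reduction to generators of $\mathbb{C}^*$'s; as noted, it can be bypassed by arguing throughout with the first variation of \eqref{mabenergy} along $\varphi_{\exp(sv)}$ for an arbitrary $v\in\mathfrak{aut}(X,\mathcal{O}(1)|_X)$.
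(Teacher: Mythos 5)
Your proposal is correct and follows essentially the paper's own route: the paper deduces \eqref{calfut} directly from Theorem A together with Proposition \ref{fdkenergy} (the identity $\nu_{R\Delta}(\sigma)=\log|\chi_{\Delta}(\sigma)|^{2}-\log|\chi_{R}(\sigma)|^{2}$ on $Aut(R,\Delta)$), matched against the classical variational characterization of the Calabi--Futaki invariant as the slope of the Mabuchi energy along the flow of $v$. Your additional reductions (semisimple generators, vanishing on nilpotents) and the explicit bookkeeping of the normalization constants are just a more detailed rendering of the same argument.
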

\begin{proof} {The proof follows at once from Theorem A and Proposition \ref{fdkenergy} .  }
\end{proof}
\begin{corollary} \emph{The map}
\begin{align}
v \in \mathfrak{aut}(X , \mathcal{O}(1)|_X)\ra \int_{X}v (h_{\om})\om^n \in \mathbb{C}
\end{align}
\emph{is a Lie algebra character and is independent of the choice of K\"ahler metric in the class $[\om]$.} 
\end{corollary}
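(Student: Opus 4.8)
The final statement to prove is the Corollary: the map $v\mapsto\int_X v(h_\om)\om^n$ on $\mathfrak{aut}(X,\mathcal{O}(1)|_X)$ is a Lie algebra character, and it is independent of the choice of Kähler metric in $[\om]$.

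The plan is to deduce both assertions directly from the Theorem immediately preceding it (identity (\ref{calfut})) together with the structural facts already established about the Futaki character $F_*$ of a pair. First I would invoke (\ref{calfut}), which says that for every holomorphic vector field $v\in\mathfrak{aut}(X,\mathcal{O}(1)|_X)$ we have $\int_X v(h_\om)\om^n = F_*(v)$, where $F_*$ is the Lie algebra character of the pair $(R,\Delta)$ under the identification $\mathrm{Aut}(X,\mathcal{O}(1)|_X)\cong\mathrm{Aut}(R(X),\Delta(X))$. By definition $F_*=d\chi_w-d\chi_v$ is the differential at the identity of the algebraic homomorphism $F=\chi_w\chi_v^{-1}:\mathrm{Aut}(v,w)\ra\mathbb{C}^*$, hence it is a genuine Lie algebra homomorphism $\mathfrak{aut}(v,w)\ra\mathbb{C}$, i.e.\ $\mathbb{C}$-linear and vanishing on brackets (vanishing on brackets because $\mathbb{C}$ is abelian and $F_*$ factors through the abelianization). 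Transporting this back along the isomorphism of automorphism groups, $v\mapsto\int_X v(h_\om)\om^n$ is the composition of a Lie algebra isomorphism with a Lie algebra character, hence is itself a Lie algebra character. That disposes of the first claim.

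For the independence of the metric in the class $[\om]$, the point is that the right-hand side $F_*(v)$ manifestly does not involve any choice of Kähler potential: $R=R(X)$ and $\Delta=\Delta(X)$ depend only on the projective embedding $X\ra\cpn$, and $F_*$ is built purely from the representation-theoretic data of the pair $(R,\Delta)$. Moreover, by the commutative diagram displayed just before the Levi decomposition discussion, $F_*$ depends only on the $G$-orbit of the pair $(R,\Delta)$, so it is unchanged if we replace $\om=\om_{FS}|_X$ by $\sigma^*\om_{FS}|_X$ for $\sigma\in G$, i.e.\ it is constant on the space of Bergman metrics. To upgrade to an arbitrary Kähler metric in $[\om]$ I would use the standard fact that the Calabi--Futaki invariant $F_{[\om]}$ of (\ref{futakiinv}) depends only on the class $[\om]$ and not on the representative (this is classical, see \cite{Futaki}); combining this with (\ref{calfut}) for the Fubini--Study representative gives $\int_X v(h_{\om'})(\om')^n = F_*(v)$ for every $\om'$ in $[\om]$, which is the asserted independence.

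The main obstacle, such as it is, is bookkeeping rather than substance: one must make sure the identification $\mathrm{Aut}(X,\mathcal{O}(1)|_X)\cong\mathrm{Aut}(R(X),\Delta(X))$ is compatible on Lie algebras with the integration map and with $F_*$, so that "$\int_X v(h_\om)\om^n$ is a character'' really follows from "$F_*$ is a character''; this is exactly the content of the preceding Theorem, so no new work is required. The only genuinely external input is the well-known class-invariance of the classical Calabi--Futaki character, which I would simply cite. Thus the proof is short: apply (\ref{calfut}), note $F_*$ is a Lie algebra homomorphism into the abelian Lie algebra $\mathbb{C}$ hence a character, note the orbit-invariance of $F_*$ plus the classical class-invariance of $F_{[\om]}$ to get metric independence, and conclude.
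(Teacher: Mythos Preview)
Your argument for the Lie algebra character claim is exactly what the paper intends: identity (\ref{calfut}) identifies the integral with $F_*(v)$, and $F_*=d\chi_{\Delta}-d\chi_{R}$ is by construction the differential of a group homomorphism into $\mathbb{C}^*$, hence a Lie algebra character. That part is fine and matches the paper.

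The gap is in your treatment of metric independence. You write that to pass from the Fubini--Study metric to an arbitrary $\om'\in[\om]$ you ``would use the standard fact that the Calabi--Futaki invariant \ldots\ depends only on the class $[\om]$ (this is classical, see \cite{Futaki}).'' But that \emph{is} the second assertion of the Corollary; you are invoking the conclusion to prove itself. The whole point of stating this as a corollary of (\ref{calfut}) is that it yields a \emph{new} derivation of Futaki's two classical facts from the representation-theoretic identification, so citing Futaki here defeats the purpose.

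The repair is cheap and stays inside the paper's framework. The proof of the Theorem goes through Theorem~A and Proposition~\ref{fdkenergy}: one reads off $\int_X v(h_\om)\om^n$ as the asymptotic slope of $\nu_\om$ along the one-parameter group generated by $v$, and that slope equals $F_*(v)$. Now the Mabuchi energy satisfies the elementary cocycle identity $\nu_{\om_{\varphi_0}}(\varphi)=\nu_\om(\varphi)-\nu_\om(\varphi_0)$, so changing the base metric within $[\om]$ shifts $\nu$ by an additive constant and leaves every slope unchanged. Hence the identity $\int_X v(h_{\om'})(\om')^n=F_*(v)$ holds for \emph{every} $\om'\in[\om]$, and since the right-hand side is visibly independent of $\om'$ you are done. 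This is the argument the paper has in mind when it lets the Corollary stand without proof.
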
 
\subsection{Highest weight polytopes} Let $\mathbb{E}$ be a finite dimensional complex $G$-module. Let $Y\subset \mathbb{P}(\mathbb{E})$ an irreducible $G$ invariant subvariety. Then $H^0(Y,\mathcal{O}(m))$ is a $G$ module for all $m\in \mathbb{N}$ .  Let $B\leq G$ be a Borel subgroup, and let $T\leq B$ denote a maximal algebraic torus of $B$.  Recall that there is a unique element $w_0$ of the Weyl group satisfying
\begin{align*}
-w_0:\Lambda_{\mathbb{Z}}(B)\ra \Lambda_{\mathbb{Z}}(B)\ .
\end{align*}

We define the space of  (dual) $B$-characters as follows 
\begin{align*}
&H^0(Y,\mathcal{O}(m))^{(B)}:= \\
&\{ -w_0\cdot\chi \in M_{\mathbb{Z}}(T)\ |\ \mbox{there exists}\ f\in H^0(Y,\mathcal{O}(m))\ \mbox{such that}\ b\cdot f=\chi(b)f\ \mbox{for all}\ b\in B \}\ .
\end{align*}
 Following Brion \cite{brion87} we define the \emph{\textbf{highest weight set}} of $Y$ as follows 
\begin{align}\label{brionpoly}
\mathscr{C}(Y):=\bigcup_{m\in \mathbb{N}}\frac{1}{m}H^0(Y,\mathcal{O}(m))^{(B)}\subset M_{\mathbb{Q}}(T)\ .
\end{align}

 Obviously we have that
\begin{align*}
\mathscr{C}(Y)\subset \Lambda_{\mathbb{Q}}(B) \ ,\  \mbox{the set of $B$-{dominant} rational weights  }\ .
\end{align*}
 
  \begin{theorem}(Mumford \cite{ness1984} , Brion \cite{brion87} )
\emph{  $\mathscr{P}(Y):= \overline{\mathscr{C}(Y)}\subset \Lambda_{\mathbb{R}}(B)$ is a compact convex rational polytope.  }
\end{theorem}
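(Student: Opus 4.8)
The plan is to identify $\mathscr{C}(Y)$ with a ``height one slice'' of a finitely generated semigroup of dominant weights, so that $\mathscr{P}(Y)$ becomes the corresponding slice of a finitely generated rational cone, hence a rational polytope. Fix a maximal torus $T\leq B$ and write $B=T\ltimes U$ with $U$ the unipotent radical, and take ``subvariety'' to mean closed subvariety, so that $Y$ is projective. Form the section ring
\[
R:=\bigoplus_{m\geq 0}H^0\big(Y,\mathcal{O}(m)\big).
\]
Since $Y$ is irreducible and $\mathcal{O}(1)|_Y$ is ample, $R$ is a finitely generated $\mathbb{C}$-algebra (a standard fact about section rings of ample line bundles on projective varieties) and an integral domain (because $Y$ is irreducible). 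The linearization of the $G$-action on $\mathcal{O}(1)$ makes $R$ a rational $G$-algebra respecting the grading; since $G$ is reductive, the finite generation of $U$-invariants (Hadziev's theorem; $U$ is a Grosshans subgroup of the reductive group $G$) shows that $R^U$ is again a finitely generated $\mathbb{C}$-algebra. It carries a bigrading by $\mathbb{N}\times M_{\mathbb{Z}}(T)$, the two indices being the degree $m$ and the $T$-weight $\chi$, and by the very definition of $H^0(Y,\mathcal{O}(m))^{(B)}$ the piece of bidegree $(m,\chi)$ is nonzero exactly when $-w_0\chi\in H^0(Y,\mathcal{O}(m))^{(B)}$ (every $B$-eigenvector sits in an irreducible $G$-submodule, whose line of $U$-fixed vectors is the highest weight line).

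Let $\Gamma\subset\mathbb{N}\times M_{\mathbb{Z}}(T)$ be the support of $R^U$, i.e. the set of bidegrees occurring with a nonzero component. Then $\Gamma$ is a finitely generated subsemigroup. It is closed under addition: if $0\neq f,g\in R^U$ are homogeneous of bidegrees $(m,\chi)$ and $(m',\chi')$, then $fg\in R^U$ has bidegree $(m+m',\chi+\chi')$ and $fg\neq 0$ because $R$ is a domain. Choosing homogeneous algebra generators $g_1,\dots,g_k$ of $R^U$, of bidegrees $(m_i,\chi_i)$, every nonzero homogeneous element of $R^U$ is a linear combination of monomials $g_1^{a_1}\cdots g_k^{a_k}$ of the same bidegree, so $\Gamma\subseteq\langle(m_1,\chi_1),\dots,(m_k,\chi_k)\rangle$; conversely each such monomial is nonzero (domain), so in fact $\Gamma=\langle(m_i,\chi_i)\rangle$. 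Since $Y$ is irreducible and projective, $H^0(Y,\mathcal{O}_Y)=\mathbb{C}$, so the only generator with $m_i=0$ is $(0,0)$, which contributes nothing to $\mathscr{C}(Y)$ and may be discarded.

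Finally, unwind the definition. Applying the lattice automorphism $-w_0$ of $M_{\mathbb{Z}}(T)$ (an involution preserving the dominant cone), we get $-w_0\big(\mathscr{C}(Y)\big)=\{\chi/m:\,(m,\chi)\in\Gamma,\ m\geq 1\}$. Writing $(m,\chi)=\sum_i a_i(m_i,\chi_i)$ with $a_i\in\mathbb{N}$ and not all $a_i$ with $m_i\geq 1$ vanishing,
\[
\frac{\chi}{m}=\sum_{i:\,m_i\geq 1}\frac{a_i m_i}{\sum_j a_j m_j}\cdot\frac{\chi_i}{m_i},
\]
a convex combination of the finitely many rational points $\chi_i/m_i$; conversely, since $\Gamma$ is a semigroup, every such integer combination lies in $\mathscr{C}(Y)$, and as the $a_i$ range over $\mathbb{N}$ the coefficients $a_i m_i/\sum_j a_j m_j$ are dense in the standard simplex. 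Hence $\overline{-w_0(\mathscr{C}(Y))}=\mathbf{conv}\{\chi_i/m_i:\,m_i\geq 1\}$, a bounded rational polytope, and applying $-w_0$ back shows $\mathscr{P}(Y)$ is a compact convex rational polytope. (Boundedness is also visible a priori: for suitable $m$ the restriction $\mathrm{Sym}^m\mathbb{E}^{\vee}=H^0(\mathbb{P}(\mathbb{E}),\mathcal{O}(m))\twoheadrightarrow H^0(Y,\mathcal{O}(m))$ is $G$-equivariant and onto, so every $T$-weight of $H^0(Y,\mathcal{O}(m))$ lies in $m\cdot\mathcal{N}(\mathbb{E}^{\vee})$ and $\mathscr{C}(Y)\subset\mathcal{N}(\mathbb{E}^{\vee})$.) The only non-formal ingredient is the finite generation of $R^U$; everything else is semigroup bookkeeping, the key mechanism being that a finitely generated graded domain has a finitely generated support semigroup, which is exactly what forces the closure of $\mathscr{C}(Y)$ to be polyhedral. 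I expect that finite-generation step to be the main obstacle to a self-contained account.
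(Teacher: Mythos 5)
The paper offers no proof of this theorem---it is quoted from Mumford's appendix to \cite{ness1984} and from Brion \cite{brion87}---and your argument is correct and is essentially the cited proof: reduce everything to finite generation of the $U$-invariant section ring $R^U$ (Hadziev--Grosshans), note that its support in $\mathbb{N}\times M_{\mathbb{Z}}(T)$ is a finitely generated subsemigroup because $R$ is a domain, and identify $\mathscr{P}(Y)$ (up to the involution $-w_0$) with the degree-one slice of the cone it spans, i.e.\ the convex hull of the finitely many normalized generator weights $\chi_i/m_i$. The only points worth keeping explicit are the ones you already flag: $Y$ must be taken closed in $\mathbb{P}(\mathbb{E})$ so that $R=\bigoplus_m H^0(Y,\mathcal{O}(m))$ is a finitely generated domain, and the parenthetical boundedness remark via $\mathrm{Sym}^m\mathbb{E}^{\vee}\twoheadrightarrow H^0(Y,\mathcal{O}(m))$ only holds for $m\gg 0$ (harmless, since boundedness already follows from your convex-hull description).
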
  
The next result follows at once from Proposition \ref{conj2} and work of Franz (see \cite{franz2002}) and also Guillemin and Sjamaar (see \cite{guillemin&sjamaar2006}) .
 \begin{proposition}
\emph{ Let $\mathbb{V}$ and $\mathbb{W}$ be finite dimensional complex rational $G$-modules. {Let} $v$ and $w$ denote nonzero vectors in $\mathbb{V}$ and $\mathbb{W}$ respectively. If the pair $(v,w)$ is semistable then their  highest weight polytopes coincide}
 \begin{align*}
 \mathscr{P}(\overline{\mathcal{O}}_{vw})=\mathscr{P}(\overline{\mathcal{O}}_{w})\ . 
 \end{align*}
 \end{proposition}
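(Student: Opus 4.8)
The plan is to deduce the equality of highest-weight polytopes from the equality of the weight polytopes of the vectors $(v,w)$ and $w$ relative to every maximal algebraic torus of $G$, and then to invoke the combinatorial description of the highest-weight polytope of an orbit closure due to Franz and to Guillemin--Sjamaar.

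First I would invoke Proposition \ref{conj2}: since $(v,w)$ is semistable, i.e. $\overline{\mathcal{O}}_{vw}\cap\overline{\mathcal{O}}_{v}=\emptyset$, the pair is numerically semistable, so $\mathcal{N}(v)\subseteq\mathcal{N}(w)$ in $M_{\mathbb{R}}(H)$ for every maximal algebraic torus $H\leq G$. Next, the $H$-weights occurring in the vector $(v,w)\in\mathbb{V}\oplus\mathbb{W}$ are exactly those occurring in $v$ together with those occurring in $w$, so $\mathcal{N}((v,w))=\mathbf{conv}\big(\mathcal{N}(v)\cup\mathcal{N}(w)\big)$, which equals $\mathcal{N}(w)$ by numerical semistability; and $w$, viewed in $\{0\}\oplus\mathbb{W}\subset\mathbb{V}\oplus\mathbb{W}$, has the same weight polytope as in $\mathbb{W}$. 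Hence $(v,w)$ and $w$ carry identical weight-polytope data with respect to every maximal algebraic torus of $G$.

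The heart of the matter is then the fact that the highest-weight polytope $\mathscr{P}(\overline{G\cdot[u]})$ of the Zariski closure of a projective orbit is completely determined by the family $\{\mathcal{N}_{H}(u)\}_{H}$ of weight polytopes of $u$ over all maximal algebraic tori $H\leq G$ (with its natural $G$-equivariant structure). Under the identification of $\mathscr{P}$ with the Kirwan moment polytope (Mumford \cite{ness1984}, Brion \cite{brion87}), this is precisely what the convexity and computation results of Franz \cite{franz2002} and Guillemin--Sjamaar \cite{guillemin&sjamaar2006} provide. Applying this twice, with $u=(v,w)$ and with $u=w$, and using the coincidence of weight-polytope data established in the previous paragraph, yields $\mathscr{P}(\overline{\mathcal{O}}_{vw})=\mathscr{P}(\overline{\mathcal{O}}_{w})$.

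It is worth recording that one of the two inclusions is elementary: by Definitions \ref{dominate} and \ref{pair}, semistability of $(v,w)$ is equivalent to dominance $(\mathbb{V}\oplus\mathbb{W};(v,w))\succsim(\mathbb{W};w)$, i.e. to the existence of a finite surjective $G$-equivariant morphism $\pi\colon\overline{\mathcal{O}}_{vw}\to\overline{\mathcal{O}}_{w}$ induced by linear projection, which satisfies $\pi^{*}\mathcal{O}_{\mathbb{P}(\mathbb{W})}(1)\cong\mathcal{O}_{\mathbb{P}(\mathbb{V}\oplus\mathbb{W})}(1)|_{\overline{\mathcal{O}}_{vw}}$. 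Pulling back $B$-eigensections along $\pi$ gives, for each $m$, an injective $G$-equivariant map $H^{0}(\overline{\mathcal{O}}_{w},\mathcal{O}(m))\hookrightarrow H^{0}(\overline{\mathcal{O}}_{vw},\mathcal{O}(m))$ preserving $B$-weights, whence $\mathscr{C}(\overline{\mathcal{O}}_{w})\subseteq\mathscr{C}(\overline{\mathcal{O}}_{vw})$ and therefore $\mathscr{P}(\overline{\mathcal{O}}_{w})\subseteq\mathscr{P}(\overline{\mathcal{O}}_{vw})$. The reverse inclusion is where the Franz / Guillemin--Sjamaar input is genuinely needed, since there is no morphism in the opposite direction; accordingly, I expect the main obstacle to be the careful invocation of that input --- reconciling the algebraic definition of $\mathscr{P}$ via Brion's $B$-characters with the symplectic moment polytope, and verifying that ``equal weight polytopes for all maximal tori'' is exactly the hypothesis under which those theorems force equality of moment polytopes.
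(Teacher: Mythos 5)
Your proposal is correct and follows essentially the same route as the paper, which derives the result directly from Proposition \ref{conj2} together with the results of Franz \cite{franz2002} and Guillemin--Sjamaar \cite{guillemin&sjamaar2006} identifying the highest weight (moment) polytope of an orbit closure in terms of the weight-polytope data over all maximal tori. Your additional elementary argument for the inclusion $\mathscr{P}(\overline{\mathcal{O}}_{w})\subseteq\mathscr{P}(\overline{\mathcal{O}}_{vw})$ via pullback of $B$-eigensections along the finite morphism is a nice supplement, but the core of your argument coincides with the paper's.
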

 \subsection{Quasi-closed orbits and semistable pairs} This subsection is devoted to the examination of some simple examples of semistable pairs. Some of these examples are available in the literature but our point of view seems to be new.
  
  Let $\mathbb{E}$ be a finite dimensional \emph{reducible} representation of $G$. Let $u\in \mathbb{E}\setminus\{0\}$. Let $\mathcal{O}\subset \mathbb{P}(\mathbb{E})$ denote the projective orbit $G\cdot [u]$. We always assume that the linear span of $\mathcal{O}$ coincides with $\mathbb{P}(\mathbb{E})$ . Fix a Borel subgroup $B\leq G$ and a maximal algebraic torus $T\leq B$. Let $\Lambda^+$ denote the dominant integral weights relative to $B$. It is well known that  $\overline{\mathcal{O}}$ is a union of orbits at least one of which is {closed} and each closed orbit corresponds to an irreducible $G$ submodule $\emu$ of $\mathbb{E}$. We assume that $\overline{\mathcal{O}}$ consists of \emph{finitely many} orbits. Let $\Lambda^+(\mathcal{O})$ denote the dominant weights corresponding to the closed orbits in $\overline{\mathcal{O}}$. Then we have the decomposition
\begin{align*}
\overline{\mathcal{O}}=\mathcal{O}\cup \bigcup _{\mubull\in \Lambda^+(\mathcal{O})}G\cdot[w_{\mubull}] \cup \mathcal{O}_{1}\cup \dots \cup  \mathcal{O}_{k} \ .
\end{align*}
$w_{\mubull}$ is the corresponding highest weight vector . Now we decompose $\mathbb{E}$ according to the orbit $\mathcal{O}$
\begin{align*}
\mathbb{E}=\bigoplus _{\mubull\in \Lambda^+(\mathcal{O})}\emu \oplus \mathbb{V} \ .
\end{align*}
  We assume that $\mathbb{V}\neq 0$ . Let $\pi_{\mathcal{O}}$ and $\pi_{\mathbb{V}}$ denote the projections onto $\bigoplus _{\mubull\in \Lambda^+(\mathcal{O})}\emu$ and $\mathbb{V}$ respectively.  Then we may decompose $u$ as follows
  \begin{align*}
  u= (v,w):= (\pi_{\mathbb{V}}(u),\pi_{\mathcal{O}}(u)) \ .
  \end{align*}
Then $(v,w)$ is semistable if and only if  for every $1\leq i\leq k$ there exists a $\mubull\in \Lambda^+(\mathcal{O})$  such that $\pi_{\mubull}(x_i)\neq 0$  where $\mathcal{O}_i=G\cdot [x_i]$ and 
$\pi_{\mubull}$ is the projection onto $\emu$.

The simplest case  is when $\overline{\mathcal{O}}$ consists of {two} orbits (one of which is closed)
\begin{align*}
\overline{\mathcal{O}}= \mathcal{O}\cup G\cdot [w_{\mubull}] \ .
\end{align*}
In this case it is automatic that the pair $(\pi_{\mathbb{V}}(u),\pi_{\mathcal{O}}(u))$ is semistable. Therefore the class of two orbit varieties (or, more generally, \emph{quasi-closed orbits}) provides many interesting examples of semistable pairs. Such varieties seem to have been   classified by Stephanie Cupit-Foutou (see \cite{cupit2003}) and Alexander Smirnov (see \cite{smirnov2004}).
 \begin{example}\label{2x2}\emph{Let $G=SL(2,\mathbb{C})$ and consider the $G$ module 
$$\mathbb{E}=\mathbb{C}^2\otimes \mathbb{C}^2\cong\det(\mathbb{C}^2)\oplus S^2(\mathbb{C}^2)$$
Let $u=e_1\otimes e_2$. The orbit closure is the well known quadric surface in $\mathbb{P}^3$
\begin{align*}
\overline{\mathcal{O}}=\overline{G\cdot [u]}=\mathbb{P}^1\times\mathbb{P}^1\rightarrow \mathbb{P}^3=\mathbb{P}(\mathbb{C}^2\otimes \mathbb{C}^2) \ .
\end{align*}
This is a two orbit variety with (unique) closed orbit $G\cdot [e^2_1]=G\cdot [e^2_2]$. We may decompose $u$ as follows
\begin{align*}
u=e_1\wedge e_2+e_1\cdot e_2 \ .
\end{align*}
 $\mathbb{V}$ corresponds to the trivial one dimensional representation with  $\pi_{\mathbb{V}}(u)=e_1\wedge e_2=1$. $\mubull=(2,0)$ and $\pi_{\mathcal{O}}(u)= e_1\cdot e_2 $. So we deduce that the pair $(1,e_1\cdot e_2 )$ is semistable.  Example \ref{hmss} says that this is equivalent to the (classical) semistability of $e_1\cdot e_2 $ under the action of $G$ .}
\end{example}  
\begin{example}\label{blowup}\emph{
 Let $\psi:\mathbb{P}^2\times\mathbb{P}^2\dashrightarrow \mathbb{P}(\wedge^2\mathbb{C}^3)$ be the rational map $\psi([v],[w]):=[v\wedge w]$.
  $\Gamma_{\psi}$ denotes the graph of $\psi$
\begin{align*}
\Gamma_{\psi}:=\{([v],[w],[v\wedge w])\ |\ [v]\neq [w]\}\subset \mathbb{P}^2\times\mathbb{P}^2\times\mathbb{P}(\wedge^2\mathbb{C}^3) \ .
\end{align*}
Recall that the blow up of $\mathbb{P}^2\times\mathbb{P}^2$ along the diagonal $\Delta$  is the Zariski closure of $\Gamma_{\psi}$ inside $\mathbb{P}^2\times\mathbb{P}^2\times\mathbb{P}(\wedge^2\mathbb{C}^3)$ . We will denote the blow up by  $ B_{\Delta}(\mathbb{P}^2\times\mathbb{P}^2)$ and let $E\cong \mathbb{P}(T^{1,0}_{\mathbb{P}^2})$ denote the exceptional divisor. The situation can be pictured as follows
\begin{align*}
\xymatrix{
 \Gamma_{\psi}\subset B_{\Delta}(\mathbb{P}^2\times\mathbb{P}^2)\ar@{^{(}->}[r]^-{\iota} \ar[d]^{p_{12}} & \mathbb{P}^2\times\mathbb{P}^2\times\mathbb{P}(\wedge^2\mathbb{C}^3)   \ar[r]^-{S}\ar[d]^{p_3}& \mathbb{P}(\mathbb{E}_{310}\oplus \mathbb{C}^3\oplus S^2(\wedge^2\mathbb{C}^3)\oplus  \mathbb{C}^3)   \\
\mathbb{P}^2\times\mathbb{P}^2\ar@{-->}^{\psi}[r]&\mathbb{P}(\wedge^2\mathbb{C}^3)  }
\end{align*}
Then we claim that $B= B_{\Delta}(\mathbb{P}^2\times\mathbb{P}^2)$ is a two-orbit $G=SL(3,\mathbb{C})$ variety (for the natural $G$ action) with  orbit decomposition
\begin{align}
B=(B\setminus E) \cup E \ .
\end{align}
Where $(B\setminus E)$ is necessarily the open orbit. There is a $G$ equivariant identification
\begin{align*}
B\setminus E\cong \mathbb{P}^2\times\mathbb{P}^2\setminus \Delta \ .
\end{align*}
Since $G$ acts transitively on planes in $\mathbb{C}^3$ we easily get that $\mathbb{P}^2\times\mathbb{P}^2\setminus \Delta$ is an orbit:
\begin{align*}
G\cdot ([e_1],[e_2]) = \mathbb{P}^2\times\mathbb{P}^2\setminus \Delta \ .
\end{align*}
To see that $E$ is a homogeneous $G$ variety we can proceed as follows. We have the decomposition into  irreducible summands
\begin{align*}
\mathbb{C}^3\otimes \mathbb{C}^3\otimes \wedge^2\mathbb{C}^3\cong \mathbb{E}_{310}\oplus \mathbb{C}^3\oplus S^2(\wedge^2\mathbb{C}^3)\oplus  \mathbb{C}^3 \ .
\end{align*}
The summand $\mathbb{E}_{310}$ appears as follows
\begin{align*}
0\ra \mathbb{E}_{310}\cong\mbox{Ker}(\pi)\ra S^2(\mathbb{C}^3)\otimes \wedge^2(\mathbb{C}^3)\xrightarrow{\pi} \mathbb{C}^3\ra 0 \ ,
\end{align*}
where the map $\pi$ is defined by 
\begin{align*}
\pi(v\cdot w\otimes \alpha)=\alpha(v)w+\alpha(w)v \ .
\end{align*}
Note that
\begin{align*}
e_1^2\otimes (e_1\wedge e_2) \in \mbox{Ker}(\pi) \ .
\end{align*}
Since $e_1^2\otimes (e_1\wedge e_2)$ is a highest weight $(310)$ vector we see that $\mathbb{E}_{310}$ is  a summand of  $\mbox{Ker}(\pi)$.  Since these spaces have the same dimension (which is 15 by the Weyl dimension formula) they coincide. Next we observe that
\begin{align*}
([e_1+te_2], [e_1], [e_1\wedge e_2])\in \Gamma_{\psi} \quad \mbox{for all $t\in\mathbb{C}^*$}\ . 
\end{align*}
As $t\ra 0$ we have
\begin{align*}
([e_1+te_2], [e_1], [e_1\wedge e_2])\ra([e_1],[e_1],[e_1\wedge e_2]) \in E \ .
\end{align*}
Let $S:\mathbb{P}^2\times\mathbb{P}^2\times\mathbb{P}(\wedge^2\mathbb{C}^3)\ra \mathbb{P}(\mathbb{E}_{310}\oplus \mathbb{C}^3\oplus S^2(\wedge^2\mathbb{C}^3)\oplus  \mathbb{C}^3)$ denote the 
Segre map. Then we have that
\begin{align*}
S([e_1],[e_1],[e_1\wedge e_2])=[e_1^2\otimes (e_1\wedge e_2)] \ .
\end{align*}
Therefore 
\begin{align*}
S(E)= G\cdot [e_1^2\otimes (e_1\wedge e_2)]  \ .
\end{align*}
Since $S$ is an embedding $E$ is a closed orbit with stabilizer
\begin{align*}
\begin{pmatrix}
*&*&*\\
0&*&*\\
0&0&*
\end{pmatrix}
\end{align*}
therefore we identify $E$ with ${F}(1,2,\mathbb{C}^3)$ the space of complete flags in $\mathbb{C}^3$. 
The projection
\begin{align*}
{F}(1,2,\mathbb{C}^3)\xrightarrow{p_1}\mathbb{P}^2 \ 
\end{align*}
exhibits ${F}(1,2,\mathbb{C}^3)$ as a projective bundle with fiber
\begin{align*}
p_1^{-1}([v])=\mathbb{P}(\mathbb{C}^3/\mathbb{C}v)  \ .
\end{align*}
Therefore if $\mathcal{Q}$ denotes the quotient bundle over $\mathbb{P}^2$ then we have the $G$ equivariant identifications 
\begin{align*}
{F}(1,2,\mathbb{C}^3)\cong \mathbb{P}(\mathcal{Q})\cong \mathbb{P}(\mathcal{O}(1)\otimes\mathcal{Q}) = \mathbb{P}(T^{1,0}_{\mathbb{P}^2})
\end{align*}
as expected. $S$ maps the point $[(e\otimes f\otimes (e\wedge f)]$ in $X\setminus E$ to
\begin{align*}
e\cdot f\otimes (e\wedge f)+(e\wedge f)^2 \in \mathbb{E}_{310}\oplus S^2(\wedge^2\mathbb{C}^3) \cong \mathbb{E}_{310}\oplus \mathbb{E}_{220}\ .
\end{align*}
We conclude that the pair
\begin{align*}
(v,w):=\big( (e_1\wedge e_2)^2 \ , \ e_1\cdot e_2\otimes (e_1\wedge e_2)\big)\in \mathbb{E}_{220}\oplus \mathbb{E}_{310} 
\end{align*}
is semistable. $\mathbb{E}_{220}$ plays the role of $\mathbb{V}$ and $\mathbb{E}_{310}$ plays the role of $\mathbb{W}$.
 }
\end{example}
 
\subsubsection{\textbf{Parabolic Induction}}
Let $G$ be a complex semisimple Lie group with Lie algebra $\mathfrak{g}$. Fix $\mathfrak{b}$ a Borel subalgebra and $\mathfrak{t}\subset \mathfrak{b}$ a Cartan subalgebra. Let $\Delta, \Delta^+, S\subset  \Delta^+$ denote the set of roots, positive roots, and simple positive roots respectively. Let $\kappa$ denote the Killing form. $W$ denotes the Weyl group. 

Let $\mathbb{E}$ be a finite dimensional complex rational representation of $G$. Let $v\in \mathfrak{N}(\mathbb{E})$ be a (nonzero) point in the {null cone} \footnote{ Recall that the null cone consists of the unstable points for the action.} of $\mathbb{E}$.  Let $\mathcal{O}_v$ denote the orbit of $v$ in $\mathbb{E}$. Given any compact convex region $R$ (for example a convex lattice polytope) in $M_{\mathbb{R}}$ we define its height $ht(R)$ by
\begin{align*}
ht(R):=\min_{x\in R}||x||_{\kappa} \ .
\end{align*}
$||x||_{\kappa}:=\sqrt{\kappa(x,x)}$ is the Killing distance from $x$ to zero . For any $w\in \mathbb{E}$ we define the height of $w$ to be the height of $\mathcal{N}(w)$, the weight polytope of $w$ with respect to $\mathfrak{t}$.
\begin{definition}(Popov-Vinberg \cite{popov-vinberg})\emph{ $v\in \mathfrak{N}(\mathbb{E})$ is a \emph{\textbf{highest point}}\footnote{Popov and Vinberg call such an element \emph{reduced}. See \cite{popov-vinberg} pg. 202.}in its orbit $\mathcal{O}_v$ provided that the following holds}
\begin{align*}
& 1) \ ht(v)=\max_{\sigma\in G}ht(\sigma\cdot v) \\
\ \\
& 2) \ \dim\mathcal{N}(v)=\min_{\{\sigma\in G\ |\ ht(\sigma\cdot v)=ht(  v)\}}\dim\mathcal{N}(\sigma\cdot v) \ .
\end{align*}
\end{definition}
 We assume that $v$ is a highest point in $\mathcal{O}_v$. Since $v$ is unstable its height is positive. Let 
 \begin{align*}
 \chi=\chi_{\min}(v)
 \end{align*}
  be the (unique) point in $\mathcal{N}(v)$ at minimal distance to $O$. In other words $\chi $ realizes the height of $v$.  Observe that $\chi $ is a {rational} point of $\mathcal{N}(v)$.  Next we consider the dual element
\begin{align*}
 h_v:=\frac{2\kappa^{-1}(\chi)}{||\chi||^2} \ .
 \end{align*}
Through the action of $W$ we can arrange that 
\begin{align*}
h_v\in \mathfrak{t}^+_{\mathbb{Q}} \ ,
\end{align*}
in other words that $\alpha(h_v)\geq 0$ for all $\alpha\in S$. Let $\Gamma$ denote any subset of $M_{\mathbb{R}}$. We set
\begin{align*}
\mathbb{E}|_{\Gamma}:= \bigoplus _{\beta\in \Gamma\cap \ \emph{supp}(\mathbb{E})}\mathbb{E}(\beta) \ .
\end{align*}
We give a similar meaning to $v|_{\Gamma}$. Next we introduce some canonical subalgebras associated to $v$.

\begin{align*}
& \mathfrak{p}(h_v):=\mathfrak{t}\oplus \sum_{\Delta_{\geq 0}:=\{\alpha\in \Delta\ |\ \alpha(h_v)\geq 0\}} \mathfrak{g}_{\alpha}\quad\mbox{( parabolic )} \\
\ \\
& \mathfrak{l}(h_v):=\mathfrak{t}\oplus \sum_{\Delta_{= 0}:=\{\alpha\in \Delta\ |\ \alpha(h_v)= 0\}} \mathfrak{g}_{\alpha} \quad \mbox{(reductive )}\\
\ \\
&\mathfrak{u}(h_v):=  \sum_{\Delta_{> 0}:=\{\alpha\in \Delta\ |\ \alpha(h_v)> 0\}} \mathfrak{g}_{\alpha} \quad \mbox{(nilpotent)} \ .
\end{align*}
Let $P(h_v), L(h_v)$ and $R^uP(h_v)$ denote the corresponding closed algebraic subgroups of $G$. Then $P(h_v)$ has the {Levi decomposition}
\begin{align*}
P(h_v)\cong L(h_v) R^uP(h_v) \ .
\end{align*}
We set $\mathbb{C}^*_v:=\{\exp(\tau h_v)\ |\tau \in \mathbb{C}\}$. By definition we have that 
\begin{align*}
\mathbb{C}^*_v\leq L(h_v) \ .
\end{align*}
We define the reduced Levi subgroup ${\mathbf{L}}(h_v)$ to be the quotient $L(h_v)\backslash \mathbb{C}^*_v$ .   Since $h_v$ is a semisimple element  we may define the eigenspace decompositions
\begin{align*}
&\mathbb{E}(q):=\{w\in \mathbb{E}\ |\ h_v\cdot w=qw\} \\
\ \\
&\mathbb{E}(\geq 0):= \bigoplus _{q\in\mathbb{Q}_{+}}\mathbb{E}(q) \ .
\end{align*}
Let $\Gamma_v$ denote the affine hyperplane in $M_{\mathbb{R}}\cong \mathfrak{t}^{\vee}_{\mathbb{R}}$ with equation $\{ \beta(h_v)=2\}$. Observe that $\chi$ lies in $\Gamma_v$ and $\mathcal{N}(v)$ lies on one side of $\Gamma_v$.  Observe that ${\mathbf{L}}(v)$ preserves $\mathbb{E}|_{\Gamma_v}$ . 

Now we make the assumptions that 
\begin{align*}
  \mathcal{N}(v)\subset \Gamma_v \ \mbox{and} \ R^uP(v) \ \mbox{acts trivially on}\ \mathbb{E}|_{\Gamma_v} \ .
\end{align*}
We define
\begin{align*}
& \mathcal{O}_{\Gamma_v}:= {\mathbf{L}}(v)\cdot [v]\subset \mathbb{P}(\mathbb{E}|_{\Gamma_v}) \ .
\end{align*}

The following construction of a \emph{\textbf{parabolically induced orbit}}  plays a basic role in the study of nilpotent orbits in complex semisimple Lie algebras.
\begin{align*}
& { {Ind}^{\ G}_{\ P}}(\overline{\mathcal{O}}_{\Gamma_v}):=G\times_P \overline{\mathcal{O}}_{\Gamma_v} \\
\ \\
& (g,[w])\sim (g\cdot p^{-1}, p\cdot [w]) \ .
\end{align*}
\begin{theorem} (Popov-Vinberg \cite{popov-vinberg} pg. 203 corollary 2.)\emph{ The canonical map
\begin{align*}
 { {Ind}^{\ G}_{\ P}}(\overline{\mathcal{O}}_{\Gamma_v})\ra \overline{G\cdot [v]} 
 \end{align*}
 is birational.}
 \end{theorem}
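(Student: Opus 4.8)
The plan is to write down the canonical morphism explicitly, identify its image as $\overline{G\cdot[v]}$, and then reduce the assertion of birationality to the single inclusion $G_{[v]}\subseteq P(h_v)$, where $G_{[v]}$ denotes the stabiliser of the line $\mathbb{C}v$. That inclusion is the heart of the matter, and I would extract it from Kempf's theory of optimal destabilising one-parameter subgroups.

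First I would verify that the rule $(g,[w])\mapsto g\cdot[w]$ descends to a $G$-equivariant morphism $\psi\colon {Ind}^{\,G}_{\,P}(\overline{\mathcal{O}}_{\Gamma_v})=G\times_P\overline{\mathcal{O}}_{\Gamma_v}\ra\mathbb{P}(\mathbb{E})$, where $P=P(h_v)$. The only nontrivial point is that $\overline{\mathcal{O}}_{\Gamma_v}$ is $P$-stable: in the Levi decomposition $P=L(h_v)R^uP(h_v)$, the factor $R^uP(h_v)$ acts trivially on $\mathbb{E}|_{\Gamma_v}$ by hypothesis, hence trivially on $\mathbb{P}(\mathbb{E}|_{\Gamma_v})$; and $L(h_v)$ preserves $\mathbb{E}|_{\Gamma_v}$ and acts on its projectivisation through $\mathbf{L}(h_v)=L(h_v)\backslash\mathbb{C}^*_v$, since $\mathbb{C}^*_v=\{\exp(\tau h_v)\}$ acts by scalars on the eigenspace $\mathbb{E}|_{\Gamma_v}$. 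Thus $\overline{\mathcal{O}}_{\Gamma_v}=\overline{\mathbf{L}(h_v)\cdot[v]}$ is $P$-stable and $\psi$ is well defined and equivariant. Moreover the source is a fibre bundle over the complete variety $G/P$ with complete fibre $\overline{\mathcal{O}}_{\Gamma_v}$, hence complete and irreducible, so $\psi$ is proper with closed irreducible image; since $\psi$ carries the class of $(e,[v])$ to $[v]$, the image contains $\overline{G\cdot[v]}$, while $\mathcal{O}_{\Gamma_v}=\mathbf{L}(h_v)\cdot[v]\subset G\cdot[v]$ forces $G\cdot\overline{\mathcal{O}}_{\Gamma_v}\subset\overline{G\cdot[v]}$. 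Hence $\psi$ is a proper surjection onto $\overline{G\cdot[v]}$.

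For birationality it then suffices, working over $\mathbb{C}$, to show that the fibre of $\psi$ over one point of the open orbit $G\cdot[v]$ is a single reduced point; by equivariance I may test this at $[v]$. Assuming the inclusion $G_{[v]}\subseteq P$, I argue as follows. If $(g,[w])$ represents a point of $\psi^{-1}([v])$ then $g^{-1}\cdot[v]=[w]\in\overline{\mathcal{O}}_{\Gamma_v}$; granting $g\in P$, the relation $(g,[w])\sim(g g^{-1},g\cdot[w])=(e,[v])$ shows $\psi^{-1}([v])$ is one point. Equivalently, over $G\cdot[v]$ the morphism $\psi$ is the natural map $G/P_{[v]}\ra G/G_{[v]}$ with $P_{[v]}=P\cap G_{[v]}$, which is bijective, and a proper bijective morphism onto the smooth variety $G\cdot[v]$ is an isomorphism there. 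So the whole theorem comes down to proving $G_{[v]}\subseteq P(h_v)$.

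This last inclusion is the main obstacle. I would deduce it from Kempf's instability theorem (\cite{kempf78}). After a positive rational rescaling making it an honest one-parameter subgroup, $h_v=2\kappa^{-1}(\chi)/||\chi||^2$ with $\chi=\chi_{\min}(v)$ is precisely the Kempf optimal destabilising one-parameter subgroup of the unstable point $[v]$: the weight $w_\lambda(v)$ and its normalisation by the Killing norm depend only on $\mathrm{supp}(v)$, hence only on the line $\mathbb{C}v$, and the Popov--Vinberg ``highest point'' conditions are exactly the normalisations that single out the nearest point $\chi$ and fix $\mathcal{N}(v)$ inside $\Gamma_v$. Consequently the same $h_v$ is optimal for $g^{-1}\cdot v$ for every $g\in G_{[v]}$, so that $g\,h_v\,g^{-1}$ (viewed as a one-parameter subgroup) is optimal for $g\cdot(g^{-1}\cdot[v])=[v]$. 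Since Kempf's theorem attaches to $[v]$ a parabolic $P(h_v)$ independent of the choice of optimal one-parameter subgroup, this gives $gP(h_v)g^{-1}=P(h_v)$, and parabolic subgroups being self-normalising we conclude $g\in N_G(P(h_v))=P(h_v)$. Hence $G_{[v]}\subseteq P(h_v)$, and $\psi$ is birational.
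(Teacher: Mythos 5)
Your surjectivity and properness discussion is fine, and the Kempf-theoretic argument for $G_{[v]}\subseteq P(h_v)$ is the standard one and works. The gap is in the reduction of birationality to that inclusion. The fibre of $\psi$ over $[v]$ is $\{gP\in G/P \ : \ g^{-1}\cdot[v]\in\overline{\mathcal{O}}_{\Gamma_v}\}$, and you only treat those $g$ for which $g^{-1}\cdot[v]$ lies in the open piece $\mathcal{O}_{\Gamma_v}=\mathbf{L}(h_v)\cdot[v]$ (there $g^{-1}[v]=l\cdot[v]$ gives $g\in G_{[v]}l^{-1}\subseteq P$, using your inclusion). Nothing you say excludes fibre points with $g^{-1}\cdot[v]\in\overline{\mathcal{O}}_{\Gamma_v}\setminus\mathcal{O}_{\Gamma_v}$; your sentence ``over $G\cdot[v]$ the morphism is the natural map $G/P_{[v]}\to G/G_{[v]}$'' silently assumes $\overline{\mathcal{O}}_{\Gamma_v}\cap G\cdot[v]=\mathcal{O}_{\Gamma_v}$. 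This is not a removable convenience: if the boundary of $\overline{\mathcal{O}}_{\Gamma_v}$ met the open orbit, then by equivariance every fibre over $G\cdot[v]$ would contain at least two points (or be positive-dimensional) and $\psi$ could not be birational, so the intersection statement is a necessary ingredient and is in fact the nontrivial half of the theorem.

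To close the gap you need the stronger Kempf--Rousseau/Hesselink/Kirwan--Ness facts, of which $G_{[v]}\subseteq P(h_v)$ is only a weak consequence: (i) if $w\in\mathbb{E}|_{\Gamma_v}$ is $\mathbf{L}(h_v)$-semistable and $w=g^{-1}\cdot v$ up to scalar, then $g\in P(h_v)$ (two points of the ``blade'' lying in one $G$-orbit differ by an element of the optimal parabolic); and (ii) no $\mathbf{L}(h_v)$-unstable point of $\mathbb{P}(\mathbb{E}|_{\Gamma_v})$ lies in $G\cdot[v]$ at all. Point (ii) is where the height maximality enters: any $[w]\in\mathbb{P}(\mathbb{E}|_{\Gamma_v})\cap G\cdot[v]$ has $\mathcal{N}(w)\subset\Gamma_v$, hence $\mathrm{ht}(w)\geq \|\chi\|$ with nearest point $\chi$, and if $w$ were $\mathbf{L}$-unstable one could tilt $h_v$ by an $\mathbf{L}$-destabilizing one-parameter subgroup of $w$ (the computation underlying the Ness--Kirwan criterion quoted in the paper) to produce a normalized weight strictly larger than $\|\chi\|$, contradicting that $\|\chi\|$ is the maximal height on the orbit. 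Only with (i) and (ii) in hand does one get $g^{-1}[v]\in\overline{\mathcal{O}}_{\Gamma_v}\Rightarrow g\in P(h_v)$, after which your Zariski-main-theorem conclusion is fine. Note finally that the paper offers no proof of this statement--it is quoted from Popov--Vinberg, where precisely (i) and (ii) are part of the stratification theory--so your argument stands or falls on its own, and as written it assumes the heart of the matter rather than proving it.
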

 \begin{corollary}
 \emph{The number of orbits in $\overline{G\cdot [v]}$ does not exceed the number of orbits in 
 $  {Ind}^{\ G}_{\ P}(\overline{\mathcal{O}}_{\Gamma_v})$ .}
 \end{corollary}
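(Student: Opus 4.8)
The plan is to derive the corollary immediately from the Popov--Vinberg theorem stated just above, once we upgrade the word ``birational'' to ``surjective'' and then invoke the general principle that a $G$-equivariant surjection of $G$-varieties induces a surjection on sets of orbits.

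First I would check completeness of the source. The variety ${Ind}^{\,G}_{\,P}(\overline{\mathcal{O}}_{\Gamma_v})=G\times_P\overline{\mathcal{O}}_{\Gamma_v}$ fibres over the flag variety $G/P$ with fibre the projective variety $\overline{\mathcal{O}}_{\Gamma_v}\subset\mathbb{P}(\mathbb{E}|_{\Gamma_v})$; hence it is proper over $G/P$, which is proper over a point, so $G\times_P\overline{\mathcal{O}}_{\Gamma_v}$ is complete. The canonical morphism $\Phi\colon G\times_P\overline{\mathcal{O}}_{\Gamma_v}\ra\overline{G\cdot[v]}$ is $(g,[w])\mapsto g\cdot[w]$, with $[w]$ regarded as a point of $\mathbb{P}(\mathbb{E})$ via the inclusion $\mathbb{E}|_{\Gamma_v}\hookrightarrow\mathbb{E}$; it is well defined on the quotient since $(g p^{-1},p\cdot[w])\mapsto g\cdot[w]$, and it is $G$-equivariant for left translation on the first factor. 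By the theorem $\Phi$ is birational, hence dominant, so its image is dense in $\overline{G\cdot[v]}$; and since the source is complete, $\Phi$ is a closed map, so its image is also closed. As $\overline{G\cdot[v]}$ is irreducible (being the closure of an orbit), a dense closed subset is the whole space, and therefore $\Phi$ is surjective.

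Next I would pass to orbits. For any $G$-orbit $G\cdot z$ in the source, equivariance gives $\Phi(G\cdot z)=G\cdot\Phi(z)$, a single orbit in $\overline{G\cdot[v]}$; thus $\Phi$ induces a well-defined map from the set of $G$-orbits of ${Ind}^{\,G}_{\,P}(\overline{\mathcal{O}}_{\Gamma_v})$ to the set of $G$-orbits of $\overline{G\cdot[v]}$. This map of orbit sets is surjective because $\Phi$ is: given $w\in\overline{G\cdot[v]}$, write $w=\Phi(z)$, and then $G\cdot w=\Phi(G\cdot z)$. Consequently the cardinality of the orbit set of $\overline{G\cdot[v]}$ is at most that of the orbit set of ${Ind}^{\,G}_{\,P}(\overline{\mathcal{O}}_{\Gamma_v})$, which is the assertion.

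The only delicate point --- and the one I would flag as the (minor) main obstacle --- is the step from ``birational'' to ``surjective'', i.e.\ verifying that $\Phi$ is proper so that its image is closed. This is precisely where the completeness of $G\times_P\overline{\mathcal{O}}_{\Gamma_v}$ is used, which in turn relies on the standing hypotheses of the construction, notably that $R^uP(v)$ acts trivially on $\mathbb{E}|_{\Gamma_v}$, so that $P$ honestly acts on $\overline{\mathcal{O}}_{\Gamma_v}$ and the associated-bundle construction, and with it the map $\Phi$, are legitimate. Granting that, the remainder of the argument is purely formal.
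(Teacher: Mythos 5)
Your argument is correct and is exactly the reasoning the paper leaves implicit: the canonical map is $G$-equivariant and, being birational with complete (projective-bundle) source, has closed dense image, hence is surjective, so it surjects on orbit sets. The paper states the corollary without proof as an immediate consequence of the Popov--Vinberg theorem, and your filling-in of the properness/surjectivity step, including the role of the triviality of the $R^uP(v)$-action in making the associated bundle well defined, is the intended route.
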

  
Suppose that $\mathbb{E}$ is reducible and coincides with the linear span of $\mathcal{O}_v$.  
Then we may conclude that $\overline{G\cdot [v]}$ consists of two orbits whenever $ { {Ind}^{\ G}_{\ P}}(\overline{\mathcal{O}}_{\Gamma_v})$ consists of two orbits.

In the next example we require a criterion insuring that a given rational semisimple element is a characteristic for a $v \in \mathfrak{N}(\mathbb{E})$.
Popov and Vinberg attribute the following result to L. Ness and F. Kirwan (independently).
\begin{theorem}(see \cite{popov-vinberg} Theorem 5.4 pg. 202) \emph{Let $v\in \mathfrak{N}(\mathbb{E})$. Let $h\in \mathfrak{t}_{\mathbb{Q}}$ be such that
\begin{align*}
v\in \oplus_{\beta\in \Delta(h)_{\geq 0}}\mathbb{E}(\beta) \ .
\end{align*}
Then $v$ is highest in its orbit with characteristic  $h$ if and only if $v|_{\Gamma_{h}}\in \mathbb{E}|_{\Gamma_{h}}$ is semistable for the action of
$\mathbf{L}(h)$.    }
\end{theorem}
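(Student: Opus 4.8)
The plan is to treat the two implications separately: the forward direction (if $v$ is highest then $v|_{\Gamma_h}$ is $\mathbf{L}(h)$-semistable) by an elementary perturbation of the one-parameter subgroup $h$, and the converse --- which contains all the content --- by reducing to Kempf's theorem on optimal destabilizing subgroups and the attendant stratification of the null cone. First a normalization: rescale $h$ within its ray so that $w_{h}(v)=2$, and put $\Gamma_{h}=\{\beta\in M_{\mathbb{R}}\mid \beta(h)=2\}$, the supporting hyperplane of $\mathcal{N}(v)$ in the direction of $h$; the hypothesis on $v$ then says $v\in\bigoplus_{\beta(h)\geq 2}\mathbb{E}(\beta)$ and $v_{0}:=v|_{\Gamma_{h}}\neq 0$. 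The device I would use throughout is the support--function identity
\begin{align*}
\max_{u\in\mathfrak{t}\setminus\{0\}}\ \frac{w_{u}(v)}{\|u\|_{\kappa}}\ =\ \min_{x\in\mathcal{N}(v)}\|x\|_{\kappa}\ =\ ht(v),
\end{align*}
with the maximum attained exactly on the ray through $h_{v}$ since the nearest point $\chi_{\min}(v)$ of the convex body $\mathcal{N}(v)$ is unique. Accordingly ``$v$ is highest in $\mathcal{O}_{v}$ with characteristic $h$'' unwinds into three assertions: (a) the ray of $h$ is the $\mathfrak{t}$-optimal one, i.e. $w_{h}(v)/\|h\|_{\kappa}=ht(v)$; (b) $ht(v)=\max_{\sigma\in G}ht(\sigma\cdot v)$; (c) $\dim\mathcal{N}(v)$ is least among the $\sigma\cdot v$ with $ht(\sigma\cdot v)=ht(v)$.

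For the forward implication, suppose $v$ is highest with characteristic $h$ and, for contradiction, that $v_{0}$ is not $\mathbf{L}(h)$-semistable. By Hilbert--Mumford there is a destabilizing one-parameter subgroup of $\mathbf{L}(h)$, which I lift to a coweight $\widetilde{\lambda}\in\mathfrak{t}$ with $\kappa(\widetilde{\lambda},h)=0$ and $m:=\min\{\beta(\widetilde{\lambda})\mid \beta\in\mathrm{supp}(v_{0})\}>0$. Put $h_{\varepsilon}:=h+\varepsilon\widetilde{\lambda}\in\mathfrak{t}$. The weights of $v$ on $\Gamma_{h}$ contribute $2+\varepsilon\beta(\widetilde{\lambda})\geq 2+\varepsilon m$ to $w_{h_{\varepsilon}}(v)$, while those off $\Gamma_{h}$ lie on $\{\beta(h)\geq 2+\delta_{0}\}$ for a fixed gap $\delta_{0}>0$ and contribute at least $2+\delta_{0}-\varepsilon C$; so for all small $\varepsilon>0$ one has $w_{h_{\varepsilon}}(v)=2+\varepsilon m$ and $\|h_{\varepsilon}\|_{\kappa}^{2}=\|h\|_{\kappa}^{2}+\varepsilon^{2}\|\widetilde{\lambda}\|_{\kappa}^{2}$, hence
\begin{align*}
\frac{w_{h_{\varepsilon}}(v)}{\|h_{\varepsilon}\|_{\kappa}}\ =\ \frac{2}{\|h\|_{\kappa}}+\frac{\varepsilon m}{\|h\|_{\kappa}}+O(\varepsilon^{2})\ >\ \frac{2}{\|h\|_{\kappa}}\ =\ ht(v),
\end{align*}
contradicting the maximality in the support--function identity, since $h_{\varepsilon}\in\mathfrak{t}$. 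Thus $v_{0}$ is $\mathbf{L}(h)$-semistable; note that only (a) was used.

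For the converse, assume $v_{0}$ is $\mathbf{L}(h)$-semistable; I must recover (a), (b), (c). \emph{Step A} (giving (a)): since $\overline{\mathbf{L}(h)\cdot v_{0}}\not\ni 0$, a fortiori the closure of the orbit of $v_{0}$ under the image of $T$ avoids $0$; as $\mathbb{C}^*_v$ (the $h$-direction torus) acts on $\mathbb{E}|_{\Gamma_{h}}$ by the single character $p$, where $p\in\Gamma_{h}$ is the foot of the perpendicular from the origin, this torus-semistability amounts to $p\in\mathcal{N}(v_{0})$. Because $\mathcal{N}(v)\subset\{\beta(h)\geq 2\}$ and $p$ realizes the distance from the origin to this half-space, $p\in\mathcal{N}(v_{0})\subseteq\mathcal{N}(v)$ forces $\chi_{\min}(v)=p$, whence $h_{v}=\frac{2\kappa^{-1}(p)}{\|p\|_{\kappa}^{2}}=h$ and $ht(v)=\frac{2}{\|h\|_{\kappa}}$, which is (a). \emph{Steps B and C} (giving (b), (c)): here I would invoke Kempf's theorem that $v$ has an optimal/adapted one-parameter subgroup $\lambda^{\ast}$, unique up to conjugation by its associated parabolic $P(\lambda^{\ast})$, together with the Kempf--Hesselink description of the null-cone strata: a null-cone point $x$ has adapted direction $h$ precisely when its leading term along $\{\beta(h)=w_{h}(x)\}$ is semistable for $\mathbf{L}(h)$. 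Applied to $v$, the hypothesis that $v_{0}$ is $\mathbf{L}(h)$-semistable together with (a) identifies the ray of $h$ as the globally adapted direction, so $\max_{\sigma}ht(\sigma\cdot v)=w_{h}(v)/\|h\|_{\kappa}=ht(v)$, which is (b); and for $\sigma\cdot v$ ranging over the fibre of this stratum --- the $\sigma\cdot v$ with $ht(\sigma\cdot v)=ht(v)$ --- the polytope $\mathcal{N}(\sigma\cdot v)$ contains, and has the same affine dimension as, the polytope of its (semistable, hence nonzero) leading term, so $\dim\mathcal{N}(v)$ is minimal, giving (c).

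The hard part is exactly the implication invoked in Steps B--C: that $\mathbf{L}(h)$-semistability of the leading term is not merely \emph{necessary} for $h$ to be the adapted direction (which the perturbation argument of the forward implication already records) but also \emph{sufficient} --- equivalently, that a null-cone point sitting over the semistable locus of the Levi really lies in the stratum indexed by $h$. A self-contained treatment would either reproduce Kempf's balancedness/convexity argument (geodesic convexity of $\log\|g\cdot v\|^{2}$ on $G/K$ after the twist by the character $p$) or the moment-map gradient-flow argument of Ness and Kirwan; I would instead cite Kempf's optimal-subgroup theorem and deduce the stratum description from it, leaving only the bookkeeping of Steps A--C above.
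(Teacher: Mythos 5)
First, a point of comparison: the paper does not prove this statement at all --- it is quoted from Popov--Vinberg (Theorem 5.4, p.~202), who attribute it to Ness and Kirwan, and it is then used as a black box in the parabolic-induction example. So there is no internal argument to measure your proposal against; the relevant question is whether your write-up is a self-contained proof, and it is not.

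Your forward direction is right in substance but has a repairable gap: the Hilbert--Mumford destabilizing one-parameter subgroup of $\mathbf{L}(h)$ need not lie in (the image of) the fixed torus $\mathfrak{t}$; you must first conjugate it into $\mathfrak{t}$ by some $\ell\in L(h)$ and then run the perturbation $h+\varepsilon\widetilde{\lambda}$ on $\ell^{-1}\cdot v$ rather than on $v$ (note that $L(h)$ preserves the $h$-grading, so $\ell^{-1}\cdot v$ still lies in $\bigoplus_{\beta(h)\geq 2}\mathbb{E}(\beta)$ with leading term $\ell^{-1}\cdot v_0$). The resulting inequality $ht(\ell^{-1}\cdot v)>2/\|h\|_{\kappa}=ht(v)$ contradicts maximality of the height over the \emph{orbit}, i.e.\ your condition (b); so the remark that ``only (a) was used'' is not correct as written. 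The more serious problem is the converse: Steps B and C invoke ``the Kempf--Hesselink description of the null-cone strata: a null-cone point has adapted direction $h$ precisely when its leading term is $\mathbf{L}(h)$-semistable,'' which is (a reformulation of) the very theorem being proved, so as a proof the argument is circular unless you actually reprove that description (Kempf's convexity/uniqueness argument or the Ness--Kirwan moment-map argument); moreover condition (c) --- minimality of $\dim\mathcal{N}(v)$ among height-maximizers --- is asserted rather than derived from the stratum picture. Step A (recovering $\chi_{\min}(v)=p$, hence $h_v=h$, from semistability of $v_0$ under the reduced torus) is correct and is the one piece of genuine content beyond the citation. Since the paper itself simply cites Popov--Vinberg here, handling the hard implication by citing Kempf/Ness/Kirwan is legitimate --- but then it should be presented as a citation, not packaged as a proof.
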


\begin{example} \emph{Let $G=SL(3,\mathbb{C})$ and $H= \exp(\mathfrak{t}\oplus \mathbb{C}E_{23}\oplus \mathbb{C}E_{13} ) $ where
  $\mathfrak{t}$ denotes the Cartan subalgebra of $G$ and $E_{ij}$ denote the standard root vectors with respect to $ad(\mathfrak{t})$ . 
 Let $\mathfrak{g}$ denote the Lie algebra of $G$. We define 
 \begin{align*}
 X_{nil}:= \{([A], [\xi])\in \mathbb{P}(\mathfrak{g})\times \mathbb{P}^2 \ |\ A \ \mbox{is nilpotent} \ , \ A\cdot \xi=0\} \ .
 \end{align*}
 There is an obvious exact sequence of $G$ modules 
 \begin{align*}
 0\ra \ker(\Phi)\ra \mathfrak{g}\otimes \mathbb{C}^3\overset{\Phi}{\ra}\mathbb{C}^3\ra 0\qquad \Phi(A\otimes \xi) :=A\cdot \xi\  .
 \end{align*}
 In which case we have the $G$ decomposition 
 \begin{align*}
 \mathfrak{g}\otimes \mathbb{C}^3\cong \ker(\Phi)\oplus \mathbb{C}^3 \ .
 \end{align*}
 Observe that $E_{13}\otimes e_1$ lies in  $\ker(\Phi)$ and has dominant weight $3L_1+L_2=(310)=\mubull$ with respect to $\mathfrak{t}$. Moreover $\xi_{(310)}:= E_{13}\otimes e_1$ is killed by all of the positive root vectors $E_{12},E_{13}, E_{23}$ hence generates an irreducible subrepresentation  
 $\mathbb{E}_{(310)}\subset \ker(\Phi)$ of dimension 15 (by the Weyl dimension formula applied to the weight $\mubull$ ) . Similarly it is easy to check that  $\xi_{(220)}:= E_{23}\otimes e_1-E_{13}\otimes e_2 \in \ker(\Phi)$ is also a highest weight vector with weight $2(L_1+L_2)=(220)=\lambull$ . This generates an irreducible subrepresentation $\mathbb{E}_{(220)}\cong Sym^2(\wedge^2\mathbb{C}^3)$ of dimension 6. Therefore we have the decompositon $\ker(\Phi)\cong \mathbb{E}_{(220)}\oplus \mathbb{E}_{(310)}$ and hence the embedding
 \begin{align*}
 X_{nil}\hookrightarrow \mathbb{P}(\mathbb{E}_{(220)}\oplus \mathbb{E}_{(310)}) \ .
 \end{align*}
Choose the basepoint $o:=[E_{23}\otimes e_1]$. Then we have an embedding of the homogeneous space $G/ H$
\begin{align*}
G/H\cong G\cdot o  \subset X_{nil} \ .
\end{align*}
 In fact we can say more, namely that $\overline{G\cdot o }=X_{nil}$. We will show that $X_{nil}$ consists of two orbits for the $G$ action.  A closed orbit is easily located. Observe that 
 \begin{align*}
 E_{23}\otimes e_1=\frac{1}{2}(\xi_{(220)}+E_{21}\cdot \xi_{(310)}) \ .
 \end{align*}
Let $\sigma:=\exp(E_{12})$. Then it is easy to see that
\begin{align*}
o:= \sigma\cdot (E_{23}\otimes e_1)=\frac{1}{2}(E_{23}\otimes e_1+ 2\xi_{(310)}) \ .
\end{align*}
 Consider the one parameter subgroup 
 \begin{align*}
 \lambda(t):= \begin{pmatrix} t^{-1}&0&0\\ 0&t&0\\ 0&0&1 \end{pmatrix}\ .
 \end{align*}
 We have
 \begin{align*}
  \lambda(t)\cdot o= \frac{1}{2}(E_{23}\otimes e_1+ 2t^{-2}\xi_{(310)}) 
 \end{align*}
therefore $\lambda(t)\cdot o\ra [\xi_{(310)}]$  as $t\ra 0$. Consequently \emph{if } $\overline{G\cdot o }$ consists of two orbits then we must have
\begin{align*}
\overline{G\cdot o }=G\cdot o \cup G\cdot [\xi_{(310)}] \ .
\end{align*}
 In the orbit decomposition of $\mathbb{E}$ we have $\mathbb{V}=\mathbb{E}_{(220)}$, and $\sigma^{-1}o$ decomposes as follows
 \begin{align*}
 u:=\sigma^{-1}o=E_{23}\otimes e_1 = (\xi_{(220)} \ , \ E_{21}\cdot \xi_{(310)}) \ .
 \end{align*}
 Therefore the pair $(\xi_{(220)} \ , \ E_{21}\cdot \xi_{(310)})$  is semistable with respect to $G$.  In order to show that $\overline{G\cdot o }$ consists of exactly two $G$ orbits we observe that the orbit
 \begin{align*}
 G\cdot [ E_{23}\otimes e_1]  
 \end{align*}
 is  Hilbert-Mumford {unstable} and we realize this orbit as a parabolically induced one.
  Obviously $\mathcal{N}( E_{23}\otimes e_1 )=\{(220)\}$ . 
 Therefore we can easily see that
 \begin{align*}
 &\chi_{min}=(220) \ , \ h=\frac{1}{2}\begin{pmatrix}1&0&0\\0&1&0\\0&0&-2\end{pmatrix} \\
 \ \\
 &\Gamma_u=\{aL_1+bL_2\ |\left(\frac{a+b}{2}\right)=2\} \\
 \ \\
 &\Gamma_u\cap \mbox{supp}(\mathbb{E})=\{ 3L_1+L_2,2L_1+2L_2, L_1+3L_2\} \ . \\
 \ \\
 &\mathbb{E}|_{\Gamma_u}=\overbrace{\mathbb{C}\xi_{(310)}}^{(310)}\oplus \overbrace{\mathbb{C}E_{21}\cdot \xi_{(310)}\oplus \mathbb{C}\xi_{(220)}}^{(220)}\oplus \overbrace{\mathbb{C}E^2_{21}\cdot \xi_{(310)}}^{(130)} \\
 \ \\
 &\Delta_{\geq 0}=\Delta^+\cup \{L_2-L_1\} \ ,\ \Delta_{=0}=\{\pm(L_1-L_2)\}\ , \ \Delta_{>0}=\{L_1-L_3, L_2-L_3\} \ .
 \end{align*}
The canonical subalgebras have the shape (see figure 2):
\begin{align*}
\mathfrak{p}(h)=\begin{pmatrix}*&*&*\\ *&*&*\\0&0&*\end{pmatrix}\ , \ \mathfrak{l}(h)=\begin{pmatrix}*&*&0\\ *&*&0\\0&0&*\end{pmatrix}\
, \ \mathfrak{u}(h)=\begin{pmatrix}0&0&*\\ 0&0&*\\0&0&0\end{pmatrix}\ \ .
\end{align*}
 $L(u)$ is easily seen to be isomorphic to $GL(2,\mathbb{C})$ and that $\mathfrak{u}(h)$ acts trivially on $\mathbb{E}|_{\Gamma}$.
\begin{align*}
L(u)= S(GL(2,\mathbb{C})\times \mathbb{C}^*):=\{(g,\alpha )\in GL(2,\mathbb{C})\times \mathbb{C}^*\ | \det(g)\alpha=1\}\cong GL(2,\mathbb{C}) \ .
 \end{align*}
   So $\mathbb{E}|_{\Gamma}$ is a four dimensional representation of  $L(u)$. Next we observe that  $\xi_{(220)}$ is $L(u)$ semiinvariant  
 \begin{align*}
 \sigma \cdot  \xi_{(220)}= \det(g)^2 \xi_{(220)} \ \mbox{for }\ \sigma= (g,\alpha )\in L(u) \ .
  \end{align*} 
  From this we deduce that 
\begin{align*}
\mathbb{E}|_{\Gamma}\cong Sym^2(\wedge ^2\mathbb{C}^2) \oplus Sym^2(\mathbb{C}^2)\otimes  \wedge ^2\mathbb{C}^2  \ \mbox{as $GL(2,\mathbb{C})$ modules .}
\end{align*}
Moreover the basepoint $u$ is identified with $(e_1\wedge e_2,e_1\cdot e_2 )$ . Finally we have that
\begin{align*}
L(u)\cdot [u]=SL(2,\mathbb{C})\cdot [(e_1\wedge e_2,e_1\cdot e_2 )] \ .
\end{align*}
This is the case studied in example \ref{2x2} and we may induce.  Observe that the present example coincides with example \ref{blowup}
\begin{align*}
X_{nil}=\overline{SL(3,\mathbb{C})\cdot[(e,f)]}=B_{\Delta}(\mathbb{P}^2\times\mathbb{P}^2)\hookrightarrow \mathbb{P}(\mathbb{E}_{220}\oplus \mathbb{E}_{310}) \ .
\end{align*}}
\end{example}  
 \begin{center}
 \begin{tikzpicture}[>=latex]
\pgfmathsetmacro\ax{2}
\pgfmathsetmacro\ay{0}
\pgfmathsetmacro\bx{2 * cos(120)}
\pgfmathsetmacro\by{2 * sin(120)}
\pgfmathsetmacro\cx{2 * cos(240)}
\pgfmathsetmacro\cy{2 * sin(240)}
\pgfmathsetmacro\dx{2*cos(60)}
\pgfmathsetmacro\dy{2*sin(60)}
\pgfmathsetmacro\lax{2*\ax/3 + \bx/3}
\pgfmathsetmacro\lay{2*\ay/3 + \by/3}
\pgfmathsetmacro\lbx{\ax/3 + 2*\bx/3}
\pgfmathsetmacro\lby{\ay/3 + 2*\by/3}
 \fill[blue!15] (0,0) -- (0:6 * \lby) -- ( 4,6* \lby) -- cycle;
 \foreach \k in {1,...,6} { \draw[line width=.4mm, magenta!70,-] (0,0) -- +(\k *60 + 0:6) ;}
 \draw[gray,-](3*\ax,3*\ay)--(-3*\cx,-3*\cy);
\draw[gray,-](2*\ax,2*\ay)--(2*\ax+3*\bx,2*\ay+3*\by);
\draw[gray,-](\ax, \ay)--(\ax+3*\bx, \ay+3*\by);
\draw[gray,-](\ax, \ay)--(\ax-2*\bx, \ay-2*\by);
\draw[gray,-](2*\ax, 2*\ay)--(2*\ax-\bx, 2*\ay- \by);
\draw[gray,-](-\ax+2*\bx, -\ay+2*\by)--(-\ax-3*\bx, -\ay- 3*\by);
\draw[gray,-](-2*\ax+\bx, -2*\ay+\by)--(\ax+3*\cx,\ay+3*\cy);
\draw[gray,-](-3*\ax, -3*\ay)--(3*\cx,3*\cy);
\draw[gray,-](3*\bx,3*\by)--(-3*\cx,-3*\cy);
\draw[gray,-](2*\bx-\ax,2*\by-\ay)--(3*\ax+2*\bx,3*\ay+2*\by);
\draw[gray,-](-2*\ax+\bx,-2*\ay+\by)--(3*\ax+\bx,3*\ay+\by);
\draw[gray,-](-3*\ax-\bx,-3*\ay-\by)--(2*\ax-\bx,2*\ay-\by);
\draw[gray,-](3*\cx,3*\cy)--(-3*\bx,-3*\by);
\draw[gray,-](-\ax+2*\cx,-\ay+2*\cy)--(\ax-2*\bx, \ay-2*\by);
\draw[gray,-](3*\ax,3*\ay)--(-3*\bx,-3*\by);
\draw[gray,-](-3*\ax,-3*\ay)--(3*\bx,3*\by);
\draw[gray,-]( \ax+ 3*\bx,\ay+3*\by)--(-3*\ax-\bx, -3*\ay-\by);
\draw[gray,-](-3*\ax+ -2*\bx, -3*\ay+-2*\by)--(2*\ax+ 3*\bx, 2*\ay+3*\by);
\draw[gray,-](3*\ax+ 2*\bx, 3*\ay+2*\by)--(-2*\ax-3*\bx, -2*\ay-3*\by);
\draw[gray,-](3*\ax+ \bx, 3*\ay+\by)--(-\ax+ -3*\bx, -\ay+-3*\by);
\draw[line width=.5mm,blue,-](2*\ax+2*\bx,2*\ay+2*\by)--(2*\ax+2*\cx,2*\ay+2*\cy);
 \draw[line width=.5mm,blue,-](2*\ax+2*\cx,2*\ay+2*\cy)--(2*\bx+2*\cx,2*\by+2*\cy);
\draw[line width=.5mm,blue,-](2*\bx+2*\cx,2*\by+2*\cy)--(2*\ax+2*\bx,2*\ay+2*\by);
\draw[line width=.5mm,red,->] (0,0) -- (\bx-\cx,\by-\cy) node[right] {\(L_2-L_3\)};
\draw[line width=.5mm,red,->] (0,0) -- (\ax-\cx,\ay-\cy) node[right] {\(L_1-L_3\)};
\draw[line width=.5mm,red,->] (0,0) -- (\ax-\bx,\ay-\by) node[right] {\(L_1-L_2\)};
\draw[line width=.5mm,orange,-](3*\ax+\bx,\by)--(3*\ax+\cx,\cy);
\draw[line width=.7mm,red, dashed](3*\ax+\bx,\by)--(\ax+3*\bx,\ay+3*\by);
\draw[line width=.5mm,orange,-](\ax+3*\bx,\ay+3*\by)--( 3*\bx+\cx, 3*\by+\cy);
\draw[line width=.5mm,orange,-]( 3*\bx+\cx, 3*\by+\cy)--(\bx+3*\cx,\by+3*\cy);
\draw[line width=.5mm,orange,-] (\bx+3*\cx,\by+3*\cy) -- (\ax+3*\cx,\ay+3*\cy) ;
\draw[line width=.5mm,orange,-]  (\ax+3*\cx,\ay+3*\cy)--(3*\ax+\cx, 3*\ay+\cy) ;
 \draw[line width=.5mm,black,->](0,0)--(2*\ax+2*\bx,2*\ay+2*\by);
\node at (40:4.3) {\(\Gamma\)};
\node at (54.5:4.4) {\(\quad \lambull=220 \)};
\node at (17:6) {\(\mubull= 310 \)};
\end{tikzpicture}
\end{center}
\begin{center}\small{Figure 2. Parabolic induction scheme and polyhedra associated to the modules $\mathbb{E}_{310}$ and $\mathbb{E}_{220}$ .}\end{center}

 \begin{example}[\emph{Classical discriminant and Resultants}]\emph{Consider two polynomials $P$ and $Q$ in one variable of degrees $m$ and $n$ respectively
\begin{align*}
&P(z)=a_mz^m+a_{m-1}z^{m-1}+\dots + a_1z+a_0 \\
\ \\
&Q(z)=b_nz^n+b_{n-1}z^{n-1}+\dots + b_1z+b_0 \ .
\end{align*}
Recall that the classical \emph{resultant} of $P$ and $Q$ is the (quasi)homogeneous polynomial  of the coefficients $(a_0,\dots,a_m;b_0,\dots,b_n)$ defined by
\begin{align*}
R_{m,n}(P,Q)= R_{m,n}(a_0,\dots,a_m;b_0,\dots,b_n):=b_n^m\prod_{\beta_i\in \mbox{zer}(Q)}P(\beta_i)=(-1)^{mn}R_{n,m}(Q,P) \ .
\end{align*}
When $m=n=d\geq 2$ we denote the resultant by $R_d$. Then
\begin{align*}
R_d\in \mathbb{C}_{2d}[M_{2\times (d+1)}] \ .
\end{align*}
$G=SL(d+1,\mathbb{C})$ acts on $R_d$ by the rule
\begin{align*}
\sigma\cdot R_d (A):=R_d(A\cdot \sigma) \quad \sigma\in G \ , \ A\in M_{2\times (d+1)} \ .
\end{align*}
The \emph{discriminant} , $\Delta_d$ , of a polynomial $P$ of degree $d$ is defined by
\begin{align*}
&\Delta_d(a_0,\dots,a_d):=R_{d,d-1}(P,\frac{\dl P}{\dl z}) \\
\ \\
&\Delta_d\in\mathbb{C}_{2d-2}[M_{1\times(d+1)}] \ .
\end{align*}
The action of $G$ is given by
\begin{align*}
\sigma \cdot \Delta_d(a)=\Delta_d(a\cdot \sigma) \ .
\end{align*}
It follows from work of Gelfand, Kapranov, and  Zelevinsky (\cite{newtpoly}) that the pair 
\begin{align*}
(R_d^{\deg(\Delta_d)}, \Delta_d^{\deg(R_d)})
\end{align*} 
is numerically semistable with respect to the {standard} torus, i.e. the torus corresponding to the $d$ fold Veronese embedding of $\mathbb{P}^1$ .
\begin{definition}(\cite{newtpoly})\emph{
Let $d\geq 2 \in \mathbb{N}$. Let $[0,d]:=\{0,1,2.\dots ,d\} .$ Given any subset $S\subseteq [1,d-1]$ the \emph{\textbf{vertex map}} $$V=V(S): [0,d]\ra \mathbb{N}$$ is given as follows.\newline
\noindent \emph{Case} 1. $S=\emptyset$. Set 
 $V(0)=V(d):= d \ ,\ V(i):=0 $ for all $i\in [1,d-1]$. \newline
 \ \\
 \noindent \emph{Case} 2. $S\neq\emptyset$. Say $S=\{1\leq i_1<\dots <i_k\leq d-1\} $. Set $i_0:=0$ and $i_{k+1}:= d$. In this case $V$ is determined by the following rule.
\begin{align*}
&V(i_0):=i_1 \ ,\ V(i_{k+1}):= d-i_k \\
\ \\
&V(i):=0 \ \mbox{for all $i\notin S\cup \{0,d\}$ } \ ,\ V(i_j):=i_{j+1}-i_{j-1} \ .
\end{align*}}
\end{definition}
\begin{theorem}(\cite{newtpoly}) \emph{The vertices of the Chow polytope are given by
\begin{align*}
\{ (V(0),V(1),\dots, V(d)) \ |\ S\subset [1,d-1]\}\ .
\end{align*}
The vertices of the discriminant polytope are given by
\begin{align*}
\{v-(1,0,\dots,0,1)\ | \ \mbox{$v$ is a vertex of the Chow polytope}\}\ .
\end{align*}}
\end{theorem}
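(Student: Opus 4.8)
The plan is to realize \emph{both} polytopes as (a translate of) the secondary polytope of the lattice configuration $A:=\{0,1,\dots,d\}\subset\mathbb{Z}$, and then to reduce the statement to the elementary combinatorics of triangulations of a segment. The starting point is the identification of $R_d=R_{d,d}$ with the Chow form (the $A$-resultant) of the rational normal curve $C_d\subset\mathbb{P}^d$, equivalently of the projective toric variety $X_A$: a codimension-two linear subspace of $\mathbb{P}^d$, cut out by two hyperplanes whose coefficient vectors are the coefficient vectors of $P$ and $Q$, meets $C_d$ exactly when $P$ and $Q$ have a common zero in $\mathbb{P}^1$, i.e.\ when $R_{d,d}(P,Q)=0$. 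Thus the ``Chow polytope'' of the statement is the weight polytope of $R_d$ with respect to the torus of the $d$-fold Veronese embedding of $\mathbb{P}^1$; by the theorem of Gelfand--Kapranov--Zelevinsky that the Newton polytope of the Chow form of $X_A$ is the secondary polytope $\Sigma(A)$ (this is precisely what \cite{newtpoly} establishes for the rational normal curve), this polytope equals $\Sigma(A)\subset\mathbb{R}^{A}=\mathbb{R}^{d+1}$.

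I would then invoke that the vertices of $\Sigma(A)$ are exactly the GKZ vectors $\varphi_T$ of the \emph{regular} triangulations $T$ of $A$, where $\varphi_T(i)=\sum_{\sigma\in T,\ i\in\sigma}\mathrm{vol}(\sigma)$ and $\mathrm{vol}$ denotes normalized lattice length. Since $A$ is one-dimensional, every triangulation is regular (realize it as the lower envelope of any strictly convex piecewise-linear function with breakpoints at the chosen interior lattice points), and triangulations correspond bijectively to their interior-breakpoint sets $S\subseteq\{1,\dots,d-1\}$: writing $i_0=0<i_1<\dots<i_k<i_{k+1}=d$ with $\{i_1,\dots,i_k\}=S$, the cells of $T_S$ are the segments $[i_{j-1},i_j]$. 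Computing $\varphi_{T_S}(i)$ then amounts to adding the lengths of the cells having $i$ as a vertex: an interior lattice point not in $S$ is a vertex of no cell, so gets $0$; a breakpoint $i_j$ is a vertex of $[i_{j-1},i_j]$ and of $[i_j,i_{j+1}]$, so gets $i_{j+1}-i_{j-1}$; and the two endpoints get $i_1$ and $d-i_k$. This is exactly the vertex map $V(S)$, which establishes the first assertion.

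For the discriminant polytope I would use the principal $A$-determinant $E_A$ together with its factorization over the faces of $\mathrm{conv}(A)=[0,d]\subset\mathbb{R}$, namely $E_A=\prod_{\Gamma}\Delta_{A\cap\Gamma}^{m(A,\Gamma)}$. Here the faces are the two endpoints and the whole segment, all with multiplicity one, and $\Delta_{\{0\}}=a_0$, $\Delta_{\{d\}}=a_d$, $\Delta_{\mathrm{conv}(A)}=\Delta_d$, so $E_A=\pm\,a_0\,a_d\,\Delta_d$ (consistently: both sides are homogeneous of degree $2d$, and one checks $\sum_i V(i)=2d$ for every $S$ by telescoping). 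Since the Newton polytope of $E_A$ is again $\Sigma(A)$, and the Newton polytope of a monomial is a single lattice point, we obtain $\mathrm{Newt}(\Delta_d)=\Sigma(A)-e_0-e_d=\Sigma(A)-(1,0,\dots,0,1)$; combined with the identification of the Chow polytope with $\Sigma(A)$, the discriminant polytope is the Chow polytope translated by $-(1,0,\dots,0,1)$, and its vertices are the translates of those of the Chow polytope, which is the second assertion. (As a check, for $d=2$ one has $\Delta_2=a_1^2-4a_0a_2$ with vertices $(0,2,0)$ and $(1,0,1)$, while the Chow polytope has vertices $(2,0,2)$ and $(1,2,1)$.)

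Each step above is either a one-line computation or a direct bijection; the real content — and hence the main obstacle in a self-contained treatment — is the pair of structural facts being cited, that the Newton polytope of the Chow form, and of the principal $A$-determinant, of the toric variety $X_A$ equals the secondary polytope $\Sigma(A)$, together with the face factorization of $E_A$. A hands-on substitute, essentially the route of \cite{newtpoly}, is to extract the extreme monomials of $R_{d,d}$ directly from the product-over-roots expression for the resultant recalled above via Newton's identities; the supporting functionals that occur are exactly the ``breakpoint'' linear forms, and the resulting vertices are exactly the $V(S)$. For the purposes of this paper it is appropriate to take \cite{newtpoly} as given.
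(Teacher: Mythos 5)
The paper contains no proof of this theorem: it is quoted directly from \cite{newtpoly}, and the only argument supplied nearby is the one-line proof of the subsequent corollary. So the comparison is between your reconstruction and the source, and your reconstruction is correct. Your route is the secondary-polytope derivation: identify $R_{d,d}$ with the Chow form of the rational normal curve, invoke the GKZ theorem that the Chow polytope of the toric variety $X_A$, $A=\{0,1,\dots,d\}$, is the secondary polytope $\Sigma(A)$, note that every triangulation of the segment is regular and is determined by its set $S$ of interior breakpoints, and compute the GKZ vector $\varphi_{T_S}$, which indeed reproduces the vertex map $V(S)$ (including the case $S=\emptyset$, where the single cell $[0,d]$ gives $(d,0,\dots,0,d)$); for the discriminant you use $\mathrm{Newt}(E_A)=\Sigma(A)$ together with the face factorization $E_A=\pm\,a_0a_d\Delta_d$, so $\mathrm{Newt}(\Delta_d)$ is the translate of the Chow polytope by $-(1,0,\dots,0,1)$. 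This differs in flavor from the cited paper, which extracts the extreme monomials of the resultant and discriminant directly from the product/Sylvester formulas; your version buys a cleaner conceptual picture at the cost of importing the heavier machinery (Chow polytope $=$ secondary polytope, principal $A$-determinant and its face factorization), while the original argument is self-contained at the level of univariate polynomials.

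Three small cautions, none fatal. First, phrase the structural input as ``the weight polytope of the Chow form under the column-degree torus action equals $\Sigma(A)$''; the Newton polytope of $R_d$ in the matrix entries lives in $\mathbb{R}^{2(d+1)}$, and it is its image in $\mathbb{R}^{d+1}$ that you are computing — your calculation makes clear you intend this, but the wording ``Newton polytope of the Chow form is the secondary polytope'' is loose. Second, the factorization $E_A=\pm\,a_0a_d\Delta_d$ uses that the multiplicities $m(A,\Gamma)$ at the two vertex faces equal $1$, which holds here because $A$ consists of all lattice points of $[0,d]$; it is worth saying so, since your degree check $\sum_iV(i)=2d$ only confirms consistency. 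Third, there is a mild circularity in proving a theorem of \cite{newtpoly} from secondary-polytope results of the same authors; since the paper under review simply cites \cite{newtpoly}, this is a matter of attribution rather than a gap, but if you intend the argument as a substitute proof you should cite the Chow-polytope and principal-$A$-determinant theorems to their own sources rather than to the statement being proved.
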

\begin{corollary}
\begin{align*}
\deg(\Delta_d)\mathcal{N}(R_d)\subseteq \deg(R_d)\mathcal{N}(\Delta_d) \ .
\end{align*}
\end{corollary}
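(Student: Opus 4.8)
The plan is to deduce the containment directly from the vertex descriptions in the theorem above, using the fact that the discriminant polytope $\mathcal{N}(\Delta_d)$ is merely a \emph{translate} of the Chow polytope $\mathcal{N}(R_d)$; once this is seen, the inclusion reduces to the elementary fact that an affine contraction of a convex body toward one of its own points maps the body into itself.

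First I would record the two degrees read off from the statements above, $\deg(R_d)=2d$ and $\deg(\Delta_d)=2d-2$, and set $u:=(1,0,\dots,0,1)\in\mathbb{R}^{d+1}$, the vector occurring in the theorem's formula for the discriminant vertices. Since forming the convex hull commutes with translation, the theorem says precisely that $\mathcal{N}(\Delta_d)=\mathcal{N}(R_d)-u$. Substituting this together with the two degree values into the desired inclusion $\deg(\Delta_d)\,\mathcal{N}(R_d)\subseteq\deg(R_d)\,\mathcal{N}(\Delta_d)$, I would reduce it to the single assertion that $\tfrac{d-1}{d}\,x+u\in\mathcal{N}(R_d)$ for every $x\in\mathcal{N}(R_d)$.

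The crux is then the observation that $u=\tfrac1d\,(d,0,\dots,0,d)$ and that $(d,0,\dots,0,d)$ is exactly the Chow vertex produced in Case $1$ of the vertex map, namely the case $S=\emptyset$. Consequently $\tfrac{d-1}{d}\,x+u$ equals the convex combination $\tfrac{d-1}{d}\,x+\tfrac1d\,(d,0,\dots,0,d)$ of two points of $\mathcal{N}(R_d)$ — the weights are nonnegative and sum to $1$ since $d\geq 2$ — so it lies in $\mathcal{N}(R_d)$ by convexity, which finishes the proof. (Equivalently, without invoking the translation identity, one may argue vertex by vertex: writing $v^{(S)}$ for the Chow vertex attached to $S\subseteq[1,d-1]$, so that $v^{(\emptyset)}=(d,0,\dots,0,d)=d\,u$, one has $(2d-2)\,v^{(S)}=2d\big[\tfrac{d-1}{d}\,(v^{(S)}-u)+\tfrac1d\,(v^{(\emptyset)}-u)\big]$, a convex combination of two scaled discriminant vertices.)

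I do not anticipate any real obstacle. The only point requiring care is the numerology: one needs $d\cdot\deg(\Delta_d)=(d-1)\cdot\deg(R_d)$ in order for the contraction ratio to come out to $\tfrac{d-1}{d}$, and this holds automatically because $u$ has coordinate sum $2$ while every point of $\mathcal{N}(R_d)$ has coordinate sum $2d=\deg(R_d)$ — the very compatibility that forces $\deg(\Delta_d)=2d-2$. So the substance of the corollary lies entirely in the theorem's vertex formulas, i.e. in the work of Gelfand, Kapranov, and Zelevinsky cited above; the corollary itself is just a repackaging of those formulas as a polyhedral inclusion.
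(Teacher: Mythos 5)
Your proposal is correct and takes essentially the same route as the paper: the paper's entire proof is the identity $(2d-2)(v_0,\dots,v_d)=\tfrac{d-1}{d}\,2d\,(v_0-1,\dots,v_d-1)+\tfrac{1}{d}\,2d\,(d-1,0,\dots,0,d-1)$, which is precisely the convex-combination identity in your parenthetical remark, with the same weights $\tfrac{d-1}{d},\tfrac1d$ and the same distinguished vertex coming from $S=\emptyset$. Your translation-plus-contraction phrasing is only a cosmetic repackaging of that identity, and it is sound.
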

The proof follows at once from the identity
\begin{align*}
 (2d-2)(v_0,\dots,v_d)=\left(\frac{d-1}{d}\right)2d(v_0-1,\dots,v_d-1)+\left(\frac{1}{d}\right)2d(d-1,0,\dots,d-1)\ .\quad   
 \end{align*}
}\end{example}

 
 \subsection{Parametrized Theory}
 Now we study the dependence of semistability upon algebraic parameters.  To begin let $Y$ be a complex algebraic $G$-variety.  Let $\mathbb{V}$ and $\mathbb{W}$ be finite dimensional complex rational $G$-modules. Given two regular $G$ maps
 \begin{align}\label{parameters}
 \begin{split}
& v:Y\ra\mathbb{P}(\mathbb{V}) \\
 \ \\
 & w: Y\ra \mathbb{P}(\mathbb{W})
 \end{split}
 \end{align}
 we define the $(v,w)$-semistable and numerically semistable loci in $Y$ in the obvious way
 \begin{align*}
 &Y^{ss}(v\ ,\ w):=\{y\in Y \ | \ \overline{\mathcal{O}}_{v(y)w(y)}\cap \overline{\mathcal{O}}_{v(y)}=\emptyset \}\\
 \ \\
 &Y^{nss}(v\ ,\ w):=\{y\in Y \ | \ \mathcal{N}(v(y)) \subset\mathcal{N}(w(y)) \ \mbox{for all $H\leq G$}\ \}\ .
 \end{align*}
 \begin{remark} \emph{ The disjointedness of the orbit closures is independent of the lifts of $v(y)$ and $w(y)$ to $\mathbb{V} , \mathbb{W}$ respectively. }   
\end{remark}
 
 Recall that an invertible sheaf $\mathscr{L}$ on a variety $X$ is \emph{globally generated} or \emph{base point free} if and only if there exists a finite dimensional subspace 
 \begin{align*}
 \mathbb{V}^{\vee} \subset \Gamma (X\ , \ \mathscr{L})
 \end{align*}
 such that for every $p\in X$ there is an $s\in  \mathbb{V}^{\vee}$ such that $s(p)\neq 0$ . We recall that in this situation there is an associated map
 \begin{align*}
 \phi_{\mathscr{L}, \mathbb{V}}:X\ra \mathbb{P}(\mathbb{V})\ 
 \end{align*}
satisfying
\begin{align*}
\phi_{\mathscr{L}, \mathbb{V}}^*\mathcal{O}(1) \cong \mathscr{L} \ .
\end{align*}

 Given $\mathscr{L}\in Pic^G(Y)$ observe that $G$ acts naturally on $\Gamma(Y \ , \ \mathscr{L})$.  When $\mathscr{L}$ is globally generated the finite dimensional subspace $\mathbb{V}^{\vee}$ of  $\Gamma(Y \ , \ \mathscr{L})$ may be chosen to be $G$ {invariant}. In this case the associated map $\phi_{\mathscr{L}, \mathbb{V}}$ is $G$-equivariant.
  
  Next we consider {pairs} $\mathscr{L} , \mathscr{M}$ of globally generated $G$-linearized invertible sheaves on $Y$ together with their associated $G$ maps 
\begin{align*}
  \phi_{\mathscr{L}, \mathbb{V}}\ , \  \phi_{\mathscr{M}, \mathbb{W}}  :Y\ra \mathbb{P}(\mathbb{V}) \ , \ \mathbb{P}(\mathbb{W}) \ .
\end{align*}
 
Finally we can define the $( \mathscr{L}  , \   \mathscr{M})$  {(numerically) semistable loci} in $Y$.
 \begin{definition}\label{defnsemistable} \emph{Let $Y$ be a $G$ variety. Let  $\mathscr{L} , \mathscr{M}\in \mbox{Pic}(Y)^G$ be globally generated. Then the (numerically)semistable loci with respect to the pair $( \mathscr{L} , \mathscr{M})$ are given by
 \begin{align*}
& Y^{ss}( \mathscr{L}  , \   \mathscr{M}):= Y^{ss}(  \phi_{\mathscr{L}, \mathbb{V}}\ , \  \phi_{\mathscr{M}, \mathbb{W}}   ) \\
\ \\
&Y^{nss}( \mathscr{L}  , \   \mathscr{M}):=Y^{nss}(\phi_{\mathscr{L}, \mathbb{V}}\ , \  \phi_{\mathscr{M}, \mathbb{W}} ) \ .
\end{align*} }
\end{definition}
For the convenience of the reader we recall the definition of a \emph{constructible} algebraic set. 
\begin{definition}
\emph{Let $W$ be an algebraic variety. A subset $S\subset W$ is \textbf{\emph{constructible}} if and only if  $S$ is a finite union of locally closed subsets of $W$. That is, there exits a finite collection $W_j,Z_j \ (1\leq j\leq m)$ of subvarieties of $W$ such that}
\begin{align*}
S=\bigcup_{1\leq j\leq m}W_j\setminus Z_j \ .
\end{align*}
\end{definition}
 
The main result in this section is the following.
\begin{proposition}\label{constructible}
\emph{ $Y^{nss}(v,w )$ is a constructible subset of $Y$.}
\end{proposition}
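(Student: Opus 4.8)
The plan is to reduce the quantifier ``for all maximal algebraic tori $H \leq G$'' to a condition involving a single fixed torus together with the $G$-action, then to establish constructibility of the single-torus locus by a finite stratification, and finally to conclude via Chevalley's theorem.

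First I would fix one maximal algebraic torus $H \leq G$ and introduce
\[
Z := \{\, y \in Y \ \mid \ \mathcal{N}(v(y)) \subset \mathcal{N}(w(y)) \,\}\ ,
\]
where all weight polytopes are taken relative to $H$. Since every maximal algebraic torus of $G$ is conjugate to $H$ and the maps $v , w$ are $G$-equivariant, for $H' = gHg^{-1}$ one checks that $\mathcal{N}_{H'}(u) = \mathrm{Ad}(g)^{*}\big(\mathcal{N}_{H}(g^{-1}\cdot u)\big)$ for every vector $u$, so (applying the linear isomorphism $\mathrm{Ad}(g)^{*}:M_{\mathbb{R}}(H)\ra M_{\mathbb{R}}(H')$ and using $v(g^{-1}\cdot y) = g^{-1}\cdot v(y)$ in $\mathbb{P}(\mathbb{V})$, likewise for $w$) the inclusion $\mathcal{N}_{H'}(v(y)) \subset \mathcal{N}_{H'}(w(y))$ is equivalent to $g^{-1}\cdot y \in Z$. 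Hence $y \in Y^{nss}(v,w)$ if and only if $G\cdot y \subseteq Z$, that is
\[
Y^{nss}(v,w) = Y \setminus G\cdot(Y\setminus Z)\ .
\]

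Next I would prove that $Z$ is constructible. Because $v : Y \ra \mathbb{P}(\mathbb{V})$ is regular, on a Zariski-open cover of $Y$ it lifts to regular maps into $\mathbb{V}\setminus\{0\}$; for each weight $\beta \in \mathrm{supp}(\mathbb{V})$ the vanishing of the $\beta$-component of such a lift is scale invariant and Zariski closed, so it defines a closed subset $Y^{\mathbb{V}}_{\beta} \subseteq Y$ independent of the chosen lift. Consequently, for each subset $S \subseteq \mathrm{supp}(\mathbb{V})$ the set $\{\, y \in Y \mid \mathrm{supp}(v(y)) = S \,\} = \big(\bigcap_{\beta \notin S} Y^{\mathbb{V}}_{\beta}\big) \setminus \big(\bigcup_{\beta \in S} Y^{\mathbb{V}}_{\beta}\big)$ is locally closed, and these finitely many sets partition $Y$. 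Refining this partition by the analogous one attached to $w$, I obtain a finite partition of $Y$ into locally closed subsets on each of which $\mathcal{N}(v(y))$ and $\mathcal{N}(w(y))$ are both constant lattice polytopes; $Z$ is precisely the union of those pieces on which the now-constant inclusion $\mathcal{N}(v(y)) \subset \mathcal{N}(w(y))$ holds, hence $Z$ is a finite union of locally closed subsets of $Y$, i.e. constructible.

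Finally, $Y\setminus Z$ is constructible, so $G\times(Y\setminus Z)$ is a constructible subset of the variety $G\times Y$, and its image under the action morphism $G\times Y \ra Y$ is constructible by Chevalley's theorem; therefore $Y^{nss}(v,w) = Y \setminus G\cdot(Y\setminus Z)$ is constructible as well. The step I expect to be the main obstacle is the first one: verifying carefully that conjugation of the maximal torus corresponds, via an honest linear isomorphism of the ambient character spaces, exactly to translation of the point of $Y$ by the group, so that the global condition collapses to a single-torus condition on the whole orbit. Once this equivariance of weight polytopes is secured, the remainder is the routine combination of a finite stratification with Chevalley's theorem.
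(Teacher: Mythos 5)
Your proof is correct and takes essentially the same route as the paper: both arguments trade the quantifier over all maximal tori for the $G$-action on a condition relative to one fixed torus, observe that this condition only depends on which weight components of $v(y)$ and $w(y)$ vanish (a finite stratification on which the polytopes $\mathcal{N}(v(y))$, $\mathcal{N}(w(y))$ are constant), and then conclude by Chevalley's theorem that the image under the action morphism $G\times Y\ra Y$ of the resulting constructible set is constructible, hence so is its complement $Y^{nss}(v,w)$. The only difference is bookkeeping: the paper encodes the vanishing pattern via explicit sections $Q_{\mubull;\beta}$ of $\mathscr{L}_{w}\otimes\emu(\beta)$ pulled back to $G\times Y$ together with a finite list of candidate polytopes $P_i$, whereas you stratify $Y$ directly by supports of local lifts and then saturate by the orbit, also spelling out the conjugation-of-tori equivalence that the paper leaves implicit.
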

\begin{proof}
For simplicity we assume that $\mathbb{V}=\elam$ and $\mathbb{W}=\emu$ are irreducible $G$ modules. Fix $T\leq G$ a maximal algebraic torus. Given $\beta\in \mbox{supp}(\mu_{\bull})$ let $\emu(\beta)$ denote the corresponding weight space. We will consider this as a trivial $G$-bundle over $Y$ (with trivial action)
\begin{align*}
\emu(\beta)\times Y\xrightarrow{\pi_2} Y \ .
\end{align*}

Let $\tau:G\times Y\ra Y$ denote the action. We consider the $G$ bundle
\begin{align*}
\mathscr{L}_{w}\otimes \emu(\beta) \ .
\end{align*}

For any vector space $\mathbb{V}$ we denote the value at $[v]$ of the tautological section $\iota$ of
\begin{align*}
\mathcal{O}_{\mathbb{P}(\mathbb{V})}(1)\otimes \mathbb{V}
\end{align*}
by $\widehat{v}$.

Let $\{\xi_{ \beta}^{(j)}\}$ ($1\leq j\leq m_{\mubull}(\beta)$ ) denote any unitary weight basis\footnote{One may use the famous \emph{Gelfand-Tsetlin basis}.} of $\emu(\beta)$  .
Then we produce algebraic sections:
\begin{align*}
& Q_{\mubull ; \beta}\in \Gamma(Y\ ,\ \mathscr{L}_{w}\otimes \emu(\beta)) \\
\ \\
&Q_{\mubull ; \beta}(y):=\sum_{1\leq j\leq m_{\mubull}(\beta)}<\widehat{w}(y),\xi_{\beta}^{(j)}>\otimes \xi_{ \beta}^{(j)} \ .
\end{align*}
For any collection of weights $B:=\{\beta_1,\dots,\beta_r\}\subset \mbox{supp}(\mubull)$ we define
\begin{align*}
&Q_{\mubull;B}:=\bigotimes _{1\leq i\leq r}Q_{\mubull ; \beta_i}\in \Gamma(Y, \mathscr{L}_{w}^r\bigotimes _{1\leq i\leq r}\emu(\beta_i)) \ .
\end{align*}
Next observe that there is a finite collection $\{P_1,\dots, P_{k(\mubull)}\}$ of lattice subpolytopes of $\mathcal{N}(\mubull)$ such that for any $v\in\emu\setminus\{0\}$ we have
\begin{align*}
\mathcal{N}(v)\in \{P_1,\dots, P_{k(\mubull)}\}\ .
\end{align*}
 Let $B(i)\subset \mbox{supp}(\mubull)$ denote the \emph{vertex set} of $P_i$ with corresponding section $Q_{\mubull;B(i)}$. 
We define
\begin{align*}
C(i):= \mbox{supp}(\mubull)\cap(\mathcal{N}(\mubull)\setminus P_i) \ ,
\end{align*}
with corresponding section $Q_{\mubull;C(i)}$.
\begin{lemma}
\emph{Let $\sigma\in G$. Then 
\begin{align*}
\sigma\cdot y\in (Q_{\mubull;B(i)}\neq 0)\cap (Q_{\mubull;C(i)}=0) \ \mbox{if and only if}\ \mathcal{N}(\sigma\cdot w(y) )=P_i\ .
\end{align*}}
\end{lemma}  

 Next we consider the weight $\lambull$ and define
\begin{align*}
D(i):=\mbox{supp}(\lambull)\cap (\mathcal{N}(\lambull)\setminus (\mathcal{N}(\lambull)\cap P_i))\ .
\end{align*}
We have the corresponding section
\begin{align*}
Q_{\lambull; D(i)}(y):= \bigotimes _{\alpha_j\in D(i)}Q_{\lambull ; \alpha_j}\in \Gamma(Y, \ \mathscr{L}_v^{\#D(i)}\bigotimes _{\alpha_j\in D(i)}\elam(\alpha_j))\ .
\end{align*}
We now pull back all of the bundles (and the sections $Q$) we have introduced to $G\times Y$ via the action $\tau$.  We define
constructible subsets  $S_i\subset G\times Y$ by
\begin{align*}
S_i:=(\tau^*Q_{\mubull;B(i)}\neq 0)\cap (\tau^*Q_{\mubull;C(i)}=0)\cap(\tau^*Q_{\lambull; D(i)} \neq 0 ) \  .
\end{align*}
Define $S:=\cup _{1\leq i\leq k(\mubull)} S_i $. Let $\pi_2$ denote the projection $\pi_2:G\times Y\ra Y \ $. Observe that $\pi_2(S)$ coincides with the numerically \emph{unstable} locus.
 Since the projection of a constructible set is constructible (as is the compliment of any such set) we are done.
\end{proof}
 \subsection{Hilbert-Mumford Semistability and the Semistability of Pairs}\label{table}
We close this section with a direct comparison of the Hilbert-Mumford Semistability and the Semistability of Pairs. We hope that the table below makes the relationship bewteen the two theories completely transparent.     
\ \\

  \begin{center} \begin{tabular}{l|l}
  \textbf{Hilbert-Mumford Semistability}& \textbf{Dominance}/\textbf{Semistability of Pairs} \\ \\
\hline \\
 $0\notin\overline{G\cdot w}$ & \ $\overline{\mathcal{O}}_{vw}\cap\overline{\mathcal{O}}_{v}=\emptyset$  \\ \\
 \hline \\
 $w_{\lambda}(w)\leq 0$   &\ $w_{\lambda}(w)-w_{\lambda}(v)\leq 0$   \\
 for all 1psg's $\lambda$ of $G$ & \ for all 1psg's $\lambda$ of $G$\\ \\
 \hline \\
 $0\in \mathcal{N}(w)$ all $H\leq G$ &\ $\mathcal{N}(v)\subset \mathcal{N}(w)$ {all $H\leq G$} \\ \\
 \hline \\
 $\exists$ $C\geq 0$ such that &\ $\exists$ $C\geq 0$ such that \\
 $\log||\sigma\cdot w||^2\geq -C$   &\ $\log {||\sigma\cdot w||^2}-\log{||\sigma\cdot v||^2}  \geq -C $\\  
  all $\sigma\in G$ &\ all $\sigma\in G$ \\ \\
  \hline\\
 $\mu^{-1}(0)\cap \overline{G\cdot [w]}\neq \emptyset$ &  $ \mathscr{P}(\overline{\mathcal{O}}_{vw})=\mathscr{P}(\overline{\mathcal{O}}_{w})$\\ \\
 \hline \\
 $Y^{ss}(\mathscr{L}) $ is (Zariski) open &\ $Y^{nss}( \mathscr{L}  \ ,\  \mathscr{M} )$ is constructible
  \end{tabular} 
  \end{center}
\ \\
\begin{center}\small{Table 1.}\end{center}
  
 Observe that in Table 1 the left hand column arises from the right hand column by taking $\mathbb{V}$ to be the trivial one dimensional representation of $G$ and $v$ any (nonzero) constant.  In particular Hilbert-Mumford semistability is a \emph{special case} of the theory of semistable pairs.
  \section{The CM-Polarization Re-examined}
In this section we consider a family parametrized by a smooth quasi-projective base.
\begin{align*}
\xymatrix{\mathbb{X}\ar[rd]_{\pi:={\pi_1}|_{\mathbb{X}}}\ar[r]&Y\times\cpn\ar[d]^{\pi_1}\ar[r]^{\pi_2}&\cpn \\
&Y& }
\end{align*}
We assume that the family satisfies the following hypothesis: 
\begin{enumerate}
\item The map $\mathbb{X}\xrightarrow{\pi} Y$ is flat .  \\
\ \\
\item The fibers ( $X_y ,\  {L}|_{X_y} )$ are polarized manifolds , $ {L}:= \pi_2^*\mathcal{O}_{\cpn}(1)|_{\mathbb{X}}$ . \\
\end{enumerate}
Let $G$ be a linear reductive (for simplicity) algebraic group. We say that the family $\mathbb{X} \xrightarrow{\pi} Y$ is equivariant provided that:
\ \\
\begin{enumerate}
\item There is a rational representation $\rho:G\ra \slnc$ . \\
\ \\
\item $\mathbb{X}$ and $Y$ are $G$-varieties, where $G$ acts naturally on the fibers of $\pi$ and for every $\sigma \in G$  and $y\in Y$ we have $\rho(\sigma)\cdot X_y=X_{\sigma\cdot y}$ . \\
 \end{enumerate}
  \begin{remark} \emph{Usually we have that $G=\slnc$ and  $\rho$ is the identity, however there are important examples where $G$ is smaller than the ambient group.}
  \end{remark}
\subsection{The CM Polarization} Now we begin the construction of the pair $(\mathscr{L}_{R},\mathscr{L}_{\Delta})$ .
Given $q\in \mathbb{Z}$ let $\mathcal{O}_{\mathbb{X}}(q):=\pi_2^*\mathcal{O}_{\cpn}(q)$ .  We define
\begin{align*}
\mathcal{E}_{n+1}:=\overbrace{\bigoplus \mathcal{O}_{\mathbb{X}}(1)}^{n+1} \ .
\end{align*}
Choosing  $m\in \mathbb{Z}_+$  sufficiently large we define an invertible sheaf $\mathscr{L}_{R}(m)$ on $Y$ by
\begin{align*}
\mathscr{L}_{R}(m):=\bigotimes_{0\leq j\leq n+1}\det \pi_{*}\left (\bigwedge^j \mathcal{E}_{n+1}^{\vee}\otimes \mathcal{O}_{\mathbb{X}}(m)\right)^{(-1)^j(n(n+1)d-d\mu)} \ .
\end{align*}

Let $\rho_{\mathbb{X}}$ denote the fiberwise \emph{Gauss map}:
\begin{align*}
\rho_{\mathbb{X}}:\mathbb{X}\ra \mathbb{G}(n,N) \ , \ \rho_{\mathbb{X}}(z):=\mathbb{T}_z(X_{\pi(z)}) \ .
\end{align*}
Let $\mathscr{U}$ denote the universal rank $n+1$ bundle over $\mathbb{G}(n,N)$ . Then the relative bundle of {one jets} of $\mathcal{O}_{\mathbb{X}}(1)$ is 
\begin{align*}
J_{1}(\mathbb{X}/Y):= \rho_{\mathbb{X}}^*(\mathscr{U}^{\vee}) \ .
\end{align*}

Let $\mathbb{X}_{(n-1)}$ denote the new family
\begin{align}\label{newfamily}
\mathbb{X}_{(n-1)}:=\mathbb{X}\times\mathbb{P}^{n-1}\subset Y\times \mathbb{P}(M_{n\times (N+1)}) \ .
\end{align}
Then the map $\pi={\pi_{1}}|_{\mathbb{X}_{(n-1)}}:\mathbb{X}_{(n-1)}\ra Y$ is a flat family of polarized manifolds: 
\begin{align*}
\pi^{-1}(y)= X_{y}\times \mathbb{P}^{n-1} \ra \mathbb{P}(M_{n\times (N+1)}) \ (\mbox{Segre image} ) \ .
\end{align*}

Therefore we may consider $J_{1}(\mathbb{X}_{(n-1)}/Y)$. Letting $m\in \mathbb{Z}_+$ be sufficiently large we define another invertible sheaf $\mathscr{L}_\Delta(m)$ on $Y$ by
\begin{align*}
\mathscr{L}_{\Delta}(m):=\bigotimes _{0\leq j\leq 2n}\det \pi_{*}\left( \bigwedge^iJ_{1}(\mathbb{X}_{(n-1)}/Y)^{\vee}\otimes \mathcal{O}_{\mathbb{X}_{(n-1)}}(m)\right)^{(-1)^id(n+1)} \ .
\end{align*}
\begin{definition}\label{cmpolarization} 
\emph{ Let $\mathbb{X}\xrightarrow{\pi} Y$ be a flat family of polarized manifolds. The \emph{\textbf{CM-polarization}} of the family is the pair $(\mathscr{L}_{R}(m)\ ,\ \mathscr{L}_{\Delta}(m))$ of invertible sheaves on $Y$.}
\end{definition}
\begin{remark}\emph{The term ``polarization'' is strictly speaking inappropriate. This terminology is used for historical reasons only. 
The sheaves evidently depend on a choice of a large integer parameter $m$. It turns out that for $m$ large enough they are \emph{independent} of this choice and we denote the common value by $(\mathscr{L}_{R} \ ,\ \mathscr{L}_{\Delta} )$ .} 
\end{remark}
 
 \subsection{Proof of Theorem \ref{globgen}}
Let $Y$ be any $G$ variety. Recall that a $G$-morphism $w$ from $Y$ to a space of {divisors} on a flag variety 
\begin{align*}
w:Y\ra \mbox{Div}(G/P , \beta):= \mathbb{P}(H^{0}(G/P,\mathbb{L}_{\beta})) \ ( \mbox{$\beta$ dominant} ) \ 
\end{align*}
  is equivalent to the following data:
\begin{align}\label{morphism}
\begin{split}
&\mbox{An invertible sheaf $\mathcal{A}\in Pic(Y)^G$}\ . \\
\ \\
& \mbox{An algebraic $G$-invariant section $S$ of $\pi_1^*\mathcal{A}\otimes\pi_2^*\mathbb{L}_{\beta}$ over $Y\times G/P$ .} \\
\ \\
&t:={ \pi_1}_*S \neq 0 \quad t\in H^0(Y,\mathcal{A}\otimes H^{0}(G/P,\mathbb{L}_{\beta}))^G \ .
\end{split}
\end{align}
The pushed down section ${ \pi_1}_*S$ is defined by the formula
\begin{align*}
{ \pi_1}_*S(y):=S(y\ , \ \cdot )\in  \mathcal{A}_y\otimes H^{0}(G/P\ ,\ \mathbb{L}_{\beta}) \ .
\end{align*}
We define a {relative Cartier divisor} over $Y$
\begin{align*}
\mathcal{Z}:= \mbox{Div}(S)\subset Y\times G/P \ .
\end{align*}

The whole situation is pictured as follows
\begin{align}\label{picture}
\xymatrix{&\pi_1^*\mathcal{A}\otimes \pi_2^*\mathbb{L}_{\beta}\ar[d]\\
\mathcal{Z}\ar@{^{(}->}[r]& Y\times G/P \ar@/^1pc/[u]^{S}\ar[d]^{\pi_1}\\
& Y} \qquad
\xymatrix{\mathcal{A}\otimes H^{0}(G/P,\mathbb{L}_{\beta}) \ar[d] \\
Y\ar@/^1pc/[u]^{t:= {\pi_1}_*S}}
\end{align}

Since $t\neq 0$ , $t$ gives an injection
\begin{align*}
0\ra\mathcal{O}_Y\xrightarrow{\times t} \mathcal{A}\otimes H^{0}(G/P,\mathbb{L}_{\beta}) \ .
\end{align*}
Dualizing and tensoring with $\mathcal{A}$ gives a surjection (hence $\mathcal{A}$ is globally generated)
\begin{align*}
H^{0}(G/P,\mathbb{L}_{\beta})^{\vee}\times Y \ra \mathcal{A}\ra 0 \ ,
\end{align*}
and therefore a map $w:Y\ra  \mbox{Div}(G/P , \beta)$ as required. Moreover 
\begin{align*}
w^*\mathcal{O} (1)\cong \mathcal{A} \ 
\end{align*}
  and the canonical map
  \begin{align*}
  & \Phi:H^{0}(G/P,\mathbb{L}_{\beta})^{\vee}\ra H^0(Y, \mathcal{A}) \\
  \ \\
  & \Phi(\alpha)(y):=\alpha(t(y))\in \mathcal{A}_y \ 
  \end{align*}
is an injection. 
Conversely, given such a map $w$, we define 
 \begin{align*}
 &\mathcal{A}:=w^*\mathcal{O} (1) \\
 \ \\
 &S(y,x):=\widehat{w}(y)(x)\in\mathcal{A}_y\otimes\mathbb{L}_{\beta} \ . 
 \end{align*}
 \subsection{Determinants of Cayley-Koszul Complexes}
In order to proceed with the proof we first review in this subsection the situation (treated, for example in \cite{gkz} , \cite{paul2009}) where the base of the family $\mathbb{X}\ra Y$ consists of a single orbit $\mathcal{O}$, in other words we are really studying a fixed projective variety $X$ (up to the action of $G$). We refer the reader to the references for fuller discussion and complete proofs.

In this case the diagram (\ref{picture}) reduces to
\begin{align}
 \xymatrix{&  \mathbb{L}_{\beta}\ar[d]\\
{Z}\ar@{^{(}->}[r]&  G/P \ar@/^1pc/[u]^{S}
 }
\end{align}

Our question in this situation is the following one: 
\begin{center}{Given ${Z}$ how can we find $S$}? \end{center}

We begin by studying the situation in an affine space.
Let  $Z$ be an irreducible algebraic hypersurface in some $\mathbb{C}^{k}$. We need to assume that the basic set up is in force.\\
\ \\
\textbf{ The Basic Set Up .}
\emph{There exists a complex   variety $X$ and a vector subbundle $\mathcal{S}$ of the trivial bundle $\mathcal{E}:= X\times \mathbb{C}^{k}$ such that the image of the restriction to $I=\mathcal{S}$  of the projection of $\mathcal{E}$ onto  $\mathbb{C}^{k}$ is $Z$.}\\
\ \\
 
 Consider the exact sequence of vector bundles on $X$
\begin{align*}
0\rightarrow \mathcal{S}\rightarrow \mathcal{E}\overset{\pi}\rightarrow \mathcal{Q}\rightarrow 0 \ .
 \end{align*}
There is a section $\xi$ of ${p_1}^*(\mathcal{Q})$ whose base locus is $I$ where $p_1$ is the projection of $\mathcal{E}$ to $X$ and $Z$ equals the image of $I$ under ${p_2}|_I$. This situation is pictured below.
\begin{align*}
\xymatrix{& p_1^*\mathcal{Q}\ar[r]^{\pi_2}\ar[d]^{\pi_1}& \mathcal{Q}\ar[d]\\
I\ar[r]^{\iota}\ar[d]^{{p_2}|_{I}}&X\times \mathbb{C}^{k}\ar@/^1pc/[u]^{\xi}\ar[d]^{p_2}\ar[r]^-{p_1}&X\\
Z\ar[r]^{i}&\mathbb{C}^{k}&}
\end{align*}

Now we attempt to describe the irreducible defining polynomial of $Z$ (denoted by $R_{Z} $ ) through an analysis of the direct image of a Cayley-Koszul complex of sheaves on ${X}\times \mathbb{C}^{ {k}}$. Let $E:=p_1^*\mathcal{Q}$
\begin{align*}
\xymatrix{& K^{\bull}(E) \ar[d]&\\
&X\times \mathbb{C}^{k}\ar[ld]_{p_1}\ar[rd]^{p_2}&\\
X& &\mathbb{C}^{k}}
\end{align*}

We have the free resolution over $\mathcal{O}_{X\times \mathbb{C}^{k}}$
\begin{align}
(K^{\bull}(E), i(\xi) )\rightarrow \iota_{*}\mathcal{O}_{I}\rightarrow 0\ ; \ K^j(E):=\bigwedge^{n+1-j}E^{\vee}\ .
\end{align}
$i(\xi)$ denotes contraction.
Let $\mathcal{V}$ denote any vector bundle on $X$ . Consider the {twisted} 
 complex
\begin{align}
\begin{split}
&(K^{\bull}(E)\otimes p_1^*\mathcal{V} , \ i(\xi))\rightarrow \iota_{*}\mathcal{O}_{I}\otimes  p_1^*\mathcal{V}\rightarrow 0  \ .
\end{split}
\end{align}
 Given $f\in  \mathbb{C}^{k}$ pull the complex back to $X$ via the map
\begin{align}
i_{f}:X\rightarrow X\times \mathbb{C}^{k} \quad i_f(x):= (x,f)
\end{align}

 The crucial point is that $i_f^*(K^{\bull}(E)\otimes p_1^*\mathcal{V} , i(\xi))$ is an 
 exact complex of locally free sheaves on $X$ whenever  $f\in \mathbb{C}^{k}\setminus Z$.

Next we make a {positivity assumption} on $\mathcal{V}$
 \begin{align}
 \mbox{ {Assume that}}\ H^j(X, \ K^i(E)\otimes \mathcal{V})=0 \  \mbox{ {for all $i$ and all $j>0$}}\ .
 \end{align}
Given $X$ we call the package
\begin{align}
\left(\mathcal{E} \ , \mathcal{Q}\ , \mathcal{S}\ , \mathcal{V}\right)
\end{align}
the \emph{basic data} for $Z$.

It follows from the Leray hypercohomology spectral sequence that the complex of {finite dimensional vector spaces} is also exact 
\begin{align}
\begin{split}
&(E^{\bull}(\mathcal{V})\ , \dl_{f}):= (H^0(X, \ K^{\bull}(E)\otimes \mathcal{V}) , \dl^{\bull}_{f})\quad f\in \mathbb{C}^{k}\setminus Z \\
\ \\
&\dl^{\bull}_{f}= i(\xi) \ .
\end{split}
\end{align}
 
Recall that the torsion \footnote{For the definition and basic properties of the torsion (determinant) of an exact complex see the appendix in \cite{gkz} or \cite{paul2009}.}spans the determinant of the complex .
\begin{align}
\begin{split}
 &\mathbb{C}{\mathbf{Tor}}(E^{\bull}(\mathcal{V}),\dl^{\bull}_{f})=\bigotimes_{j=0}^{n+1}\bigwedge^{b_j}H^0(X,\bigwedge^j\mathcal{Q}^{\vee}\otimes \mathcal{V})^{(-1)^{j+1}} \ , \ b_j:= h^0(X, \bigwedge^{j} \mathcal{Q}^{\vee}\otimes\mathcal{V}) \ .
\end{split}
\end{align}
Choosing bases $\{ e^{(\bull)}_j\}$ in each term of $E^{j}(\mathcal{V})$
exhibits the {torsion} of the complex as a nowhere zero {regular} function away from $Z$
\begin{align}\label{regular}
f\in \mathbb{C}^{k}\setminus Z \rightarrow   {\mathbf{Tor}}(E^{\bull}(\mathcal{V}),\dl^{\bull}_{f};\{ e^{(\bull)}_j\})\in \mathbb{C}^* \ .
\end{align}
Therefore this function must be a power of  $R_{Z}(\cdot )$.  
\begin{proposition}\emph{
There is an integer $q$ (the {{Z-adic order}} of the determinant) such that}
\begin{align}\label{ratpwr}
&{\mathbf{Tor}}(E^{\bull}(\mathcal{V}),\dl^{\bull}_{f};\{ e^{(\bull)}_j\})=R_{Z}(f)^q \ .
\end{align}
\end{proposition}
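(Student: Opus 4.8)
The plan is to show that the torsion, regarded as a regular nowhere-zero function on the affine space minus the irreducible hypersurface $Z$, must be a unit times a power of $R_Z$, and that the unit is forced to be a constant which the chosen integral bases normalize to $1$. First I would invoke the standard fact that $\mathbb{C}^k$ has trivial Picard group, so any regular function on $\mathbb{C}^k\setminus Z$ which vanishes nowhere on that open set is of the form $c\, R_Z^q$ for some $c\in\mathbb{C}^*$ and some $q\in\mathbb{Z}$: indeed its divisor on $\mathbb{C}^k$ is supported on $Z$ (it can have no other zeros or poles since it is regular and nonvanishing off $Z$), and since $Z$ is irreducible with reduced defining polynomial $R_Z$, the divisor is $q\,[Z]$ for a unique integer $q$, whence $\mathbf{Tor}/R_Z^q$ is a regular function on all of $\mathbb{C}^k$ with no zeros, hence a nonzero constant $c$.

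Next I would argue that $q\ge 0$. This is where the hypotheses on the basic data enter: the complex $i_f^*(K^\bullet(E)\otimes p_1^*\mathcal{V},i(\xi))$ is a complex of locally free sheaves on $X$, exact precisely for $f\notin Z$, and the vanishing assumption $H^j(X,K^i(E)\otimes\mathcal{V})=0$ for $j>0$ ensures that taking $H^0$ yields an honest exact complex $(E^\bullet(\mathcal{V}),\partial_f)$ of finite-dimensional vector spaces whose differentials $\partial_f$ depend polynomially (not just rationally) on $f\in\mathbb{C}^k$. Since the torsion of a complex with polynomially varying differentials is itself a polynomial in the entries (it is built from determinants of submatrices of the $\partial_f^j$ in the chosen bases), $\mathbf{Tor}(E^\bullet(\mathcal{V}),\partial_f^\bullet;\{e_j^{(\bullet)}\})$ extends to a regular function on all of $\mathbb{C}^k$; combined with the previous paragraph this forces $q\ge 0$ and in fact $\mathbf{Tor}=c\,R_Z^q$ on all of $\mathbb{C}^k$.

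Finally I would pin down $c=1$ by an explicit normalization: the torsion is defined (following the appendix of \cite{gkz} or \cite{paul2009}) with respect to the fixed integral bases $\{e_j^{(\bullet)}\}$, and the standard convention makes the torsion equal to $1$ on any sub-locus where the complex splits as a direct sum of elementary acyclic pieces in those bases — or, more simply, one evaluates both sides at one convenient point $f_0\notin Z$ and checks they agree there. Either way, the constant is $1$ and one obtains $\mathbf{Tor}(E^\bullet(\mathcal{V}),\partial_f^\bullet;\{e_j^{(\bullet)}\})=R_Z(f)^q$.

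I expect the main obstacle to be the bookkeeping that shows $q$ is independent of all the choices (the bases, and the auxiliary data $X,\mathcal{E},\mathcal{Q},\mathcal{S},\mathcal{V}$) and is genuinely a nonnegative integer intrinsic to $Z$ — what the statement calls the $Z$-adic order. The polynomial dependence of $\partial_f$ on $f$ and the $\mathrm{Pic}(\mathbb{C}^k)=0$ argument are routine; the subtlety is that changing integral bases multiplies the torsion by a unit in the relevant determinant line, so one must be careful that the normalization ``$=R_Z^q$ on the nose'' is compatible with the determinant-line identification used elsewhere in the paper (e.g.\ in the definition of $\mathscr{L}_R(m)$ and $\mathscr{L}_\Delta(m)$). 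I would handle this by fixing once and for all the integral structure on each $H^0(X,\bigwedge^j\mathcal{Q}^\vee\otimes\mathcal{V})$ coming from a chosen basis and tracking it through the tensor product defining $\mathbb{C}\,\mathbf{Tor}$, so that the identity $\mathbf{Tor}=R_Z^q$ is an equality of elements of $\mathbb{C}$ under that identification.
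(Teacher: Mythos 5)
Your first paragraph is exactly the paper's argument: the torsion is a nowhere-vanishing regular function on $\mathbb{C}^k\setminus Z$, i.e.\ a unit of the localization of the polynomial ring at $R_Z$, and since that ring is a UFD and $R_Z$ is irreducible every such unit is $c\,R_Z^q$ with $c\in\mathbb{C}^*$ and $q\in\mathbb{Z}$. The constant you worry about is harmless: $R_Z$ is itself only defined up to a nonzero scalar, and the torsion depends on the chosen bases $\{e^{(\bull)}_j\}$ only through an overall scalar, so the equality (\ref{ratpwr}) is a normalization convention rather than something to be verified at a test point $f_0$ (your ``evaluate both sides and check they agree'' has no content until such a normalization is fixed). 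Up to that remark, this is the same proof as in the paper.

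The step that fails is your second paragraph. The torsion of an exact complex with respect to chosen bases is an \emph{alternating} product of minors of the differentials, so for polynomially varying $\dl_f$ it is a priori only a rational function of $f$, not a polynomial; nothing in its definition lets you conclude that it extends regularly across $Z$, and the inference $q\geq 0$ is therefore unjustified. Indeed the paper's remark immediately after the proposition explicitly allows the torsion to be the reciprocal of a polynomial, i.e.\ $q<0$ is not excluded at this level of generality; in the cases of interest the sign and value of $q$ (namely $q=1$) are obtained later from the separate weighted Euler characteristic computation (Propositions \ref{wtdeulerchar} and \ref{ord1}), not from any polynomiality of the torsion. Since the proposition only asserts that $q$ is an integer, deleting that paragraph leaves your proof intact and in agreement with the paper.
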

 
 \begin{remark}\emph{ In particular the torsion of the complex $(E^{\bull}(\mathcal{V}),\dl^{\bull}_{f})$ is a \textbf{\emph{polynomial}}, or the reciprocal of a polynomial.} 
\end{remark}

Since the boundary operators $\dl_f$ are  \emph{ {linear}} in $f$ we may deduce the following.
\begin{proposition}\label{wtdeulerchar} \emph{The $Z$-adic order can be computed as follows}
\begin{align} 
q\mbox{\emph{deg}}(R_Z)= \sum_{j=0}^{n+1}(-1)^{j+1}jh^0(X, \bigwedge^{j}\mathcal{Q}^{\vee}\otimes\mathcal{V})   \ .
\end{align}
\end{proposition}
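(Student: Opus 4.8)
The plan is to extract the statement from the identity $\mathbf{Tor}(E^{\bullet}(\mathcal{V}),\partial^{\bullet}_{f};\{e^{(\bullet)}_{j}\})=R_{Z}(f)^{q}$ proved just above, by comparing how its two sides scale under the dilation $f\mapsto tf$. The inputs are: the torsion of an exact based complex is, up to a nonzero constant, an alternating product of determinants of the isomorphisms induced by the differentials; the differentials $\partial^{\bullet}_{f}=i(\xi)$ depend \emph{linearly} on $f$; and $E^{j}(\mathcal{V})=H^{0}(X,\bigwedge^{\,n+1-j}\mathcal{Q}^{\vee}\otimes\mathcal{V})$, so $\dim E^{j}(\mathcal{V})=b_{n+1-j}$ with $b_{k}:=h^{0}(X,\bigwedge^{k}\mathcal{Q}^{\vee}\otimes\mathcal{V})$.

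First I would fix a generic $f\notin Z$, so that $(E^{\bullet}(\mathcal{V}),\partial^{\bullet}_{f})$ is exact, set $B^{j}:=\mathrm{im}(\partial^{j-1}_{f})$, $\beta_{j}:=\dim B^{j}$, and choose linear splittings $E^{j}(\mathcal{V})=B^{j}\oplus C^{j}$ on which $\partial^{j}_{f}$ restricts to an isomorphism $C^{j}\xrightarrow{\ \sim\ }B^{j+1}$; exactness gives $\dim E^{j}(\mathcal{V})=\beta_{j}+\beta_{j+1}$ and $\dim C^{j}=\beta_{j+1}$. For $t\in\mathbb{C}^{*}$, linearity yields $\partial^{j}_{tf}=t\,\partial^{j}_{f}$, so the \emph{same} $C^{j}$ split the complex $(E^{\bullet}(\mathcal{V}),\partial^{\bullet}_{tf})$ and on each $C^{j}$ the differential is merely rescaled by $t$. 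Feeding fixed bases of the $C^{j}$ through the definition of the torsion then shows $\mathbf{Tor}(tf)=t^{N}\,\mathbf{Tor}(f)$ for a constant $N=\pm\sum_{j}(-1)^{j}\beta_{j}$, the overall sign being dictated by the orientation of the determinant lines that is implicit in the torsion identity recorded above. Comparing with $\mathbf{Tor}(f)=R_{Z}(f)^{q}$ gives $R_{Z}(tf)^{q}=t^{N}R_{Z}(f)^{q}$ for all $t$; since $R_{Z}$ is irreducible, homogeneity of the power $R_{Z}^{q}$ forces $R_{Z}$ to be homogeneous, and $q\,\deg(R_{Z})=N$. (In the applications $Z$ is the affine cone over a projective hypersurface, so this homogeneity is automatic, but the argument does not require knowing it in advance.)

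It remains to turn $N$ into the stated sum. Solving $\dim E^{j}(\mathcal{V})=\beta_{j}+\beta_{j+1}$ downward yields $\beta_{j}=\sum_{m\ge j}(-1)^{m-j}\dim E^{m}(\mathcal{V})$; substituting into $\sum_{j}(-1)^{j}\beta_{j}$ collapses the double sum to $\sum_{m}(m+1)(-1)^{m}\dim E^{m}(\mathcal{V})$. The term $\sum_{m}(-1)^{m}\dim E^{m}(\mathcal{V})$ is the Euler characteristic of a generically exact complex, hence vanishes — which is also why the outcome is insensitive to an overall internal degree shift one might have assigned to $E^{0}(\mathcal{V})$. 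Using $\dim E^{m}(\mathcal{V})=b_{n+1-m}$ and re-indexing $m\mapsto n+1-m$ (the newly appearing factor of $n+1$ again killed by the vanishing Euler characteristic), this produces, with the sign fixed above,
\[
q\,\deg(R_{Z})=\sum_{j=0}^{n+1}(-1)^{j+1}\,j\,h^{0}\!\bigl(X,\ \textstyle\bigwedge^{j}\mathcal{Q}^{\vee}\otimes\mathcal{V}\bigr).
\]

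The one genuinely delicate point is the sign bookkeeping: one must carry the determinant-line convention of the paper through the scaling computation (keeping in mind that the relabelling $K^{j}(E)=\bigwedge^{\,n+1-j}E^{\vee}$ reverses the order of the terms, which is the source of the sign $(-1)^{j+1}$ in the final answer) so that $N$ emerges as $\sum_{j}(-1)^{j+1}j\,b_{j}$ rather than its negative; everything else is linear algebra. An alternative, essentially equivalent, route avoids dilations entirely by regarding $(E^{\bullet}(\mathcal{V})\otimes\mathbb{C}[f],\partial^{\bullet})$ as a graded complex of free $\mathbb{C}[f]$-modules — place $E^{j}(\mathcal{V})$ in internal degree $j$ so that $\partial^{\bullet}$ is homogeneous of degree $0$ — and then quotes the standard formula for the degree of the determinant of a graded complex (see the appendix of \cite{gkz}). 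I would present the dilation argument as the main line and mention this as a remark.
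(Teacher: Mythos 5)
Your proposal is correct and follows essentially the same route as the paper: the paper's one-line justification ("since the boundary operators $\partial_f$ are linear in $f$") is exactly the standard degree formula for the determinant/torsion of a complex with linear differentials (as in the appendix of \cite{gkz}), and your dilation argument $\partial_{tf}=t\,\partial_f$, together with the splitting bookkeeping $\dim E^j=\beta_j+\beta_{j+1}$ and the vanishing Euler characteristic, is just a self-contained proof of that fact. The only loose end you leave — fixing the overall sign so that the answer is $\sum_j(-1)^{j+1}j\,b_j$ rather than its negative, which requires carrying the paper's convention $\bigotimes_j\bigwedge^{b_j}H^0(X,\bigwedge^j\mathcal{Q}^{\vee}\otimes\mathcal{V})^{(-1)^{j+1}}$ through the scaling — is purely conventional and you correctly flag it, so I see no genuine gap.
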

Let $X\ra \cpn$ be an $n$ dimensional complex projective variety. Let $Z=Z_X$ be the associated hypersurface. 
 In this case the basic data for $Z$ may be chosen as follows :
 \begin{align}\label{bdres}
\begin{split}
&\mathcal{E}=X\times M_{n+1, N+1}(\mathbb{C})\\
\ \\
&\mathcal{S}=\{(x,(l_0,l_1,\dots,l_n))|\ l_i(x)=0 \ ; 0\leq i\leq n\} \\
& l_i \ \mbox{denotes a linear form on $\mathbb{C}^{N+1}$}.\\
&\mathcal{Q}\cong  \overbrace{ \mathcal{O}(1)_X\oplus \mathcal{O}(1)_X \oplus \dots \oplus \mathcal{O}(1)_X}^{n+1}  \\
\ \\
&\mathcal{V}= \mathcal{O}_X(m)\quad m>>0 \in \mathbb{Z} \ .
\end{split}
\end{align}
Let $X\ra \cpn$ be a smooth $n$ dimensional complex projective variety. Let $Z=X^{\vee} $ be the dual variety of $X$. 
 Let 
\begin{align*}
\rho:X\ra \mathbb{G}(n,N)
\end{align*}
be the Gauss map of $X$ and $\mathscr{U}$ the rank $n+1$ tautological bundle over $\mathbb{G}(n,N)$. 
In this case  the basic data for $Z$ may be chosen as follows :
\begin{align}\label{bddisc}
\begin{split}
&\mathcal{E}=X\times(\mathbb{C}^{N+1})^{\vee}\\
\ \\
&\mathcal{S}=\{(p, f)\ |\ \rho(p) \subset \mbox{Ker}(f)\} \ , \ f\in (\mathbb{C}^{N+1})^{\vee}\setminus\{0\}  \\
 \ \\
&\mathcal{Q}=  \rho^*(\mathscr{U}^{\vee}) \\
\ \\
&\mathcal{V}=\mathcal{O}_X(m)\ , \  m>>0 \in \mathbb{Z}\ .
\end{split}
\end{align}
Now we may state the following result which is required for the proof of Theorem \ref{globgen}.
 \begin{proposition}\label{ord1}(Paul \cite{paul2009})
\begin{equation}
\ \sum_{j=0}^{n+1}(-1)^{j+1}jh^0(X, \bigwedge^{j}\mathcal{Q}^{\vee}\otimes\mathcal{V})= 
\begin{cases}    \deg(R_X) & \text{$Z$ is the associated hypersurface of $X$,}
\\
\\
 \deg(\Delta_X) &\text{ $Z$ is the dual variety of $X$ .}
\end{cases}
\end{equation}
\end{proposition}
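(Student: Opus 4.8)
\textbf{Proof proposal for Proposition \ref{ord1}.}

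The plan is to reduce the computation of the alternating sum $\sum_{j}(-1)^{j+1}jh^0(X,\bigwedge^j\mathcal{Q}^\vee\otimes\mathcal{V})$ to an Euler-characteristic calculation, and then evaluate that characteristic by Hirzebruch--Riemann--Roch, recognizing the answer as the known degree formula for the Cayley--Chow form (resp. the hyperdiscriminant). First I would invoke the vanishing assumption $H^j(X,\bigwedge^i\mathcal{Q}^\vee\otimes\mathcal{V})=0$ for $j>0$, valid for $m\gg 0$ by Serre vanishing in both cases (\ref{bdres}) and (\ref{bddisc}), so that $h^0$ can be replaced by $\chi$ throughout; this turns the sum into $\sum_{j=0}^{n+1}(-1)^{j+1}j\,\chi\bigl(X,\bigwedge^j\mathcal{Q}^\vee\otimes\mathcal{V}\bigr)$, a purely cohomological quantity independent of $m$ once $m$ is large. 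The standard trick is then to write $\sum_j (-1)^{j+1}j\,a_j = -\frac{d}{dt}\Big|_{t=1}\sum_j a_j t^j$ (as a formal identity), i.e. to introduce the generating function $\Phi(t):=\sum_{j=0}^{n+1}(-t)^j\chi(X,\bigwedge^j\mathcal{Q}^\vee\otimes\mathcal{V})=\chi\bigl(X,\Lambda_{-t}\mathcal{Q}^\vee\otimes\mathcal{V}\bigr)$, where $\Lambda_{-t}$ denotes the total $\lambda$-operation; then the desired sum equals $\Phi'(1)$ up to sign. By HRR this is $\int_X \mathrm{ch}\bigl(\Lambda_{-t}\mathcal{Q}^\vee\bigr)\,\mathrm{ch}(\mathcal{V})\,\mathrm{Td}(X)$, and $\mathrm{ch}(\Lambda_{-t}\mathcal{Q}^\vee)=\prod_i(1-te^{-x_i})$ where $x_i$ are the Chern roots of $\mathcal{Q}$.

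Next I would specialize to the two cases. In the resultant case (\ref{bdres}), $\mathcal{Q}\cong\mathcal{O}_X(1)^{\oplus(n+1)}$ has a single Chern root $h:=c_1(\mathcal{O}_X(1))$ with multiplicity $n+1$, so $\mathrm{ch}(\Lambda_{-t}\mathcal{Q}^\vee)=(1-te^{-h})^{n+1}$; differentiating in $t$ and setting $t=1$, one notes $(1-e^{-h})^{n}$ vanishes to order $n$ along $h$, so only the top power $h^n$ survives against $\mathrm{ch}(\mathcal{V})\mathrm{Td}(X)$ when integrated over the $n$-fold $X$, and one is left with $(n+1)\int_X h^n=(n+1)d$ times a combinatorial factor — and the whole point is that the $m$-dependence and the Todd/Chern corrections cancel, leaving exactly $\deg(R_X)=(n+1)d$. (Here one uses that $\deg(R_X)=d$ in Plücker coordinates but $\deg R_X$ as a polynomial in the matrix entries $M_{(n+1)\times(N+1)}$ equals $(n+1)d$, matching the basic construction.) In the discriminant case (\ref{bddisc}), $\mathcal{Q}=\rho^*\mathscr{U}^\vee$ is the bundle of one-jets $J_1(\mathcal{O}_X(1))$, which sits in $0\to\Omega^1_X(1)\to J_1(\mathcal{O}_X(1))\to\mathcal{O}_X(1)\to 0$; its Chern roots are governed by $c(J_1)=c(\Omega^1_X(1))\cdot(1+h)$. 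The same differentiation-and-evaluation then produces, after the dust settles, the classical class formula $\deg(X^\vee)=\sum_{i=0}^{n}(-1)^i(i+1)\int_X c_i(\Omega^1_X(1))\,h^{\,n-i}$ (equivalently $\sum(-1)^i(n+1-i)\deg c_i(X)$ in the usual notation), i.e. $\deg(\Delta_X)$.

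I would organize the write-up as: (1) reduce $h^0\rightsquigarrow\chi$ via Serre vanishing; (2) the generating-function identity $\sum(-1)^{j+1}j a_j=\Phi'(1)$; (3) HRR to express $\Phi(t)$ as an integral over $X$ of $\prod_i(1-te^{-x_i})\cdot\mathrm{ch}(\mathcal{O}_X(m))\cdot\mathrm{Td}(X)$; (4) the observation that $\Phi'(1)$ picks out only the terms where $\prod_i(1-e^{-x_i})$ contributes its lowest-degree part, which has degree exactly $\mathrm{rk}(\mathcal{Q})-1=n$, so everything of positive degree in $\mathrm{ch}(\mathcal{O}_X(m))\mathrm{Td}(X)$ beyond what is needed to reach dimension $n$ drops out — this is what kills the $m$-dependence; (5) plug in the two specific $\mathcal{Q}$'s and recognize the resulting intersection numbers as $\deg(R_X)$ and $\deg(\Delta_X)$ respectively, quoting the classical formulas (e.g. from \cite{gkz} or \cite{tevelev}) and Proposition \ref{degreeofhyp}. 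The main obstacle I anticipate is step (4)--(5) in the discriminant case: one must be careful that the Gauss map $\rho$ is a morphism only because $X$ is smooth, that $\rho^*\mathscr{U}^\vee$ really is the one-jet bundle, and that the alternating sum collapses precisely to the Katz--Kleiman type class formula rather than to some nearby but wrong expression; keeping track of the combinatorial coefficients $(-1)^i(i+1)$ coming from $\frac{d}{dt}(1-te^{-x})^{\bullet}|_{t=1}$ is where an error is most likely to creep in, so I would double-check it against the known value $\deg(\Delta_{X\times\mathbb{P}^{n-1}})=n(n+1)d-d\mu$ from Proposition \ref{degreeofhyp} as a consistency check (noting the hyperdiscriminant, not the ordinary discriminant, is what the family in \S4 actually uses, via (\ref{newfamily})).
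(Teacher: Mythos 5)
The paper itself never proves Proposition \ref{ord1}: it is quoted from \cite{paul2009} (and the related Claim inside the proof of Proposition \ref{discrfamily} again defers to pgs.\ 1357--1362 of that reference), so there is no in-paper argument to measure you against. On its own terms your route is correct and is the natural one. With the standing vanishing hypothesis (Serre vanishing for $m\gg 0$) each $h^0$ is an Euler characteristic, and your generating-function/HRR step proves the clean identity
\[
\sum_{j=0}^{n+1}(-1)^{j+1}\,j\,\chi\Big(X,\ \textstyle\bigwedge^{j}\mathcal{Q}^{\vee}\otimes\mathcal{O}_X(m)\Big)\ =\ \int_X c_n(\mathcal{Q}),
\]
because $\frac{d}{dt}\big|_{t=1}\prod_i(1-te^{-x_i})=-\sum_k e^{-x_k}\prod_{i\neq k}(1-e^{-x_i})$ has lowest degree $\operatorname{rk}\mathcal{Q}-1=n=\dim X$, with degree-$n$ part $-e_n(x_1,\dots,x_{n+1})=-c_n(\mathcal{Q})$; this is exactly why the $m$-dependence and all Todd corrections drop out. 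Specializing: $\int_X c_n(\mathcal{O}_X(1)^{\oplus (n+1)})=(n+1)\int_X h^n=(n+1)d=\deg R_X$ (there is no extra combinatorial factor; in this split case one can also bypass HRR entirely, as the paper effectively does in Proposition \ref{mainres}, via the finite-difference identity $(n+1)\sum_{i=0}^{n}(-1)^i\binom{n}{i}P(m-1-i)=(n+1)d$), and $\int_X c_n(J_1(\mathcal{O}_X(1)))=\deg X^{\vee}=\deg\Delta_X$ for $X$ smooth and dually nondegenerate, which is the classical count of singular members of a generic pencil.

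The one concrete error is the ``classical class formula'' you display in the discriminant case: $\sum_{i=0}^{n}(-1)^i(i+1)\int_X c_i(\Omega^1_X(1))\,h^{n-i}$ is not $\deg X^{\vee}$ (for a plane cubic it gives $-3$ instead of $6$), and it is not equivalent to the second expression you list. The correct expansion, obtained from $c(J_1(\mathcal{O}_X(1)))=c(\Omega^1_X(1))\cdot(1+h)$ together with the vanishing of $c_j$ of a line bundle for $j\geq 2$, is
\[
\deg X^{\vee}=\int_X\Big(c_n(\Omega^1_X(1))+h\,c_{n-1}(\Omega^1_X(1))\Big)=\sum_{i=0}^{n}(-1)^i\,(n+1-i)\int_X c_i(T_X)\,h^{n-i}.
\]
Since your HRR computation already lands on $\int_X c_n(J_1(\mathcal{O}_X(1)))$ on the nose, the clean fix is simply to quote that integral as $\deg X^{\vee}$ and not expand it at all; with that repair (and the consistency check against Proposition \ref{degreeofhyp} you already propose, which is the form actually needed for the hyperdiscriminant via the family (\ref{newfamily})), your argument is a complete and valid proof of the proposition.
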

In particular we see that in these cases $q$, the $Z$-adic order of the torsion, is equal to one . \newline
 

\subsection{Families of Resultants}
We consider a flat family $\mathbb{X}\xrightarrow{\pi} Y$ of polarized varieties \footnote{Smoothness is not necessary for resultants .} . We assume, although this can be avoided,  that $Y$ is smooth. The situation we will consider may be pictured as follows.

\begin{align}
\xymatrix{&E\ar[d]\ar[r]& p_1^*\mathcal{O}_{\cpn}(1)\otimes p_2^*\mathcal{Q}\ar[d] \\
 I_{\mathbb{X}}\ar@{^{(}->}[r]^-{\iota} \ar[d] &  \mathbb{X}\times\mathbb{G} \ar@/^1pc/[u]^{q^*(\xi)}\ar[r]^-{q}\ar[d]^{p}&\cpn\times \mathbb{G}  \ar@/^1pc/[u]^{\xi}\\
\mathcal{Z}\ar@{^{(}->}[r]&Y\times \mathbb{G}  \ar[d]^{p_1}&\\
 &Y& }
\end{align}
In the diagram above we have defined
\begin{align}
\begin{split}
& i)\ \xi|_{([v],L)}:\mathbb{C}v\ra \mathbb{C}^{N+1}/L \ , \ \quad \xi(zv)=\pi_{L}(zv) \\
& \quad \pi_L:\mathbb{C}^{N+1}\ra \mathbb{C}^{N+1}/L \ \mbox{denotes the projection} . \\
&\quad \mbox{Observe that}\  \xi|_{([v],L)}=0 \ \mbox{if and only if}\ v\in L \ . \\
 \ \\
&ii) \ q:\mathbb{X}\times\mathbb{G}\ra \cpn\times \mathbb{G} \quad \mbox{is defined by}\ q(x,L):=(\pi_2(x),L) \ . \\
&\quad \mathbb{G}:=\mathbb{G}(N-n-1,N) \ .\\
\ \\
& iii) \ E:=q^*\left(p_1^*\mathcal{O}_{\cpn}(1)\otimes p_2^*\mathcal{Q}\right) \\
&\ \ \quad I_{\mathbb{X}}:= (q^*(\xi)=0)  \ . \\
 \ \\
& iv)\ p:\mathbb{X}\times \mathbb{G}\ra Y\times \mathbb{G} \quad \mbox{is defined by} \ p(x,L):=(\pi(x),L)\\
& \quad \mathcal{Z}:=p( I_{\mathbb{X}})\ .
\end{split}
\end{align}
\begin{remark}
\emph{The reader should observe that $\mathcal{Z}\cap (\{y\}\times \mathbb{G})$ is the associated hypersurface of $X_y$.}
\end{remark}
In exactly the same way as before we have a Cayley-Koszul complex  $K^{\bull}(E)$ (over $\mathbb{X}\times \mathbb{G}$) associated to the structure sheaf of $I_{\mathbb{X}}$ , where $i(q^*(\xi))$ denotes contraction 
\begin{align*}
K^{\bull}(E):= \bigwedge^{n+1}E^{\vee}\xrightarrow{i(q^*(\xi))}\bigwedge^{n}E^{\vee}\xrightarrow{i(q^*(\xi))}  \dots \xrightarrow{i(q^*(\xi))} E^{\vee}\xrightarrow{i(q^*(\xi))} \mathcal{O}_{\mathbb{X}\times \mathbb{G}} \ra {\iota_*\mathcal{O}_{I_{\mathbb{X}}}}  \ .
\end{align*}
We are interested in the direct image of the twisted complex $K^{\bull}(E)(m)$ ( $m>>0$ ) .
\begin{align}
 K^{i}(E)(m)= \left(\bigwedge^iE^{\vee}\otimes \mathcal{O}_{\mathbb{X}}(m)\right) \qquad (0\leq i\leq n+1)\  .
\end{align}
\begin{proposition}\label{mainres}
\emph{
\begin{align*}
& i) \ \mbox{All terms of}\  p_{*}K^{\bull}(E)(m) \ \mbox{are locally free}  \ . \\
\ \\
& ii) \ p_{*}K^{\bull}(E)(m) \ \mbox{is exact away from} \ \mathcal{Z} \ . \\
 \ \\
& iii) \quad \mbox {There is an isomorphism of sheaves on $Y\times \mathbb{G}$} \\
 &\quad  \det  p_{*}K^{\bull}(E)(m) ^{\otimes (n(n+1)d-d\mu)}\cong p_1^*(\mathscr{L}_{R}(m) )\otimes p_2^*\mathcal{O}_{\mathbb{G}}\big(\frac{r}{n+1}\big) \ . \\
 \ \\
& iv) \ \mbox{There is a canonical section} \ S\in H^0(Y\times \mathbb{G}, \det  p_{*}K^{\bull}(E)(m) ) \ \mbox{such that}  \\
&\quad \mbox{ for every} \ y\in Y \ , \ S(y,\cdot) \in \mathbb{P}H^0(\mathbb{G},\mathcal{O}_{\mathbb{G}}(d)) \ \mbox{satisfies} \ Div(S(y,\cdot))=Z_{X_y}\ . \\   
& \quad  \mbox{Therefore $Div(S)=\mathcal{Z}$ ,}\ \det  p_{*}K^{\bull}(E)(m)\cong \mathcal{O}(\mathcal{Z})\  \mbox{and} \ \mathcal{Z}\ra Y \ \mbox{has the}\\
&\quad  \mbox{ structure of a relative Cartier divisor over} \ Y \ .  
\end{align*}}
\end{proposition}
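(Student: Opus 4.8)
\textbf{Proof plan for Proposition \ref{mainres}.}

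The plan is to obtain all four assertions by reducing to the fixed-variety case already treated in the preceding subsection (equations \eqref{bdres} and Propositions \ref{wtdeulerchar}, \ref{ord1}) via flat base change and cohomology-and-base-change, and then upgrading the resulting fiberwise statements to global statements on $Y\times\mathbb{G}$. First I would set up the data carefully: over $\mathbb{X}\times\mathbb{G}$ we have the locally free sheaf $E:=q^*(p_1^*\mathcal{O}_{\cpn}(1)\otimes p_2^*\mathcal{Q})$ of rank $n+1$, the Koszul complex $K^{\bull}(E)(m)=(\bigwedge^{\bull}E^{\vee}\otimes\mathcal{O}_{\mathbb{X}}(m),\ i(q^*\xi))$, and the flat projection $p:\mathbb{X}\times\mathbb{G}\ra Y\times\mathbb{G}$ (flat because $\pi:\mathbb{X}\ra Y$ is flat and base change preserves flatness). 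For part $i)$, the key point is the positivity assumption: each $K^i(E)(m)=\bigwedge^iE^{\vee}\otimes\mathcal{O}_{\mathbb{X}}(m)$ restricted to a fiber $X_y\times\mathbb{G}$ is, by the choice \eqref{bdres} of basic data, a sum of line bundles of the form $\mathcal{O}_{X_y}(m-j)\boxtimes(\text{Plücker twist on }\mathbb{G})$, so for $m\gg 0$ (independent of $y$, using properness of $Y$ or a Noetherian compactness argument over the base) all higher direct images $R^{>0}p_*K^i(E)(m)$ vanish. Then Grauert's theorem (cohomology and base change) gives that $p_*K^i(E)(m)$ is locally free and commutes with base change, which simultaneously identifies its fiber at $y$ with $H^0(X_y\times\mathbb{G},K^i(E)(m)|_{X_y\times\mathbb{G}})$.

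For part $ii)$, exactness away from $\mathcal{Z}$: a point $(y,L)\notin\mathcal{Z}$ means $L$ meets no point of $X_y$, i.e. the section $q^*\xi$ restricted to $X_y\times\{L\}$ is nowhere vanishing, so the Koszul complex $(K^{\bull}(E)(m)|_{X_y\times\{L\}},i(q^*\xi))$ is an exact complex of locally free sheaves on $X_y$. By the Leray hypercohomology spectral sequence (exactly as in the displayed argument producing $(E^{\bull}(\mathcal{V}),\partial_f)$) together with the vanishing just established, the complex of vector spaces $p_*K^{\bull}(E)(m)\otimes k(y,L)$ is exact; since $p_*K^{\bull}(E)(m)$ commutes with base change and its terms are locally free, exactness at a point propagates to exactness of the complex of sheaves on a neighborhood, hence on all of $(Y\times\mathbb{G})\setminus\mathcal{Z}$. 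For part $iii)$, I would invoke the determinant-of-a-complex formalism: $\det p_*K^{\bull}(E)(m)$ is a well-defined invertible sheaf on $Y\times\mathbb{G}$, and I must compute it. The key is that the determinant of a bounded complex of locally free sheaves can be computed fiberwise up to a line bundle pulled back from the base, and on each fiber $\{y\}\times\mathbb{G}$ the torsion of $(E^{\bull},\partial)$ is, by Proposition \ref{ord1} (the $Z$-adic order $q=1$ case) and Proposition \ref{wtdeulerchar}, a power of the $X_y$-resultant $R_{X_y}$, a section of $\mathcal{O}_{\mathbb{G}}(d)$. Raising to the power $n(n+1)d-d\mu=\deg(\Delta_{\xhyp})$ matches the exponents in the definition of $\mathscr{L}_R(m)=\bigotimes_j\det\pi_*(\bigwedge^j\mathcal{E}_{n+1}^\vee\otimes\mathcal{O}_{\mathbb{X}}(m))^{(-1)^j(n(n+1)d-d\mu)}$, because $\mathcal{E}_{n+1}^\vee$ restricted appropriately is exactly $\mathcal{Q}^\vee$ for the resultant basic data, and the $\mathbb{G}$-factor contributes $\mathcal{O}_{\mathbb{G}}(r/(n+1))$ after the normalization. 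I would make this precise by the standard "determinant of a Koszul complex twisted by a section" computation, tracking the $\mathcal{O}_{\cpn}(1)$ and $\mathcal{Q}$ contributions separately.

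Finally, part $iv)$ follows from the general principle \eqref{morphism}--\eqref{picture} discussed just before this subsection: the torsion/determinant section of a Cayley-Koszul complex resolving $\iota_*\mathcal{O}_{I_{\mathbb{X}}}$ gives a canonical global section $S$ of $\det p_*K^{\bull}(E)(m)$ whose divisor of zeros is precisely the locus where the complex fails to be exact, namely $\mathcal{Z}=p(I_{\mathbb{X}})$; restricting to $\{y\}\times\mathbb{G}$ recovers (a power of, but $q=1$ here) $R_{X_y}\in H^0(\mathbb{G},\mathcal{O}_{\mathbb{G}}(d))$ with zero locus $Z_{X_y}$ by Proposition \ref{ord1}. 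Then $\det p_*K^{\bull}(E)(m)\cong\mathcal{O}(\mathcal{Z})$ and flatness of $\mathcal{Z}\ra Y$ (it is a divisor in $\mathbb{X}\times\mathbb{G}$ pushed forward, cut out fiberwise by a single equation of fixed degree, hence a relative Cartier divisor) completes the claim. The main obstacle I anticipate is part $iii)$: pinning down the determinant line bundle \emph{on the nose}, including the precise Plücker-twist factor $\mathcal{O}_{\mathbb{G}}(r/(n+1))$ and the exact power $n(n+1)d-d\mu$, rather than merely up to an ambiguous twist from the base — this requires carefully combining the local triviality of $\det p_*K^{\bull}(E)(m)$ with the global determinant formula and the fiberwise identification of the torsion with $R_{X_y}$, and checking that the choices of bases $\{e_j^{(\bull)}\}$ used in \eqref{regular} globalize to a nowhere-vanishing trivialization over $(Y\times\mathbb{G})\setminus\mathcal{Z}$.
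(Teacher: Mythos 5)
Your treatment of parts $i)$, $ii)$ and $iv)$ matches the paper's proof: there $i)$ and $ii)$ are dispatched as general theory (vanishing for $m\gg 0$ plus base change, as you outline), and $iv)$ is proved exactly along your lines, by taking the torsion of the exact complex over $(Y\times\mathbb{G})\setminus\mathcal{Z}$ (Lemma \ref{torsion}) and identifying its restriction to each slice $\{y\}\times\mathbb{G}$, after pulling back to $M_{(n+1)\times(N+1)}$, with the torsion of the fixed-variety complex built from the basic data (\ref{bdres}); Propositions \ref{wtdeulerchar} and \ref{ord1} then say this is the Cayley--Chow form of $X_y$, which is what guarantees the section extends across $\mathcal{Z}$ with divisor exactly $\mathcal{Z}$ (rather than a pole, the point you correctly flag via $q=1$).

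The gap is in $iii)$, and it is precisely the obstacle you name but do not resolve. Knowing $\det p_{*}K^{\bull}(E)(m)$ on every slice $\{y\}\times\mathbb{G}$ determines it only up to $p_1^*$ of an arbitrary invertible sheaf on $Y$, and globalizing the trivializations $\{e^{(\bull)}_j\}$ off $\mathcal{Z}$ only gives $\det p_{*}K^{\bull}(E)(m)\cong\mathcal{O}(k\mathcal{Z})$; neither step identifies the $Y$-factor with $\mathscr{L}_{R}(m)$, which is the actual content of $iii)$ (in the paper the isomorphism $\mathcal{O}(\mathcal{Z})\cong p_1^*\mathscr{L}_{R}(m)\otimes p_2^*\mathcal{O}_{\mathbb{G}}(r/(n+1))$ is a corollary of $iii)$ and $iv)$, not an input). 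What closes this is the direct computation you gesture at but leave unexecuted: from $K^{i}(E)(m)\cong p_1^*\mathcal{O}_{\mathbb{X}}(m-i)\otimes p_2^*\bigwedge^i\mathcal{Q}^{\vee}$ and the projection formula,
\begin{align*}
\det p_{*}K^{i}(E)(m)\cong p_1^*\big(\det \pi_*\mathcal{O}_{\mathbb{X}}(m-i)\big)^{\binom{n+1}{i}}\otimes p_2^*\det \big(\bigwedge ^i \mathcal{Q}^{\vee}\big)^{P(m-i)} \ ,
\end{align*}
where the multiplicities $\binom{n+1}{i}$ are exactly the ranks coming from $\bigwedge^i\mathcal{E}_{n+1}^{\vee}\cong\mathcal{O}_{\mathbb{X}}(-i)\otimes\bigwedge^i\mathbb{C}^{n+1}$, so the alternating product of the $Y$-factors reproduces the definition of $\mathscr{L}_{R}(m)$ verbatim; and since $\mathrm{Pic}(\mathbb{G})\cong\mathbb{Z}$ one has $\det(\bigwedge^i\mathcal{Q}^{\vee})\cong\det(\mathcal{Q}^{\vee})^{\binom{n}{i-1}}$, so the $\mathbb{G}$-twist after raising to the power $n(n+1)d-d\mu$ is $\det(\mathcal{Q})^{l}$ with $l=(n(n+1)d-d\mu)\sum_{i=0}^{n+1}(-1)^{i+1}\binom{n}{i-1}P(m-i)$. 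The elementary identity $\sum_{i=0}^{n+1}(-1)^{i+1}\binom{n}{i-1}P(m-i)=d$ is the step that pins the Pl\"ucker twist to $\mathcal{O}_{\mathbb{G}}\big(\frac{r}{n+1}\big)$. Until this computation (in particular that identity) is carried out, $iii)$ is not established; the fiberwise torsion identification you lean on belongs to $iv)$, not $iii)$. A minor further point: $Y$ is only quasi-projective, so uniformity of $m$ in part $i)$ should be argued from quasi-compactness of $Y$, not properness.
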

Recall that $\mathcal{O}_{\mathbb{G}}(k):=\det(\mathcal{Q})^{\otimes k}$ for any $k\in\mathbb{Z}$ .
\begin{proof}
$i)$ and $ii)$ follow at once from general theory. 
The proof of $iii)$ is an easy computation. To begin we observe that
\begin{align}
\begin{split}
&\bigwedge^j\mathcal{E}^{\vee}_{n+1}\cong \mathcal{O}_{\mathbb{X}}(-j)\otimes\bigwedge^j\mathbb{C}^{n+1}  \\
\ \\
& \bigwedge^j\mathcal{E}^{\vee}_{n+1}\otimes \mathcal{O}_{\mathbb{X}}(m)\cong \mathcal{O}_{\mathbb{X}}(m-j)\otimes\bigwedge^j\mathbb{C}^{n+1} \\
\ \\
&\pi_*(\mathcal{O}_{\mathbb{X}}(m-j)\otimes\bigwedge^j\mathbb{C}^{n+1})\cong \pi_*(\mathcal{O}_{\mathbb{X}}(m-j))\otimes\bigwedge^j\mathbb{C}^{n+1} \\
\ \\
&\det(\pi_*(\mathcal{O}_{\mathbb{X}}(m-j))\otimes\bigwedge^j\mathbb{C}^{n+1})\cong \det(\pi_*(\mathcal{O}_{\mathbb{X}}(m-j))^{\binom{n+1}{j}} \ .
\end{split}
\end{align}
Therefore
\begin{align}
\left(\bigotimes_{0\leq j\leq n+1}\det(\pi_*(\mathcal{O}_{\mathbb{X}}(m-j))^{(-1)^j\binom{n+1}{j}}\right)^{\otimes (n(n+1)d-d\mu)}\cong \mathscr{L}_{R}  \ .
\end{align}
Next we see that
\begin{align}
K^{i}(E)(m)\cong p_1^*\mathcal{O}_{\mathbb{X}}(m-i)\otimes p_2^*\bigwedge^i\mathcal{Q}^{\vee} \ .
\end{align}
Therefore
\begin{align*}
& \det p_*(K^{i}(E)(m)) \cong p_1^*(\det \pi_*\mathcal{O}_{\mathbb{X}}(m-i))^{\binom{n+1}{i}}\otimes p_2^*\det (\bigwedge ^i \mathcal{Q}^{\vee})^{P(m-i)}\\
\ \\
& P(m-i)= h^0(X_y , \mathcal{O}_{\mathbb{X}}(m-i)|_{X_y}) \ .
\end{align*}
By the splitting principal and the fact that $\mbox{Pic}(\mathbb{G}(k,n))\cong \mathbb{Z}$ we have the following isomorphism for $1\leq i \leq n+1$
\begin{align}
\det (\bigwedge ^i \mathcal{Q}^{\vee})\cong \det (\mathcal{Q}^{\vee})^{\binom{n}{i-1}} \ .
\end{align}
Therefore
\begin{align}
\begin{split}
&\left(\bigotimes_{0\leq i\leq n+1}\det(p_*K^{i}(E)(m))^{(-1)^i}\right)^{\otimes(dn(n+1)-d\mu)}\cong p_1^*(\mathscr{L}_{R} )\otimes p_2^*\det(\mathcal{Q})^{l} \ , \\
\ \\
&l:=(dn(n+1)-d\mu)\sum_{i=0}^{n+1}(-1)^{i+1}\binom{n}{i-1}P(m-i) \ .
\end{split}
\end{align}
Part $iii)$ now follows from the elementary identity
\begin{align}
\sum_{i=0}^{n+1}(-1)^{i+1}\binom{n}{i-1}P(m-i)=d \ . 
\end{align}
 The proof of $iv)$ depends on the preceding discussion together with a straightforward extension of that discussion. More precisely we need the following well known result which can be traced back to work of Arthur Cayley.
\begin{lemma}\label{torsion} 
\emph{Let $(\mathcal{E}^{\bull}, \dl_{\bull})$ be a bounded complex of locally free sheaves on a variety $\mathscr{X}$. If this complex is exact then the torsion provides a global nonvanishing section of the determinant line bundle
\begin{align}
\mbox{det}(\mathcal{E}^{\bull}, \dl_{\bull}):= \bigotimes_{i}\bigwedge^{r_i}(\mathcal{E}^{i})^{(-1)^{i+1}} \overset{\mathbf{Tor}}{\cong } \mathcal{O}_{\mathscr{X}}\ ,\ r_i:=\rnk(\mathcal{E}^{i})\  .
\end{align}}
\end{lemma}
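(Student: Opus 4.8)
The plan is to prove Lemma~\ref{torsion}, the statement that the torsion of a bounded exact complex of locally free sheaves furnishes a canonical nowhere-vanishing global section of the determinant line bundle. First I would recall the purely linear-algebraic fact underpinning the construction: if $(V^{\bull},\partial_{\bull})$ is a bounded exact complex of finite-dimensional vector spaces over a field, then there is a canonical isomorphism (the \emph{torsion}) $\mathbf{Tor}(V^{\bull},\partial_{\bull}):\bigotimes_i (\det V^i)^{(-1)^{i+1}}\xrightarrow{\ \sim\ }k$. The standard construction is to split the complex, writing $V^i\cong B^i\oplus B^{i+1}$ where $B^i:=\mathrm{im}(\partial_i)$ (using exactness so that cycles equal boundaries), so that $\det V^i\cong \det B^i\otimes\det B^{i+1}$; in the alternating tensor product $\bigotimes_i(\det V^i)^{(-1)^{i+1}}$ every $\det B^i$ then appears once with each sign and they cancel in pairs, leaving $k$. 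One checks this is independent of the chosen splittings.

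Next I would globalize this over the base variety $\mathscr{X}$. Since $\bigwedge^{r_i}(\mathcal{E}^i)$ is an invertible sheaf ($r_i=\rnk(\mathcal{E}^i)$ being the rank), the sheaf $\mathrm{det}(\mathcal{E}^{\bull},\partial_{\bull})=\bigotimes_i\bigwedge^{r_i}(\mathcal{E}^i)^{(-1)^{i+1}}$ is invertible on $\mathscr{X}$. On a sufficiently small open set $U$ where the images $\mathcal{B}^i:=\mathrm{im}(\partial_i|_U)$ are all locally free (hence subbundles, because the quotients are locally free by exactness: at each point the complex remains exact after base change, so the ranks of the $\partial_i$ are locally constant), the local splitting of the complex as above produces a local trivialization of $\mathrm{det}(\mathcal{E}^{\bull},\partial_{\bull})$; the pointwise torsion assembles into a regular nowhere-vanishing section over $U$. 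The content then is that these local sections agree on overlaps — which follows because the torsion isomorphism is canonical, i.e.\ independent of all choices of local splittings, so the locally defined sections patch to a global section $\mathbf{Tor}\in\Gamma(\mathscr{X},\mathrm{det}(\mathcal{E}^{\bull},\partial_{\bull}))$ that is nowhere zero, giving the claimed isomorphism $\mathrm{det}(\mathcal{E}^{\bull},\partial_{\bull})\xrightarrow{\ \sim\ }\mathcal{O}_{\mathscr{X}}$.

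The one genuine subtlety — and the step I expect to be the main obstacle — is the local freeness of the subsheaves $\mathcal{B}^i=\mathrm{im}(\partial_i)$ and the cokernels, which is what is needed to perform the splitting \emph{fiberwise in families} and to know the construction is base-change compatible. The cleanest route is: exactness of a bounded complex of locally free sheaves is preserved under arbitrary base change (one proves this by breaking the complex into short exact sequences $0\to\mathcal{Z}^i\to\mathcal{E}^i\to\mathcal{B}^{i+1}\to 0$ and inducting, using that the relevant $\mathrm{Tor}$ sheaves vanish), so the function $x\mapsto\rnk(\partial_i\otimes k(x))$ is locally constant; hence $\mathcal{B}^{i+1}$ is locally free of locally constant rank, and the short exact sequences split Zariski-locally. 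Alternatively one can invoke the well-known fact that a bounded exact complex of vector bundles is, Zariski-locally on the base, isomorphic to a direct sum of trivial two-term complexes $\mathcal{O}^{r}\xrightarrow{\mathrm{id}}\mathcal{O}^r$ (``locally split''), which makes the claim immediate. I would cite the determinant-of-complexes formalism (the appendix of \cite{gkz}, or \cite{paul2009}) for the precise statement that the torsion is a canonical, base-change-compatible isomorphism, and spend the bulk of the written proof on verifying that these local data glue — the rest being the elementary cancellation argument described above.
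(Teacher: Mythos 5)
Your argument is correct, and it is the standard proof of this statement. The paper itself offers no proof of Lemma \ref{torsion} --- it is invoked as a well-known fact traced back to Cayley, with the torsion formalism delegated to the appendix of \cite{gkz} and to \cite{paul2009} --- and your route (local splitting of a bounded exact complex of locally free sheaves, so that the images are locally free and the alternating determinants cancel, followed by gluing of the local trivializations via the canonicity of the torsion) is exactly the argument carried out in those references.
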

We apply  Lemma \ref{torsion} in the present situation by choosing
\begin{align}
\begin{split}
& \mathcal{E}^{i}=p_*K^i(E)(m) \ , \\
\ \\
& \mathscr{X}= Y\times \mathbb{G}\setminus \mathcal{Z} \ .
\end{split}
\end{align}
Then we have
\begin{align}
\begin{split}
&\mathbf{Tor}(p_*K^{\bull}(E)(m)\ , \ \dl_{\bull})^{\otimes\mu(n)}\in H^0\big(Y\times \mathbb{G}\setminus\mathcal{Z}\ , \ p_1^*(\mathscr{L}_{R}(m) )\otimes p_2^*\mathcal{O}_{\mathbb{G}}(d\mu(n)) \big) \\
\ \\
&\mu(n):=n(n+1)d-d\mu \ .
 \end{split}
 \end{align}
\begin{lemma}
\emph{ $\mathbf{Tor}(p_*K^{\bull}(E)(m)\ , \ \dl_{\bull})$ {vanishes identically} on $\mathcal{Z}$, the $\mu(n)^{th}$ power extends to a global section $S$ of $p_1^*(\mathscr{L}_{R}(m) )\otimes p_2^*\mathcal{O}_{\mathbb{G}}(d\mu(n))$ satisfying condition (\ref{morphism}) , and the induced morphism
\begin{align}
Y\ra \mathbb{P} H^0(\mathbb{G}, \mathcal{O}(d\mu(n)))\cong \mathbb{P}(\elam)
\end{align}
coincides with $R$ . Therefore $R$ is a regular map of quasi projective varieties.} 
\end{lemma}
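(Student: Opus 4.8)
The plan is to deduce the Lemma from the family version of the Cayley--Koszul machinery developed in Proposition \ref{mainres} together with the single-orbit analysis recalled earlier (Proposition \ref{ord1} and Proposition \ref{wtdeulerchar}). First I would establish that the torsion section $\mathbf{Tor}(p_*K^{\bull}(E)(m),\dl_{\bull})$, which is defined and nowhere vanishing on the complement $Y\times\mathbb{G}\setminus\mathcal{Z}$ by parts $i)$ and $ii)$ of Proposition \ref{mainres}, extends across $\mathcal{Z}$ after raising to the $\mu(n)$-th power. The key local fact is that for each fixed $y\in Y$ the restriction of the complex to $\{y\}\times\mathbb{G}$ is precisely the Cayley--Koszul complex computing the associated hypersurface $Z_{X_y}$, and by Proposition \ref{ord1} (the $Z$-adic order of the torsion equals one for the associated hypersurface) the torsion restricted to the slice is, up to a nonzero scalar, the $X_y$-resultant $R_{X_y}\in H^0(\mathbb{G},\mathcal{O}_{\mathbb{G}}(d))$, a genuine polynomial vanishing exactly on $Z_{X_y}$. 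Hence fiberwise the $\mu(n)$-th power of the torsion vanishes to finite order along $\mathcal{Z}$ and extends; by the standard valuative/normality argument (the complement of $\mathcal{Z}$ is dense and $Y\times\mathbb{G}$ is smooth, so a regular function on the complement extends iff it is bounded near each codimension-one point, which the fiberwise polynomiality guarantees) this extends to a global section $S$ of $p_1^*(\mathscr{L}_R(m))\otimes p_2^*\mathcal{O}_{\mathbb{G}}(d\mu(n))$, using part $iii)$ of Proposition \ref{mainres} to identify the line bundle.

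Next I would check that $S$ satisfies the data $(\ref{morphism})$ with $G/P$ replaced by $\mathbb{G}=\mathbb{G}(N-n-1,N)$, $\mathbb{L}_\beta=\mathcal{O}_{\mathbb{G}}(d\mu(n))$, and $\mathcal{A}=\mathscr{L}_R(m)$: namely that $S$ is $G$-invariant (which follows because the whole construction --- the bundles $E$, the section $\xi$, the complex, and the choice of determinant --- is $G$-equivariant, so the torsion is canonical and transforms correctly, and the determinant twist $p_2^*\mathcal{O}_{\mathbb{G}}(d\mu(n))$ carries the $SL(n+1,\mathbb{C})$-action that makes $\elam\cong H^0(\mathbb{G},\mathcal{O}(d\mu(n)))$ into the resultant module), and that $t:=p_{1*}S\neq 0$, which holds because $S$ does not vanish on the dense open set $Y\times\mathbb{G}\setminus\mathcal{Z}$. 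Then the formalism following $(\ref{morphism})$ produces a $G$-equivariant morphism $Y\ra\mathbb{P}H^0(\mathbb{G},\mathcal{O}(d\mu(n)))^{\vee}$; after the identification $H^0(\mathbb{G},\mathcal{O}(d\mu(n)))\cong\mathbb{C}_r[M_{(n+1)\times(N+1)}]^{SL(n+1,\mathbb{C})}\cong\elam$ from the Basic Construction this is a morphism $Y\ra\mathbb{P}(\elam)$, and $w^*\mathcal{O}(1)\cong\mathscr{L}_R(m)$.

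It then remains to identify this morphism with $R$ as defined in $(\ref{rdmaps})$. This is a pointwise check: for each $y\in Y$, the value of the morphism at $y$ is determined by the divisor $\mathrm{Div}(S(y,\cdot))$ on $\mathbb{G}$, which by the fiberwise computation above equals $Z_{X_y}$, and hence $S(y,\cdot)$ is (a scalar multiple of) $R_{X_y}^{\mu(n)}$; by the normalization conventions in the Basic Construction, $R(X_y):=R_{X_y}^{\deg(\Delta_{\mathbb{X}_y\times\mathbb{P}^{n-1}})}$ with $\deg(\Delta_{\mathbb{X}_y\times\mathbb{P}^{n-1}})=n(n+1)d-d\mu=\mu(n)$, so these agree as points of $\mathbb{P}(\elam)$. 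Since $Y$ is quasi-projective and the morphism is regular, $R$ is a regular map of quasi-projective varieties. The main obstacle I anticipate is the extension step: making rigorous that the $\mu(n)$-th power of the torsion genuinely extends as a \emph{section} (not merely a rational section) across the potentially singular or non-reduced locus $\mathcal{Z}$, and that one does not pick up spurious poles along components of $\mathcal{Z}$ lying over the discriminant locus where fibers degenerate; the cleanest route is to argue fiberwise using the established $Z$-adic order one statement and invoke that a rational section of a line bundle on a normal variety which is regular in codimension one and fiberwise polynomial is regular, rather than attempting a global deformation-theoretic argument.
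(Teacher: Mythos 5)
Your proposal is correct and follows essentially the same route as the paper: restrict to the slices $\{y\}\times\mathbb{G}$, identify the fiberwise torsion with the Cayley--Chow form of $X_y$ via Propositions \ref{wtdeulerchar} and \ref{ord1} (the paper does this by pulling back along the dominant rational map from $M_{(n+1)\times(N+1)}$ to $\mathbb{G}$, where the basic data (\ref{bdres}) lives), conclude fiberwise polynomiality and hence extension of the (power of the) torsion across $\mathcal{Z}$, and then read off the morphism to $\mathbb{P}(\elam)$ from the formalism of (\ref{morphism}). Your added details --- the codimension-one/valuative extension argument and the degree match $\mu(n)=\deg(\Delta_{\xhyp})$ identifying the morphism with $R$ --- are exactly the steps the paper leaves as ``immediate.''
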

\begin{proof} Observe that
\begin{align}
(\{y\}\times \mathbb{G})\setminus (\mathcal{Z}\cap (\{y\}\times \mathbb{G})) = \mathbb{G}\setminus Z_{X_y} \ .
\end{align}
Under the dominant (rational) map
\begin{align}
\pi:M_{(n+1)\times (N+1)}\setminus \overline{\pi^{-1}(Z_{X_y})}\dashrightarrow \mathbb{G}\setminus Z_{X_y}
\end{align}
we have that 
\begin{align}
\pi^*\big(\mathbf{Tor}(p_*K^{\bull}(E)(m)\ , \ \dl_{\bull})|_{\mathbb{G}\setminus Z_{X_y}}\big) 
\end{align}
coincides with the torsion of the complex constructed from the basic data (\ref{bdres}). By Propositions \ref{wtdeulerchar} and \ref{ord1} this is the Cayley Chow form of $X_y$. Therefore $\mathbf{Tor}(p_*K^{\bull}(E)(m)\ , \ \dl_{\bull})$ vanishes on $\mathcal{Z}\cap (\{y\}\times \mathbb{G})$ for any $y\in Y$. Therefore it vanishes identically on $\mathcal{Z}$ and hence extends to a global section . The remaining properties are immediate.
\end{proof}
\begin{corollary} \emph{Let $\mathcal{O}(\mathcal{Z})$ denote the line bundle corresponding to $\mathcal{Z}$. Then there is an isomorphism
\begin{align}
\mathcal{O}(\mathcal{Z})\cong p_1^*(\mathscr{L}_{R}(m) )\otimes p_2^*\mathcal{O}_{\mathbb{G}}(d\mu(n)) \ .
\end{align}
Consequently the sheaf $\mathscr{L}_{R}(m)$ is independent of $m$.}
\end{corollary}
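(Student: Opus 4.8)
The first isomorphism is essentially a repackaging of the lemma just established. That lemma produces a global section
$S\in H^{0}\bigl(Y\times\mathbb{G}\,,\,p_{1}^{*}(\mathscr{L}_{R}(m))\otimes p_{2}^{*}\mathcal{O}_{\mathbb{G}}(d\mu(n))\bigr)$
whose zero locus is exactly $\mathcal{Z}$, and part $iv)$ of Proposition \ref{mainres} tells us that $\mathcal{Z}$ is an effective Cartier divisor on $Y\times\mathbb{G}$ (indeed a relative Cartier divisor over $Y$). The plan is then to invoke the standard principle that if an invertible sheaf $\mathscr{M}$ on a variety admits a global section whose vanishing scheme is an effective Cartier divisor $D$, then $\mathscr{M}\cong\mathcal{O}(D)$: multiplication by $S$ realises an injection $\mathcal{O}_{Y\times\mathbb{G}}\hookrightarrow\mathscr{M}$ whose cokernel is supported on $\mathcal{Z}$, so $S$ trivialises $\mathscr{M}$ off $\mathcal{Z}$ and defines an isomorphism $\mathscr{M}^{-1}\xrightarrow{\sim}\mathcal{O}(-\mathcal{Z})$, equivalently $\mathscr{M}\cong\mathcal{O}(\mathcal{Z})$. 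Applying this with $\mathscr{M}=p_{1}^{*}(\mathscr{L}_{R}(m))\otimes p_{2}^{*}\mathcal{O}_{\mathbb{G}}(d\mu(n))$ gives the displayed isomorphism.

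For the "consequently" clause I would argue as follows. First I would observe that neither $\mathcal{O}(\mathcal{Z})$ nor the factor $p_{2}^{*}\mathcal{O}_{\mathbb{G}}(d\mu(n))$ depends on $m$: the subscheme $\mathcal{Z}=p(I_{\mathbb{X}})$ was constructed with no reference to the twisting parameter, and $\mu(n)=n(n+1)d-d\mu$ is determined by the Hilbert polynomial alone. Hence for any two sufficiently large integers $m,m'$, cancelling the common factor $p_{2}^{*}\mathcal{O}_{\mathbb{G}}(d\mu(n))$ from the two isomorphisms $\mathcal{O}(\mathcal{Z})\cong p_{1}^{*}(\mathscr{L}_{R}(m))\otimes p_{2}^{*}\mathcal{O}_{\mathbb{G}}(d\mu(n))$ and $\mathcal{O}(\mathcal{Z})\cong p_{1}^{*}(\mathscr{L}_{R}(m'))\otimes p_{2}^{*}\mathcal{O}_{\mathbb{G}}(d\mu(n))$ yields $p_{1}^{*}(\mathscr{L}_{R}(m))\cong p_{1}^{*}(\mathscr{L}_{R}(m'))$ on $Y\times\mathbb{G}$. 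It remains to descend this isomorphism to $Y$. For that I would fix any point $L_{0}\in\mathbb{G}$ and consider the section $\iota\colon Y\hookrightarrow Y\times\mathbb{G}$, $\iota(y)=(y,L_{0})$, of the projection $p_{1}$; since $p_{1}\circ\iota=\mathrm{id}_{Y}$, pulling the above isomorphism back along $\iota$ gives $\mathscr{L}_{R}(m)\cong\mathscr{L}_{R}(m')$ on $Y$, and we denote the common sheaf by $\mathscr{L}_{R}$.

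There is no genuine obstacle here; the substantive work has already been carried out in Proposition \ref{mainres} and the lemma preceding the corollary (flatness for local freeness, a hypercohomology/base-change argument for exactness off $\mathcal{Z}$, the Cayley--Koszul torsion computation identifying the section fibrewise with the Chow form via Propositions \ref{wtdeulerchar} and \ref{ord1}, and the binomial identities pinning down the $\mathbb{G}$-twist). The only two points that merit a word of care when writing the proof in full are: (i) that $\mathcal{Z}$ really is a Cartier divisor, so that $\mathcal{O}(\mathcal{Z})$ makes sense and the "section with divisor $D$ implies $\mathscr{M}\cong\mathcal{O}(D)$" principle is legitimate --- this is exactly part $iv)$ of Proposition \ref{mainres}; and (ii) that $p_{1}^{*}$ is injective on isomorphism classes of invertible sheaves, which follows at once from the existence of the section $\iota$ of $p_{1}$ (alternatively from $\mathrm{Pic}(Y\times\mathbb{G})\cong\mathrm{Pic}(Y)\times\mathrm{Pic}(\mathbb{G})$, $\mathbb{G}$ being a Grassmannian, but the section argument is cleaner).
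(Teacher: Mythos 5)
Your proposal is correct and follows essentially the same route the paper takes: it reads the isomorphism off from the fact that the extended torsion section $S$ of $p_1^*(\mathscr{L}_{R}(m))\otimes p_2^*\mathcal{O}_{\mathbb{G}}(d\mu(n))$ has divisor exactly $\mathcal{Z}$ (Proposition \ref{mainres} $iii)$--$iv)$ and the preceding lemma), and then notes that the left-hand side is $m$-independent. Your extra care in descending from $Y\times\mathbb{G}$ to $Y$ via the section $y\mapsto(y,L_0)$ of $p_1$ is a detail the paper leaves implicit, and it is a legitimate way to finish.
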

This completes the proof of Proposition \ref{mainres} .
\end{proof}

\subsection{Families of Discriminants}
In this section we consider a flat family $\mathbb{X}\xrightarrow{\pi} Y$ of polarized manifolds satisfying the non-degeneracy condition
\begin{align*}
\pi_*(c_n(J_1(\mathbb{X}/Y))\equiv d^{\vee} \in \mathbb{Z}_+ \ .
\end{align*}
That is, all of the fibers $X_y$ are dually non-degenerate and the discriminants of the fibers have (common) degree $d^{\vee}>0$ . This follows from a result of \cite{bfs} . The situation we will consider may be pictured as follows.
\begin{align*}
\xymatrix{&F\ar[d]\ar[r]& p_1^*\mathscr{U}^{\vee}\otimes p_2^*\mathcal{O}_{{\cpn}^{\vee}}(1)\ar[d] \\
 J_{\mathbb{X}}\ar@{^{(}->}[r]^-{\iota} \ar[d] &  \mathbb{X}\times{\cpn}^{\vee} \ar@/^1pc/[u]^{q^*(\zeta)}\ar[r]^-{q}\ar[d]^{p}&\mathbb{G}(n,N)\times {\cpn}^{\vee}  \ar@/^1pc/[u]^{\zeta}\\
\mathcal{Z}\ar@{^{(}->}[r]&Y\times {\cpn}^{\vee}  \ar[d]^{p_1}&\\
 &Y& }
\end{align*}
In the diagram above we have defined
\begin{align*}
& i)\ \zeta|_{(L,[f])}:L\ra \mathbb{C}^{N+1}/\ker(f) \ , \ \quad \zeta(u)=\pi_{\ker(f)}(u) \\
& \quad \pi_{\ker(f)}:\mathbb{C}^{N+1}\ra \mathbb{C}^{N+1}/{\ker(f)} \ \mbox{denotes the projection} . \\
&\quad \mbox{Observe that}\  \zeta|_{(L,[f])}=0 \ \mbox{if and only if}\ L\subset \ker(f)\ . \\
 \ \\
&ii) \ q:\mathbb{X}\times{\cpn}^{\vee}\ra \mathbb{G}(n,N)\times {\cpn}^{\vee}  \ \mbox{is defined by}\ q(x,[f]):=(\rho_{\mathbb{X}}(x),[f]) \ . \\
&\quad \mathbb{G}:=\mathbb{G}(N-n-1,N) \ . \\
\ \\
& iii) \ F:=q^*\left( p_1^*\mathscr{U}^{\vee}\otimes p_2^*\mathcal{O}_{{\cpn}^{\vee}}(1)     \right) \\
&\ \ \quad J_{\mathbb{X}}:= (q^*(\zeta)=0) \ . \\
 \ \\
& iv)\ p:\mathbb{X}\times{\cpn}^{\vee}\ra Y\times {\cpn}^{\vee}  \ \mbox{is defined by} \ p(x,[f]):=(\pi(x),[f])\\
& \quad \mathcal{Z}:=p( I_{\mathbb{X}})\ .
\end{align*}
\begin{remark}
\emph{The reader should observe that $\mathcal{Z}\cap (\{y\}\times {\cpn}^{\vee})$ is the dual variety of the fiber $X_y$.}
\end{remark}
As in the resultant case we study the Cayley-Koszul complex associated to the structure sheaf of $J_{\mathbb{X}}$ :
\begin{align*}
0\ra \bigwedge^{n+1}F^{\vee}\xrightarrow{i(q^*(\zeta))}\bigwedge^{n}F^{\vee}\xrightarrow{i(q^*(\zeta))}  \dots \xrightarrow{i(q^*(\zeta))} F^{\vee}\xrightarrow{i(q^*(\zeta))} \mathcal{O}_{\mathbb{X}\times{\cpn}^{\vee}} \xrightarrow{i(q^*(\zeta))} {\iota_*\mathcal{O}_{J_{\mathbb{X}}}}\ra 0 \ .
\end{align*}
We are interested in the twisted complex $K^{\bull}(F)(m)$ of sheaves on $\mathbb{X}\times {\cpn}^{\vee}$ for $m>>0$ 
\begin{align*}
K^{i}(F)(m):=\bigwedge^iF^{\vee}\otimes \mathcal{O}_{\mathbb{X}}(m) \ (0\leq i\leq n+1)\  .
\end{align*}

\begin{proposition}\label{discrfamily}
\emph{ 
\begin{align*}
& i) \ \mbox{All terms of}\  p_{*}K^{\bull}(F)(m) \ \mbox{are locally free}  \ . \\
\ \\
& ii)  \ p_{*}K^{\bull}(F)(m) \ \mbox{is exact away from} \ \mathcal{Z} \ . \\
\ \\
& iii)\ \mbox{We define an invertible sheaf $\mathcal{A}$ on $Y$ by} \\
 &\mathcal{A}:= \bigotimes_{0\leq i\leq n+1}\det\pi_*(\bigwedge^iJ_1(\mathbb{X}/Y)^{\vee}\otimes \mathcal{O}_{\mathbb{X}}(m))^{(-1)^i}\ . \\
 &\mbox{Then $\mathcal{A}$ is independent  of $m$ and there is an isomorphism of sheaves on}\  Y\times {\cpn}^{\vee} \\
 &\det p_{*}K^{\bull}(F)(m)\cong p_1^*(\mathcal{A})\otimes p_2^*\mathcal{O}_{{\cpn}^{\vee}}(d^{\vee}) \ . 
  \end{align*}
 \begin{align*}
& iv)   \ \mbox{There is a canonical section} \ \Delta \in H^0(Y\times {\cpn}^{\vee}, \det  p_{*}K^{\bull}(F)(m) ) \ \mbox{such that }   \\
&   \mbox{for every $y\in Y$ $\Delta(y,\cdot) \in \mathbb{P}H^0({\cpn}^{\vee},\mathcal{O}_{{\cpn}^{\vee}}(d^{\vee})) $}\ \mbox{satisfies} \ Div(\Delta(y,\cdot))={X_y}^{\vee}\ . \\
&\mbox{Therefore $Div(\Delta)=\mathcal{Z}$, $\det p_{*}K^{\bull}(F)(m)\cong\mathcal{O}(\mathcal{Z})$ and $\mathcal{Z}\ra Y$ has the}\\
& \mbox{structure of a relative Cartier divisor over $Y$} \ .
   \end{align*}}
\end{proposition}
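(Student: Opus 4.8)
The plan is to transcribe the proof of Proposition~\ref{mainres} essentially verbatim, replacing the resultant data (\ref{bdres}) by the discriminant data (\ref{bddisc}) and the Grassmannian $\mathbb{G}$ by ${\cpn}^{\vee}$. For $(i)$ and $(ii)$: the section $q^*(\zeta)$ of $F=q^*(p_1^*\mathscr{U}^{\vee}\otimes p_2^*\mathcal{O}_{{\cpn}^{\vee}}(1))$ has base locus exactly $J_{\mathbb{X}}$, and since $F$ has the expected rank $n+1$ the twisted Cayley--Koszul complex $K^{\bull}(F)(m)$ is a locally free resolution of $\iota_*\mathcal{O}_{J_{\mathbb{X}}}$ on $\mathbb{X}\times{\cpn}^{\vee}$. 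Choosing $m\gg 0$, Serre vanishing gives $H^{j}(X_y,K^i(F)(m)|_{X_y})=0$ for all $j>0$, all $i$ and every $y\in Y$; by cohomology-and-base-change (Grauert) each $p_*K^i(F)(m)$ is then locally free with formation commuting with base change, which is $(i)$; restricting to a point $(y,[f])\notin\mathcal{Z}$ and applying the Leray spectral sequence exactly as in the single-variety discussion produces an exact complex of finite-dimensional vector spaces, which is $(ii)$.

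For $(iii)$ I would compute the terms explicitly. From $F^{\vee}=q^*(p_1^*\mathscr{U}\otimes p_2^*\mathcal{O}_{{\cpn}^{\vee}}(-1))$, the identity $\bigwedge^i(\mathscr{U}\otimes\mathcal{O}(-1))\cong\bigwedge^i\mathscr{U}\otimes\mathcal{O}(-i)$, and $\rho_{\mathbb{X}}^*\mathscr{U}\cong J_1(\mathbb{X}/Y)^{\vee}$, one obtains
\begin{align*}
K^i(F)(m)\cong\left(\bigwedge\nolimits^i J_1(\mathbb{X}/Y)^{\vee}\otimes\mathcal{O}_{\mathbb{X}}(m)\right)\boxtimes\mathcal{O}_{{\cpn}^{\vee}}(-i)\ ,
\end{align*}
so by the projection formula $p_*K^i(F)(m)\cong\pi_*\!\left(\bigwedge^i J_1(\mathbb{X}/Y)^{\vee}\otimes\mathcal{O}_{\mathbb{X}}(m)\right)\boxtimes\mathcal{O}_{{\cpn}^{\vee}}(-i)$. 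Writing $b_i(m)$ for the rank of $\pi_*(\bigwedge^i J_1(\mathbb{X}/Y)^{\vee}\otimes\mathcal{O}_{\mathbb{X}}(m))$, which for $m\gg 0$ equals $h^0(X_y,\bigwedge^i J_1(X_y)^{\vee}\otimes\mathcal{O}_{X_y}(m))$ and is independent of $y$ by flatness together with the non-degeneracy hypothesis (for which one invokes \cite{bfs}), the alternating tensor product of determinants gives
\begin{align*}
\det p_*K^{\bull}(F)(m)\cong p_1^*\mathcal{A}\otimes p_2^*\mathcal{O}_{{\cpn}^{\vee}}\Big(\sum_{i=0}^{n+1}(-1)^{i+1}i\,b_i(m)\Big)\ .
\end{align*}
Applying Proposition~\ref{ord1} to the basic data (\ref{bddisc}) of the single fibre $X_y$ (the dual-variety case), the exponent is exactly $\deg(\Delta_{X_y})=d^{\vee}$; this proves $(iii)$, and the independence of $\mathcal{A}$ from $m$ comes out of $(iv)$.

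For $(iv)$: by Lemma~\ref{torsion} the torsion of the exact locally free complex $p_*K^{\bull}(F)(m)$ over $\mathscr{X}:=Y\times{\cpn}^{\vee}\setminus\mathcal{Z}$ is a nowhere-vanishing global section there of $\det p_*K^{\bull}(F)(m)$. Restrict this section to a slice $\{y\}\times{\cpn}^{\vee}$ and pull it back along the dominant rational map $(\mathbb{C}^{N+1})^{\vee}\dashrightarrow{\cpn}^{\vee}$: the boundary operators become linear, and the torsion becomes precisely the torsion of the finite-dimensional complex attached to the basic data (\ref{bddisc}) of $X_y$, whose $X_y^{\vee}$-adic order is $q=1$ by Propositions~\ref{wtdeulerchar} and \ref{ord1}; hence on each slice the torsion is literally $\Delta_{X_y}$. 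Consequently the torsion vanishes on $\mathcal{Z}\cap(\{y\}\times{\cpn}^{\vee})=X_y^{\vee}$ for every $y$, so it vanishes identically on $\mathcal{Z}$ and therefore extends to a global section $\Delta\in H^0(Y\times{\cpn}^{\vee},\det p_*K^{\bull}(F)(m))$ with $Div(\Delta)=\mathcal{Z}$. Since $\Delta$ restricts on each slice to the single Cartier equation $\Delta_{X_y}$, the morphism $\mathcal{Z}\ra Y$ acquires the structure of a relative Cartier divisor and $\det p_*K^{\bull}(F)(m)\cong\mathcal{O}(\mathcal{Z})$; comparing with $(iii)$ and using that $p_1^*$ is injective on Picard groups (the projection $p_1$ has a section) yields that $\mathcal{A}$ is independent of $m$. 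Finally, $\Delta$ is a global section of $p_1^*\mathcal{A}\otimes p_2^*\mathcal{O}_{{\cpn}^{\vee}}(d^{\vee})$ of the shape (\ref{morphism}), hence defines a $G$-equivariant morphism $Y\ra\mathbb{P}H^0({\cpn}^{\vee},\mathcal{O}_{{\cpn}^{\vee}}(d^{\vee}))$, which by the slice computation is $y\mapsto[\Delta_{X_y}]$; in particular this map is regular. The main obstacle is this extension step: the torsion is a priori only defined off $\mathcal{Z}$, and showing that it extends across $\mathcal{Z}$ with divisor exactly $\mathcal{Z}$ works only because its $Z$-adic order is exactly one (the content of Proposition~\ref{ord1}) and because the non-degeneracy hypothesis, via \cite{bfs}, keeps $d^{\vee}$ — hence the formation of every sheaf in sight — constant over $Y$. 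Granting these inputs, the argument is a line-by-line parallel of the resultant case treated in Proposition~\ref{mainres}.
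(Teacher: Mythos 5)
Your proposal is correct and follows essentially the same route as the paper, which itself declares the proof ``identical to the one for resultants'' and only indicates part $iii)$ via the computation $p_*K^{i}(F)(m)\cong p_1^*\pi_*(\bigwedge^iJ_1(\mathbb{X}/Y)^{\vee}\otimes\mathcal{O}_{\mathbb{X}}(m))\otimes\mathcal{O}_{{\cpn}^{\vee}}(-i)$ together with the constancy of $\sum_i(-1)^{i+1}ih^0(i;m)$ equal to $\pi_*(c_n(J_1(\mathbb{X}/Y)))=d^{\vee}$. Your fiberwise use of Proposition \ref{ord1} combined with the dual non-degeneracy hypothesis (via \cite{bfs}) is exactly the content of the paper's Claim, and your part $iv)$ is the same torsion-extension argument as in Proposition \ref{mainres}.
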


\begin{proof} The proof is identical to the one for resultants. We will just give an indication of $iii)$. 
\begin{align*}
&p_*K^{i}(F)(m)\cong p_1^*\ \pi_*\left(\bigwedge^iJ_1(\mathbb{X}/Y)^{\vee}\otimes\mathcal{O}_{\mathbb{X}}(m)\right)\otimes\mathcal{O}_{{\cpn}^{\vee}}(-i) \ . \\
\ \\
&\mbox{Therefore , we have the following }\\
& \det(p_*K^{i}(F)(m))\cong p_1^*\det \pi_*\left(\bigwedge^iJ_1(\mathbb{X}/Y)^{\vee}\otimes\mathcal{O}_{\mathbb{X}}(m)\right)\otimes\mathcal{O}_{{\cpn}^{\vee}}(-ih^0(i;m)) \ , \\
& h^0(i;m):=h^0(X_y , \bigwedge^i(J_1(\mathcal{O}_{\cpn}(1)|_{X_y})(m)) \ .
\end{align*}
Therefore
\begin{align*}
\bigotimes_{0\leq i\leq n+1}\det(p_*K^{i}(F)(m))^{(-1)^{i+1}}\cong p_1^*(\mathcal{A})\otimes p_2^*\mathcal{O}_{{\cpn}^{\vee}}(\sum_{0\leq i\leq n+1}(-1)^{i+1}ih^0(i;m)) \ .
\end{align*}
$iii)$ the follows from the next claim. For the proof the reader should see \cite{paul2009} pgs. 1357-1362.
\begin{claim}\emph{The function
\begin{align*}  
Y\ni y\ra \sum_{0\leq i\leq n+1}(-1)^{i+1}i  h^0(X_y , \bigwedge^i(J_1(\mathcal{O}_{\cpn}(1)|_{X_y})(m))   
\end{align*}
is constant with value  $\pi_*(c_n(J_1(\mathbb{X}/Y))$, the degree of the dual of any fiber of the family $\mathbb{X}\ra Y$.  }
\end{claim}
\end{proof}
To complete the proof of Theorem \ref{globgen} one twists the given family $\mathbb{X}\ra Y$ fiberwise by $\mathbb{P}^{n-1}$ (see (\ref{newfamily}) ) then applies Proposition \ref{discrfamily} to this new family. For the computation of the degree of the hyperdiscriminant recall Proposition \ref{degreeofhyp} . Further details are left to the reader.  
 
Part $(2)$ of Theorem \ref{uniform1} requires Theorem A together with corollary \ref{infi}. The dependence of the constant $M$ on line (\ref{M}) comes from the definition of the weights $\lambda_{\bull}$ and $\mu_{\bull}$ and the discussion found after remark 5 on pg. 276 of \cite{paul2011} . Part $(4)$ follows from Proposition \ref{constructible} .  

Theorem \ref{weakII} follows from Theorem A and Proposition \ref{curveselection} .   
  
\section{ Asymptotics as $k\ra \infty$ }

Let $k\in \mathbb{Z}_+$ and let  $(X,L)$ be a polarized manifold. Fix a Hermitian metric $h$ on $L$ with curvature form $\om_h=-\sqrt{-1}\dl\dlb\log h$. Then the Bergman space associated to the Kodaira embedding
\begin{align*}
X\overset{L^k}{\ra}\mathbb{P}^{N_k}
\end{align*}
is given by 
\begin{align*}
\mathcal{B}_k:=\bigg\{ \frac{1}{k}\vps \ |\ \sigma\in SL(N_k+1,\mathbb{C})\bigg\} \ .
\end{align*}
The importance of the spaces $\mathcal{B}_k$ is brought out in the following theorem, due to Tian.
\begin{theorem}\label{density}(\cite{tian1990})\ 
\emph{The union $\bigcup_{k\geq 1}\mathcal{B}_k$  is dense in $\mathcal{H}_{\om}$ in the $C^2$ topology.} 
\end{theorem}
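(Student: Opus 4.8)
The plan is to realise each K\"ahler potential in $\mathcal{H}_{\om}$ as a $C^2$-limit of the pullbacks of $\om_{FS}$ under Kodaira embeddings built from $L^2$-orthonormal bases of $H^0(X,L^k)$ (here $\om=\om_h$, the curvature of $h$). Fix $\vp\in\mathcal{H}_{\om}$ and let $h_\vp:=h\,e^{-\vp}$, a Hermitian metric on $L$ with curvature form $\om_\vp=\om+\frac{\sqrt{-1}}{2\pi}\dl\dlb\vp>0$. For each $k\geq 1$ I would equip $H^0(X,L^k)$ with the Hermitian inner product $\langle s,t\rangle_k:=\int_X\langle s,t\rangle_{h_\vp^k}\,\om_\vp^n$, choose a $\langle\ ,\ \rangle_k$-orthonormal basis $s_0^{(k)},\dots,s_{N_k}^{(k)}$, and form the associated Kodaira embedding $\iota_k:X\ra\mathbb{P}^{N_k}$. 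A direct local computation of the pullback of $\om_{FS}$ then gives
\begin{align}\label{pullbackidentity}
\tfrac{1}{k}\iota_k^*\om_{FS}=\om_\vp+\tfrac{1}{k}\tfrac{\sqrt{-1}}{2\pi}\dl\dlb\log\rho_k(\vp)\ ,\qquad \rho_k(\vp):=\sum_{j=0}^{N_k}|s_j^{(k)}|^2_{h_\vp^k}\ ,
\end{align}
where $\rho_k(\vp)$ is the Bergman density function of $(L^k,h_\vp^k)$. In particular $\tfrac{1}{k}\iota_k^*\om_{FS}\in\mathcal{B}_k$ (any two bases of $H^0(X,L^k)$ differ by an element of $GL(N_k+1,\mathbb{C})$, and a scalar change of basis affects neither $\iota_k$ nor $\iota_k^*\om_{FS}$), and its potential relative to $\om$ is $\vp+\tfrac{1}{k}\log\rho_k(\vp)$ up to an additive constant.

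The substance of the proof is the asymptotic behaviour of $\rho_k(\vp)$ as $k\ra\infty$; concretely I would establish
\begin{align}\label{bergmanexpansion}
k^{-n}\rho_k(\vp)\xrightarrow[k\ra\infty]{}1\qquad\text{in the }C^2\text{ topology on }X\ .
\end{align}
To prove \eqref{bergmanexpansion} I would follow the peak-section method: for each $p\in X$ use H\"ormander's $L^2$ estimate for $\dlb$, applied to a cut-off of a locally defined holomorphic section of $L^k$ peaked at $p$, to produce an almost-orthonormal family of global sections concentrated near $p$, with precise pointwise control of their norms and of their first and second covariant derivatives in $\om_\vp$-normal coordinates centred at $p$. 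Summing the squared norms of such a family, and bounding the remaining contribution by the extremal characterisation $\rho_k(p)=\sup\{|s(p)|^2_{h_\vp^k}:\|s\|_k=1\}$, yields $\rho_k=k^n+O(k^{n-1})$, with the estimates propagating to first and second derivatives because the peak sections are controlled in $C^2$; uniformity over $X$ comes from compactness of $X$ and the uniform geometry of $(X,\om_\vp,h_\vp)$. This is exactly Tian's theorem in \cite{tian1990}, and it is the main obstacle --- everything else is formal.

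Granting \eqref{bergmanexpansion}, the conclusion is immediate. Since $k^{-n}\rho_k(\vp)\ra 1$ in $C^2$, also $\log\bigl(k^{-n}\rho_k(\vp)\bigr)\ra 0$ in $C^2(X)$, hence
\begin{align*}
\tfrac{1}{k}\log\rho_k(\vp)=\tfrac{n\log k}{k}+\tfrac{1}{k}\log\bigl(k^{-n}\rho_k(\vp)\bigr)\xrightarrow[k\ra\infty]{}0\qquad\text{in }C^2(X)\ .
\end{align*}
Subtracting the irrelevant constant $\tfrac{n\log k}{k}$, the Bergman potentials $\vp+\tfrac{1}{k}\log\rho_k(\vp)-\tfrac{n\log k}{k}\in\mathcal{B}_k$ converge to $\vp$ in the $C^2$ topology. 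As $\vp\in\mathcal{H}_{\om}$ was arbitrary, $\bigcup_{k\geq 1}\mathcal{B}_k$ is dense in $\mathcal{H}_{\om}$ in the $C^2$ topology, which is the assertion of Theorem \ref{density}.
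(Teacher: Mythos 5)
Your proposal is correct and follows essentially the same route as the source: the paper itself gives no proof of this theorem (it is quoted from \cite{tian1990}), and Tian's original argument is exactly what you outline --- reduce density to the $C^2$ asymptotics $k^{-n}\rho_k(\vp)\to 1$ of the Bergman function via the pullback identity for $\iota_k^*\om_{FS}$, then establish those asymptotics by peak sections built from H\"ormander's $L^2$ estimate for $\overline{\partial}$ with control of two derivatives. The only caveat is that the entire analytic weight of the theorem sits in that expansion, which you sketch rather than prove, so your text is a faithful outline of Tian's proof rather than an independent or self-contained alternative.
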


Let $R(k), \Delta(k), M(k)$ be the $k$ dependent data associated to the embedding
\begin{align*}
 X \xrightarrow{{L}^k}\mathbb{P}^{N_k} 
\end{align*}
An immediate consequence of Theorems \ref{weakII} and \ref{density} is the following.
\begin{theorem}\label{corethm} \emph
 {{Assume} the sequence $\{M(k)\}_{k\geq1} $ is bounded by a constant $M$. Then the infimum of the Mabuchi energy over the entire space of K\"ahler metrics in the class $[\om]$ is given by}
\begin{align*}
 |(n+1)\inf_{\vp\in\mathcal{H}_{\om}}\nu_{\om}(\vp) - \inf_{k\in\mathbb{Z}_+}\frac{1}{k^{2n}}\log\tan^2 d _{g} (\overline{\mathcal{O}}_{R(k)  \Delta(k)},\overline{\mathcal{O}}_{R(k)} )|\leq M   \ . 
 \end{align*}
\end{theorem}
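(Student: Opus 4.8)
The plan is to combine three ingredients already assembled in the paper: (i) the exact formula for the Mabuchi energy on the Bergman spaces in terms of the norms of the resultant and hyperdiscriminant (Theorem A, applied to each Kodaira embedding $X\xrightarrow{L^k}\mathbb{P}^{N_k}$), (ii) the distance reformulation of the energy of a pair (Corollary \ref{infi}, together with Lemma \ref{distance}), and (iii) the density statement Theorem \ref{density}. First I would fix $k$ and apply Theorem A to the embedding $X\xrightarrow{L^k}\mathbb{P}^{N_k}$ with the polarization $L^k$. Writing $\nu_{\om}^{(k)}$ for the Mabuchi energy computed in the class $c_1(L^k)=k[\om]$ and noting that $\nu_{\om}^{(k)}=k^{n}\,\nu_{\om}$ when restricted to the corresponding potentials (the Mabuchi energy scales by the volume factor under $[\om]\mapsto k[\om]$; the precise power is read off from the normalization in \eqref{mabenergy} and the definition of $M(k)$), Theorem A gives, for every $\sigma\in\mathcal{B}_k$,
\begin{align*}
d_k^2(n+1)\,\nu_{\om}(\vps)= \log\frac{\|\sigma\cdot\Delta(k)\|^2}{\|\Delta(k)\|^2}-\log\frac{\|\sigma\cdot R(k)\|^2}{\|R(k)\|^2}\ ,
\end{align*}
where $d_k=\deg(X\xrightarrow{L^k}\mathbb{P}^{N_k})$. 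Taking the infimum over $\sigma$ and invoking Corollary \ref{infi} (with $v=R(k)$, $w=\Delta(k)$ normalized to unit length), I get
\begin{align*}
(n+1)\inf_{\sigma\in\mathcal{B}_k}\nu_{\om}(\vps)=\frac{1}{d_k^2}\log\tan^2 d_g\big(\overline{\mathcal{O}}_{R(k)\Delta(k)},\overline{\mathcal{O}}_{R(k)}\big)+O\!\left(\frac{1}{d_k^2}\right)\ ,
\end{align*}
and the error here is exactly controlled by $M(k)$ as in Part (2) of Theorem \ref{uniform1}; since $d_k=k^n d$ up to lower order and the normalizing constants are absorbed into $M(k)$, under the hypothesis $\{M(k)\}$ bounded by $M$ this reads
\begin{align*}
\Big|(n+1)\inf_{\sigma\in\mathcal{B}_k}\nu_{\om}(\vps)-\frac{1}{k^{2n}}\log\tan^2 d_g\big(\overline{\mathcal{O}}_{R(k)\Delta(k)},\overline{\mathcal{O}}_{R(k)}\big)\Big|\le M\ .
\end{align*}

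Next I would take the infimum over $k\in\mathbb{Z}_+$ of both quantities inside the absolute value; the uniform bound $M$ is preserved under taking infima of two families of reals that are pointwise within $M$ of each other, so I obtain
\begin{align*}
\Big|\inf_{k}(n+1)\inf_{\sigma\in\mathcal{B}_k}\nu_{\om}(\vps)-\inf_{k}\frac{1}{k^{2n}}\log\tan^2 d_g\big(\overline{\mathcal{O}}_{R(k)\Delta(k)},\overline{\mathcal{O}}_{R(k)}\big)\Big|\le M\ .
\end{align*}
It then remains to identify $\inf_{k}\inf_{\sigma\in\mathcal{B}_k}\nu_{\om}(\vps)$ with $\inf_{\vp\in\mathcal{H}_{\om}}\nu_{\om}(\vp)$. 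One inclusion is trivial since every $\vp\in\bigcup_k\mathcal{B}_k$ lies in $\mathcal{H}_{\om}$, giving $\inf_{\mathcal{H}_{\om}}\nu_{\om}\le\inf_k\inf_{\mathcal{B}_k}\nu_{\om}$. For the reverse I would use Theorem \ref{density}: $\bigcup_k\mathcal{B}_k$ is $C^2$-dense in $\mathcal{H}_{\om}$, and the Mabuchi energy $\nu_{\om}$ is continuous in the $C^2$ topology (this is standard — the integrand in \eqref{mabenergy} involves the scalar curvature, which depends on two derivatives of the potential, and $\nu_{\om}$ is path-independent so one may estimate along a short $C^2$ segment), hence given any $\vp\in\mathcal{H}_{\om}$ and $\epsilon>0$ there is $k$ and $\sigma\in\mathcal{B}_k$ with $\nu_{\om}(\vps)<\nu_{\om}(\vp)+\epsilon$, yielding $\inf_k\inf_{\mathcal{B}_k}\nu_{\om}\le\inf_{\mathcal{H}_{\om}}\nu_{\om}$. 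Combining the two inequalities gives the desired identification, and substituting into the displayed bound completes the proof.

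The main obstacle I anticipate is the bookkeeping of the scaling exponents: one must verify carefully that under $L\rightsquigarrow L^k$ the Mabuchi energy, the degree $d_k$, and the degrees/weights $(\lambda_\bull(k),\mu_\bull(k))$ of $R(k),\Delta(k)$ scale in precisely the way that makes the normalization factor $\tfrac{1}{d_k^2}\asymp\tfrac{1}{k^{2n}}$ come out as written, with all remaining discrepancies absorbed into the quantity $M(k)$ of Part (2) of Theorem \ref{uniform1}. This is exactly the content referenced by the remark "the discussion found after remark 5 on pg. 276 of \cite{paul2011}", so I would lean on that computation rather than redo it. The only other point requiring a word of care is the $C^2$-continuity of $\nu_{\om}$, which is classical but should be cited (e.g. from \cite{mabuchi} or \cite{tian1990}) rather than left implicit. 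Everything else is a formal manipulation of infima and absolute values.
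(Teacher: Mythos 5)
Your proposal is correct and is essentially the paper's own (largely implicit) argument: the $k$-th instance of the estimate (\ref{M}) from Theorem \ref{uniform1}(2) --- equivalently Theorem A combined with Corollary \ref{infi} --- supplies the per-$k$ bound with constant $M(k)\leq M$, and Tian's density theorem \ref{density} together with the $C^2$-continuity of $\nu_{\om}$ on $\mathcal{H}_{\om}$ identifies $\inf_{k}\inf_{\mathcal{B}_k}\nu_{\om}$ with $\inf_{\mathcal{H}_{\om}}\nu_{\om}$, after which the elementary fact that pointwise $M$-closeness passes to infima finishes the proof. One small correction that does not affect your argument (since you ultimately quote, rather than rederive, the per-$k$ estimate defining $M(k)$): the scaling of the Mabuchi energy is $\nu_{k\om}(k\psi)=\nu_{\om}(\psi)$ with no factor of $k^{n}$, so the normalization $k^{-2n}$ in the statement arises from $d_k^{2}\sim k^{2n}d^{2}$, the remaining constants being absorbed into $M(k)$.
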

 
Theorems \ref{density} and \ref{corethm} motivate the following definition.
\begin{definition}\label{asympss}\emph{Let $(X, {L})$ be a polarized manifold. Then $(X, {L})$ is \textbf{\emph{asymptotically semistable}} provided }
\begin{align}
|\inf_{k\in\mathbb{Z}_+}\frac{1}{k^{2n}}\log\tan^2 d _{g} (\overline{\mathcal{O}}_{R(k)  \Delta(k)},\overline{\mathcal{O}}_{R(k)} )|<\infty \ .
\end{align}
\end{definition}
 
 The current status of the author's approach to the standard conjectures is exhibited in the following flow chart.   
 \ \\
 
\tikzstyle{decision} = [rectangle, draw, fill=blue!20, 
    text width=4.5em, text badly centered, node distance=3cm, inner sep=0pt]
\tikzstyle{block} = [rectangle, draw, fill=blue!20, 
    text width=5em, text centered, rounded corners, minimum height=4em]
\tikzstyle{line} = [draw, -latex']
\tikzstyle{cloud} = [draw, ellipse,fill=red!20, node distance=3cm,
    minimum height=2em]
\begin{center}    
\begin{tikzpicture}[node distance = 3.5cm, auto]
   \node [block] (canon) {Existence of a Canonical metric in the class $[\om]=c_1(L)$};
   \node [block, right of=canon] (bound) { Mabuchi energy lower bound on $\mathcal{H}_{\om}$};
    \node [block, right of= bound] (berg) {\emph{\textbf{Uniform}} lower Mabuchi energy bound on $\mathcal{B}_{k}$ (all $k$)};
    \node [block, right of=berg] (berg2) { Lower Mabuchi energy bound on $\mathcal{B}_{k}$ (fixed $k$)};
    \node [block, below of=berg2] (stability) {Semistable embedding $X \xrightarrow{ {L}^k}\mathbb{P}^{N_k}$ (fixed $k$)  };
    \node [block, below of=bound] (asympstability) {$(X , L)$ \emph{\textbf{asymptotically}} semistable};
     \path [line] (canon) --node{(1)} (bound);
    \path [line,dashed] (bound) -- (asympstability);
    \path [line] (bound) -- node{(2)}(berg);
    \path [line] (berg2) --node{(4)}(stability);
    \path [line] (asympstability) --node{(5)}(stability);
   \path [line,dashed] (asympstability) --node{(6)}(bound);
    \path [line] (berg) -- (bound);
   \path [line] (berg) -- node{(3)}(berg2);
   \path [line] (stability) -- (berg2);
  \end{tikzpicture}
 \end{center}
\begin{center}{\small{Figure 3.}}\end{center}

\begin{enumerate}
\item This is due to Bando and Mabuchi when $L=-K_X$ \cite{bando-mabuchi87}. See Chen and Tian for the general case \cite{chentian08}. See also Donaldson \cite{skd2005} and Li \cite{chili2009} .
 \ \\
\item This is Tian's density Theorem \cite{tian1990}.  
 \ \\
\item Obvious.
 \ \\
\item This equivalence is due to the author \cite{paul2011}. Partial results can be found in \cite{tian97}.
 \ \\
 \item Follows from the definition of asymptotic semistability .
\ \\
\item This equivalence holds provided the sequence $\{M(k)\}_{k\geq1} $ is bounded.  
\end{enumerate}
\ \\
 
\begin{conjecture}
 \emph{The sequence $\{M(k)\}_{k\geq1} $ is bounded.}
 \end{conjecture}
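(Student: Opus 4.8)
The plan is to make the constant $M(k)$ completely explicit, identify it with a combination of Quillen-type metrics (equivalently, Bott--Chern secondary classes) attached to the determinant sheaves $\mathscr{L}_{R}$ and $\mathscr{L}_{\Delta}$ of Definition~\ref{cmpolarization}, and then control its behaviour as $k\ra\infty$ by feeding in the Bergman kernel expansion of $(L^{k},h^{k})$ together with Bismut--Vasserot-type asymptotics for holomorphic analytic torsion.

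Concretely, unwinding the proof of part~$(2)$ of Theorem~\ref{uniform1} for the Kodaira embedding $X\xrightarrow{L^{k}}\mathbb{P}^{N_{k}}$, one obtains an identity of the form
\begin{align*}
(n+1)\inf_{\sigma\in\mathcal{B}_{k}}\nu_{\om}(\sigma)=\frac{1}{k^{2n}d^{2}}\log\tan^{2}d_{g}\big(\overline{\mathcal{O}}_{R(k)\Delta(k)},\overline{\mathcal{O}}_{R(k)}\big)+c(k),
\end{align*}
where $c(k)$ packages two discrepancies: the difference between the intrinsic normalisation coming from Mabuchi's functional and the extrinsic one coming from $d_{g}$, and the difference between the distinguished norms on $\elam$ and $\emu$ appearing in Theorem~A and the Hermitian norms induced by the background form $h$ on $\cn$. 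One has $M(k)=|c(k)|$ up to a fixed additive constant, so the conjecture is equivalent to $c(k)=O(1)$. The first serious step is therefore to recognise $c(k)$ as a sum of logarithms of $L^{2}$/Quillen metrics on $\det\pi_{*}\big(\bigwedge^{j}\mathcal{E}_{n+1}^{\vee}\otimes\mathcal{O}_{\mathbb{X}}(m)\big)$ and $\det\pi_{*}\big(\bigwedge^{i}J_{1}(\mathbb{X}_{(n-1)}/Y)^{\vee}\otimes\mathcal{O}_{\mathbb{X}_{(n-1)}}(m)\big)$ evaluated at $[X]$; equivalently, by Grothendieck--Riemann--Roch with Bott--Chern forms, as an integral over $X$ of a universal polynomial in the curvature of $(L,h)$ and its covariant derivatives. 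This is essentially the computation of the lower-order terms in the expansion of the $K$-energy along $\mathcal{B}_{k}$.

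Second, the analytic input: the Tian--Yau--Zelditch--Catlin expansion $\rho_{k}(x)=k^{n}+\tfrac{1}{2}\mathrm{Scal}(\om_{h})k^{n-1}+O(k^{n-2})$ controls all the $L^{2}$-metrics with uniform $C^{\infty}$ remainders, while Bismut--Vasserot asymptotics control the analytic-torsion contribution; together these should yield an asymptotic expansion of $c(k)$ in descending powers of $k$. Boundedness then reduces to the statement that every coefficient of a \emph{positive} power of $k$ in this expansion vanishes, i.e. that the top-order parts of the $\mathscr{L}_{R}$ and $\mathscr{L}_{\Delta}$ contributions cancel. This cancellation is precisely what the normalisation $k^{-2n}d^{-2}$ is designed to achieve: by the Proposition comparing $\mathscr{L}_{\pi}=\mathscr{L}_{R}^{\vee}\otimes\mathscr{L}_{\Delta}$ with Tian's CM polarisation, the leading-order data of $c(k)$ is governed by $c_{1}(\mathscr{L}_{cm})=\deg(\mathbb{X}/Y)\,\pi_{*}\big((n+1)c_{1}(K_{\pi})\pi_{2}^{*}c_{1}(L)^{n}+\mu\,\pi_{2}^{*}c_{1}(L)^{n+1}\big)$, which scales by exactly $k^{2n}$ when $L$ is replaced by $L^{k}$; matching the expansion of $c(k)$ against the known (Tian, Lu) expansion of the normalised Mabuchi energy along $\mathcal{B}_{k}$ should then leave an $O(1)$ remainder.

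The main obstacle is the uniform control of the analytic-torsion term. The $L^{2}$-side is handled by the Bergman expansion with remainders that are uniform on $X$, but the torsion side requires Bismut--Vasserot estimates whose error terms do not accumulate in $k$, and here they must be applied to the twisted families $\mathbb{X}_{(n-1)}=\mathbb{X}\times\mathbb{P}^{n-1}$ and to the jet bundles $J_{1}$, not just to powers of $L$. Compounding this, $N_{k}\ra\infty$, so the ambient spaces $\mathbb{P}(\elam\oplus\emu)$ and their Fubini--Study metrics genuinely change with $k$, and one must verify --- at the level of constants, not merely first Chern classes --- that $d_{g}$ is normalised compatibly across the tower. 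A secondary difficulty is that the distinguished norms in Theorem~A are defined by a limiting procedure, so a preliminary task is to re-present Theorem~A in a form exhibiting those norms as explicit $L^{2}$-metrics twisted by a Bott--Chern class, after which the $k$-expansion becomes a finite (if lengthy) computation.
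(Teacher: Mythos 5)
There is a genuine gap here, and it is worth being clear about its nature: the statement you are proving is stated in the paper as a \emph{conjecture} and no proof is offered there, so there is no argument of the paper for your proposal to match. What you have written is a research programme, not a proof, and its decisive steps are exactly the ones left open. First, the identity you start from, with an error term $c(k)$, is obtained by comparing the distinguished \emph{continuous} norms of Theorem A on $\mathbb{E}_{\lambda_{\bull}(k)}$ and $\mathbb{E}_{\mu_{\bull}(k)}$ with the Hermitian norms induced by $h$; your plan treats this comparison as an $O(1)$ bookkeeping step, but the equivalence constant between two norms on a finite dimensional space depends on the space, and here the dimensions of $\mathbb{E}_{\lambda_{\bull}(k)}$ and $\mathbb{E}_{\mu_{\bull}(k)}$ grow without bound with $k$ (this is precisely where the dependence of $M$ on $h$ and on the weights enters in Theorem \ref{uniform1}(2)). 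Controlling that growth is the heart of the conjecture, and nothing in the proposal addresses it.

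Second, the analytic inputs you invoke do not by themselves deliver the required uniformity. The Tian--Yau--Zelditch--Catlin expansion and Bismut--Vasserot asymptotics give expansions for a \emph{fixed} metric (or a compact family of metrics) as $k\ra\infty$, whereas $M(k)$ must control the discrepancy uniformly over the whole orbit of $G_k=SL(N_k+1,\mathbb{C})$, a group whose dimension grows with $k$ and whose elements produce arbitrarily degenerate Bergman potentials; a pointwise expansion at the center of $\mathcal{B}_k$ does not bound the infimum over the orbit. Third, the proposed cancellation of all positive powers of $k$ rests on a first Chern class computation ($c_1(\mathscr{L}_{\pi})$ versus $c_1(\mathscr{L}_{cm})$ scaling like $k^{2n}$); cohomological cancellation controls weights and leading asymptotics of the energy, but $M(k)$ is a metric-level constant, and Chern-class identities say nothing about the bounded terms you need. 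Your own closing paragraph concedes that the uniform torsion estimates for the twisted families $\mathbb{X}_{(n-1)}$ and the jet bundles, and the compatible normalisation of $d_g$ across the changing ambient spaces $\mathbb{P}(\mathbb{E}_{\lambda_{\bull}(k)}\oplus\mathbb{E}_{\mu_{\bull}(k)})$, are unresolved; until those are supplied, the argument establishes nothing beyond a plausible reduction, and the conjecture remains open.
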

  \begin{center}{\textbf{Acknowledgements}}\end{center}
  The author owes an enormous debt of gratitude to Professor Joel Robbin who listened carefully over the past year to the author's lectures on various versions of this paper, his many questions helped sharpen many (if not all) of the statements. In particular Theorem \ref{weakII} was suggested by him. The author also thanks Jeff Viaclovsky and Alina Marian for their advice on how to better structure the paper and Gabriele LaNave and Alberto Della Vedova for early conversations on this work.

 \bibliography{ref}
  \end{document}